\newcommand{\ie}{i.e. }
\newcommand{\loccit}{{loc. cit.}}
\newcommand{\cA}{{\mathcal A}}
\newcommand{\cB}{{\mathcal B}}
\newcommand{\cC}{{\mathcal C}}
\newcommand{\cD}{{\mathcal D}}
\newcommand{\cE}{{\mathcal E}}
\newcommand{\cK}{{\mathcal K}}
\newcommand{\cX}{{\mathcal X}}
\newcommand{\bbZ}{{\mathbb Z}}
\newcommand{\arr}[1]{\stackrel{#1}{\longrightarrow}}
\newcommand{\HHom}[2]{[#1,#2]} 
\newcommand{\TTens}{\otimes} 
\newcommand{\trafo}{\rightarrow} 
\newcommand{\Hom}{{\rm Hom}} 
\newcommand{\unit}{\eta} 
\newcommand{\counit}{\epsilon} 
\newcommand{\W}{{\rm W}} 
\newcommand{\diag}[1]{{\rm \fbox{$\scriptstyle #1$}}} 
\newcommand{\pair}[1]{\{ #1\}} 
\newcommand{\bid}{{\varpi}} 
\newcommand{\ev}{{\rm ev}} 
\newcommand{\coev}{{\rm coev}} 
\newcommand{\tpp}{{tp}} 
\newcommand{\thh}{{th}} 
\newcommand{\cc}{{s}} 
\newcommand{\q}{{\pi}} 
\newcommand{\qh}{{\kappa}} 
\newcommand{\fp}{{\alpha}} 
\newcommand{\fh}{{\beta}} 
\newcommand{\fg}{{\lambda}} 
\newcommand{\ff}{{\mu}} 
\newcommand{\ssp}{{\theta}} 
\newcommand{\sh}{{\nu}} %
\newcommand{\ea}{{a}} 
\newcommand{\eb}{{b}} 
\newcommand{\ec}{{c}} 
\newcommand{\rr}{{\zeta}} 
\newcommand{\exch}{{\varrho}} 
\newcommand{\dd}{{\tau}} 
\newcommand{\eps}{{\varepsilon}} 
\newcommand{\gam}{{\gamma}} 
\newcommand{\rel}{{\omega}} 
\newcommand{\abs}{{\omega}} 
\newcommand{\one}{{\mathbf 1}} 
\newcommand{\pt}{{\rm Pt}} 
\newcommand{\func}[1]{{|#1|}} 
\newcommand{\ttens}{\bullet} 
\newcommand{\hhom}{h} 
\newcommand{\ath}{ath} 
\newcommand{\asso}{asso} 
\newcommand{\ep}{\epsilon} 
\newcounter{mydiagram}
\newcommand{\diagram}{\refstepcounter{mydiagram} {\rm \fbox{$\scriptstyle \themydiagram$}}}
\newcounter{myassump}
\renewcommand{\themyassump}{\Alph{myassump}}
\newcommand{\assumption}[2]{{\bf(#1}$_{#2}${\bf)}}
\newcommand{\assum}[2]{\refstepcounter{myassump}{\rm\assumption{\themyassump}{#1} }#2\newline}
\theoremstyle{definition}
\newtheorem{defi}{Definition}[subsection]
\theoremstyle{plain}
\newtheorem{theo}[defi]{Theorem}
\newtheorem{coro}[defi]{Corollary}
\newtheorem{prop}[defi]{Proposition}
\newtheorem{lemm}[defi]{Lemma}
\newtheorem{nota}[defi]{Notation}
\theoremstyle{remark}
\newtheorem{rema}[defi]{Remark}
\newtheorem{exam}[defi]{Example}
\newcommand{\mypar}[1]{{\smallskip \noindent \bf #1.}}
\title{{\bf Tensor-triangulated categories and dualities}}
\author{Baptiste Calm{\`e}s and Jens Hornbostel}
\begin{document}
\bibliographystyle{amsplain}

\maketitle

\begin{abstract}
In a triangulated symmetric monoidal closed category, there are natural
dualities induced by the internal Hom. Given a monoidal exact functor $f^*$
between two such categories and adjoint couples $(f^*,f_*)$, $(f_*,f^!)$, we
establish the commutative diagrams necessary for $f^*$ and $f_*$ to respect
certain dualities, for a projection formula to hold between them (as duality
preserving exact functors) and for classical base change and composition
formulas to hold when such duality preserving functors are composed. This
framework allows us to define push-forwards for Witt groups, for example. 
\end{abstract}

\tableofcontents

\section*{Introduction}

Several cohomology theories on schemes use the concept of a duality in their
definition. In fact, these cohomology theories are usually defined at a
categorical level; a group is associated to a category with some additional
structure, including a duality which is then a contravariant endofunctor of
order two (up to a natural isomorphism) on the category. The main example 
that we have in mind is the Witt
group, defined for a triangulated category with duality, the (coherent or
locally free) Witt groups of a scheme then being defined by using one of the
various bounded derived categories of a scheme endowed with a duality coming from the
derived functor RHom. Of course, most of the structural properties of these
cohomology theories should be proved at a categorical level. For example, Paul
Balmer has proved localization for Witt groups directly at the level of
triangulated categories by using localization properties of such categories
\cite{Balmer00}. In the rest of the article, 
we only discuss the example of Witt groups, but
everything works exactly the same way for Grothendieck-Witt
groups. The reader should also have in mind that similar considerations 
should apply to hermitian K-theory and Chow-Witt theory, etc. 
 
We are interested in the functoriality of Witt groups along morphisms of schemes. At the categorical level, this means that we are given functors between categories (such as the derived functors of $f^*$ or $f_*$) and that we would like to use them to induce morphisms between Witt groups. Let us elaborate on this theme. The basic ideas are:
\begin{itemize}
\item a functor between categories respecting the duality induces a 
morphism of groups and 
\item two such morphisms can be compared if there is a morphism of functors between two such functors, again respecting the duality. 
\end{itemize}
By ``respecting the duality'', we actually mean the following. Let
$(\cC,D,\bid)$ be a category with duality, which means that $D: \cC \to \cC$
is a contravariant functor and that $\bid: Id_{\cC} \to D^2$ is a morphism of
functors such that for any object $A$, we have
\begin{equation} \label{bidualFormulaEq}
D(\bid_A) \circ \bid_{DA}=Id_{DA}.
\end{equation} 
A duality preserving functor from $(\cC_1,D_1,\bid_1)$ to $(\cC_2,D_2,\bid_2)$ is a pair $\pair{F,\phi}$ where $F: \cC_1 \to \cC_2$ is a functor and $\phi:FD \to DF$ is a morphism of functors, such that the diagram of morphism of functors
\begin{equation} \label{dualPresDiag}
\xymatrix{
F \ar[r]^{F\bid_1} \ar[d]_{\bid_2 F} & FD_1 D_1 \ar[d]^{\phi D_1} \\
D_2 D_2 F \ar[r]^{D_2 \phi} & D_2 F D_1
}
\end{equation}
commutes. In practice, the functor $F$ is usually given. For example, in the
context of schemes, it can be the derived functor of the pull-back or of the
push-forward along a morphism of schemes. It then remains to find an interesting $\phi$ and to check the commutativity of diagram \eqref{dualPresDiag}, which can be very intricate. 

A morphism of duality preserving functors $\pair{F,\phi} \to \pair{G,\psi}$ is a morphism of functors $\rho:F \to G$ such that the diagram
\begin{equation} \label{dualPresMorphDiag}
\xymatrix{
FD_1 \ar[r]^{\rho D_1} \ar[d]_{\phi} & GD_1 \ar[d]^{\psi} \\ 
D_2F & D_2 G \ar[l]^{D_2 \rho}
}
\end{equation}
commutes. This situation with two functors arises when we want to compare what
two different functors (respecting the dualities) yield as Witt group
morphisms. For example, we might want to compare the composition of two
pull-backs and the pull-back of the composition, in the case
of morphisms of schemes.
In this kind of situation, usually $\pair{F,\phi}$ and $\pair{G,\psi}$ are
already given, $\rho$ has to be found and the commutative diagram
\eqref{dualPresMorphDiag} has to be proved.

It turns out that in the context of symmetric monoidal closed categories, this type of diagram commutes for certain dualities, functors and morphisms of functors arising in a natural way. The main object of this article is to prove it.

Our original motivation for dealing with the questions of this paper
was to define push-forward
morphisms for Witt groups with respect
to proper maps of schemes and study their properties,
which in turn should be useful 
both for general theorems and concrete computations of Witt groups
of schemes. The article \cite{Calmes08b_pre}
uses the results of this article to establish these push-forwards. 
We refer the reader to \loccit{}
for details. As Witt groups are the main application that we have in mind, 
we have stated in corollaries of the main theorems what they imply for Witt
groups. These corollaries are trivial, and just rely on the classical
propositions \ref{dualPresIsTransfer_prop} and
\ref{samemorphismWitt_prop}. This should be seen as some help for the reader
familiar with cohomology theories and geometric intuition. The reader not
interested in Witt groups or other Witt-like cohomology theories might just
skip the corollaries. 
In all of them, the triangulated categories are assumed to be $\bbZ[1/2]$-linear (to be able to define Witt groups). 

\medskip

We now give a more precise description of the functors and morphisms of 
functors we consider. 

\mypar{Dualities}
Assume we are given a symmetric monoidal closed category $\cC$ were the tensor product is denoted by $\otimes$ and its ``right adjoint'', the internal Hom, is denoted by $[-,-]$. Then, as recalled in Section \ref{consClosedMon}, fixing an object $K$ and setting $D_K=[-,K]$, a natural morphism of functors $\bid_K:Id \to D_K D_K$ can be defined using the symmetry and the adjunction of the tensor product and the internal Hom and the formula \eqref{bidualFormulaEq} is satisfied. This is well known and recalled here just for the sake of completeness.  

\mypar{Pull-back}
Assume now that we are given a monoidal functor denoted by $f^*$ (by analogy with the algebro-geometric case) from $\cC_1$ to $\cC_2$, then, for any object $K$, as explained above, we can consider the categories with duality $(\cC_1,D_K,\bid_K)$ and $(\cC_2,D_{f^*K},\bid_{f^*K})$. There is a natural morphism of functors $\fh_K: f^*D_K \to D_{f^*K}f^*$ such that diagram \eqref{dualPresDiag} is commutative (see Proposition \ref{defifh} and Theorem \ref{PullBack0_theo}). In other words, $\pair{f^*,\fh_K}$ is a duality preserving functor from $(\cC_1,D_K,\bid_K)$ to $(\cC_2,D_{f^*K},\bid_{f^*K})$. 

\mypar{Push-forward}
Assume furthermore that that we are given a right adjoint $f_*$, then there is a natural morphism of projection $\q:f_*(-)\otimes * \to f_*(- \otimes f^*(*))$ (see Proposition \ref{projFormMorphExists}). When $f_*$ also has a right adjoint $f^!$, we can consider the categories with duality $(\cC_1,D_K,\bid_K)$ and $(\cC_2,D_{f^!K},\bid_{f^!K})$ and there is a natural isomorphism $\rr_K: f_* D_{f^!K} \to D_K f_*$ such that diagram \eqref{dualPresDiag} is commutative. In other words, $\pair{f_*,\rr_K}$ is a duality preserving functor from $(\cC_2,D_{f^!K},\bid_{f^!K})$ to $(\cC_1,D_K,\bid_K)$.

\mypar{Product}
Theorems involving products of dualities are stated. Given a pair of categories with duality $(\cC_1,D_1,\bid_1)$ and $(\cC_2,D_2,\bid_2)$, there is an obvious structure of a category with duality $(\cC_1\times \cC_2, D_1 \times D_2, \bid_1 \times \bid_2)$. If $\cC$ is monoidal symmetric closed, there is a natural morphism of (bi)functors $\dd_{K,M}:D_K \otimes D_M \to D(- \otimes -)_{K \otimes M}$ (see Definition \ref{defidd}). Proposition \ref{existProduct} recalls that $\pair{-\otimes-,\dd_{K,M}}$ is a duality preserving functor from $(\cC \times \cC, D_K \times D_M, \bid_K \times \bid_M)$ to $(\cC,D_{K \otimes M}, \bid_{K \otimes M})$. This gives a product on Witt groups.

\medskip
We now explain relations between the functors $\pair{f^*,\fh}$, $\pair{f_*,\rr}$ and $\pair{\TTens,\dd}$ in different contexts. Before going any further, let us remark that a morphism $\iota: K\to M$ induces a morphism of functors $\tilde{\iota}:D_K \to D_M$ which is easily shown to yield a duality preserving functor $I_\iota=\pair{Id,\tilde{\iota}}$ from $(\cC,D_K,\bid_K)$ to $\cC,D_M,\bid_M)$.

\mypar{Composition}
There is a natural way to compose duality preserving functors: $\pair{F,f}\circ \pair{G,g}:= \pair{FG,gF\circ fG}$. Suppose that we are given a pseudo contravariant functor $(-)^*$ from a category $\cB$ to a category where the objects are symmetric monoidal closed categories and the morphisms are monoidal functors. As usual, ``pseudo'' means that we have almost a functor, except that we only have a natural isomorphism $\ea_{g,f}:f^* g^* \simeq (gf)^*$ instead of an equality. Then an obvious question arises: can we compare $\pair{(gf)^*,\fh_K}$ with $\pair{f^*,\fh_{g^*K}} \circ \pair{g^*,\fh_{K}}$ for composable morphisms $f$ and $g$ in $\cB$. The answer is that the natural isomorphism $\ea$ is a morphism of duality preserving functors from $I_{\ea_{g,f,K}} \circ \pair{f^*,\fh_{g^*K}} \circ \pair{g^*,\fh_{K}}$ to $\pair{(gf)^*,\fh_K}$, \ie the diagram \eqref{dualPresMorphDiag} commutes (see \ref{compof^*_theo}) with $\rho=\ea$. The correction by $I_{\ea_{g,f,K}}$ is necessary for otherwise, strictly speaking, the target categories of the two duality preserving functors are not equipped with the same duality.

A similar composition question arises when some $f \in \cB$ are such that there are adjunctions $(f^*,f_*)$ and $(f_*,f^!)$. On the subcategory $\cB'$ of such $f$, there is a way of defining natural (with respect to the adjunctions) pseudo functor structures $\eb:(gf)_* \simeq g_* f_*$ and $\ec:f^!g^! \simeq (gf)^!$, and we can compare $\pair{(gf)_*,\rr_K}$ and $\pair{g_*,\rr_{g,f^!K}} \circ \pair{f_*,\rr_{f,K}}$ using the natural morphism of functors $\eb$ up to a small correction of the duality using $\ec$ as above (see Theorem \ref{compof_*0_theo}). 

\mypar{Base change}
A ``base change" question arises when we are given a commutative diagram  
$$\xymatrix{
V \ar[r]^{\bar{g}} \ar[d]_{\bar{f}} & Y \ar[d]^{f}  \\
X \ar[r]_{g} & Z 
}$$
in $\cB$ such that $g$ and $\bar{g}$ are in $\cB'$. In this situation, there is a natural morphism $\eps:f^* g_* \to \bar{g}_* \bar{f}^*$ (see Section \ref{BaseChange}). When $\eps$ is an isomorphism, there is a natural morphism $\gam:\bar{f}^*g^! \to \bar{g}^!f^*$. Starting from an object $K \in \cC_Z$, the morphism of functors $\gam$ is used to define a duality preserving functor $I_{\gam_K}$ from $(\cC_V,D_{\bar{f}^*g^!K},\bid_{\bar{f}^*g^!K})$ to $(\cC_V,D_{\bar{g}^!f^*K},\bid_{\bar{g}^!f^*K})$. How can we compare $\pair{f^*,\fh_K} \circ \pair{g_*,\rr_{K}}$ and $\pair{\bar{g}_*,\rr_{f^*K}}\circ I_{\gam_K} \circ \pair{\bar{f}^*,\fh_{g^!K}}$? Theorem \ref{basechange0_theo} proves that $\eps$ defines a duality preserving functor from the first to the second. 

\mypar{Pull-back and product}
Given a monoidal functor $f^*$ the isomorphism $\fp: f^*(-)\otimes f^*(-) \to f^*(-\otimes -)$ (defining $f^*$ as a monoidal functor) defines a duality preserving functor $I_{\fp_{K,M}}$ from $(\cC,D_{f^*K \otimes f^*M},\bid_{f^*K \otimes f^*M})$ to $(\cC,D_{f^*(K \otimes M)},\bid_{f^*(K \otimes M)})$. Proposition \ref{fpDualPres} shows that the morphism of functors $\fp$ then defines a duality preserving morphism of functors from $I_{\fp_{K,M}} \circ \pair{-\otimes-,\dd_{f^*K,f^*M}}\circ \pair{f^* \times f^*,\fh_K \times \fh_M}$ to $\pair{f^*,\fh_{K \otimes M}} \circ \pair{-\otimes-,\dd_{K,M}}$. It essentially means that the pull-back is a ring morphism on Witt groups. 

\mypar{Projection formula}
Given a monoidal functor $f^*$ with adjunctions $(f^*,f_*)$, $(f_*,f^!)$ such that $\q$ is an isomorphism, there is a natural morphism of functors $\ssp: f^!(-) \otimes f^*(-) \to f^!(-\otimes -)$. This defines a duality preserving functor $I_{\ssp_{K,M}}$ from $(\cC_2,D_{f^!K \otimes f^*M},\bid_{f^!K \otimes f^*M})$ to $(\cC_2,D_{f^!(K \otimes M)},\bid_{f^!(K \otimes M)})$. Theorem \ref{projform0_theo} proves that $\q$ is a duality preserving morphism of functors from $\pair{-\otimes-, \dd_{K,M}} \circ \pair{f_*\times Id,\rr_K \times id}$ to $\pair{f_*,\rr_{K \otimes M}} \circ I_{\ssp_{K,M}} \circ \pair{-\TTens -, \dd_{f^!K,f^*M}} \circ \pair{Id \times f^*,id \times \fh_M}$.

\mypar{Suspensions and triangulated structures}
As it is required for the example of Witt groups, all the results are proved as well in the setting of {\it triangulated} symmetric monoidal categories, functors and morphisms of functors respecting the triangulated structure. This means essentially two things. First, the categories are endowed with an autoequivalence $T$ (called suspension in the article) and the tensor products defining monoidal structures respect this suspension, \ie such a tensor product comes equipped with a pair of isomorphism of functors such that a certain diagram anticommutes (see Definition \ref{defiSuspBif}). As proved in the article, it follows that all the other functors (or bifunctors) mentioned above can be endowed by construction (see Point 1 of Propositions \ref{suspAdjExists} and \ref{suspACRBExists}) with morphisms of functors to commute with the suspensions in a suitable way involving commutative diagrams (similar to \eqref{dualPresDiag} with $T$ instead of $D$). All the morphisms of functors involved then also respect the suspension, \ie satisfy commutative diagrams similar to \eqref{dualPresMorphDiag}. This is important, because in the main applications, checking this kind of things by hand amounts to checking signs involved in the definitions of complexes or maps of complexes, and it has been the source of errors in the literature. Here, we avoid such potential errors by construction (there are no signs and no complexes). Second, using these morphisms of functors to deal with the suspensions, the functors should respect the collection of distinguished triangles. This is easy and has nothing to do with commutative diagrams. It is again obtained by construction (see Point 2 of Propositions \ref{suspAdjExists} and \ref{suspACRBExists}), except for the first variable of the internal Hom, for which it has to be assumed. The reader who is not interested in triangulated categories, but rather only in monoidal structures can just forget about all the parts of the statements involving $T$. What remains is the monoidal structure. 

These functors and morphisms between them are discussed in the first five
sections of this article. Section \ref{variousReform_sec} is devoted to reformulations of the main results when the base category $\cB$ is enriched in order that the objects also specify the duality: they are pairs $(X,K)$ where $K \in X^*$ is used to form the duality $D_K$. The morphisms can then be chosen in different ways (see Sections \ref{fofUnitObj_sec}, \ref{reformcorrectdual_sec} and \ref{reformFinalObject_sec}). These sections are directed towards geometric applications. In particular, the object $\rel_f$ introduced at the beginning of \ref{fofUnitObj_sec} has a clear geometric interpretation in the case of morphisms of schemes: it is the relative canonical sheaf. 
Finally, we have included an appendix to recall symmetric monoidal closed
structures on categories involving chain complexes and to relate the sign
choices involved with the ones made by various authors working with 
Witt groups. 

\medskip

\mypar{Remarks on the proofs}
Nearly every construction of a morphism in the article is based on variations on a single lemma on adjunctions, namely Lemma \ref{adjab}. Its refinements are Theorem \ref{adjsquare}, Lemma \ref{adjabbif} and Theorem
\ref{theoGenAdj}. Similarly, to establish commutativities of natural
transformations arising from by Lemma \ref{adjab} the main tool
is Lemma \ref{cube}. The whole Section \ref{DualFormAdj} is devoted to 
these formal results about adjunctions.

In some applications, certain natural morphisms considered in this article
have to be isomorphisms. For example, to define Witt groups, one requires that
the morphism of functors $\bid_K$ is an isomorphism. This is not important to
prove the commutativity of the diagrams, therefore it is not part of our
assumptions. Nevertheless, in some cases, it might be useful to know that if a
particular morphism is an isomorphism, then another one is also automatically
an isomorphism. When possible, we have included such statements, for example
in Propositions \ref{projFormIso} and \ref{adjpairsf_*_prop}. The attribute
``strong" in Definitions \ref{catwithduality},
\ref{DualPresFunc_defi} and \ref{MorphDualPresFunc_defi} is part of this
philosophy. By contrast, two isomorphism assumptions are important to the
abstract setting regardless of applications. This is the case of the
assumption on $\q$, since the essential morphism $\ssp$ is {\it defined} using
the inverse of $\q$, as well as the assumption on $\eps$ whose inverse is
involved in the definition of $\gam$. These isomorphism assumptions are 
listed at the beginning of Section \ref{FunctClosedMon} and are recalled in every theorem where they are used.

\medskip

Considering possible future applications (e.g. motivic homotopy 
categories or the stable homotopy category), we have presented some aspects 
in as much generality as possible.
Conversely, we do not attempt to provide a complete
list of articles where parts of the general framework we study has
already been considered. See however \cite{Fausk03} for a recent reference
written by homotopy theorists which contains many further references. 

Finally, let us mention a question a category theorist might ask when reading such a paper: are there coherence theorems, in the spirit of \cite{Kelly71}, that would prove systematically the needed commutative diagrams? As far as the authors know, there are no such coherence theorems available for the moment. Although it is certainly an interesting question, it is unclear (to the authors) how to formulate coherence statements. One problem is that, as mentioned above, part of the interesting commutative diagrams involve morphisms that are defined using inverses of natural morphisms. Without those inverted natural morphisms, the commutativity of the diagrams become meaningless. So, the interested reader might consider this article as a source of inspiration for future coherence theorems.

\section{Adjunctions and consequences} \label{DualFormAdj}

\subsection{Notations and conventions} \label{NotConv}

The opposite category of a category $\cC$ is denoted by $\cC^o$.

When $F$ and $G$ are functors with same source and target, we denote a morphism of functors between them as $t: F \trafo G$. 
When $s: G \trafo H$ is another one, their composition is denoted by $s \circ t$. When $F_1, F_2 : \cC \to \cD$, $G_1, G_2: \cD \to \cE$, $f:F_1 \trafo F_2$ and $g: G_1 \trafo G_2$, we denote by $gf$ the morphism of functors defined by $(fg)_A=G_2(f_A)\circ g_{F_1(A)}=g_{F_2(A)}\circ G_1(f_A)$ on any object $A$. When $F_1=F_2=F$ and $f=id_F$ (resp. $G_1=G_2=G$ and $g=id_G$), we usually use the notation $gF$ (resp. $Gf$). With this convention, $gf=G_2 f \circ g F_1= g F_2 \circ G_1 f$. 
When a commutative diagram is obtained by this equality or other properties immediate from the definition of a morphism of functors, we just put an $\diag{mf}$ label on it and avoid further justification. To save space, it may happen that when labeling maps in diagrams, we drop the functors from the notation, and just keep the important part, that is the morphism of functors (thus $FgH$ might be reduced to $g$). Many of the commutative diagrams in the article will be labeled by a symbol in a box (letters or numbers, such as in $\diag{H}$ or $\diag{3}$). When they are used in another commutative diagram, eventually after applying a functor to them, we just label them with the same symbol, so that the reader recognizes them, but without further comment. 

\subsection{Useful properties of adjunctions} \label{UsefulAdj}

This section is devoted to easy facts and theorems about adjunctions, that are repeatedly used throughout this article. All these facts are obvious, and we only prove the ones that are not completely classical. A good reference for the background on categories and adjunctions as discussed here is \cite{MacLane98}. 

\begin{defi}\label{defadjcouple}
An adjoint couple $(L,R)$ is the data consisting of two functors $L: \cC \to \cD$ and $R:\cD \to \cC$ and equivalently:
\begin{itemize}
\item  a bijection $\Hom(LA,B)\simeq \Hom(A,RB)$, functorial in $A\in \cC$ and $B \in \cD$, or 
\item two morphism of functors $\unit:Id_{\cC} \to RL$ and $\counit:LR \to Id_{\cD}$, called respectively unit and counit, such that the resulting compositions $R
\stackrel{\unit R}{\to} RLR \stackrel{R \counit}{\to} R$ and 
$L \stackrel{L \unit}{\to} LRL \stackrel{\counit L}{\to} L$ are identities.
\end{itemize}
In the couple, $L$ is called the left adjoint and $R$ the right adjoint. When we want to specify the unit and counit of the couple and the categories involved, we say $(L,R,\unit,\counit)$ is an adjoint couple from $\cC$ to $\cD$.
\end{defi}

When the commutativity of a diagram follows by one of the above compositions giving the identity, we label it $\diag{adj}$.

\begin{rema}
Adjunctions between functors that are contravariant can be considered in two different ways, by taking the opposite category of the source of $L$ or $R$. This does {\it not} lead to the same notion, essentially because if $(L,R)$ is an adjoint couple, then $(R^o,L^o)$ is an adjoint couple (instead of $(L^o,R^o)$). For this reason, we only use covariant functors in adjoint couples.
\end{rema}

\begin{lemm} \label{isoadjoints}
Let $(L,R,\unit,\counit)$ and $(L',R',\unit',\counit')$ be two adjoint couples
between the same categories $\cC$ and $\cD$, and let $l: L \trafo L'$ (resp. $r: R \trafo R'$) be an isomorphism. Then, there is a unique isomorphism $r: R \trafo R'$ (resp. $l: L \trafo L'$) such that $\unit'=r l \circ \unit$ and $\counit'= \counit \circ l^{-1} r^{-1}$. In particular, a right (resp. left) adjoint is unique up to unique isomoprhism.
\end{lemm}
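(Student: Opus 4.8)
The plan is to prove the statement about right adjoints (the parenthetical version for left adjoints being entirely dual), so I start from the given isomorphism $l: L \trafo L'$ and produce the unique compatible $r: R \trafo R'$. The guiding principle is that everything is forced by the adjunction bijections: a morphism of functors $R \trafo R'$ is the same datum as a compatible family of maps on Hom-sets $\Hom(A, RB) \to \Hom(A, R'B)$, and the isomorphism $l$ gives me exactly such a family by transport through the two adjunction isomorphisms. Concretely, I would define $r$ so that the diagram
\begin{equation*}
\xymatrix{
\Hom(LA,B) \ar[r]^{\sim} \ar[d]_{(l_A^{-1})^*} & \Hom(A,RB) \ar[d]^{(r_B)_*} \\
\Hom(L'A,B) \ar[r]^{\sim} & \Hom(A,R'B)
}
\end{equation*}
commutes for all $A,B$; since the horizontal maps are the adjunction bijections and the left vertical map is a bijection (as $l$ is an isomorphism), this forces $(r_B)_*$ to be a bijection as well, and by Yoneda it is induced by a unique isomorphism $r_B: RB \to R'B$. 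Naturality of $r$ in $B$ follows from the functoriality of both adjunction bijections in the second variable together with the fact that $l$ is a morphism of functors.

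Alternatively, and perhaps more in the spirit of the unit/counit formulation that this section emphasizes, I would simply \emph{define} $r$ by the explicit formula $r := \counit' L' \cdot \ldots$; more precisely, I would set
\begin{equation*}
r : R \arr{\unit' R} R'L'R \arr{R'l^{-1}R} R'LR \arr{R'\counit} R',
\end{equation*}
using the unit $\unit'$ of the second couple, the inverse isomorphism $l^{-1}: L' \trafo L$, and the counit $\counit$ of the first couple. This candidate is manifestly an isomorphism once I check it (it is built from $\unit'$, $l^{-1}$, $\counit$, and the invertibility will follow from the triangle identities), and it has the advantage of being completely explicit. I would then verify the two asserted compatibility relations $\unit' = rl \circ \unit$ and $\counit' = \counit \circ l^{-1}r^{-1}$ by diagram chases that use only the triangle identities of the two adjoint couples together with the naturality ($\diag{mf}$) of $l$; these are exactly the kind of computations the paper flags as labelled $\diag{adj}$ and $\diag{mf}$.

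For uniqueness, suppose $r'$ is another morphism $R \trafo R'$ satisfying $\unit' = r'l \circ \unit$. I would argue that the relation $\unit' = rl \circ \unit$ determines $r$ completely: composing on suitable sides and using the triangle identity $R'\counit \circ \unit' R' = id$ (or equivalently passing back through the adjunction bijection), the equation $rl \circ \unit = r'l \circ \unit$ together with the universal property of the unit forces $r = r'$. In fact the cleanest route is the Hom-set description above, where $r_B$ is pinned down as the \emph{unique} map making the square commute, so uniqueness is automatic. The final sentence, that a right (resp. left) adjoint is unique up to unique isomorphism, is then the special case $L = L'$ with $l = id_L$: the construction yields a canonical isomorphism $r: R \trafo R'$, and uniqueness is what has just been established.

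The main obstacle I anticipate is not conceptual but bookkeeping: one must be careful about \emph{which} triangle identity and \emph{which} naturality square is invoked at each step, and about the placement of functors in expressions like $R'l^{-1}R$ versus $l$ applied after whiskering. The compatibility formula for the counit, $\counit' = \counit \circ l^{-1}r^{-1}$, involves the inverse $r^{-1}$ and so requires knowing $r$ is invertible before the identity is even stated cleanly; I would therefore establish invertibility of $r$ first (directly from the explicit formula and the triangle identities) and only then verify the counit relation. None of these steps is hard, but getting the variances and whiskerings exactly right is where an error would creep in, which is precisely why the paper isolates this as a foundational lemma to be reused throughout.
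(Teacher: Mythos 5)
Your proposal is correct and takes essentially the same route as the paper: your explicit composite $R \arr{\unit' R} R'L'R \arr{R'l^{-1}R} R'LR \arr{R'\counit} R'$ is verbatim the formula the paper gives for $r$, and the paper likewise exhibits the inverse as $R\counit' \circ RlR' \circ \unit R'$, leaving the triangle-identity verifications implicit just as you outline them. Your alternative Hom-set/Yoneda transport argument is an equivalent packaging of the same construction, so there is nothing genuinely different here.
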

\begin{proof}
The morphism $r$ is given by the composition $R' \counit \circ R' l^{-1} R \circ \unit' R$ and its inverse by the composition $R \counit' \circ R l R' \circ \unit R'$.
\end{proof}

\begin{lemm}
An equivalence of categories is an adjoint couple $(F,G,a,b)$ for which the unit and counit are isomorphisms. In particular, $(G,F,b,a)$ is also an adjoint couple.
\end{lemm}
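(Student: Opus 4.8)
The plan is to prove the statement: \emph{an equivalence of categories is an adjoint couple $(F,G,a,b)$ for which the unit and counit are isomorphisms, and in particular $(G,F,b,a)$ is also an adjoint couple.} I read this as two assertions. The substantive one is that if $F\colon \cC \to \cD$ is an equivalence, then it fits into an adjoint couple whose unit and counit are isomorphisms; the converse (that such an adjoint couple is an equivalence) is immediate, since having isomorphisms $\unit\colon Id_\cC \to GF$ and $\counit\colon FG \to Id_\cD$ is precisely a witness that $F$ and $G$ are mutually inverse up to natural isomorphism. The final ``in particular'' clause is then a formal consequence that I would dispatch at the end.

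First I would recall that an equivalence means there exist $G$ and natural isomorphisms $u\colon Id_\cC \to GF$ and $v\colon FG \to Id_\cD$. The subtlety is that an arbitrary such pair $(u,v)$ need \emph{not} satisfy the two triangle identities of Definition \ref{defadjcouple}, so one cannot simply declare $(F,G,u,v)$ an adjoint couple. The standard remedy is to keep $u$ as the unit $a=u$ and \emph{adjust} $v$ to a new counit $b$ so that the triangle identities hold; this is the classical argument that any adjoint equivalence can be built from a plain equivalence. Concretely, I would set
\begin{equation*}
b = v \circ F u^{-1} G \circ v^{-1} F G \colon FG \to Id_\cD,
\end{equation*}
or the equivalent standard correction, and then verify the two composites $R\to RLR\to R$ and $L\to LRL\to L$ of Definition \ref{defadjcouple} are identities. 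Since $u$ is an isomorphism and $b$ is a composite of isomorphisms, both $a$ and $b$ are automatically isomorphisms, which is exactly the extra conclusion we want.

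The main obstacle is the verification of the triangle identities for $(F,G,a,b)$, which is the one genuinely non-formal step. I expect to carry it out by a diagram chase using naturality of $u$ and $v$ together with the defining formula for $b$; one triangle identity holds essentially by construction, and the other follows after cancelling the isomorphisms $u$ and $v$ using their naturality squares. This is a routine but slightly delicate computation, so in the spirit of the surrounding text I would either perform it as a short chase or simply invoke it as the well-known construction of an adjoint equivalence (cf.\ \cite{MacLane98}).

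Finally, for the ``in particular'' statement I would observe that once $(F,G,a,b)$ is an adjoint couple with $a,b$ isomorphisms, the symmetric data $(G,F,b,a)$ satisfies the same two triangle identities read in the opposite order: the identity $G\to GFG\to G$ for the couple $(G,F,b,a)$ is literally the other triangle identity of $(F,G,a,b)$, and vice versa. Hence $(G,F,b,a)$ is an adjoint couple as claimed, with unit $b$ and counit $a$, again both isomorphisms.
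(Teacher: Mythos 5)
The paper states this lemma without proof (it falls under the ``completely classical'' facts of Section \ref{UsefulAdj}), and in fact its phrasing --- ``an equivalence of categories \emph{is} an adjoint couple $(F,G,a,b)$ for which the unit and counit are isomorphisms'' --- together with its later use (e.g.\ for $(T,T^{-1})$ in the definition of a suspended category) suggests the authors are essentially taking \emph{adjoint} equivalence as their working notion of equivalence, so that the only real content is the symmetry claim about $(G,F,b,a)$. You instead prove the stronger classical theorem that a bare equivalence (arbitrary natural isomorphisms $u\colon Id_\cC \to GF$, $v\colon FG \to Id_\cD$, not assumed to satisfy the triangle identities) can be promoted to an adjoint equivalence by keeping $a=u$ and correcting the counit; your formula $b = v \circ Fu^{-1}G \circ v^{-1}FG$ is the standard one (note $v^{-1}_{FGD}=FG(v^{-1}_D)$ by naturality, so it agrees with the usual $v\circ Fu^{-1}G\circ FGv^{-1}$), and deferring the triangle-identity chase to \cite{MacLane98} is reasonable given that the paper itself declines to prove the lemma. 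So your argument is correct and subsumes what the paper needs, at the cost of proving more than the statement is likely meant to assert.

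One imprecision in your last paragraph deserves correction. As written, $(G,F,b,a)$ does not typecheck: a unit for the couple $(G,F)$ must go $Id_\cD \to FG$, whereas $b\colon FG\to Id_\cD$, so one must take unit $b^{-1}$ and counit $a^{-1}$ --- which is possible precisely because both are isomorphisms, and is presumably the abuse intended in the statement. Moreover, the new triangle identities are not ``literally'' the old ones: the composite $G \xrightarrow{Gb^{-1}} GFG \xrightarrow{a^{-1}G} G$ is the \emph{inverse} of the old composite $G \xrightarrow{aG} GFG \xrightarrow{Gb} G$ (the same-side identity, not the other one), and similarly for $F$; each new identity thus follows by inverting the corresponding old one. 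The conclusion is unaffected, but the justification as you phrased it --- equality with the other triangle identity --- is not what actually happens.
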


\begin{lemm} \label{composeadj}
Let $(L,R,\unit,\counit)$ (resp. $(L',R',\unit',\counit')$) be an adjoint couple from $\cC$ to $\cD$ (resp. from $\cD$ to $\cE$). Then $(L'L,RR',R\unit'L\circ \unit,\counit'\circ L'\counit R')$ is an adjoint couple from $\cC$ to $\cE$.
\end{lemm}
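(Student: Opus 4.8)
The plan is to verify directly that the proposed pair of morphisms satisfies the two triangle identities of the second characterization in Definition \ref{defadjcouple}. Write $\unit'' := R\unit'L \circ \unit : Id_{\cC} \trafo RR'L'L$ and $\counit'' := \counit' \circ L'\counit R' : L'LRR' \trafo Id_{\cE}$ for the proposed unit and counit; a glance at sources and targets (one whiskers $\unit'$ by $R$ on the left and $L$ on the right, and $\counit$ by $L'$ on the left and $R'$ on the right) shows these are well defined morphisms of functors between $Id_{\cC}$ and $(RR')(L'L)$, resp. $(L'L)(RR')$ and $Id_{\cE}$. It then remains to check that $RR'\counit'' \circ \unit'' RR' = id_{RR'}$ and $\counit'' L'L \circ L'L\unit'' = id_{L'L}$.

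For the first identity I would expand both factors and read the resulting composite $RR' \trafo RR'$ as
\[ RR'\counit' \circ RR'L'\counit R' \circ R\unit'LRR' \circ \unit RR'. \]
The key step is to collapse the two middle terms: factoring $R$ on the left and $R'$ on the right they equal $R\bigl(R'L'\counit \circ \unit' LR\bigr)R'$, and the interchange law $\diag{mf}$ for $\unit': Id_{\cD} \trafo R'L'$ and $\counit: LR \trafo Id_{\cD}$ rewrites the inner composite as $\unit' \circ \counit$. The whole expression becomes $RR'\counit' \circ R\unit'R' \circ \bigl(R\counit R' \circ \unit RR'\bigr)$. Now the parenthesized part is the triangle identity $R\counit \circ \unit R = id_R$ of $(L,R)$ whiskered on the right by $R'$, so it is $id_{RR'}$; and the remaining $RR'\counit' \circ R\unit'R' = R\bigl(R'\counit' \circ \unit'R'\bigr)$ is the triangle identity $R'\counit' \circ \unit'R' = id_{R'}$ of $(L',R')$ whiskered on the left by $R$. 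Both uses are instances of $\diag{adj}$, and the composite is $id_{RR'}$.

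The second identity is entirely dual: expanding gives $\counit'L'L \circ L'\counit R'L'L \circ L'LR\unit'L \circ L'L\unit$, the two middle terms collapse by the interchange law for $\counit: LR \trafo Id_{\cD}$ against $\unit': Id_{\cD} \trafo R'L'$ to $L'\bigl(\unit' \circ \counit\bigr)L$, and one then peels off the triangle identity $\counit L \circ L\unit = id_L$ of $(L,R)$ and $\counit'L' \circ L'\unit' = id_{L'}$ of $(L',R')$ to obtain $id_{L'L}$.

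I do not expect a genuine difficulty: the only point requiring care is the bookkeeping of the whiskerings, namely correctly identifying between which copies of the functors each of $\unit$, $\counit$, $\unit'$, $\counit'$ acts. A more conceptual alternative would be to compose the two adjunction bijections $\Hom(L'LA,C) \simeq \Hom(LA,R'C) \simeq \Hom(A,RR'C)$, which is manifestly functorial in $A \in \cC$ and $C \in \cE$, and then to check that the unit and counit read off from this composite bijection are exactly $\unit''$ and $\counit''$; but that verification reduces to the same whiskering computation, so the direct check of the triangle identities is the most economical route.
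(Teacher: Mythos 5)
Your proof is correct: the expansion of the two triangle identities, the collapse of the middle terms via the interchange law, and the peeling off of the triangle identities for $(L,R)$ and $(L',R')$ are all accurate, and this is exactly the standard argument. The paper in fact gives no proof of Lemma \ref{composeadj} (it is listed among the classical facts whose proofs are omitted), so your direct verification supplies precisely what the authors took for granted.
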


We now turn to a series of less standard results, nevertheless very easy.

\begin{lemm} (mates)\label{adjab}
Let $H$, $H'$, $J_1$, $K_1$, $J_2$ and $K_2$ be functors with sources and targets as on the following diagram.
$$\xymatrix{
\cC_1 \ar@<0.5ex>[d]^{K_1} \ar[r]^{H} & \cC_2 \ar@<0.5ex>[d]^{K_2} \\
\cC_1' \ar@<0.5ex>[u]^{J_1} \ar[r]_{H'} & \cC_2' \ar@<0.5ex>[u]^{J_2} \\
}$$
Assume $(J_i,K_i,\unit_i,\counit_i)$, $i=1,2$ are adjoint couples. Let $a: J_2 H' \trafo H J_1$ (resp. $b: H' K_1 \trafo K_2 H$) be a morphism of functors. Then there exists a unique morphism of functors $b: H' K_1 \trafo K_2 H$ (resp. $a: J_2 H' \trafo H J_1$) such that the diagrams
$$\begin{array}{c}
\xymatrix{
J_2 H' K_1  \ar[d]_{aK_1} \ar[r]^{J_2 b} \ar@{}[dr]|{\diag{H}} & J_2 K_2 H \ar[d]^{\eta_2 H} \\
H J_1 K_1 \ar[r]_{H \counit_1} & H 
}\end{array}\hfill
\hspace{5ex}
and
\hspace{5ex}\begin{array}{c} 
\xymatrix{
H'  \ar[d]_{\unit_2 H'} \ar[r]^{H'\unit_1} \ar@{}[dr]|{\diag{H'}} & H' K_1 J_1 \ar[d]^{b J_1}  \\
K_2 J_2 H' \ar[r]_{K_2 a} & K_2 H J_1
}
\end{array}$$ 
are commutative. Furthermore, given two morphisms of functors $a$ and $b$,
the commutativity of one diagram is equivalent to the commutativity of the other one. In this situation, we say that $a$ and $b$ are {\it mates} with respect to the rest of the data.
\end{lemm}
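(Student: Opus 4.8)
The plan is to recognize this as the standard calculus of mates and to build the correspondence $a \leftrightarrow b$ explicitly from the units and counits of the two adjoint couples. Given $a: J_2 H' \trafo H J_1$, I would \emph{define} its candidate mate by
\[
b := K_2 H \counit_1 \circ K_2 a K_1 \circ \unit_2 H' K_1 \colon H'K_1 \trafo K_2 H,
\]
and symmetrically, starting from $b$, set $a := \counit_2 H J_1 \circ J_2 b J_1 \circ J_2 H' \unit_1$. These are nothing but the images of $a$ (resp.\ $b$) under the evident adjunction bijections, so the formulas are forced once one insists on the commutativity of the diagrams.

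The conceptual skeleton is that the relevant data live in four hom-sets of natural transformations linked pairwise by the two adjunctions. Using the couple $(J_2,K_2,\unit_2,\counit_2)$ (acting by post-composition) and the couple $(J_1,K_1,\unit_1,\counit_1)$ (acting by pre-composition) one obtains bijections
\[
\Hom(J_2 H', H J_1) \;\cong\; \Hom(H', K_2 H J_1) \;\cong\; \Hom(H'K_1, K_2 H),
\]
where the first sends $a \mapsto K_2 a \circ \unit_2 H'$ and the second sends $b \mapsto b J_1 \circ H' \unit_1$. Diagram $\diag{H'}$ asserts exactly that $a$ and $b$ have the \emph{same} image in the middle term $\Hom(H', K_2 H J_1)$. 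Since both arrows into the middle term are bijections, this immediately yields that for each $a$ there is a unique $b$ making $\diag{H'}$ commute (and conversely), which is the existence-and-uniqueness assertion.

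For diagram $\diag{H}$ I would run the identical argument through the other pair of bijections $\Hom(J_2 H', H J_1) \cong \Hom(J_2 H' K_1, H) \cong \Hom(H'K_1, K_2 H)$, under which $a \mapsto H \counit_1 \circ a K_1$ and $b \mapsto \counit_2 H \circ J_2 b$; diagram $\diag{H}$ says that these two images in $\Hom(J_2 H' K_1, H)$ coincide. The final point---that $\diag{H}$ and $\diag{H'}$ are equivalent for any fixed pair $a,b$---then reduces to exhibiting a single bijection $\theta \colon \Hom(H', K_2 H J_1) \to \Hom(J_2 H' K_1, H)$, namely $\gamma \mapsto H\counit_1 \circ \counit_2(H J_1 K_1) \circ J_2 \gamma K_1$, and checking that it carries $K_2 a \circ \unit_2 H'$ to $H \counit_1 \circ a K_1$ and $b J_1 \circ H' \unit_1$ to $\counit_2 H \circ J_2 b$; injectivity of $\theta$ then gives the equivalence. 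Each of these two identities reduces to one naturality square for a counit followed by a single triangle identity such as $\counit_2 J_2 \circ J_2 \unit_2 = id_{J_2}$ (respectively its index-$1$ analogue). This last verification---purely formal whiskering bookkeeping with the triangle identities---is the only genuine computation; I expect it, rather than any conceptual difficulty, to be the main obstacle, and everything else is formal consequence of the adjunction bijections.
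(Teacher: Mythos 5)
Your proof is correct, and it takes a genuinely different route from the paper's. The paper argues directly at the level of pasted diagrams: it first shows (using $\diag{H'}$, naturality, and a triangle identity) that any $b$ making the diagrams commute is forced to equal $K_2 H \counit_1 \circ K_2 a K_1 \circ \unit_2 H' K_1$, and then verifies $\diag{H}$ and $\diag{H'}$ for this $b$ by two explicit commutative-diagram computations -- while leaving the final claim, that for a fixed pair $(a,b)$ the commutativity of one diagram is equivalent to that of the other, to the reader. You instead organize the whole statement around the adjunction bijections between the four hom-sets of natural transformations, so that $\diag{H'}$ becomes the assertion that $a$ and $b$ have equal images in $\Hom(H', K_2 H J_1)$ and $\diag{H}$ the analogous assertion in $\Hom(J_2 H' K_1, H)$; existence and uniqueness then fall out of bijectivity, and the comparison map $\theta$ intertwining the two corners reduces the equivalence of the diagrams to two short whiskering computations (each one naturality square plus one triangle identity -- I checked both, and they go through exactly as you predict, as does the bijectivity of $\theta$ and of the four transposition maps). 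Your approach buys a cleaner conceptual skeleton and, notably, an actual proof of the equivalence statement that the paper omits; the paper's approach buys explicit pasted diagrams of type $\diag{mf}$ and $\diag{adj}$ that are reused verbatim later (e.g.\ in the proofs of Theorem \ref{adjsquare} and Lemma \ref{adjabbif}), which is why the authors present it that way. Both routes produce the same explicit formulas for $b$ and for $a$.
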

\begin{proof}
We only prove that the existence of $a$ implies the uniqueness and existence of $b$, the proof of the other case is similar. Assume that $b$ exists and makes the diagrams
commutative. The commutative diagram
$$
\xymatrix{
H'K_1  \ar[d]_{\unit_2 H'K_1} \ar[r]^{H'\unit_1 K_1} \ar@{}[dr]|{\diag{H'}} & H'K_1 J_1 K_1 \ar[r]^-{H'K_1 \counit_1} \ar[d] \ar@{}[dr]|{\diag{mf}} & H'K_1 \ar[d]^b  \\
K_2 J_2 H' K_1 \ar[r]_{K_2 a K_1} & K_2 H J_1 K_1 \ar[r]_-{K_2 H \counit_1} & K_2 H 
}
$$
in which the upper horizontal composition is the identity of $H'K_1$ (by adjunction) shows that $b$ has to be given by the composition
$$\xymatrix@C+2ex{H'K_1 \ar[r]^-{\unit_2 H'K_1} & K_2 J_2 H'K_1 \ar[r]^{K_2 a K_1} & K_2 H J_1 K_1 \ar[r]^-{K_2 H \counit_1} & K_2 H}.$$
This proves uniqueness. Now let $b$ be given by the above composition. The commutative diagram
$$
\xymatrix{
J_2 H' K_1 \ar[r] \ar`u[r]-/d5ex/`[rrr]_{J_2 b}[rrr] \ar@{=}[dr] & J_2 K_2 J_2 H' K_1 \ar[d] \ar[r] \ar@{}[dr]|{\diag{mf}} & J_2 K_2 H J_1 K_1 \ar[d] \ar[r] \ar@{}[dr]|{\diag{mf}} & J_2 K_2 H \ar[d]^{\counit_2 H} \\
\ar@{}@<1ex>[ur]_(.75){\diag{adj}} & J_2 H' K_1 \ar[r]_{aK_1} & H J_1 K_1 \ar[r]_{H \counit_1} & H \\
}
$$
proves $\diag{H}$ and the commutative diagram
$$
\xymatrix{
H'K_1 J_1 \ar[r] \ar`u[r]-/d5ex/`[rrr]_{bJ_1}[rrr] \ar@{}[dr]|{\diag{mf}} & K_2 J_2 H' K_1 J_1 \ar[r] \ar@{}[dr]|{\diag{mf}} & K_2 H J_1 K_1 J_1 \ar[r] \ar@{}@<1ex>[dr]_(.25){\diag{adj}} & K_2 H J_1 \\
H'  \ar[u]^{H'\unit_1} \ar[r]_{\unit_2 H'} & K_2 J_2 H' \ar[u] \ar[r]_{K_2 a} & K_2 H J_1 \ar[u] \ar@{=}[ur] & \\
}
$$
proves $\diag{H'}$. The fact that the commutativity of one of the diagrams implies 
commutativity to the other is left to the reader.
\end{proof}

\begin{lemm} \label{cube}
Let us consider a cube of functors and morphisms of functors
$$\begin{array}{c}
\xymatrix@!0{
\bullet \ar[rr] \ar[dr] & & \bullet \ar[dr] & \\
 & \bullet \ar@2[ur] \ar[rr] & & q \\
p \ar[uu] \ar[dr] & & & \\
 & \bullet \ar@2[luuu] \ar[uu] \ar[rr] & & \bullet \ar@2[lluu] \ar[uu]
} \\
front
\end{array}
\hspace{10ex}
\begin{array}{c}
\xymatrix@!0{
\bullet \ar[rr] & & \bullet \ar[dr] & \\
 & & & q \\
p \ar[uu] \ar[dr] \ar[rr] & & \bullet \ar@2[lluu] \ar[uu] \ar[dr] & \\
 & \bullet \ar@2[ur] \ar[rr] & & \bullet \ar@2[luuu] \ar[uu]
} \\
back
\end{array}$$
that is commutative in the following sense: The morphism between the
two outer compositions of functors from $p$ to $q$ given
by the composition of the three morphisms of functors of the front is equal to the composition of the three morphism of functors of the back. Assume that the vertical maps have right adjoints. Then, by Lemma \ref{adjab} applied to the vertical squares, we obtain the following cube (the top and bottom squares have not changed).
$$\begin{array}{c}
\xymatrix@!0{
r \ar[dd] \ar[rr] \ar[dr] & & \bullet \ar[dr] & \\
 & \bullet \ar[dd] \ar@2[ur] \ar[rr] & & \bullet \ar[dd] \\
\bullet \ar@2[ur] \ar[dr] & & & \\
 & \bullet \ar@2[uurr] \ar[rr] & & s 
} \\
front
\end{array}
\hspace{10ex}
\begin{array}{c}
\xymatrix@!0{
r \ar[dd] \ar[rr] & & \bullet \ar[dd] \ar[dr] & \\
 & & & \bullet \ar[dd] \\
\bullet \ar[dr] \ar@2[uurr] \ar[rr] & & \bullet \ar[dr] \ar@2[ur] & \\
 & \bullet \ar@2[ur] \ar[rr] & & s 
} \\
back
\end{array}$$
This cube is commutative (in the sense just defined, using $r$ and $s$ instead of $p$ and $q$).
\end{lemm}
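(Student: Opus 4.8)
The plan is to read the commutativity of each cube as a single equality of two pasting composites of morphisms of functors between the two \emph{outer} composite functors from $p$ to $q$, and then to show that the mate construction of Lemma \ref{adjab} carries this equality from the original cube to the new one. First I would fix notation for the twelve edges, writing each of the four vertical edges as a left adjoint (with chosen right adjoint), and read off the two extreme monotone paths $F_1, F_2 : p \trafo q$ around the cube. Exactly one vertical edge occurs in $F_1$ and exactly one in $F_2$; the remaining two vertical edges are \emph{interior}, each being cancelled inside one of the two pasting composites. The hypothesis that the original cube commutes then says precisely that the composite built from the three front faces and the composite built from the three back faces agree as a single transformation $a : F_2 \trafo F_1$.

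Next I would set up one ``global'' instance of Lemma \ref{adjab}, taking the left and right vertical couples $(J_1,K_1)$, $(J_2,K_2)$ to be the adjunctions on the two vertical edges appearing in $F_1$ and $F_2$, and taking $H$ (resp. $H'$) to be the corresponding outer top (resp. bottom) composite, so that $F_1 = HJ_1$ and $F_2 = J_2H'$. Lemma \ref{adjab} then produces a single mate $b : H'K_1 \trafo K_2H$, and one checks that $H'K_1$ and $K_2H$ are exactly the two outer composites $r \trafo s$ of the new cube. Because $b$ depends only on $a$ (via the explicit formula of Lemma \ref{adjab}), it now suffices to prove that \emph{both} the new front composite and the new back composite equal this $b$.

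The heart of the argument is a compatibility of the mate construction with pasting, which I would isolate as two elementary facts, each deduced from the defining diagrams \diag{H} and \diag{H'}. The first is that pasting with a transformation that involves no vertical edge (the top face $\tau$ and the bottom face, which are the squares left unchanged) commutes with taking mates; this is immediate from naturality, i.e. a $\diag{mf}$ argument, together with the formula for $b$. The second, and the only step with real content, is that the mate of the horizontal composite of two adjacent vertical squares sharing a vertical edge equals the horizontal composite of their mates, rerouted through the right adjoint of the shared edge. Proving this amounts to inserting the unit and counit of the shared adjunction and cancelling them by the triangle identities ($\diag{adj}$), combined with one application of \diag{H} and of naturality ($\diag{mf}$); this is precisely where the right adjoints of the two \emph{interior} vertical edges get used.

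Granting these two facts I would conclude by rewriting each side. The new front composite is the paste of the two mated front and left faces with the top face $\tau$; by the second fact the two mated faces combine into the mate of the paste of the original front and left faces, and by the first fact $\tau$ passes through unchanged, so the new front composite equals the global mate of the original front pasting, namely $b$. The identical argument with the mated right and back faces and the bottom face shows the new back composite equals the global mate of the original back pasting, again $b$. Since the original cube gives front pasting $=$ back pasting $=a$, both new composites equal $b$, so the new cube commutes. The main obstacle is the second compatibility fact: it is the single place where the adjunctions on the interior vertical edges must be invoked and where the triangle identities do the genuine work, while everything else reduces to bookkeeping of whiskerings and the routine verification that the source and target functors match up.
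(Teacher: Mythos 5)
Your proposal is correct and follows exactly the route the paper intends: the authors' proof is the one-line remark ``straightforward, using the commutative diagrams of Lemma \ref{adjab}, and left to the reader,'' and your argument is precisely the careful elaboration of that remark, namely the compatibility of the mate construction with pasting (your two facts), established from the defining diagrams $\diag{H}$, $\diag{H'}$, naturality, and the triangle identities of the interior adjunctions. In particular your identification of both the new front and back composites with the single global mate $b$ of the outer $2$-cell $a$ is the standard and intended way to conclude.
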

\begin{proof}
This is straightforward, using the commutative diagrams of Lemma \ref{adjab}, and left to the reader.
\end{proof}

We now use Lemma \ref{adjab} to prove a theorem, which doesn't contain a lot more than the lemma, but is stated in a convenient way for future reference in the applications
we are interested in. 

\begin{theo}\label{adjsquare}
Let $L$, $R$, $L'$, $R'$, $F_1$, $G_1$, $F_2$, $G_2$ be functors whose sources and targets are specified by the diagram 
$$\xymatrix{
\cC_1 \ar@<0.5ex>[d]^{G_1} \ar@<0.5ex>[r]^{L} & \cC_2 \ar@<0.5ex>[l]^{R} \ar@<0.5ex>[d]^{G_2} \\
\cC_1' \ar@<0.5ex>[u]^{F_1} \ar@<0.5ex>[r]^{L'} & \cC_2'. \ar@<0.5ex>[u]^{F_2} \ar@<0.5ex>[l]^{R'} \\
}$$
We will study morphisms of functors $f_L$, $f'_L$, $g_L$, $g'_L$, $f_R$, $f'_R$, $g_R$ and $g'_R$ whose sources and targets will be as follows:
$$
\begin{array}{lr}
\xymatrix{
LF_1 \ar@<0.5ex>[r]^{f_L} & F_2 L' \ar@<0.5ex>[l]^{f'_L} 
}
&
\xymatrix{
L'G_1 \ar@<0.5ex>[r]^{g'_L} & G_2 L \ar@<0.5ex>[l]^{g_L}
}
\\
\xymatrix{
F_1 R' \ar@<0.5ex>[r]^{f'_R} & RF_2 \ar@<0.5ex>[l]^{f_R}
}
&
\xymatrix{
G_1 R \ar@<0.5ex>[r]^{g_R} & R' G_2 \ar@<0.5ex>[l]^{g'_R}
} \\
\end{array}
$$
Let us consider the following diagrams, in which the maps and their directions will be the obvious ones induced by the eight maps above and the adjunctions accordingly to the 
different cases discussed below.
$$\xymatrix{
F_2 L' G_1 \ar@{-}[d] \ar@{-}[r] \ar@{}[dr]|{\diag{L}} & L F_1 G_1 \ar@{-}[d] \\
F_2 G_2 L \ar@{-}[r] & L
}
\hspace{15ex}
\xymatrix{
L' \ar@{-}[d] \ar@{-}[r] \ar@{}[dr]|{\diag{L'}} & G_2 F_2 L' \ar@{-}[d] \\
L' G_1 F_1 \ar@{-}[r] & G_2 L F_1
}
$$
$$\xymatrix{
F_1 R' G_2 \ar@{-}[d] \ar@{-}[r] \ar@{}[dr]|{\diag{R}} & R F_2 G_2 \ar@{-}[d] \\
F_1 G_1 R \ar@{-}[r] & R 
}
\hspace{15ex}
\xymatrix{
R' \ar@{-}[d] \ar@{-}[r] \ar@{}[dr]|{\diag{R'}} & G_1 F_1 R' \ar@{-}[d] \\
R' G_2 F_2 \ar@{-}[r] & G_1 R F_2
}
$$
$$\xymatrix{
G_1 \ar[d] \ar[r] \ar@{}[dr]|{\diag{G_1}} & G_1 R L \ar@{-}[d] \\
R'L'G_1 \ar@{-}[r] & R'G_2 L 
}
\hspace{15ex}
\xymatrix{
L'G_1 R \ar@{-}[d] \ar@{-}[r] \ar@{}[dr]|{\diag{G_2}} & G_2 L R \ar[d] \\
L'R' G_2 \ar[r] & G_2
}
$$
$$\xymatrix{
F_1 \ar[d] \ar[r] \ar@{}[dr]|{\diag{F_1}} & F_1 R' L' \ar@{-}[d] \\
RLF_1 \ar@{-}[r] & RF_2 L'
}
\hspace{15ex}
\xymatrix{
LF_1 R' \ar@{-}[d] \ar@{-}[r] \ar@{}[dr]|{\diag{F_2}} & F_2 L' R' \ar[d] \\
LR F_2 \ar[r] & F_2
}
$$
Then
\begin{itemize}
\item[1.] Let $(G_i,F_i)$, $i=1,2$ be adjoint couples. Let $g_L$ (resp. $f_L$) be given, then there is a unique $f_L$ (resp. $g_L$) such that $\diag{L}$ and $\diag{L'}$ are commutative. Let $g_R$ (resp. $f_R$) be given, then there is a unique $f_R$ (resp. $g_R$) such that $\diag{R}$ and $\diag{R'}$ are commutative.
\item[1'.] Let $(F_i,G_i)$, $i=1,2$ be adjoint couples. Let $g'_L$ (resp. $f'_L$) be given, then there is a unique $f'_L$ (resp. $g'_L$) such that $\diag{L}$ and $\diag{L'}$ are commutative. Let $g'_R$ (resp. $f'_R$) be given, then there is a unique $f'_R$ (resp. $g'_R$) such that $\diag{R}$ and $\diag{R'}$ are commutative.
\item[2.] Let $(L,R)$ and $(L',R')$ be adjoint couples. Let $f_L$ (resp. $f'_R$) be given, then there is a unique $f'_R$ (resp. $f_L$) such that $\diag{F_1}$ and $\diag{F_2}$ are commutative. Let $g'_L$ (resp. $g_R$) be given, then there is a unique $g_R$ (resp. $g'_L$) such that $\diag{G_1}$ and $\diag{G_2}$ are commutative.
\item[3.] Assuming $(G_i,F_i)$, $i=1,2$, $(L,R)$ and $(L',R')$ are adjoint couples, and $g_L$, $g'_L=g_L^{-1}$ are given (resp. $f_R$ and $f'_R=f_R^{-1}$). By 1 and 2, we obtain $f_L$ and $g_R$ (resp. $g_R$ and $f_L$). We then may construct $f_R$ and $f'_R$ (resp. $g_L$ and $g'_L$) which are inverse to each other.  
\item[3'.] Assuming $(F_i,G_i)$, $i=1,2$, $(L,R)$ and $(L',R')$ are adjoint couples, $f_L$ and $f'_L=f_L^{-1}$ are given (resp. $g_R$ and $g'_R=g_R^{-1}$). By 1' and 2, we obtain $g'_L$ and $f'_R$ (resp. $f'_R$ and $g'_L$). We then may construct $g'_R$ and $g_R$ (resp. $f'_L$ and $f_L$) which are inverse to each other.  
\end{itemize}
\end{theo}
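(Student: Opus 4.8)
The plan is to obtain parts 1, 1' and 2 as direct instances of the mate correspondence of Lemma \ref{adjab}, and then to deduce 3 and 3' from them with the help of Lemma \ref{cube}. For part 1 I would apply Lemma \ref{adjab} to the square obtained by reading the vertical functors with $G_i$ as the left and $F_i$ as the right adjoint, with horizontal functor $H=L$, $H'=L'$; under the identification $J_i=G_i$, $K_i=F_i$ the morphisms $a$ and $b$ of the lemma become $g_L$ and $f_L$, while the two commutative squares of the lemma become exactly $\diag{L}$ and $\diag{L'}$. Hence $g_L$ determines $f_L$ uniquely and conversely. Taking $H=R$, $H'=R'$ gives the correspondence $g_R\leftrightarrow f_R$ via $\diag{R},\diag{R'}$. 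Part 1' is the same computation with the vertical couples read as $(F_i,G_i)$, which produces the primed correspondences $g'_L\leftrightarrow f'_L$ and $g'_R\leftrightarrow f'_R$ from the same squares. For part 2 I would instead feed Lemma \ref{adjab} the horizontal couples $(L,R)$ and $(L',R')$ as its vertical adjunctions (rotating the square a quarter turn), with transported functor $F_i$; then $a$ and $b$ are $f_L$ and $f'_R$ and the squares are $\diag{F_1},\diag{F_2}$, giving $f_L\leftrightarrow f'_R$. With transported functor $G_i$ one gets $g'_L\leftrightarrow g_R$ from $\diag{G_1},\diag{G_2}$. In each case the assertion is precisely the existence-and-uniqueness clause of Lemma \ref{adjab} after relabelling.

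For part 3 I assume $(G_i,F_i)$, $(L,R)$ and $(L',R')$ are adjoint and $g_L$ is invertible with $g'_L=g_L^{-1}$. Part 1 applied to $g_L$ yields $f_L$ and part 2 applied to $g'_L$ yields $g_R$; a second round gives $f'_R$ (part 2 on $f_L$) and $f_R$ (part 1 on $g_R$). The substance is that $f_R$ and $f'_R$ are mutually inverse. The key observation is that $f'_R$ is manufactured from $g_L$ by taking first the $(G,F)$-mate and then the $(L,R)$-mate, whereas $f_R$ is manufactured from $g_L^{-1}=g'_L$ by taking first the $(L,R)$-mate and then the $(G,F)$-mate. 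I would record the defining squares $\diag{L},\diag{F_1},\diag{F_2}$ of the first chain and $\diag{G_1},\diag{G_2},\diag{R},\diag{R'}$ of the second as the side faces of a single cube whose four vertical edges are the adjunctions $(G_i,F_i)$, $(L,R)$, $(L',R')$, arranged so that Lemma \ref{cube} applies verbatim. The commutativity fed into Lemma \ref{cube} is the triangle-type identity expressing $g_L\circ g'_L=\mathrm{id}$ (respectively $g'_L\circ g_L=\mathrm{id}$), and the lemma transports it into the commutativity that reads $f_R\circ f'_R=\mathrm{id}_{F_1R'}$ (respectively $f'_R\circ f_R=\mathrm{id}_{RF_2}$). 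Part 3' is identical after exchanging the $(G_i,F_i)$-mates of part 1 with the $(F_i,G_i)$-mates of part 1', i.e. swapping the unprimed and primed families throughout; there one starts from an invertible $f_L$ with $f'_L=f_L^{-1}$ and produces the inverse pair $g_R,g'_R$.

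I expect the only real obstacle to be the coherence inside part 3: making precise that the $(G,F)$-mate and the $(L,R)$-mate operations can be exchanged in the exact orientation required, and that the hypothesis $g'_L=g_L^{-1}$ is what cancels to leave an identity. Concretely this amounts to choosing the eight vertices and orienting every edge of the cube so that Lemma \ref{cube} can be quoted without modification, and then checking that the two transported composites differ only by the insertion of $g_L$ against $g_L^{-1}$; the four triangle identities of the adjunctions are what force the residual whiskered units and counits to collapse. Everything else is bookkeeping on top of Lemma \ref{adjab}, and getting all arrows to point the right way is the single place where sign- or direction-errors could creep in.
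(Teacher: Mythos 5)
Your treatment of parts 1, 1' and 2 is exactly the paper's: each is a relabelled instance of Lemma \ref{adjab}, and the paper itself dismisses these as ``obvious translations of the previous lemma'' (up to a harmless swap in your part 1, where one must take $H=L'$, $H'=L$ so that $a=g_L$ has the required shape $J_2H' \trafo HJ_1$). The gap is in part 3 (hence also 3'), which is the only part of the theorem with real content, and your plan for it fails as stated.

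Lemma \ref{cube} replaces the four side $2$-cells of a single cube by their mates along the four \emph{vertical} edges, i.e.\ along one fixed direction in which all four vertical functors are left adjoints. But under the hypotheses of point 3 only $(G_i,F_i)$, $(L,R)$ and $(L',R')$ are adjoint, and $g_L$ and $g'_L=g_L^{-1}$ point in opposite directions: $g_L:G_2L \trafo L'G_1$ is matable along $(G_i,F_i)$ (giving $f_L$) but not along $(L,R),(L',R')$, whereas $g'_L:L'G_1 \trafo G_2L$ is matable along $(L,R),(L',R')$ (giving $g_R$) but has no $(G_i,F_i)$-mate at all --- the morphism $f'_L$ does not even exist here, since $(F_i,G_i)$ is not assumed adjoint. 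Consequently $g_L$ and $g'_L$ can never sit together as side faces of one cube to which Lemma \ref{cube} ``applies verbatim'' (and seven squares cannot be the four side faces of a cube, nor three adjoint couples its four vertical edges); moreover no iteration helps, since any cube containing both cells as side faces is untransportable in either direction. So the identity $g'_L \circ g_L = id$ cannot be carried by Lemma \ref{cube} into $f_R \circ f'_R = id$: the two factors arise by mating in \emph{different orders along different directions}, and the interchange of these two mating operations together with the collapse of the residual whiskered units and counits --- precisely what you set aside as ``the only real obstacle'' --- is the entire theorem. The paper proves it by hand: it writes out $f'_R = RF_2\counit' \circ Rf_LR' \circ \unit F_1R'$ and $f_R = F_1R'\counit_2 \circ F_1g_RF_2 \circ \unit_1RF_2$ explicitly and verifies $f_R \circ f'_R = id$ by one pasting diagram built from $\diag{G_1}$, $\diag{L'}$, naturality squares and the triangle identities, with $\diag{L}$ and $\diag{G_2}$ handling $f'_R \circ f_R = id$ symmetrically. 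To repair your argument you must either reproduce this chase or first prove an honest interchange-of-mates statement; Lemma \ref{cube} as it stands cannot be quoted for it.
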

\begin{proof}
Points 1, 1' and 2 are obvious translations of the previous lemma. We only prove Point 3, since 3' is dual to it. Let $(L,R,\unit,\counit)$, $(L',R',\unit',\counit')$ and $(G_i,F_i,\unit_i,\counit_i)$, $i=1,2$, be the adjoint couples. Using 1 and 2, we first obtain $f_L$ and $g_R$, as well as the commutative diagrams $\diag{L}$, $\diag{L'}$ (both involving $g_L=(g'_L)^{-1}$), $\diag{G_1}$ and $\diag{G_2}$ (both involving $g'_L=(g_L)^{-1}$). The morphisms of functors $f'_R$ and $f_R$ are respectively defined by the compositions
$$\xymatrix{F_1 R' \ar[r]^-{\unit F_1 R'} & RL F_1 R' \ar[r]^{R f_L R'} & RF_2L'R' \ar[r]^-{RF_2 \counit'} & RF_2}$$
and
$$\xymatrix{RF_2 \ar[r]^-{\unit_1 RF_2} & F_1 G_1 RF_2 \ar[r]^{F_1 g_R F_2} & F_1 R' G_2 F_2 \ar[r]^-{F_1 R' \counit_2} & F_1 R'}.$$
We compute $f_R \circ f'_R$ as the upper right composition of the following commutative diagram
$$
\xymatrix@R=3ex{
F_1 R' \ar[d]_{\unit_1} \ar[r]^{\unit} \ar@{}[dr]|{\diag{mf}} & RLF_1 R' \ar[d] \ar[r]^{f_L} \ar@{}[dr]|{\diag{mf}} & RF_2 L'R' \ar[d] \ar[r]^{\counit'} \ar@{}[dr]|{\diag{mf}} & RF_2 \ar[d]^{\unit_1} \\
F_1 G_1 F_1 R' \ar[r] \ar@{}[dr]|{\diag{G_1}} \ar@/_/[ddr]_{\unit'} & F_1 G_1 RL F_1 R' \ar[d] \ar[r] \ar@{}[dr]|{\diag{mf}} & F_1 G_1 R F_2 L' R' \ar[d] \ar[r] \ar@{}[dr]|{\diag{mf}} & F_1 G_1 R F_2 \ar[d]^{g_R} \\
 & F_1 R' G_2 L F_1 R' \ar[d]|(0.35){(g'_L)^{-1}=g_L} \ar[r] \ar@{}[dr]|{\diag{L'}} & F_1 R' G_2 F_2 L' R' \ar[d] \ar[r] \ar@{}[dr]|{\diag{mf}} & F_1 R' G_2 F_2 \ar[d]^{\counit_2} \\
  & F_1 R' L' G_1 F_1 R' \ar[r]_{\counit_1} & F_1 R' L' R' \ar[r]_{\counit'} & F_1 R' 
}
$$
The lower left composition in the above diagram is the identity because it appears as the upper right composition of the commutative diagram
$$
\xymatrix@R=4ex{F_1 R' \ar[r]^{\unit_1} \ar@{=}[dr] & F_1 G_1 F_1 R' \ar[d] \ar[r]^{\unit'} \ar@{}[dr]|{\diag{mf}} & F_1 R' L' G_1 F_1 R' \ar[d]^{\counit_1} \\
 \ar@{}@<1ex>[ur]_(.75){\diag{adj}} & F_1 R' \ar[r] \ar@{=}[dr] & F_1 R' L' R' \ar[d]^{\counit'} \\
 & \ar@{}@<1ex>[ur]_(.75){\diag{adj}} & F_1 R' 
}.
$$
The composition $f'_R \circ f_R=id$ is proved in a similar way, involving the diagrams $\diag{L}$ and $\diag{G_2}$.
\end{proof}

The reader has certainly noticed that there is a statement $2'$ 
which we didn't spell out because we don't need it.

\subsection{Bifunctors and adjunctions} \label{BifandAdj}

We have to deal with couples of bifunctors that give adjoint couples of usual functors when one of the entries in the bifunctors is fixed. We need to explain how these adjunctions are functorial in this entry. The standard example for that is the classical adjunction between tensor product and internal Hom. 

\begin{defi} \label{defiAdjBif}
Let $\cX$, $\cC$, $\cC'$ be three categories, and let $L: \cX \times \cC' \to \cC$ and $R: \cX^o \times \cC \to \cC'$ be bifunctors. We say that $(L,R)$ form an adjoint couple of bifunctors (abbreviated as ACB) 
from $\cC'$ to $\cC$ with parameter in $\cX$ if we are given adjoint couples $(L(X,-),R(X,-),\unit_X,\counit_X)$ for every $X$ and if furthermore $\unit$ and $\counit$ are generalized transformations in the sense of \cite{Eilenberg66b}, \ie for any morphism $f:A \to B$ in $\cX$, the diagrams
$$\xymatrix{
L(A,R(B,C)) \ar[r]^-{L(f,id)} \ar[d]_{L(id,R(f,id))} \ar@{}[dr]|{\diag{gen}} & L(B,R(B,C)) \ar[d]^{\counit_B} \\
L(A,R(A,C)) \ar[r]^-{\counit_A} & C
}
\xymatrix{
C' \ar[r]^-{\unit_A} \ar[d]_{\unit_B} \ar@{}[dr]|{\diag{gen}} & R(A,L(A,C')) \ar[d]^{R(id,L(f,id))} \\
R(B,L(B,C')) \ar[r]^{R(f,id)} & R(A,L(B,C'))
}$$
commute. 
We sometimes use the notation $(L(*,-),R(*,-))$, where the $*$ is the entry in $\cX$ and
write
$$\xymatrix{\cC \ar@<1ex>[rr]^{L} \ar@{}[rr]|{\cX} & & \cC' \ar@<1ex>[ll]^{R}}$$
in diagrams.
\end{defi}

\begin{exam}
Let $\cC=\cC'=\cX$ be the category of modules over a commutative ring. 
The tensor product (with the variables switched) and the internal Hom form an ACB, with the usual unit and counit.
\end{exam}

\begin{lemm} \label{changeParam}
Let $F:\cX' \to \cX$ be a functor, and $(L,R)$ be an ACB with parameter in $\cX$.
Then $(L(F(*),-),R(F^o(*),-))$ is again an ACB in the obvious way. 
\end{lemm}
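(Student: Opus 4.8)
The plan is to verify the definition of an ACB (Definition \ref{defiAdjBif}) for the pair $(L(F(*),-),R(F^o(*),-))$ directly, reusing the adjunction data already present in $(L,R)$. First I would observe that fixing a parameter $X' \in \cX'$ gives $L(F(X'),-)$ and $R(F(X'),-)$, and since $(L,R)$ is an ACB with parameter in $\cX$, the pair $(L(F(X'),-),R(F(X'),-),\unit_{F(X')},\counit_{F(X')})$ is already an adjoint couple by hypothesis. So the pointwise adjoint couples required by the definition are obtained simply by reindexing the existing ones along $F$; there is nothing to construct, only to record that $\unit$ and $\counit$ for the new ACB are defined by $\unit'_{X'}:=\unit_{F(X')}$ and $\counit'_{X'}:=\counit_{F(X')}$.

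The substance of the proof is checking that these reindexed $\unit'$ and $\counit'$ are still generalized transformations, i.e.\ that the two hexagonal (here square) diagrams $\diag{gen}$ of Definition \ref{defiAdjBif} commute for every morphism $f':A' \to B'$ in $\cX'$. The key step is that applying the functor $F$ to $f'$ yields a morphism $F(f'):F(A') \to F(B')$ in $\cX$, and the diagrams $\diag{gen}$ for the new data are \emph{literally} the diagrams $\diag{gen}$ for the old ACB $(L,R)$ instantiated at the morphism $F(f')$. Concretely, the counit diagram for the new data reads
$$\xymatrix{
L(F(A'),R(F(B'),C)) \ar[r] \ar[d] & L(F(B'),R(F(B'),C)) \ar[d]^{\counit_{F(B')}} \\
L(F(A'),R(F(A'),C)) \ar[r]^-{\counit_{F(A')}} & C,
}$$
and this is exactly the first $\diag{gen}$ diagram of the original ACB with $f$ replaced by $F(f')$, using that $L(F(f'),id)$ and $R(F(f'),id)$ are the horizontal and vertical maps induced by functoriality. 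The unit diagram is handled identically using the second $\diag{gen}$ square. Hence both commute by the hypothesis that $(L,R)$ is an ACB.

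The only point requiring a word of care is functoriality of the new bifunctors themselves: $L(F(*),-)$ is a bifunctor $\cX' \times \cC' \to \cC$ as the composite of $F \times id$ with $L$, and $R(F^o(*),-)$ is a bifunctor $(\cX')^o \times \cC \to \cC'$ as the composite of $F^o \times id$ with $R$, where $F^o:(\cX')^o \to \cX^o$ is the functor induced by $F$ on opposite categories. These are evidently functorial, so the hypotheses of Definition \ref{defiAdjBif} are met. I do not expect any genuine obstacle: the entire argument is a reindexing along $F$, and the only thing to be slightly careful about is keeping the contravariance in the $\cX$-variable straight, which is why $R$ is composed with $F^o$ rather than $F$ — this is exactly what the statement's notation $R(F^o(*),-)$ already records. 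This is precisely the sense in which the lemma holds ``in the obvious way''.
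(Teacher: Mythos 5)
Your proof is correct and is precisely the reindexing argument the paper has in mind: it states this lemma without proof as obvious (just as the suspended analogue, Lemma \ref{changeParamSus}, is ``left to the reader''), and your verification --- setting $\unit'_{X'}=\unit_{F(X')}$, $\counit'_{X'}=\counit_{F(X')}$, and observing that the $\diag{gen}$ squares for the new data are literally the old ones instantiated at $F(f')$ --- is exactly that intended argument. Your care with the contravariance in the parameter variable, composing $R$ with $F^o$ rather than $F$, is also the right and only subtle point.
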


\begin{lemm} \label{AdjComposeBifFonc}
Let $(L,R)$ be an ACB from $\cC'$ to $\cC$ with parameter in $\cX$ and let $(F,G)$ be an adjoint couple from $\cD$ to $\cC'$, then $(L(*,F(-)),GR(*,-))$ is an ACB with unit and counit defined in the obvious and natural way.
\end{lemm}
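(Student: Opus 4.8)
The plan is to produce the new ACB $(L(*,F(-)),GR(*,-))$ by composing, \emph{for each fixed value of the parameter}, the adjoint couple $(F,G)$ with the adjoint couple $(L(X,-),R(X,-))$, and then to check that the resulting units and counits are generalized transformations by reducing everything to the generalized transformation property already assumed for $(L,R)$.

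First I would fix $X\in\cX$. By construction $L'(X,-):=L(X,F(-))$ and $R'(X,-):=GR(X,-)$ are exactly the composites of the adjoint couple $(F,G,a,b)$ from $\cD$ to $\cC'$ (with unit $a:Id_{\cD}\to GF$ and counit $b:FG\to Id_{\cC'}$) and the adjoint couple $(L(X,-),R(X,-),\unit_X,\counit_X)$ from $\cC'$ to $\cC$. Hence Lemma \ref{composeadj} applies verbatim and yields an adjoint couple $(L'(X,-),R'(X,-),\unit'_X,\counit'_X)$ from $\cD$ to $\cC$ with
$$\unit'_X = G\unit_X F\circ a \qquad\text{and}\qquad \counit'_X = \counit_X\circ L(X,\,b_{R(X,-)}).$$
These are the ``obvious and natural'' unit and counit, and all that remains is to verify the two diagrams $\diag{gen}$ of Definition \ref{defiAdjBif} for these data, for an arbitrary morphism $f:A\to B$ in $\cX$.

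The unit case is immediate: since $\unit'_{X,D}=G(\unit_{X,FD})\circ a_D$ and the morphism $a_D$ carries no parameter dependence, both legs of the new unit square factor as $G(\,-\,)\circ a_D$, where $(-)$ are the two legs of the unit diagram $\diag{gen}$ for $(L,R)$ evaluated at the object $FD\in\cC'$. As the latter commutes by hypothesis and $G$ preserves commutativity, the new unit diagram commutes. For the counit I would argue slightly more carefully. Writing $P=R(B,C)$ and $\phi=R(f,id):R(B,C)\to R(A,C)$ (note that $R$ is contravariant in the parameter, so $f:A\to B$ induces $\phi$ in this direction), the two composites to be compared are
$$\counit_B\circ L(B,b_P)\circ L(f,id)\qquad\text{and}\qquad \counit_A\circ L(A,b_{R(A,C)})\circ L(A,FG\phi).$$
On the right, naturality of $b:FG\to Id$ (an $\diag{mf}$ identity) gives $b_{R(A,C)}\circ FG(\phi)=\phi\circ b_P$, so that side equals $\counit_A\circ L(A,\phi)\circ L(A,b_P)$; on the left, bifunctoriality of $L$ rewrites $L(B,b_P)\circ L(f,id)=L(f,b_P)=L(f,id)\circ L(A,b_P)$. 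Both composites are thus of the form $(\,-\,)\circ L(A,b_P)$, and it remains only to show that the two left-hand factors agree, i.e. $\counit_B\circ L(f,id)=\counit_A\circ L(A,R(f,id))$, which is precisely the counit diagram $\diag{gen}$ for the original ACB $(L,R)$.

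The main obstacle is not conceptual but bookkeeping: keeping track of the variance (that $R$ is contravariant in the parameter) and of which of the three factors of each composite unit/counit actually carries the parameter dependence. The point of splitting $\unit'_X,\counit'_X$ via Lemma \ref{composeadj} is that this dependence is confined to the $\unit_X,\counit_X$ factors, so the generalized transformation property transfers directly from $(L,R)$, while the factors coming from $(F,G)$ contribute only naturality ($\diag{mf}$) squares and the functoriality of $G$. I would mention explicitly that the ACB axioms for $(F,G)$ themselves are empty here, since the parameter $\cX$ plays no role in $(F,G)$, which is why no further generalized-transformation hypothesis on $(F,G)$ is needed.
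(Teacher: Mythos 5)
Your proof is correct, and it is precisely the routine argument the paper omits (its proof of Lemma \ref{AdjComposeBifFonc} is simply ``Left to the reader''): compose the adjunctions pointwise in the parameter via Lemma \ref{composeadj}, then reduce both $\diag{gen}$ squares for the composite unit and counit to those of $(L,R)$, using functoriality of $G$ on the unit side and naturality of $b$ together with bifunctoriality of $L$ on the counit side. Your closing observation that no generalized-transformation hypothesis is needed for $(F,G)$, since it carries no parameter, is exactly the right bookkeeping point.
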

\begin{proof}
Left to the reader.
\end{proof}

We now give a version of Lemma \ref{adjab} for ACBs. 

\begin{lemm}\label{adjabbif}
For $i=1$ or $2$, let $(J_i,K_i,\unit_i,\counit_i)$ be an ACB from $\cC_i'$ to $\cC_i$ with parameter in $\cX$, let $H:\cC_1 \to \cC_2$ and $H':\cC'_1 \to \cC'_2$ be functors. Let $a: J_2 (*,H'(-)) \trafo H J_1(*,-)$ (resp. $b: H' K_1(*,-) \trafo K_2(*, H(-))$) be a morphism of bifunctors. Then there exists a unique morphism of bifunctors $b: H' K_1(*,-) \trafo K_2(*, H(-))$ (resp. $a: J_2 (*,H'(-)) \trafo H J_1(*,-)$) such that the diagrams
$$
\xymatrix{
J_2 (X,H' K_1(X,-))  \ar[d]_{a K_1} \ar[r]^{J_2 b} \ar@{}[dr]|{\diag{H}} & J_2(X,K_2(X, H(-))) \ar[d]^{\eta_2 H} \\
H J_1(X, K_1(X,-)) \ar[r]_{H \counit_1} & H 
}$$
and
$$ 
\xymatrix{
H'  \ar[d]_{\unit_2 H'} \ar[r]^{H'\unit_1} \ar@{}[dr]|{\diag{H'}} & H' K_1(X, J_1(X,-)) \ar[d]^{b J_1}  \\
K_2(X, J_2(X,H'(-))) \ar[r]_{K_2 a} & K_2(X, H J_1(X,-))
}
$$ 
are commutative for every morphism $X \in \cX$. 
\end{lemm}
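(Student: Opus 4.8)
The plan is to deduce Lemma~\ref{adjabbif} from the ordinary mates Lemma~\ref{adjab} by working one parameter $X \in \cX$ at a time, and then checking that the family of morphisms so obtained is natural in $X$, i.e. really assembles into a morphism of bifunctors. So there are two things to establish: existence/uniqueness of $b$ as a morphism of functors $H'K_1(X,-) \trafo K_2(X,H(-))$ for each fixed $X$, and naturality of this assignment in $X$.

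\emph{First step (fixed parameter).} For a fixed object $X \in \cX$, the data $(J_i(X,-),K_i(X,-),\unit_{i,X},\counit_{i,X})$ are adjoint couples for $i=1,2$, and $a(X,-): J_2(X,H'(-)) \trafo HJ_1(X,-)$ is an ordinary morphism of functors. This is exactly the setup of Lemma~\ref{adjab}, with $J_i := J_i(X,-)$, $K_i := K_i(X,-)$, and the functors $H$, $H'$ as given. Applying that lemma produces a unique morphism of functors $b_X: H'K_1(X,-) \trafo K_2(X,H(-))$ making the two diagrams $\diag{H}$ and $\diag{H'}$ (the $X$-indexed versions printed in the statement) commute; moreover the lemma gives the explicit formula
$$b_X = K_2(X,H\counit_{1,X}) \circ K_2(X, a(X,-))\, K_1(X,-) \circ \unit_{2,X}\, H'K_1(X,-).$$
The same formula computed for the other direction (starting from $b$ and producing $a$) is handled symmetrically, so I treat only the stated case.

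\emph{Second step (naturality in $X$).} The remaining, and genuinely new, point is that $X \mapsto b_X$ is natural: for every $f: A \to B$ in $\cX$ the evident square relating $b_A$ and $b_B$ (built from the bifunctoriality maps $K_i(f,id)$ and $J_i(f,id)$) must commute. Here is where the hypothesis that $\unit$ and $\counit$ are \emph{generalized} transformations -- the two $\diag{gen}$ squares of Definition~\ref{defiAdjBif} -- is used, and this is the main obstacle. I would substitute the explicit formula for $b_A$ and $b_B$ into the naturality square and chase it: the unit $\unit_2$ contributes one $\diag{gen}$ square (to commute $\unit_{2,A}$ past $\unit_{2,B}$ through $K_2(f,id)$), the counit $\counit_1$ contributes the other $\diag{gen}$ square, the middle factor $a$ is natural as a morphism of bifunctors (so its compatibility with $J_1(f,id)$ and $J_2(f,id)$ is automatic), and the functoriality of $H$, $H'$, $K_2$ supplies the remaining $\diag{mf}$ cells. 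Splicing these together shows $K_2(f,id)\circ b_A = b_B \circ H'K_1(f,id)$, i.e. naturality.

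\emph{Conclusion.} Since each $b_X$ is uniquely determined by Lemma~\ref{adjab} and the family $\{b_X\}$ is natural in $X$, the collection constitutes a unique morphism of bifunctors $b: H'K_1(*,-) \trafo K_2(*,H(-))$ with the required property, and the stated $X$-indexed diagrams commute by construction. The equivalence of the two diagrams for fixed parameters, and the dual construction of $a$ from $b$, are inherited verbatim from Lemma~\ref{adjab}. I expect the fixed-parameter step to be essentially a citation of Lemma~\ref{adjab}, so the real content -- and the only place where Definition~\ref{defiAdjBif} enters -- is the naturality verification in the second step.
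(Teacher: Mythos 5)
Your proposal is correct and takes essentially the same route as the paper: the paper's proof likewise treats the case where $a$ is given, applies Lemma~\ref{adjab} for each fixed parameter $X$, and then notes that naturality in $X$ is a diagram chase using the generalized-transformation squares of Definition~\ref{defiAdjBif}, a verification it leaves to the reader and which your second step actually carries out (correctly locating the two $\diag{gen}$ squares at $\unit_2$ and $\counit_1$, given the explicit formula for $b$). No gap.
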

\begin{proof}
We only consider the case when $a$ is given. For every $X \in \cX$, we apply Lemma \ref{adjab} to obtain $b: H'K_1(X,-) \to K_2(X,H(-))$. It remains to prove that this is functorial in $X$. This is an easy exercise on commutative diagrams which we leave to the reader (it involves that both $\unit_1$ and $\counit_2$ are generalized transformations as in Definition \ref{defiAdjBif}).
\end{proof}

\begin{lemm} \label{isomorphACRB}
Let $(L,R,\unit,\counit)$ and $(L',R',\unit',\counit')$ be ACBs, and let $l:L \trafo L'$ (resp. $r:R\trafo R'$) be an isomorphism of bifunctors. Then, there exists a unique isomorphism of bifunctors $r:R \trafo R'$ (resp. $l:L \trafo L'$) such that $\unit'_X = r_X R(X,l) \unit_X$ and $\counit'_X =\counit_X L(X,r^{-1}_{X}) l^{-1}_{X}$ for every $X \in \cX$. In other words, a right (resp. left) bifunctor adjoint is unique up to unique isomorphism.
\end{lemm}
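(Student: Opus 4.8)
The plan is to reduce to the already-proved single-variable statement, Lemma \ref{isoadjoints}, applied at each parameter separately, and then to check that the family of isomorphisms so obtained is natural in the parameter. I treat the case in which $l : L \trafo L'$ is given; the case in which $r$ is given is entirely symmetric, using the other half of Lemma \ref{isoadjoints}.

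First I would fix $X \in \cX$. By the definition of an ACB, restricting to this parameter yields two honest adjoint couples $(L(X,-), R(X,-), \unit_X, \counit_X)$ and $(L'(X,-), R'(X,-), \unit'_X, \counit'_X)$ between $\cC'$ and $\cC$, together with the isomorphism $l_X := l(X,-) : L(X,-) \trafo L'(X,-)$. Lemma \ref{isoadjoints} then produces a unique isomorphism $r_X : R(X,-) \trafo R'(X,-)$ with $\unit'_X = r_X L'(X,-) \circ R(X, l_X) \circ \unit_X$ and $\counit'_X = \counit_X \circ L(X, r_X^{-1}) \circ l_X^{-1} R'(X,-)$; once the whiskerings are spelled out these are exactly the two relations in the statement. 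Explicitly, $r_X = R'(X, \counit_X) \circ R'(X, l_X^{-1} R(X,-)) \circ \unit'_X R(X,-)$, the parameterwise instance of the formula $r = R'\counit \circ R' l^{-1} R \circ \unit' R$ from the proof of Lemma \ref{isoadjoints}.

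The real content is to show that the family $(r_X)_{X \in \cX}$ assembles into a morphism of bifunctors $r : R \trafo R'$, i.e. that for every morphism $f : X \to Y$ in $\cX$ the square expressing naturality in the parameter commutes (recall that $R$ and $R'$ are contravariant in this entry, so $f$ induces $R(f,-) : R(Y,-) \to R(X,-)$, and one must verify $r_X \circ R(f,-) = R'(f,-) \circ r_Y$). I would do this by substituting the explicit composite for $r_X$ and $r_Y$ and splitting the verification factor by factor: the factors built from $l$ and from $R, R'$ are natural in $X$ because $l$ is itself a morphism of bifunctors and $R, R'$ are bifunctors, whereas the factors carrying $\unit'$ and $\counit$ are controlled precisely by the hypothesis that $\unit'$ and $\counit$ are generalized transformations in the sense of Definition \ref{defiAdjBif}. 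This is the same diagram-chase that is left to the reader at the end of the proof of Lemma \ref{adjabbif}, and it is the one genuinely non-formal step; I expect it to be the main (though routine) obstacle, since everything else is a direct transcription of Lemma \ref{isoadjoints}.

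Finally, uniqueness is immediate: any morphism of bifunctors $r : R \trafo R'$ satisfying the two displayed relations restricts at each $X$ to a morphism meeting the hypotheses of Lemma \ref{isoadjoints}, and is therefore forced to coincide with the $r_X$ constructed above for every $X$. Since each $r_X$ is an isomorphism, the assembled $r$ is an isomorphism of bifunctors, with inverse given parameterwise by $r_X^{-1}$, which completes the argument.
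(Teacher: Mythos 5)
Your proof is correct, but it takes a different route from the paper's. The paper deduces the lemma from the bifunctor mates lemma: it applies Lemma \ref{adjabbif} with $H=H'=\mathrm{Id}$, taking $a=l^{-1}$ to produce the mate $r:R\trafo R'$ (which is then a morphism of bifunctors by the very statement of \ref{adjabbif}, so naturality in the parameter comes for free) and taking $a=l$ to produce $r':R'\trafo R$; the remaining work is to check that $r$ and $r'$ are mutually inverse, which is done by the usual uniqueness-of-adjoints argument at each parameter $X$. You instead apply the one-variable Lemma \ref{isoadjoints} at each fixed $X$, so that invertibility of each $r_X$, the two displayed unit/counit relations, and pointwise uniqueness are immediate, and the remaining work is the naturality of the family $(r_X)$ in $X$. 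The two proofs thus trade off which half is outsourced: the paper cites \ref{adjabbif} for naturality in the parameter (where that verification is itself left to the reader, resting on the $\diag{gen}$ squares) and argues invertibility by hand; you cite \ref{isoadjoints} for invertibility and argue naturality by hand. Your naturality chase does go through exactly as you describe: pushing $R(f,-)$ through the composite $r_X = R'(X,\counit_X)\circ R'(X,l_X^{-1})\circ \unit'_X R(X,-)$ uses precisely the naturality of $\unit'_X$ in the $\cC$-variable, the naturality of $l^{-1}$ in both variables, the $\diag{gen}$ square for $\counit$, the $\diag{gen}$ square for $\unit'$, and the interchange law in the bifunctor $R'$ --- the ingredients you name. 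One small refinement: for uniqueness the unit relation alone suffices, since for any morphism of bifunctors $r$ satisfying $\unit'_X = r_X R(X,l)\unit_X$, the triangle identities together with naturality of $r$ force $r_X$ to equal the displayed composite; the counit relation, which presupposes invertibility, is not needed to pin $r$ down.
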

\begin{proof}
We only give the proof in case $l$ is given, the case with $r$ given is similar.
Apply Lemma \ref{adjabbif} with $H$ and $H'$ being identity functors. Starting with $l^{-1}$, we get $r$, and starting with $l$ we get a morphism $r':R'\to R$.
The morphisms $r$ and $r'$ are inverse to each other
as in the usual proof of the unicity of adjunction (which applies to every parameter $X \in \cX$).
\end{proof}

\subsection{Suspended and triangulated categories} \label{SuspTriangCat}

We recall here what we need about triangulated categories. The reason why we make a distinction between suspended categories and triangulated ones is because all the commutative diagrams that we are interested in are just related to the suspension, and not to the exactness of the functors involved. So when we need to prove the commutativity of those diagrams, we forget about the exactness of our functors, and just think of them as suspended functors, in the sense described below.

\begin{defi}
A suspended category is an additive category $\cC$ together with an adjoint couple $(T, T^{-1})$ from $\cC$ to $\cC$ which is an equivalence of category (the unit and counit are isomorphisms).
\end{defi}

\begin{rema} \label{TT-1Id}
We assume furthermore in all what follows that $TT^{-1}$ and $T^{-1}T$ are the identity of $\cC$ and that the unit and counit are also the identity. This assumption is not true in some suspended (triangulated) categories arising in stable homotopy theory.
Nevertheless, it simplifies the exposition which is already sufficiently
technical. When working in an example where this assumption
does not hold, it is of course possible to make the modifications to get this
even more general case. 
\end{rema}

Between suspended categories $(\cC,T_\cC)$ and $(\cD,T_\cD)$, we use suspended functors:

\begin{defi}
A suspended functor $(F,f)$ from $\cC$ to $\cD$ is a functor $F$ together with an isomorphism of functors $f: F T_\cC \trafo T_\cD F$. We sometimes forget about $f$ in the notation.
\end{defi}

Without the assumption in Remark \ref{TT-1Id}, we would need another isomorphism $f':F T^{-1} \trafo T^{-1} F$ and compatibility diagrams analogous to the ones in Lemma \ref{adjab}. Then, we would have to carry those compatibilities in our constructions. 
Again, this would not be a problem, just making things even more tedious.

Suspended functors can be composed in an obvious way, and $(T,id_{T^2})$ and $(T^{-1},id_{Id})$ are suspended endofunctors of $\cC$ that we call $T$ and $T^{-1}$ for short.

\begin{defi} \label{defiHigher}
To a suspended functor $F$, one can associate ``shifted" ones, composing $F$ by $T$ or $T^{-1}$ several times on either sides. The isomorphisms $T^iFT^j \simeq T^kFT^l$ with $i+j=k+l$ constructed using $f$, $f^{-1}$, $T^{-1}T=Id$ and $TT^{-1}=Id$ all coincide, so whenever we use one, we label it $``f"$ without further mention.
\end{defi}

\begin{defi}
The opposite suspended category $\cC^o$ of a suspended category $\cC$ is given the suspension $(T_\cC^{-1})^o$.
\end{defi}

With this convention, we can deal with contravariant suspended functors in two different ways (depending where we put the "op"), and this yields essentially the same thing, using the definition of shifted suspended functors.

\begin{defi} \label{defiMorSusp}
A morphism of suspended functors $h:(F,f) \to (G,g)$ is a morphism of functors $h: F \trafo G$ such that the diagram
$$\xymatrix{ 
FT \ar[d]_{hT} \ar[r]^{f} \ar@{}[dr]|{\diag{sus}}& TF \ar[d]^{Th} \\
GT \ar[r]_{g} & TG
}
$$
is commutative.
\end{defi}

\begin{lemm}
The composition of two morphisms of suspended functors yields a morphism of suspended functors.
\end{lemm}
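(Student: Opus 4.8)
The plan is to verify the defining commutativity $\diag{sus}$ for the composite directly, by vertically pasting the two squares that come with the given morphisms. Suppose $h:(F,f)\to(G,g)$ and $k:(G,g)\to(H,\ell)$ are morphisms of suspended functors; I claim that $k\circ h:F\trafo H$ is a morphism of suspended functors from $(F,f)$ to $(H,\ell)$. Since precomposition and postcomposition with the functor $T$ are computed objectwise, we have $(k\circ h)T=kT\circ hT$ and $T(k\circ h)=Tk\circ Th$, so the only thing to check is the single equation $\ell\circ(k\circ h)T=T(k\circ h)\circ f$.

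First I would record the two hypotheses as the commutativity of $\diag{sus}$ for $h$ and for $k$, namely $g\circ hT=Th\circ f$ and $\ell\circ kT=Tk\circ g$. Then I would stack them:
$$\xymatrix{
FT \ar[r]^{f} \ar[d]_{hT} \ar@{}[dr]|{\diag{sus}} & TF \ar[d]^{Th} \\
GT \ar[r]^{g} \ar[d]_{kT} \ar@{}[dr]|{\diag{sus}} & TG \ar[d]^{Tk} \\
HT \ar[r]_{\ell} & TH
}$$
The top square commutes because $h$ is a morphism of suspended functors and the bottom one because $k$ is; hence the outer rectangle commutes, and reading it off gives $\ell\circ kT\circ hT=Tk\circ Th\circ f$, which is exactly the required equation. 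Chasing through explicitly, $\ell\circ(k\circ h)T=\ell\circ kT\circ hT=Tk\circ g\circ hT=Tk\circ Th\circ f=T(k\circ h)\circ f$.

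There is essentially no obstacle here: the statement is a formal consequence of the objectwise nature of $T$ applied to natural transformations together with the pasting of two commutative squares, exactly in the spirit of the $\diag{mf}$ manipulations used throughout Section \ref{DualFormAdj}. The only point worth a word is the identification $(k\circ h)T=kT\circ hT$ (and its $T(-)$ counterpart), which is immediate from the definition of the composition of morphisms of functors recalled in Section \ref{NotConv}.
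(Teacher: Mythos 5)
Your proof is correct and is exactly the ``straightforward'' verification the paper has in mind: the hypotheses for $h$ and $k$ are the two commutative squares $\diag{sus}$, and vertically pasting them (together with the identities $(k\circ h)T=kT\circ hT$ and $T(k\circ h)=Tk\circ Th$, which follow from the conventions of Section \ref{NotConv}) gives the square $\diag{sus}$ for $k\circ h$. Nothing is missing.
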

\begin{proof}
Straightforward.
\end{proof} 

A triangulated category is a suspended category with the choice of some exact triangles, satisfying some axioms. This can be found in text books as \cite{Weibel94}(see also
the nice introduction in \cite[Section 1]{Balmer00}). We include the enriched octahedron axiom in the list of required axioms as it is suitable to deal with Witt groups, as explained in \loccit 

\begin{defi} \label{deltaexact} (see for example \cite[§ 1.1]{Gille03})
Let $(F,f):\cC \to \cD$ be a covariant (resp. contravariant) suspended functor. 
We say that $(F,f)$ is $\delta$-exact ($\delta=\pm 1$) if for any exact triangle
$$A \arr{u} B \arr{v} C \arr{w} TA$$
the triangle
$$FA \arr{Fu} FB \arr{Fv} FC \arr{\delta f_A \circ Fw} TFA$$
respectively
$$FC \arr{Fv} FB \arr{Fu} FA \arr{\delta f_C \circ FT^{-1}w} TFC$$
is exact.
\end{defi}

\begin{rema} \label{signTF}
With this definition, $T$ and $T^{-1}$ are $(-1)$-exact functors, because of the second axiom of triangulated categories, and the composition of exact functors multiplies their signs. Thus, if $F$ is $\delta$-exact, then $T^iFT^j$ is $(-1)^{i+j}\delta$-exact.
\end{rema}

To define morphisms between exact functors $F$ and $G$, the signs $\delta_F$ and $\delta_G$ of the functors have to be taken into account, so that the morphism of functors induces a morphism between the triangles obtained by applying $F$ or $G$ to a triangle and making the sign modifications.
\begin{defi} \label{morphTriangFunc}
We say that $h: F \trafo G$ is a morphism of exact functors if the diagram $\diag{sus}$ in Definition \ref{defiMorSusp} is $\delta_F \delta_G$ commutative.
\end{defi}
On the other hand, we have the following lemma.
\begin{lemm} \label{isoDeltaExact}
Let $h: F \trafo G$ be an isomorphism of suspended functors such that $\diag{sus}$ is $\nu$-commutative. Assume $F$ is $\delta$-exact. Then $G$ is $\delta \nu$-exact.
\end{lemm}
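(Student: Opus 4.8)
The plan is to deduce the $\delta\nu$-exactness of $G$ by exhibiting, for each exact triangle, the candidate $G$-triangle as isomorphic (via $h$) to the corresponding $F$-triangle, which is exact by hypothesis; the result then follows from the standard fact that a triangle isomorphic to an exact one is exact. So I would fix an exact triangle $A \arr{u} B \arr{v} C \arr{w} TA$. Since $(F,f)$ is $\delta$-exact, the triangle
$$FA \arr{Fu} FB \arr{Fv} FC \arr{\delta f_A \circ Fw} TFA$$
is exact, and the goal is to prove that
$$GA \arr{Gu} GB \arr{Gv} GC \arr{\delta\nu\, g_A \circ Gw} TGA$$
is exact as well.

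To compare these, I would use the four vertical maps $h_A$, $h_B$, $h_C$ and $Th_A$, all isomorphisms since $h$ is. Of the three squares making up the morphism of triangles, the two lying over $u$ and $v$ commute immediately by naturality of $h$ applied to $u$ and $v$ (a $\diag{mf}$ label). The only square that requires real work is the rightmost one, namely the assertion
$$Th_A \circ (\delta f_A \circ Fw) = (\delta\nu\, g_A \circ Gw) \circ h_C.$$

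The key computation relies on two inputs. First, evaluating the $\nu$-commutative diagram $\diag{sus}$ of Definition \ref{defiMorSusp} at the object $A$ gives $Th_A \circ f_A = \nu\, g_A \circ h_{TA}$. Second, naturality of $h$ for the morphism $w : C \to TA$ gives $h_{TA} \circ Fw = Gw \circ h_C$. Combining these, the left-hand side rewrites as
$$\delta\, Th_A \circ f_A \circ Fw = \delta\nu\, g_A \circ h_{TA} \circ Fw = \delta\nu\, g_A \circ Gw \circ h_C,$$
which is exactly the right-hand side. Hence the last square commutes, $h$ is an isomorphism of triangles, and the $G$-triangle is exact, proving $G$ is $\delta\nu$-exact.

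I expect the only delicate point to be the bookkeeping of the single sign $\nu$: one must feed $\diag{sus}$ at the object $A$ (the source of the shift appearing in $TA$, not at $C$), and keep in mind that the naturality square for $w$ carries no sign, so that the factor $\nu$ surfaces in precisely the place needed to match the prescribed connecting map $\delta\nu\, g_A \circ Gw$ of $G$. The contravariant case is handled in the same way, evaluating $\diag{sus}$ at $C$ and reading the shifted suspension isomorphism through the conventions of Definition \ref{defiHigher}, so no separate argument is needed.
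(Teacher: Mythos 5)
Your proof is correct and takes essentially the same route as the paper: both exhibit the candidate $G$-triangle as isomorphic, via the vertical isomorphisms $h_A$, $h_B$, $h_C$, $Th_A$, to the exact $F$-triangle, the only nontrivial square being the one over $w$, which is settled exactly as you do by combining the $\nu$-commutative diagram $\diag{sus}$ evaluated at $A$ with naturality of $h$ at $w:C \to TA$. Your sign bookkeeping ($Th_A \circ \delta f_A \circ Fw = \delta\nu\, g_A \circ Gw \circ h_C$) is in fact more carefully stated than the paper's displayed triangles, so nothing is missing.
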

\begin{proof}
For any triangle
$$A \arr{u} B \arr{v} C \arr{w} TA$$
the triangle
$$GA \arr{Gu} GB \arr{Gv} FC \arr{\delta g_A \circ Gw} TGA$$
is easily shown to be isomorphic to
$$FA \arr{Fu} FB \arr{Fv} FC \arr{\nu \delta f_A \circ Fw} TFA$$
\end{proof}

We also need to deal with bifunctors from two suspended categories to another one. These are just suspended functors in each variable, with a compatibility condition. Examples are 
the internal Hom or the tensor product in triangulated categories.

\begin{defi} \label{defiSuspBif}
Let $\cC_1$, $\cC_2$ and $\cD$ be suspended categories. A suspended bifunctor from $\cC_1 \times \cC_2$  to $\cD$ is a triple $(B,b_1,b_2)$ where $B: \cC_1 \times \cC_2 \to \cD$ is a functor and two morphisms of functors $b_1:B(T(-),*) \to TB(-,*)$ and $b_2:B(-,T(*)) \to TB(-,*)$, such that the diagram
$$\xymatrix{
B(TA,TC) \ar[d]_{b_{1,A,TC}} \ar[r]^{b_{2,TA,C}} \ar@{}[rd]|{-1} & TB(TA,C) \ar[d]^{b_{1,A,C}} \\
TB(A,TC) \ar[r]_{b_{2,A,C}} & T^2 B(A,C)
}$$
anti-commutes for every $A$ and $C$.
\end{defi}

\begin{rema} \label{remaHigherBif}
As in Definition \ref{defiHigher}, we have shifted versions of $b_1$ (or $b_2$) which we 
will sometimes label $``b_1"$ (or $``b_2"$) below. It is important to describe precisely in which order morphisms are applied when combining $b_1$ and $b_2$.
\end{rema}

\begin{defi} \label{defiMorSuspBif}
A morphism of suspended bifunctors from a suspended bifunctor $(B,b_1,b_2)$ to a suspended bifunctor $(B',b'_1,b'_2)$ is a morphism of functors $f:B \to B'$ such that the two diagrams
$$\xymatrix{
B(TA,C) \ar[d]_{f_{TA,C}} \ar[r]^{b_{1,A,C}} & TB(A,C) \ar[d]^{Tf_{A,C}} & B(A,TC) \ar[l]_{b_{2,A,C}} \ar[d]^{f_{A,TC}} \\
B'(TA,C) \ar[r]^{b'_{1,A,C}} & TB'(A,C) & B'(A,TC) \ar[l]_{b'_{2,A,C}} \\
}$$
are commutative for every $A$ and $C$.
\end{defi}

By composing with a usual suspended functor to $\cC_1$ or $\cC_2$ or from $\cD$, we get other suspended bifunctors (the verification is easy). But, if we do that several times, using different functors, the order in which the suspended functors to $\cC_1$ or $\cC_2$ are used {\it does} matter. For example, as with usual suspended functors, it is possible to define shifted versions by composing with the suspensions in each category as mentioned in Remark \ref{remaHigherBif}. This can be useful. Unfortunately, according to the order in which we do this (if we mix functors to $\cC_1$ and to $\cC_2$), we don't get the same isomorphism of functors, even though we get the same functors in the pair. One has to be careful about that.

\subsection{Suspended adjunctions} \label{SuspAdj}

As with usual functors, there is a notion of adjunction well suited for suspended functors.

\begin{defi}
A suspended adjoint couple $(L,R)$ is a adjoint couple in the usual sense in which $L$ and $R$ are suspended functors and the unit and counit are morphisms of suspended functors.
\end{defi}

\begin{defi} \label{shiftedAdjoint}
When $(L,R)$ is an adjoint couple of suspended functors, using Lemma \ref{composeadj} we obtain shifted versions $(T^iLT^j,T^{-j}RT^{-i})$. Using Definition \ref{defiHigher}, we obtain isomorphisms exchanging the $T$'s. Applying Lemma \ref{isoadjoints} to them, we get an adjoint couple of suspended bifunctors $(T^iLT^j,T^{-i}RT^{-j})$.
\end{defi}

The following proposition seems to be well-known.

\begin{prop} \label{suspAdjExists}
Let $(L,R)$ be an adjoint couple from $\cC$ to $\cD$ (of usual functors) and let $(L,l)$ be a suspended functor. Then
\begin{enumerate}
\item there is a unique isomorphism of functors $r:RT \to TR$ that turns $(R,r)$ into a suspended functor and $(L,R)$ into a suspended adjoint couple. 
\item if furthermore $\cC$ and $\cD$ are triangulated and $(L,l)$ is $\delta$-exact, then $(R,r)$ is also $\delta$-exact (with the {\it same} $\delta$).
\end{enumerate}
\end{prop}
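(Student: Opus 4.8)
The plan is to treat Part 1 as a mate construction and Part 2 as a Yoneda-plus-five-lemma argument.

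For Part 1, I would obtain $r$ as the mate of $l$. By Lemma \ref{composeadj} the couples $(LT,T^{-1}R)$ and $(TL,RT^{-1})$ are both adjoint couples from $\cC$ to $\cD$ (using that $(T,T^{-1})$ is an adjoint couple on each of $\cC$ and $\cD$). Since $l\colon LT\trafo TL$ is an isomorphism between the two left adjoints, Lemma \ref{isoadjoints} furnishes a unique isomorphism $s\colon T^{-1}R\trafo RT^{-1}$ between the right adjoints, compatible with the units and counits. Whiskering by $T$ on both sides and using $TT^{-1}=T^{-1}T=Id$ (Remark \ref{TT-1Id}), I set $r:=TsT\colon RT\trafo TR$, which is again an isomorphism; so $(R,r)$ is a suspended functor. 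Equivalently, one applies Lemma \ref{adjab} to the square with vertical couples $(L,R)$ on both sides and horizontal functors $T$, taking $a=l$ and inverting the resulting mate.

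The two conditions packaged in ``suspended adjoint couple'' — that $\unit$ and $\counit$ are morphisms of suspended functors in the sense of Definition \ref{defiMorSusp}, for the composite suspension structures $rL\circ Rl$ on $RL$ and $lR\circ Lr$ on $LR$ — are precisely the compatibility equations $\unit'=sl\circ\unit$ and $\counit'=\counit\circ l^{-1}s^{-1}$ provided by Lemma \ref{isoadjoints} (equivalently, the two mate diagrams $\diag{H}$ and $\diag{H'}$ of Lemma \ref{adjab}), read after the above whiskering. Uniqueness of $r$ is inherited from the uniqueness clause of Lemma \ref{isoadjoints}: any $r'$ making $(L,R)$ a suspended adjoint couple unwhiskers to an $s'$ satisfying the same compatibility, forcing $s'=s$ and hence $r'=r$.

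For Part 2, fix an exact triangle $A\arr{u}B\arr{v}C\arr{w}TA$ in $\cD$ and form the candidate triangle $RA\arr{Ru}RB\arr{Rv}RC\arr{\delta r_A\circ Rw}TRA$ in $\cC$; one checks it is a complex from $R(vu)=0$, $R(wv)=0$ and the naturality of $r$. Using that $\cC$ is triangulated, complete $Ru$ to a genuine exact triangle $RA\arr{Ru}RB\arr{p}Z\arr{q}TRA$ and produce a fill-in $\theta\colon Z\to RC$ with $\theta\circ p=Rv$ (it exists because $Rv\circ Ru=0$ and $\Hom_\cC(-,RC)$ is homological). I would then prove $\theta$ is an isomorphism by the five lemma: for every $S\in\cC$ the adjunction isomorphism $\Hom_\cC(S,R-)\cong\Hom_\cD(LS,-)$ turns the long exact sequence obtained by applying the homological functor $\Hom_\cD(LS,-)$ to the original triangle into an exact sequence through $\Hom_\cC(S,RC)$, whose neighbouring terms and maps match those of the long exact sequence of $\Hom_\cC(S,-)$ applied to $RA\to RB\to Z\to TRA$, the term $\Hom_\cC(S,R(TA))$ being identified with $\Hom_\cC(S,TRA)$ via the isomorphism $r_A$. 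The five lemma gives $\theta_*$ bijective for all $S$, so $\theta$ is an isomorphism by Yoneda, and the candidate triangle, being isomorphic to an exact one via $(id,id,\theta)$, is exact.

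The main obstacle is the sign: I must check that the third square of $(id,id,\theta)$ commutes exactly with the factor $\delta$, i.e. $q=\delta r_A\circ Rw\circ\theta$, so that $R$ is $\delta$-exact with the \emph{same} $\delta$ as $L$ rather than with $-\delta$. This is where the hypothesis that $(L,l)$ is $\delta$-exact enters, feeding in through the compatibility of the mates $l$ and $r$ with the connecting map $w$ established in Part 1 (the unit and counit being morphisms of suspended functors, hence carrying no extra sign). Tracking this sign through the adjunction identifications — equivalently, verifying it after applying the $\delta$-exact functor $L$ and composing with the counit — is the one genuinely delicate point; the remainder is the formal five-lemma bookkeeping.
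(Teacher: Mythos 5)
Your Part~1 is correct and is essentially the paper's own argument in different packaging: the paper invokes Point~3 of Theorem~\ref{adjsquare} with $F_1=T_\cC$, $G_1=T_\cC^{-1}$, $F_2=T_\cD$, $G_2=T_\cD^{-1}$ and $g_L=T^{-1}lT^{-1}$, while you compose adjunctions (Lemma~\ref{composeadj}) and apply uniqueness of right adjoints (Lemma~\ref{isoadjoints}) to $(LT,T^{-1}R)$ and $(TL,RT^{-1})$, then whisker. These are the same mate construction, and your unwhiskering argument for uniqueness is sound.

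Part~2, however, has a genuine gap, and it sits exactly at the point you flag as ``delicate''. Your $\theta$ is produced as a bare fill-in for the middle square only, i.e.\ from $Rv\circ Ru=0$ and exactness of $\Hom_\cC(-,RC)$ you get $\theta p=Rv$. But such fill-ins form a torsor: $\theta$ may be replaced by $\theta+\phi\circ q$ for any $\phi:TRA\to RC$, which changes the composite $(\delta r_A\circ Rw)\circ\theta$ by $(\delta r_A\circ Rw)\circ\phi\circ q$, in general nonzero. So the third-square identity $q=\delta r_A\circ Rw\circ\theta$ is not a sign to be ``tracked'': for an arbitrary fill-in satisfying $\theta p=Rv$ it is simply false, and ``applying $L$ and composing with the counit'' cannot verify an identity that does not hold. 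Without it, neither your five-lemma ladder (whose third square \emph{is} this identity) nor the final conclusion that $(id,id,\theta)$ identifies the candidate triangle with an exact one can be run.

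The paper avoids this by constructing the comparison map on the other side of the adjunction: complete $RA\arr{Ru}RB$ to an exact triangle $RA\to RB\arr{v'}C'\arr{w'}TRA$ in $\cC$, apply the $\delta$-exact $(L,l)$ to get an exact triangle in $\cD$ with third map $\delta\, l_{RA}\circ Lw'$, map it to $A\to B\to C\to TA$ via the counits, and use the triangulated axiom on morphisms of exact triangles to produce $h:LC'\to C$ making \emph{both} squares commute in $\cD$. Then $\theta:=ad(h)=Rh\circ\unit_{C'}$ automatically satisfies both squares in $\cC$: the computation $\delta r_A\circ Rw\circ ad(h)=w'$ uses naturality of $\unit$ and $r$, the fact that $\unit$ and $\counit$ are morphisms of suspended functors (i.e.\ your Part~1), the triangle identity $R\counit\circ\unit R=id$, and $\delta^2=1$ --- which is precisely where the \emph{same} $\delta$ comes out. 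With this $\theta$ your five-lemma/Yoneda bookkeeping goes through verbatim, so your plan is repairable; as written, it is incomplete at its crucial step.
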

\begin{proof}
Point 1 is a direct corollary of Point 3 of Theorem \ref{adjsquare}, by taking $L=L'$, $R=R'$, $F_1=T_\cC$, $G_1=T^{-1}_\cC$, $F_2=T_\cD$, $G_2=T^{-1}_\cD$, $g_L=(g'_L)^{-1}=T^{-1}lT^{-1}$. This gives $f_R=r$. The commutative diagrams $\diag{F_1}$ and $\diag{F_2}$ 
exactly tell us that the unit and counit are suspended morphisms of functors with this choice of $r$.
To prove Point 2, we have to show that the pair $(R,r)$ is exact. Let 
$$A \arr{u} B \arr{v} C \arr{w} TA$$
be an exact triangle. We want to prove that the triangle
$$RA \arr{u} RB \arr{v} RC \arr{r_A \circ Rw} TRA$$
is exact. We first complete $RA \arr{u} RB$ as an exact triangle
$$RA \arr{u} RB \arr{v'} C' \arr{w'} TRA$$
and we prove that this triangle is in fact isomorphic to the previous one. 
To do so, 
one completes the incomplete morphism of triangles
$$\xymatrix@R=3ex{
LRA \ar[r]^{LRu} \ar[d] & LRB \ar[r]^{LRv} \ar[d] & LC' \ar[rr]^{f_{RA}\circ LRw} 
\ar@{.>}[d]^(.4){h} & & TLRA \ar[d] \\
A \ar[r]_{u} & B \ar[r]_{v} & C \ar[rr]_{w} & & TA \\
}.$$
Looking at the adjoint diagram, we see that $ad(h):C' \to RC$ is an isomorphism by the five lemma for triangulated categories.
\end{proof}

\begin{theo} \label{adjsquareT}
Lemma \ref{adjab}, Lemma \ref{cube} and Theorem \ref{adjsquare} hold when we replace every functor by a suspended functor, every adjoint couple by a suspended adjoint couple and every morphism of functor by a morphism of suspended functors.
\end{theo}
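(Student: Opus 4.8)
The plan is to split each of the three results into two parts. The first is a ``non-suspended part'', consisting of the existence, uniqueness and commutativity assertions, which concern only the lettered diagrams \diag{H}, \diag{H'}, \diag{L}, \diag{L'}, $\ldots$; none of these diagrams mention the suspension $T$, so their commutativity is literally the content of the already-proved Lemma \ref{adjab}, Lemma \ref{cube} and Theorem \ref{adjsquare} applied to the underlying functors and underlying morphisms of functors. The second is a ``suspended part'', asserting that every morphism of functors produced by the constructions is in fact a morphism of \emph{suspended} functors. Only the suspended part requires a genuine verification, and that is what I would concentrate on.

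For the mate construction of Lemma \ref{adjab} I would use the explicit formula obtained in its proof, namely that the mate $b$ is the composite
$$\xymatrix@C+1ex{H'K_1 \ar[r]^-{\unit_2 H'K_1} & K_2 J_2 H'K_1 \ar[r]^{K_2 a K_1} & K_2 H J_1 K_1 \ar[r]^-{K_2 H \counit_1} & K_2 H}.$$
In the suspended setting each $(J_i,K_i)$ is a suspended adjoint couple, so $\unit_2$ and $\counit_1$ are morphisms of suspended functors by definition, while $a$ is one by hypothesis. It then suffices to know that morphisms of suspended functors are stable under vertical composition (the unnamed lemma following Definition \ref{defiMorSusp}) and under whiskering by a suspended functor on either side. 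The latter is the routine check that applying a suspended functor $H$ to the diagram \diag{sus} for $h$ and pasting it with the naturality square of the structure isomorphism of $H$ against $h$ again yields a commuting \diag{sus}. Hence $b$ is a morphism of suspended functors. I expect this bookkeeping to be the main (though mild) obstacle, since one must track the structure isomorphisms of the various functors and use Proposition \ref{suspAdjExists}, which guarantees that the suspended structure on a right adjoint is the canonical one for which the unit and counit are morphisms of suspended functors.

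With suspended Lemma \ref{adjab} established, Lemma \ref{cube} follows at once: its proof merely applies Lemma \ref{adjab} to the vertical squares, so in the suspended version the cube morphisms are automatically morphisms of suspended functors and the commutativity of the cube is the underlying statement. For Theorem \ref{adjsquare}, Points 1, 1' and 2 are direct translations of suspended Lemma \ref{adjab}, exactly as in the original argument. For Points 3 and 3' I would reuse the explicit composites defining $f_R$ and $f'_R$ (respectively $g_L$ and $g'_L$) in the proof of Theorem \ref{adjsquare}: each is a composition and whiskering of the previously constructed suspended morphisms $f_L$, $g_R$ and of the suspended units and counits, hence again a morphism of suspended functors by the same closure properties; the identities $f_R \circ f'_R = id$, $f'_R \circ f_R = id$ and the commutativity of all the lettered diagrams are precisely the non-suspended assertions and need no modification.

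Conceptually, the cleanest formulation is that suspended categories, suspended functors and morphisms of suspended functors form a $2$-category in which suspended adjoint couples are exactly the internal adjunctions; since the proofs of \ref{adjab}, \ref{cube} and \ref{adjsquare} use only the triangle identities and naturality, that is, purely $2$-categorical input, they transport verbatim. I would nonetheless keep the concrete, formula-based verification above as the actual argument, so as to avoid setting up the $2$-category formally.
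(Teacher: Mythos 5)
Your proof is correct and takes essentially the same route as the paper, whose entire proof is the one-line observation that the original arguments for Lemma \ref{adjab}, Lemma \ref{cube} and Theorem \ref{adjsquare} use only compositions and commutative diagrams of (morphisms of) functors, all of which exist and behave identically in the suspended setting. Your explicit check that morphisms of suspended functors are closed under composition and whiskering, applied to the mate formula and to the composites defining $f_R$ and $f'_R$, is precisely the bookkeeping the paper leaves implicit (your appeal to Proposition \ref{suspAdjExists} is superfluous, since the hypotheses already posit suspended adjoint couples, making the units and counits suspended by definition).
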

\begin{proof}
The same proofs hold, since they only rely on operations and properties of functors and morphism of functors, such as composition or commutative diagrams, that exist and behave the same way in the suspended case.
\end{proof}

We now adapt the notion of an adjoint couple of bifunctors
to suspended categories.

\begin{defi} \label{defiSACRB}

Let $(L,R)$ be an ACB from $\cC'$ to $\cC$ with parameter in
$\cX$, where $\cC, \cC'$ and $\cX$ are suspended categories.
Assume moreover that  $(L,l_1,l_2)$ and $(R,r_1,r_2)$ are suspended bifunctors. 
We say that $(L,R)$ is a suspended adjoint couple of bifunctors if 
\begin{enumerate}
\item $((L(X,-),l_2),(R(X,-),r_2))$ is a suspended adjoint couple 
of functors for every parameter $X$, 
\item the following diagrams commute: 
$$\xymatrix{
TC \ar[r]^-{\unit_{X,TC}} \ar[d]_{T\unit_{TX,C}} & R(X,L(X,TC)) \ar[r]^-{``r_1"} & TR(TX,L(X,TC)) \ar[d]^{TR(TX,l_2)} \\
TR(TX,L(TX,C)) \ar[rr]^-{TR(X,l_1)} & & TR(TX,TL(X,C)) 
}$$
$$\xymatrix{
TL(X,R(TX,C)) \ar[rr]^-{l_1^{-1}} \ar[d]_{l_2^{-1}} & & L(TX,R(TX,C)) \ar[d]^{\counit_{TX,C}} \\
L(X,TR(TX,C)) \ar[r]^-{``r_1"^{-1}} & L(X,R(X,C)) \ar[r]^{\counit_{X,C}} & C
}$$
\end{enumerate}
\end{defi}

\begin{rema}\label{explainSACRB}
Note that (2) ensures the compatibility of the suspension functor on the parameter 
with the other ones.
\end{rema}

\begin{lemm} \label{changeParamSus}
Let $(F,f):\cX' \to \cX$ be a suspended functor, and $(L,R)$ a suspended ACB with parameter in $\cX$.
Then $(L(F(*),-),R(F^o(*),-))$ is again a suspended ACB in the obvious way. 
\end{lemm}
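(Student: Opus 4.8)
The plan is to reduce everything to the ordinary (non-suspended) change-of-parameter lemma and then transport the suspended structure along $f$. First I would invoke Lemma \ref{changeParam}, which already shows that $(L',R')$ with $L' = L(F(*),-)$ and $R' = R(F^o(*),-)$ is an ACB with parameter in $\cX'$; by the construction in that lemma its unit and counit at a parameter $X' \in \cX'$ are $\unit_{F(X')}$ and $\counit_{F(X')}$. It then remains to equip $L'$ and $R'$ with suspended-bifunctor structures and to verify conditions (1) and (2) of Definition \ref{defiSACRB}.

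The structure maps in the second (non-parameter) variable are untouched, so I would simply take $l_2'$ and $r_2'$ to be $l_2$ and $r_2$ read at the parameter $F(X')$, since $L'(X',T(*)) = L(F(X'),T(*))$ and likewise for $R'$. For the parameter variable I would use the suspension isomorphism $f: FT_{\cX'} \trafo T_\cX F$ of $(F,f)$ and set
$$l_1' = l_1 \circ L(f,\mathrm{id}): L(F(T(-)),*) \trafo L(T_\cX F(-),*) \trafo TL(F(-),*),$$
and dually $r_1' = r_1 \circ R(f^o,\mathrm{id})$, keeping in mind that the parameter of $R$ lives in $\cX^o$ with suspension $(T_\cX^{-1})^o$, so the direction of $f$ there has to be handled via $f^o$. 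Because $f$ is an isomorphism touching only the parameter entry, $l_1'$ and $r_1'$ inherit from $l_1$ and $r_1$ the property of being (iso)morphisms of functors of the correct shape, and the anti-commutativity square of Definition \ref{defiSuspBif} for $L'$ (resp. $R'$) follows from the one for $L$ (resp. $R$) by precomposing its parameter corner with the isomorphism $L(f,\mathrm{id})$ (resp. $R(f^o,\mathrm{id})$); this commutes with $l_2' = l_2$ because the two variables are independent. Condition (1) of Definition \ref{defiSACRB} is then immediate, since at a fixed $X'$ the data $((L'(X',-),l_2'),(R'(X',-),r_2'))$ coincides with $((L(F(X'),-),l_2),(R(F(X'),-),r_2))$, which is a suspended adjoint couple by condition (1) for $(L,R)$.

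The only substantial point is condition (2) of Definition \ref{defiSACRB}. Writing out its two diagrams for $(L',R')$ and substituting $\unit_{X',-} = \unit_{F(X'),-}$, $l_1' = l_1 \circ L(f,\mathrm{id})$, $r_1' = r_1 \circ R(f^o,\mathrm{id})$, and the correspondingly shifted $``r_1'"$, one recovers exactly the condition-(2) diagrams for $(L,R)$ at the parameter $F(X')$, except that wherever the old diagram reads $T_\cX F(X')$ the new one reads $F(T_{\cX'}X')$, the two being linked by the component $f_{X'}: F(T_{\cX'}X') \to T_\cX F(X')$. The hard part will be to absorb these $f$-corrections: I would feed the morphism $f_{X'}$ into the generalized-transformation property $\diag{gen}$ of $\unit$ (resp. $\counit$) from Definition \ref{defiAdjBif}, which is precisely the mechanism for moving a parameter morphism past the unit and counit, and then invoke the already-established commutativity of condition (2) for $(L,R)$ at $F(X')$, together with the compatibility of $f$ with the shifted maps $``r_1"$ (Definition \ref{defiHigher} and Remark \ref{remaHigherBif}). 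This bookkeeping of the $f$-corrections in the parameter entry is routine but the most delicate step, and it is the only place where the suspended structure of $(F,f)$, as opposed to the bare functor $F$, is actually used.
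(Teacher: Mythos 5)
Your proof is correct; note that the paper itself gives no argument here (the proof is ``Left to the reader''), and your construction is precisely the intended ``obvious way'': reduce to Lemma \ref{changeParam} for the underlying ACB, transport the suspended bifunctor structure in the parameter slot along $(F,f)$ via $l_1' = l_1 \circ L(f,\mathrm{id})$ (and dually for $r_1'$), and absorb the resulting $f$-corrections in condition (2) of Definition \ref{defiSACRB} using the generalized-transformation squares $\diag{gen}$ of the unit and counit together with naturality. You correctly identify $\diag{gen}$ as the one mechanism that moves the parameter morphism $f_{X'}\colon F(TX') \to TF(X')$ past $\unit$ and $\counit$, which is indeed the only nontrivial step. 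One small notational point: for $r_1'$ the parameter suspension on $\cX^o$ is $(T_\cX^{-1})^o$, so what you compose with is the op of the shifted isomorphism $``f''\colon FT^{-1} \to T^{-1}F$ of Definition \ref{defiHigher} rather than $f^o$ itself; since you flag exactly this direction issue, it is a matter of bookkeeping, not a gap.
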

\begin{proof}
Left to the reader.
\end{proof}

The following proposition is an analogue of Proposition \ref{suspAdjExists} for suspended bifunctors.
\begin{prop} \label{suspACRBExists}
Let $(L,R)$ be an ACB such that $(L,l_1,l_2)$ (resp. $(R,r_1,r_2)$) is a suspended bifunctor. Then 
\begin{enumerate}
\item there exist unique $r_1,r_2$ (resp. $l_1,l_2$) such that $(R,r_1,r_2)$ (resp. $(L,l_1,l_2)$) is a suspended bifunctor and $(L,R)$ is a suspended ACB,
\item if $(L(X,-),l_2)$ is $\delta$-exact for some object $X$, then so is $(R(X,-),r_2)$.
\end{enumerate}
\end{prop}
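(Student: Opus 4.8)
The plan is to reduce everything, as far as possible, to the parameter-wise statement of Proposition~\ref{suspAdjExists} and to the mate formalism of Lemmas~\ref{adjab}, \ref{adjabbif}, \ref{isoadjoints} and \ref{isomorphACRB}. I treat the case where $(L,l_1,l_2)$ is given and one must produce $r_1,r_2$; the case with $(R,r_1,r_2)$ given is strictly symmetric. The first step is the construction of $r_2$ together with condition~(1) of Definition~\ref{defiSACRB}. For each fixed parameter $X \in \cX$, the pair $(L(X,-),R(X,-))$ is an ordinary adjoint couple and $(L(X,-),l_2)$ is a suspended functor, so Proposition~\ref{suspAdjExists}(1) yields a unique $r_2$ at $X$ turning $(R(X,-),r_2)$ into a suspended functor and $(L(X,-),R(X,-))$ into a suspended adjoint couple; this is exactly condition~(1). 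It then remains to check that this $X$-wise $r_2$ is natural in the parameter, i.e. that it is a genuine morphism of bifunctors $R(-,T(*)) \to TR(-,*)$. Since $r_2$ is by construction the mate of $l_2$ (Proposition~\ref{suspAdjExists} builds it through Point~3 of Theorem~\ref{adjsquare}, hence via the triangle identities), its naturality in $X$ follows from Lemma~\ref{adjabbif}, using that the unit and counit of the ACB are generalized transformations as in Definition~\ref{defiAdjBif}.

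Next I construct $r_1$. For each $X$ the map $l_{1,X}\colon L(TX,-) \to TL(X,-)$ is an isomorphism of left adjoints, the right adjoint of $TL(X,-)$ being $R(X,T^{-1}(-))$ by Lemma~\ref{composeadj} and Remark~\ref{TT-1Id}. Lemma~\ref{isoadjoints} then produces a unique isomorphism of the corresponding right adjoints $R(TX,-) \cong R(X,T^{-1}(-))$, which, after reindexing the parameter and composing with the already constructed $r_2$ and the $T/T^{-1}$ identities, defines the desired $r_1\colon R(T^{-1}(-),*) \to TR(-,*)$ (recall that the suspension on the parameter category $\cX^o$ is $(T^{-1})^o$, so $r_1$ raises $T^{-1}$ on the parameter slot to a $T$ in front, matching the shifted ``$r_1$'' occurring in the diagrams of Definition~\ref{defiSACRB}). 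Naturality in $X$ is automatic because every ingredient ($l_1$, $r_2$, and the units of $(T,T^{-1})$) is already a morphism of bifunctors; alternatively one invokes Lemma~\ref{isomorphACRB} directly. Uniqueness of $r_1$ follows because the first diagram of condition~(2), which contains the unit $\unit$, pins down $r_1$ by the usual adjunction argument (precompose with the unit and use a triangle identity), exactly as uniqueness was obtained in Lemma~\ref{adjab}.

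It then remains to verify the two structural axioms: that $(R,r_1,r_2)$ is itself a suspended bifunctor, i.e. the anticommutativity of Definition~\ref{defiSuspBif}, and that both diagrams of condition~(2) of Definition~\ref{defiSACRB} commute. Both are instances of transporting a (anti)commutative square across adjunctions. The anticommuting square relating $l_1$ and $l_2$ passes to the square relating $r_1$ and $r_2$ by the mate principle, a sign-aware application of Lemma~\ref{cube}: since mating is an additive operation and $(-f)^{-1}=-(f^{-1})$ for an isomorphism $f$, the sign $-1$ is preserved and $(R,r_1,r_2)$ inherits anticommutativity. The two diagrams of condition~(2) are the mate translations of the compatibilities already built into $r_1,r_2$, and the equivalence ``one mate diagram commutes iff the other does'' from Lemma~\ref{adjab} means that verifying the second diagram reduces to the first once $r_1$ has been defined through it.

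Finally, Point~2 is immediate: fixing $X$ with $(L(X,-),l_2)$ being $\delta$-exact, the construction above exhibits $(L(X,-),R(X,-))$ as a suspended adjoint couple with suspension isomorphisms $l_2,r_2$, so Proposition~\ref{suspAdjExists}(2) applies verbatim and gives that $(R(X,-),r_2)$ is $\delta$-exact with the same $\delta$. The step I expect to be the main obstacle is the sign bookkeeping in the third paragraph: one must check carefully that mating across the two adjunctions carries the anticommutativity of the $L$-square of Definition~\ref{defiSuspBif} to the correct anticommutativity of the $R$-square, and that the interaction of the parameter suspension $(T^{-1})^o$ with $r_2$ produces precisely the two diagrams of condition~(2) and not sign-flipped variants. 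Everything else is a routine, if lengthy, diagram chase of the kind already systematized by Lemmas~\ref{adjab}--\ref{isomorphACRB}.
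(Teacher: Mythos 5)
Your proposal is correct and follows essentially the same route as the paper's proof: $r_2$ is obtained parameter-wise from Proposition \ref{suspAdjExists} with naturality in $X$ supplied by the generalized-transformation diagrams underlying Lemma \ref{adjabbif}, $r_1$ arises as a mate built from shifted versions of $l_1$ and $l_2$, the two diagrams of condition (2) of Definition \ref{defiSACRB} are exactly the resulting mate diagrams, the anticommutativity transfers because the unit and counit are morphisms of suspended functors (your additivity-of-mates argument is an equivalent formulation of this), and Point 2 is Proposition \ref{suspAdjExists}(2) applied for fixed $X$. The only packaging difference is that the paper produces $r_1$ by a single application of Lemma \ref{adjabbif} with $a=``l_1"``l_2"$ (whence the condition-(2) diagrams come for free), whereas you split the construction into Lemma \ref{isoadjoints} for the $l_1$-part followed by composition with $r_2$ and then verify those diagrams afterwards --- the two constructions agree since the mate of a composite is the composite of the mates.
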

\begin{proof}
The existence and uniqueness of $r_2$ follows (for every parameter $X$) from Point 1 of Proposition \ref{suspAdjExists}. That $r_1$ is also natural in the first
variable follows from a large diagram of commutative squares 
of type $\diag{mf}$ and $\diag{gen}$. This yields in particular morphisms of suspended functors $\unit_X$ and $\counit_X$ for every $X$. For the existence and uniqueness of $r_1$, we use Lemma \ref{adjabbif} applied to $H'=T$, $H=Id$, $J_1=L(*,-)$, $K_1=R(*,-)$, $J_2=L(T^{-1}(*),-)$, $K_2=R(T^{-1}(*),-)$ and $a=``l_1"``l_2"$ (the order is important). We obtain two diagrams which are easily seen to be equivalent to the ones required for $(L,R)$ to be a suspended ACB (Point 2 of Definition \ref{defiSACRB}). The anticommutativity required by Definition \ref{defiSuspBif} for $r_2$ and $r_1$ may be proved using the anticommutativity of $l_1$ and $l_2$ and the fact that $\unit_X$ and $\counit_{T^{-1}X}$ are morphisms of suspended functors. Point 2 is proved in the same way as Point 2 of Proposition \ref{suspAdjExists}.
\end{proof}

We need a version of Lemma \ref{adjabbif} for suspended bifunctors.

\begin{lemm} \label{adjabbifT}
Lemma \ref{adjabbif} holds when all the functors, bifunctors and adjunctions become suspended ones.
\end{lemm}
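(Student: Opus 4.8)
The plan is to follow the proof of Lemma \ref{adjabbif} verbatim and simply add the verification that the suspension isomorphisms are respected. Forgetting the suspended structures, a suspended bifunctor is in particular an ordinary bifunctor, a suspended ACB an ordinary ACB, and a morphism of suspended bifunctors a morphism of bifunctors; hence Lemma \ref{adjabbif} already produces a \emph{unique} morphism of bifunctors $b$ (resp. $a$) making $\diag{H}$ and $\diag{H'}$ commute, and this $b$ is the only possible candidate. So the whole content of the suspended statement is that this same $b$ is automatically a morphism of suspended bifunctors, i.e. that it is compatible with the two suspension morphisms $b_1$ and $b_2$ in the sense of Definition \ref{defiMorSuspBif}. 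Since we work only with suspended, not exact, structures, there are no signs to keep track of. I would only treat the case where $a$ is given, the other being symmetric.

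I would split the compatibility into its two variables. Compatibility with the suspension $b_2$ in the non-parameter variable is fibrewise: for each fixed parameter $X$, the couple $(J_i(X,-),K_i(X,-))$ is a suspended adjoint couple of functors by Point 1 of Definition \ref{defiSACRB}, the functors $H,H'$ are suspended, and $a(X,-)$ is a morphism of suspended functors. The suspended version of Lemma \ref{adjab} contained in Theorem \ref{adjsquareT} then yields that the mate $b(X,-)$ is again a morphism of suspended functors, which is exactly the square expressing compatibility of $b$ with $b_2$. The naturality of $b$ in the parameter $X$ itself is unchanged from Lemma \ref{adjabbif}, resting on $\unit_1$ and $\counit_2$ being generalized transformations in the sense of Definition \ref{defiAdjBif}.

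The only genuinely new point, and the one I expect to be the main obstacle, is compatibility with the suspension $b_1$ in the parameter variable. This is the analogue, for the parameter-suspension, of the functoriality-in-$X$ step of Lemma \ref{adjabbif}; but it cannot be deduced from the generalized-transformation property, because there is no natural morphism $X \to TX$ in $\cX$ to feed into $\diag{gen}$. Instead it is precisely the extra structure axiomatised by Point 2 of Definition \ref{defiSACRB}. Concretely, I would write $b$ as the defining composition $K_2 H\counit_1 \circ K_2 a K_1 \circ \unit_2 H'K_1$ from the proof of Lemma \ref{adjab}, paste it against its shifted copy, and fill the resulting rectangle with naturality squares $\diag{mf}$ and the two commuting squares of Point 2 of Definition \ref{defiSACRB} relating the parameter-suspensions $r_1$ and $l_1$ to $l_2$. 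The only real care needed is the bookkeeping of the order in which $b_1$ and $b_2$ are combined, as flagged in Remark \ref{remaHigherBif}; once that order is fixed the chase is routine, and I would leave it to the reader in the style of Lemma \ref{adjabbif}.
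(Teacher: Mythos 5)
Your proposal is correct and follows essentially the same route as the paper, whose entire proof is the one-line observation that the proof of Lemma \ref{adjabbif} goes through because it only involves compositions and commutative diagrams that exist in the suspended case. Your writeup is in fact more explicit than the paper's: in particular, your isolation of the parameter-suspension compatibility as the one genuinely new verification, handled by Point 2 of Definition \ref{defiSACRB} (together with the first square of Definition \ref{defiMorSuspBif} for $a$), spells out precisely what the paper's terse proof leaves implicit.
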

\begin{proof}
The proof of Lemma \ref{adjabbif} works since it only involves compositions and commutative diagrams that exist in the suspended case. \\
\end{proof}

Finally, there is also a version of Theorem \ref{adjsquare} for (suspended) bifunctors, which is our main tool for the applications. For this reason, we state it 
in full detail.

\begin{theo} \label{theoGenAdj}
Let $L$, $R$, $L'$, $R'$, $F_1$, $G_1$, $F_2$, $G_2$ be (suspended) functors whose sources and targets are specified by the diagram
(recall the notation of Definition \ref{defiAdjBif}), 
$$\xymatrix{
\cC_1 \ar@<1.5ex>[d]^{G_1} \ar@{}[d]|{\cX} \ar@<0.5ex>[rr]^{L} & & \cC_2 \ar@<1.5ex>[d]^{G_2} \ar@{}[d]|{\cX} \ar@<0.5ex>[ll]^{R} \\
\cC_1' \ar@<1.5ex>[u]^{F_1} \ar@<0.5ex>[rr]^{L'} & & \cC_2' \ar@<1.5ex>[u]^{F_2} \ar@<0.5ex>[ll]^{R'} \\
}
$$
and let $f_L$, $f'_L$, $g_L$, $g'_L$, $f_R$, $f'_R$, $g_R$ and $g'_R$ be morphisms of bifunctors (resp. suspended bifunctors) whose sources and targets will be as follows:
$$
\begin{array}{lr}
\xymatrix{
LF_1(*,-) \ar@<0.5ex>[r]^{f_L} & F_2 (*,L'(-)) \ar@<0.5ex>[l]^{f'_L} 
}
&
\xymatrix{
L'G_1(*,-) \ar@<0.5ex>[r]^{g'_L} & G_2 (*,L(-)) \ar@<0.5ex>[l]^{g_L}
}
\\
\xymatrix{
F_1(*,R'(-)) \ar@<0.5ex>[r]^{f'_R} & RF_2(*,-) \ar@<0.5ex>[l]^{f_R}
}
&
\xymatrix{
G_1(*,R(-)) \ar@<0.5ex>[r]^{g_R} & R' G_2(*,-) \ar@<0.5ex>[l]^{g'_R}
} \\
\end{array}
$$
Let us consider the following diagrams, in which the maps and their directions will be the only obvious ones in all the cases discussed below.
{\small
$$\xymatrix{
F_2(X, L'G_1(X,-)) \ar@{-}[d] \ar@{-}[r] \ar@{}[dr]|{\diag{L}} & L F_1(X,G_1(X,-)) \\
F_2(X, G_2(X, L(-))  & L \ar[l] \ar[u] 
}
\xymatrix{
G_1(C,-) \ar[d] \ar[r] \ar@{}[dr]|{\diag{G_1}} & G_1(C, R L(-)) \ar@{-}[d] \\
R'L'G_1(C,-) \ar@{-}[r] & R'G_2(C, L(-)) 
}
$$
$$\xymatrix{
F_1(X,R' G_2(X,-)) \ar@{-}[d] \ar@{-}[r] \ar@{}[dr]|{\diag{R}} & R F_2(X, G_2(X,-)) \\
F_1(X, G_1(X, R(-))) & R \ar[l] \ar[u] 
}
\xymatrix{
F_1(C,-) \ar[d] \ar[r] \ar@{}[dr]|{\diag{F_1}} & F_1(C, R' L'(-)) \ar@{-}[d] \\
RLF_1(C,-) \ar@{-}[r] & RF_2(C, L'(-))
}
$$
$$\xymatrix{
L' \ar@{}[dr]|{\diag{L'}} & G_2(X, F_2(X, L'(-))) \ar[l] \ar@{-}[d] \\
L' G_1(X, F_1(X,-)) \ar[u] \ar@{-}[r] & G_2(X, L F_1(X,-))
}
\xymatrix{
L'G_1(C, R(-)) \ar@{-}[d] \ar@{-}[r] \ar@{}[dr]|{\diag{G_2}} & G_2(C, L R(-)) \ar[d] \\
L'R' G_2(C,-) \ar[r] & G_2(C,-)
}
$$
$$\xymatrix{
R' \ar@{}[dr]|{\diag{R'}} & G_1(X, F_1(X, R'(-))) \ar[l] \ar@{-}[d] \\
R' G_2(X, F_2(X,-)) \ar[u] \ar@{-}[r] & G_1(X, R F_2(X,-))
}
\xymatrix{
LF_1(C, R'(-)) \ar@{-}[d] \ar@{-}[r] \ar@{}[dr]|{\diag{F_2}} & F_2(C, L' R'(-)) \ar[d] \\
LR F_2(C,-) \ar[r] & F_2(C,-)
}
$$
}
Then
\begin{itemize}
\item[1.] Let $(G_i,F_i)$, $i=1,2$ be ACBs (resp. suspended ACBs). Let $g_L$ (resp. $f_L$) be given, then there is a unique (suspended) $f_L$ (resp. $g_L$) such that $\diag{L}$ and $\diag{L'}$ are commutative for any $X$. Let $g_R$ (resp. $f_R$) be given, then there is a unique (suspended) $f_R$ (resp. $g_R$) such that $\diag{R}$ and $\diag{R'}$ are commutative for any $X$.
\item[2.] Let $(L,R)$ and $(L',R')$ be adjoint couples (resp. suspended adjoint couples). Let $f_L$ (resp. $f'_R$) be given, then there is a unique (suspended) $f'_R$ (resp. $f_L$) such that $\diag{F_1}$ and $\diag{F_2}$ are commutative. Let $g'_L$ (resp. $g_R$) be given, then there is a unique (suspended) $g_R$ (resp. $g'_L$) such that $\diag{G_1}$ and $\diag{G_2}$ are commutative.
\item[3.] Assuming $(G_i,F_i)$, $i=1,2$, are ACBs (resp. suspended ACBs), $(L,R)$ and
$(L',R')$ are (suspended) adjoint couples, and $g_L$ and $g'_L=g_L^{-1}$ are given (resp. $f_R$ and $f'_R=f_R^{-1}$). By 1 and 2, we obtain $f_L$ and $g_R$ (resp. $g_R$ and $f_L$).
We then may construct $f_R$ and $f'_R$ (resp. $g_L$ and $g'_L$) which are inverse to each other.  
\end{itemize}
\end{theo}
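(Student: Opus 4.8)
The plan is to mirror the proof of Theorem \ref{adjsquare} essentially verbatim, reducing each point to the mate lemmas that have already been established and checking in addition that all the morphisms produced are natural in the parameter (that is, morphisms of bifunctors) and, in the suspended setting, morphisms of suspended bifunctors. Points 1 and 2 are the parametrized analogues of Points 1 and 2 of Theorem \ref{adjsquare}, and Point 3 is deduced from 1 and 2 by exactly the same explicit compositions and commutative diagrams.

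First I would dispatch Point 1. Here the vertical couples $(G_i,F_i)$ are ACBs, which is precisely the hypothesis of Lemma \ref{adjabbif}. Taking $H=L$, $H'=L'$ (respectively $H=R$, $H'=R'$) and identifying $g_L$ and $f_L$ (respectively $g_R$ and $f_R$) with the two mates $a$ and $b$ of that lemma, the diagrams $\diag{L}$ and $\diag{L'}$ (respectively $\diag{R}$ and $\diag{R'}$) become exactly the diagrams $\diag{H}$ and $\diag{H'}$ of Lemma \ref{adjabbif}. Since that lemma produces morphisms of bifunctors, naturality in the parameter comes for free and nothing extra is needed; in the suspended case one invokes Lemma \ref{adjabbifT} instead.

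Next comes Point 2, which carries the only genuinely new content. Now $(L,R)$ and $(L',R')$ are only ordinary (resp.\ suspended) adjoint couples, unrelated to the parameter, so Lemma \ref{adjabbif} does not apply directly. Instead, for each fixed $X \in \cX$ I would apply Lemma \ref{adjab} (resp.\ its suspended form from Theorem \ref{adjsquareT}) to the horizontal square, obtaining the mate for that $X$ as the explicit composition of the given morphism with the units and counits of $(L,R)$ and $(L',R')$; these realize the diagrams $\diag{F_1}$, $\diag{F_2}$ and $\diag{G_1}$, $\diag{G_2}$. The one point to verify, absent in Theorem \ref{adjsquare}, is naturality in $X$: the units and counits of $(L,R)$ and $(L',R')$ do not involve the parameter, while the input morphism (say $f_L$) is a morphism of bifunctors and the $F_i$, $G_i$ are bifunctorial in $X$, so the defining composition is natural in $X$ and the mate is indeed a morphism of bifunctors. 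This is the same easy exercise as in the proof of Lemma \ref{adjabbif}.

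Finally, Point 3 is obtained exactly as in Theorem \ref{adjsquare}: starting from $g_L$ and $g'_L=g_L^{-1}$, Points 1 and 2 yield $f_L$ and $g_R$, and I would then define $f_R$ and $f'_R$ by the same two explicit compositions written there, now read with the parameter $X$ inserted throughout, and prove $f_R\circ f'_R=\mathrm{id}$ and $f'_R\circ f_R=\mathrm{id}$ using verbatim the two large diagrams of that proof, built only from squares of type $\diag{mf}$, $\diag{adj}$ and the already-commuting $\diag{L}$, $\diag{L'}$, $\diag{G_1}$, $\diag{G_2}$. I expect the main obstacle to be organizational rather than conceptual: confirming naturality in $X$ in Point 2 and, in the suspended setting, checking that the constructed mates satisfy Definition \ref{defiMorSuspBif} (commutation with both suspension structures $b_1$ and $b_2$). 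Both are routine diagram chases, made available by the fact that the suspended analogues of all the input lemmas (Theorem \ref{adjsquareT}, Lemma \ref{adjabbifT}) are already recorded, and the combinatorics of the arrows is identical to the non-parametrized case.
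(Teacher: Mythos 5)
Your proposal is correct and takes essentially the same approach as the paper, whose entire proof is the one-line remark that the argument of Theorem \ref{adjsquare} goes through verbatim with Lemma \ref{adjabbif} (resp.\ Lemma \ref{adjabbifT}) replacing Lemma \ref{adjab} in Points 1 and 2, and with Point 3 obtained by the same explicit compositions and diagrams. Your added care in Point 2 --- noting that there the adjunctions $(L,R)$, $(L',R')$ carry no parameter, so one applies Lemma \ref{adjab} for each fixed $X$ and verifies naturality in $X$ from the defining composition, just as in the proof of Lemma \ref{adjabbif} --- is exactly the routine detail the paper leaves implicit.
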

\begin{proof}
The proof is the same as for Theorem \ref{adjsquare}, but using Lemma \ref{adjabbif} (or \ref{adjabbifT}) instead of Lemma \ref{adjab} for Points 1 and 2.
\end{proof}
\begin{rema}
We didn't state the analogues of Points 1' and 2' of Theorem \ref{adjsquare} in this context because we don't need them.
\end{rema}

\section{Dualities} \label{Dualities}

We now introduce our main subject of interest: duality. As before, we state everything for the suspended (or triangulated) case and the usual case in a uniform way. 

\subsection{Categories with duality}

\begin{defi}\label{catwithduality}
A category with duality is a triple $(\cC,D,\bid)$ where $\cC$ is a category with an adjoint couple $(D,D^o,\bid,\bid^o)$ from $\cC$ to $\cC^o$. When $\bid$ is an isomorphism, $D$ is an equivalence of categories and we say that the duality is {\it strong}. A suspended (resp. triangulated) category with duality is defined in the same way, but $(D,d)$ is a suspended (resp. $\delta$-exact) functor 
on a suspended (resp. triangulated) category $\cC$, and the adjunction is suspended.
\end{defi}

\begin{rema}
Observe that the standard condition $(D^o \bid^o) \circ (\bid^o D)=id_{D^o}$
is satisfied by the definition of an adjunction.
The only difference between our definition and Balmer's definition \cite[Def. 2.2]{Balmer00} is that we don't require the isomorphism $d:DT \to T^{-1}D$ to be an equality in the suspended or triangulated case. Assuming that duality and suspension
strictly commute is as bad as assuming that the internal Hom 
and the suspension strictly
commute, or (by adjunction) that the tensor product and the suspension strictly commute.
This is definitely a too strong condition when checking strict commutativity
of diagrams in some derived category. Dropping all signs in this setting
when defining these isomorphisms just by saying ``take the canonical ones''
may even lead to contradictions as the results of the Appendix show.
When $d=id$, we say that the duality is {\it strict}.
\end{rema}

\begin{defi} \label{defiSuspTriCat}
Let $(\cC,D,\bid)$ be a triangulated category with duality for which $(D,d)$ is $\delta$-exact. By Definition \ref{shiftedAdjoint} we get a shifted adjoint couple $T(D,D^o)=(TD,TD^o,\bid',(\bid')^o)$. 
We define the suspension of $(\cC,D,\bid)$ as $T(\cC,D,\bid)=(\cC,TD,-\delta \bid')$. Note that $(TD,Td)$ is $(-\delta)$-exact because $T$ is $(-1)$-exact.
\end{defi}

\begin{rema}
This is the definition of \cite[Definition 2.8]{Balmer00} adapted to cover the non strict case, and the next one generalizes \cite[Definition 2.13]{Balmer00} to the non
strict case.
\end{rema}

\begin{defi}\label{Wgraded}
For any triangulated category with strong duality $(\cC,D,\bid)$,
we define its i-th Witt group $W^i$ by
$\W^i(\cC,D,\bid):=W(T^i(\cC,D,\bid))$
(extending \cite[2.4 and Definitions 2.12 and 2.13]{Balmer00}
in the obvious way). If $D$ and $\bid$ are understood,  
we sometimes also write $\W^i(\cC)$ or $\W^i(\cC,D))$ for short.
\end{defi}

\begin{rema}
In concrete terms, this means that the condition of
\loccit\ for an element in $W^1(\cC)$ represented 
by some $\phi$ to be symmetric is that
$(TdT^{-1}) \circ \phi = (D^oT\phi) \circ \bid$ whereas in the 
strict case the $(TdT^{-1})$ may be omitted.
\end{rema}  

\begin{rema}
To define a Witt group, the duality has to be strong (that is $\bid$ has to be an 
isomorphism), but this is not necessary to prove all sorts of commutative diagrams, 
so there is no reason to assume it here in general.
\end{rema}

\subsection{Duality preserving functors and morphisms}

\begin{defi} \label{DualPresFunc_defi}
A duality preserving functor from a (suspended, triangulated) category with duality $(\cC_1,D_1,\bid_1)$ to another one $(\cC_2,D_2,\bid_2)$ is a pair $\pair{F,f}$ where $F$ is a (suspended, $\delta$-exact) functor from $\cC_1$ to $\cC_2$, and $f:FD_1^o \trafo D_2^o F^o$ is a morphism (in $\cC_2$) of (suspended, $\delta$-exact) functors where $f$ and $f^o$ are mates as in Lemma \ref{adjab} (resp. Theorem \ref{adjsquareT}) when setting $J_1=D_1$, $K_1=D_1^o$, $J_2=D_2$, $K_2=D_2^o$, $H=F^o$, $H'=F$, $a=f^o$ and $b=f$. In other words, the diagram $\diag{H}$ (equivalent to $\diag{H'}$) of Lemma \ref{adjab} must commute:
$$\xymatrix{
F \ar[d]_{\bid_2 F} \ar[r]^{F \bid_1}\ar@{}[dr]|{\diag{P}} & F D_1^o D_1 \ar[d]^{f D_1}  \\
D_2^o D_2 F  \ar[r]_{D_2^o f^o} & D_2^o F^o D_1  
}$$
We sometimes denote the functor simply by $\pair{F}$ if $f$ is understood. When $f$ is an isomorphism, we say the functor is strong duality preserving.
\end{defi}

When the dualities are both strict and strong and the functor
is strongly duality preserving, then this coincides with the usual definition (see for example \cite[Definition 2.6]{Gille02} where Point 2 corresponds to the fact that $f$ is a morphism o suspended functors in our definition and is only used in the suspended case).
Duality preserving functors are composed in the obvious way by setting $\pair{F',f}\pair{F,f} = \pair{F'F,f'F^o \circ F' f}$.

\begin{defi} \label{MorphDualPresFunc_defi}
A morphism of (suspended, exact) duality preserving functors from $\pair{F,f}$ to $\pair{F',f'}$ (with same source and target) is a morphism of the underlying (suspended, exact) functors $\rho:F \trafo F'$ such that the diagram
$$\xymatrix{
F D_1^o \ar[d]_{\rho D_1^o} \ar[r]^{f} \ar@{}[dr]|{\diag{M}} & D_2^o F^o \ar[d]^{D_2^o \rho^o} \\
F' D_1^o \ar[r]_{f'} & D_2^o (F')^o  
}$$
commutes. We say that such a morphism is strong if the underlying morphism of functors is an isomorphism.
\end{defi}

Composing two morphisms between duality preserving functors obviously gives another one, and composition preserves being strong.

The proofs of following two propositions are
straightforward
(see also \cite[Theorem 2.7]{Gille02} for 
a proof of the first one in the strict case).

\begin{prop}\label{dualPresIsTransfer_prop}
A $1$-exact strong duality preserving functor $\pair{F,f}$ 
between triangulated categories with strong dualities induces a morphism on Witt groups by sending an element represented by a form $\psi: A \to D_1(A)$ to the class of the form $f_A \circ F(\psi)$.
\end{prop}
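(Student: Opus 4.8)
The plan is to verify three things: that the proposed formula $\psi \mapsto f_A \circ F(\psi)$ sends a symmetric form to a symmetric form (so that the map is well defined on the set of symmetric forms), that it sends metabolic/hyperbolic forms to metabolic ones (so that it descends to Witt groups), and that it is additive. Throughout I will lean on the fact that $\pair{F,f}$ is a $1$-exact strong duality preserving functor, so that $F$ is $\delta$-exact with $\delta=1$ and $f$ is an isomorphism satisfying the diagram $\diag{P}$ of Definition \ref{DualPresFunc_defi}.

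First I would treat the symmetry. A form $\psi:A\to D_1(A)$ being symmetric means (up to the suspension correction recorded in the remark following Definition \ref{Wgraded}, but in the basic $W^0$ case simply) that $D_1^o(\psi)\circ \bid_{1,A}=\psi$, i.e. $\psi$ equals its own transpose. Applying $F$ and then post-composing with $f_A$ gives the candidate form $f_A\circ F(\psi):F(A)\to D_2 F(A)$; I must check $D_2^o(f_A\circ F\psi)\circ \bid_{2,F(A)}=f_A\circ F\psi$. The key computation is to rewrite the left-hand side using that $F$ is a functor (so $F(D_1^o\psi\circ\bid_{1,A})=F(\psi)$ by symmetry of $\psi$) together with the commutativity of $\diag{P}$, which relates $\bid_2 F$ and $F\bid_1$ through $f$ and $f^o$. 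This is the technical heart: one expands $D_2^o(f_A\circ F\psi)\circ\bid_{2,F(A)}$, inserts $\diag{P}$ to convert $\bid_{2,F(A)}$ into $f_A D_1\circ \bid_{1}$-type data, uses that $f$ and $f^o$ are mates, and collapses everything back using functoriality of $F$ and the symmetry hypothesis on $\psi$. I expect this diagram chase to be the main obstacle, essentially because it is exactly where the axiom $\diag{P}$ is forced to do its work and where the mate relationship between $f$ and $f^o$ must be used carefully.

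Next I would handle well-definedness on Witt classes. Since $f$ is an isomorphism, $f_A\circ F(\psi)$ is an isomorphism whenever $\psi$ is, so nondegenerate forms map to nondegenerate forms. For the Witt relation I must show that a neutral (metabolic) form maps to a neutral form: if $\psi$ admits a Lagrangian, i.e. fits into an exact triangle exhibiting a sublagrangian with the appropriate self-dual structure, then applying the $1$-exact functor $F$ preserves this exact triangle (this is where $\delta=1$ is used, so no sign obstruction appears in the translated triangle), and $f$ transports the self-duality datum of the Lagrangian to one for $f_A\circ F(\psi)$. Thus the image is again metabolic, so the assignment respects the subgroup of neutral forms and descends to Witt groups. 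I would only sketch this, citing Balmer's description of neutral forms as in \cite{Balmer00}, rather than reproving the metabolic machinery.

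Finally, additivity is routine: orthogonal sum of forms corresponds to the biproduct in the additive category, $F$ is an additive functor and $f$ is natural, so $f_{A\oplus A'}\circ F(\psi\perp\psi')=(f_A\circ F\psi)\perp(f_{A'}\circ F\psi')$ up to the canonical identifications of $F$ and $D_2^o$ with biproducts. Assembling the three points, the assignment induces a well-defined group homomorphism $W^i(\cC_1,D_1,\bid_1)\to W^i(\cC_2,D_2,\bid_2)$, which is the claim. The only genuinely delicate step is the symmetry verification, and I would present that as an explicit commutative diagram built out of $\diag{P}$, the naturality square $\diag{mf}$ for $f$, and functoriality of $F$.
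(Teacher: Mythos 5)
Your proposal is correct and is exactly the standard argument the paper has in mind: the paper gives no proof beyond declaring it straightforward and citing \cite[Theorem 2.7]{Gille02} for the strict case, and your three-step plan (symmetry via $\diag{P}$ and the mate relation between $f$ and $f^o$, neutrality preserved because the $1$-exact $F$ carries Lagrangian triangles to Lagrangian triangles with no sign obstruction, additivity from additivity of $F$ and naturality of $f$) is precisely that classical proof, correctly flagging the only subtlety, namely the suspension correction $(TdT^{-1})$ in the non-strict shifted case.
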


\begin{prop} \label{samemorphismWitt_prop}
A duality preserving isomorphism between $1$-exact strong duality preserving functors ensures that they induce the same morphism on Witt groups.
\end{prop}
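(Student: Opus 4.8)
The plan is to show that, for every symmetric form, the two functors produce \emph{isometric} forms, the isometry being supplied by the isomorphism $\rho$ itself; since isometric forms represent the same class in the Witt group, the two induced homomorphisms must coincide.

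First I would fix a symmetric form $\psi\colon A \to D_1(A)$ representing a class in $\W^i(\cC_1,D_1,\bid_1)$. By Proposition \ref{dualPresIsTransfer_prop}, the two duality preserving functors $\pair{F,f}$ and $\pair{F',f'}$ send this class to the classes of the symmetric forms $f_A \circ F(\psi)\colon FA \to D_2(FA)$ and $f'_A \circ F'(\psi)\colon F'A \to D_2(F'A)$, respectively. Since the morphism $\rho$ is \emph{strong}, each component $\rho_A\colon FA \to F'A$ is an isomorphism, and the whole point is to check that $\rho_A$ is an isometry between these two forms, that is, that
\[
f_A \circ F(\psi) = D_2(\rho_A) \circ f'_A \circ F'(\psi) \circ \rho_A .
\]

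The verification is a short diagram chase using exactly the two pieces of data available. Evaluating the defining square $\diag{M}$ of Definition \ref{MorphDualPresFunc_defi} at $A$, and unwinding the opposite-category bookkeeping so that $D$ is read as the honest contravariant duality, forces (by source/target matching) the identity $f_A = D_2(\rho_A)\circ f'_A \circ \rho_{D_1 A}$; meanwhile naturality of $\rho\colon F \to F'$ applied to the morphism $\psi$ gives $\rho_{D_1A}\circ F(\psi)=F'(\psi)\circ \rho_A$. Substituting the first into $f_A\circ F(\psi)$ and then the second produces
\[
f_A\circ F(\psi)=D_2(\rho_A)\circ f'_A\circ \rho_{D_1A}\circ F(\psi)=D_2(\rho_A)\circ f'_A\circ F'(\psi)\circ \rho_A,
\]
which is precisely the desired isometry relation. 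Hence the two image forms are isometric, and therefore equal in the Witt group, so the two induced homomorphisms agree on $[\psi]$; as $\psi$ was arbitrary, they agree on all of $\W^i$.

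Finally I would note that the computation above was carried out in a single degree, so it remains to see it holds in every degree $i$ of the graded Witt group. This is automatic: because $\rho$ is a morphism of \emph{suspended} (exact) duality preserving functors, it induces a morphism of duality preserving functors between the shifted data on $T^i(\cC_1,D_1,\bid_1)$, and the same isometry chase then applies verbatim. The only real point requiring care is the compatibility of $\rho$ with the suspension isomorphisms when passing to the shifted dualities, which is the kind of place where sign errors could creep in; but here it is harmless, since $\rho$ is a morphism of exact functors (Definition \ref{morphTriangFunc}) and $F$, $F'$ share the sign $\delta=1$, so the relevant square commutes on the nose and no discrepancy arises. Everything else is the formal isometry identity established above, so I expect the contravariance/$o$ bookkeeping and this suspension-sign check to be the only genuinely delicate points.
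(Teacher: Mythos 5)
Your proof is correct and is exactly the straightforward isometry argument the paper has in mind (the paper omits the proof as ``straightforward,'' pointing to \cite[Theorem 2.7]{Gille02} for the strict case): evaluating $\diag{M}$ at $A$ gives $f_A = D_2(\rho_A)\circ f'_A\circ \rho_{D_1A}$, and combined with naturality of $\rho$ at $\psi$ this exhibits $\rho_A$ as an isometry from $(FA, f_A\circ F(\psi))$ to $(F'A, f'_A\circ F'(\psi))$, so the image classes agree. Your closing observation---that the shifted groups $\W^i$ are handled because $\rho$ is a morphism of suspended ($1$-exact) functors, so $\diag{sus}$ commutes with sign $\delta_F\delta_{F'}=1$ and the same chase applies to the shifted data---is precisely the only remaining point needing care, and you treat it correctly.
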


\begin{defi}
\begin{enumerate}
\item Let $(\cC,D,\bid)$ be a triangulated category with duality and let $\cA$ be a full triangulated subcategory of $\cC$ preserved by $D$. Then, $(\cA,D|_{\cA},\bid|_{\cA})$ is trivially a triangulated category with duality, and we say that the duality of $\cC$ {\it restricts} to $\cA$.
\item Let $\pair{F,f}:(\cC_1,D_1,\bid_1) \to (\cC_2,D_2,\bid_2)$ be a 
duality preserving functor between triangulated categories
with dualities as in Definition \ref{DualPresFunc_defi}. 
Assume that there are full triangulated subcategories $\cA_i \subset \cC_i$, $i=1,2$ such that $D_i$ restricts to $\cA_i$ and such that $F|_{\cA_1}$ factors through $\cA_2$. Then we say that the duality preserving pair $\pair{F,f}$ restricts to the subcategories 
$\cA_1$ and $\cA_2$. 
\item Let $\rho$ be a morphism between two such restricting functors $\pair{F,f}$ and $\pair{F',f'}$, as in Definition \ref{MorphDualPresFunc_defi}, then the restriction of $\rho$ automatically defines a morphism of exact duality preserving functors between the restricted functors. 
\end{enumerate}
\end{defi}

%
\begin{lemm}\label{restricteddualmor}       
Let $\pair{F,f}:(\cC_1,D_1,\bid_1) \to (\cC_2,D_2,\bid_2)$ be a 
duality preserving functor between triangulated categories
with duality that restricts to the subcategories 
$\cA_1$ and $\cA_2$. 
\begin{enumerate}
\item Assume that the restricted dualities
on $\cA_1$ and $\cA_2$ are strong and that $f|_{F(\cA_1^o)}$
is an isomorphism. Then the restriction of the duality preserving
pair $\pair{F,f}$ to $\cA_1$ and $\cA_2$ is a 
a strongly duality preserving functor between
triangulated categories with strong duality
and therefore induces a morphism 
$$\W(\cA_1,D_1|_{\cA_1},\bid_1|_{\cA_1}) \to \W(\cA_2,D_2|_{\cA_2},\bid_2|_{\cA_2})$$
on Witt groups. 
\item If a morphism $\rho$ between two such duality preserving functors is strong when restricted to $\cA_1$, then they induce the same morphisms on Witt groups.
\end{enumerate}
\end{lemm}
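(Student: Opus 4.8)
The plan is to deduce both statements directly from the two transfer propositions \ref{dualPresIsTransfer_prop} and \ref{samemorphismWitt_prop}, so that essentially all the work consists in checking that the restricted data satisfy their hypotheses. First I would record, using the definition preceding the lemma (the notion of a duality preserving pair restricting to $\cA_1$ and $\cA_2$), that the restriction $\pair{F|_{\cA_1},f|_{\cA_1}}$ is again a duality preserving functor, now from $(\cA_1,D_1|_{\cA_1},\bid_1|_{\cA_1})$ to $(\cA_2,D_2|_{\cA_2},\bid_2|_{\cA_2})$. The defining diagram $\diag{P}$ of Definition \ref{DualPresFunc_defi} holds on $\cA_1$ simply because it is the restriction of the already commuting diagram on $\cC_1$, all functors and natural transformations involved restricting by hypothesis.

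For Part 1, I would then verify the remaining hypotheses of Proposition \ref{dualPresIsTransfer_prop}. The source and target $(\cA_1,D_1|_{\cA_1},\bid_1|_{\cA_1})$ and $(\cA_2,D_2|_{\cA_2},\bid_2|_{\cA_2})$ are triangulated categories with \emph{strong} duality by assumption, since the restricted $\bid_i$ are isomorphisms. The restricted pair is \emph{strongly} duality preserving because $f|_{F(\cA_1^o)}$ is assumed invertible, i.e. $f_A$ is an isomorphism for every object $A$ of $\cA_1$ (viewed in $\cA_1^o$). Finally, the $1$-exactness required by Proposition \ref{dualPresIsTransfer_prop} is inherited by $F$ from $\cC_1$ to the triangulated subcategory $\cA_1$ landing in $\cA_2$. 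Proposition \ref{dualPresIsTransfer_prop} then produces the asserted morphism $\W(\cA_1,D_1|_{\cA_1},\bid_1|_{\cA_1}) \to \W(\cA_2,D_2|_{\cA_2},\bid_2|_{\cA_2})$, given on a form $\psi$ by $f_A \circ F(\psi)$.

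For Part 2, I would invoke the third clause of the definition preceding the lemma: the restriction of $\rho$ is automatically a morphism of the restricted (exact) duality preserving functors, i.e. the diagram $\diag{M}$ of Definition \ref{MorphDualPresFunc_defi} commutes on $\cA_1$. By Part 1, both $\pair{F,f}$ and $\pair{F',f'}$ restrict to $1$-exact \emph{strong} duality preserving functors between triangulated categories with strong duality, and the hypothesis that $\rho$ is strong on $\cA_1$ means precisely that the underlying $\rho|_{\cA_1}$ is an isomorphism of functors. Hence $\rho|_{\cA_1}$ is a duality preserving \emph{isomorphism} between two $1$-exact strong duality preserving functors, and Proposition \ref{samemorphismWitt_prop} yields at once that the two induced Witt group morphisms coincide.

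The argument is essentially bookkeeping, so I do not expect a genuine obstacle; the only points requiring a little care are matching the precise hypotheses of the two cited propositions. Specifically, one must make sure that strongness is used for the \emph{restricted} dualities (not for $\bid_1,\bid_2$ on all of $\cC_i$), that the invertibility of $f_A$ is exactly the one supplied by the hypothesis on $F(\cA_1^o)$, and that the $1$-exactness demanded by Propositions \ref{dualPresIsTransfer_prop} and \ref{samemorphismWitt_prop} really is available for the restricted functors. Once these are lined up, both parts follow immediately from the cited propositions.
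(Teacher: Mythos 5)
Your proposal is correct and follows the same route as the paper: the paper's proof likewise observes that the required diagrams in $\cA_2$ commute because they already commute in $\cC_2$, and then invokes Propositions \ref{dualPresIsTransfer_prop} and \ref{samemorphismWitt_prop}. You have merely filled in the bookkeeping (restricted strongness, invertibility of $f$ on $F(\cA_1^o)$, exactness passing to the full triangulated subcategories) that the paper leaves implicit.
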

\begin{proof} 
One has to check that certain diagrams in $\cA_2$ are commutative.
This follows as they are already commutative in $\cC_2$ by assumption. 
Now Propositions \ref{dualPresIsTransfer_prop} and \ref{samemorphismWitt_prop} prove the claims.
\end{proof}

%
\section{Consequences of the closed monoidal structure} \label{consClosedMon}

We now recall a few notions on tensor products and internal Hom functors (denoted by $\HHom{-}{*}$) and prove very basic facts related to the suspension. 
A category satisfying the axioms of this section deserves 
to be called a 
`` (suspended or triangulated)
symmetric monoidal closed category''. Then we prove that the dualities defined using the internal Hom on such a category are naturally equipped with the necessary data to define (triangulated) categories with dualities. 

\subsection{Tensor product and internal Hom} \label{TensProHom}

Let $(\cC,\TTens)$ be a symmetric monoidal category (see \cite[Chapter VII]{MacLane98})
with an internal Hom $\HHom{-}{-}$ adjoint to the tensor product. More
precisely, we assume that $(-\TTens *, \HHom{*}{-})$ is an ACB. We denote by
$\cc=\cc^{-1}$ the symmetry isomorphism and call this datum
a ``symmetric monoidal closed category''.

When talking about a ``suspended symmetric monoidal closed
category'', we assume that we have a suspended bifunctor 
$(-\TTens *,\tpp_1,\tpp_2)$ (see Definition \ref{defiSuspBif}) such that the diagram
$$\xymatrix@R=3ex{
(TA \TTens B) \TTens C \ar[r] \ar[d]_{\tpp_1 \TTens id} \ar@{}[ddr]|{\diag{assoc}} & TA \TTens (B \TTens C) \ar[dd]^{\tpp_1} \\
T(A\TTens B) \TTens C \ar[d]_{\tpp_1} & \\
T((A \TTens B) \TTens C) \ar[r] & T(A \TTens (B \TTens C)) \\
}$$
commutes, as well as the two similar ones in which the suspension starts on one of the other variables. We also assume that the diagram
$$\xymatrix{
T(-)\TTens * \ar[d]_{\tpp_1} \ar[r]^{\cc} \ar@{}[dr]|{\diag{\cc}} & {* \TTens T(-)} \ar[d]^{\tpp_2} \\
T(-\TTens *) \ar[r]_{T\cc} & T(* \TTens -)
}$$
commutes.
By Proposition \ref{suspACRBExists}, we get morphisms 
$$\thh_1: \HHom{T^{-1}(*)}{-} \to T \HHom{*}{-} \hspace{10ex} \thh_2: \HHom{*}{T(-)} \to T\HHom{*}{-}$$
that make $(\HHom{*}{-},\thh_1,\thh_2)$ a suspended bifunctor and $(-\TTens *, \HHom{*}{-})$ a suspended ACB (Definition \ref{defiSACRB}). Using $\cc$, we obtain a new suspended ACB $(* \TTens -, \HHom{*}{-})$ from the previous one.

If $\cC$ is triangulated, we furthermore assume that $\TTens$ is exact in both variables
(by symmetry it suffices to check this for one of them). 
By Proposition \ref{suspACRBExists}, $\HHom{*}{-}$ is suspended in both variables and automatically exact in
the second variable. We assume furthermore that it is exact in the first
variable,
and say that we have a ``triangulated closed symmetric monoidal category''.
 
The morphisms 
$$\ev^l_{A,K}: \HHom{A}{K}\TTens A \to K \hspace{10ex} \coev^l_{A,K}: K \to \HHom{A}{K \TTens A}$$
respectively
$$\ev^r_{A,K}: A \TTens \HHom{A}{K} \to K \hspace{10ex} \coev^r_{A,K}: K \to \HHom{A}{A \TTens K}$$
induced by the counit and the unit of the (suspended) ACB $(- \TTens *, \HHom{*}{-})$ (resp. $(* \TTens -, \HHom{*}{-})$) are called the left (resp. right) evaluation and coevaluation.

\begin{lemm} \label{diagEv}
The following diagrams are commutative.
$$\xymatrix@R=3ex@C=3ex{
(T\HHom{A}{K}) \TTens A \ar[r]^{\tpp_1} \ar@{}[dr]|{\diagram \label{evl1}} & T(\HHom{A}{K} \TTens A) \ar[d]^{T\ev^l_{A,K}} \\
\HHom{A}{TK} \TTens A \ar[u]^{\thh_2 \TTens id} \ar[r]_-{\ev^l_{A,TK}} & TK
}
\hspace{3ex}
\xymatrix@R=3ex@C=3ex{
A \TTens (T\HHom{A}{K}) \ar[r]^{\tpp_2} \ar@{}[dr]|{\diagram \label{evr1}} & T(A \TTens \HHom{A}{K}) \ar[d]^{T\ev^r_{A,K}} \\
A \TTens \HHom{A}{TK} \ar[u]^{id \TTens \thh_2} \ar[r]_-{\ev^r_{A,TK}} & TK
}$$
$$\xymatrix@R=3ex@C=3ex{
TK \ar[r]^-{\coev^l_{A,TK}} \ar[d]_{T\coev^l_{A,K}} \ar@{}[dr]|{\diagram \label{coevl1}} & \HHom{A}{TK \TTens A} \ar[d]^{\tpp_1} \\
T \HHom{A}{K \TTens A} \ar[r]_-{\thh_2^{-1}} & \HHom{A}{T(A \TTens K)}
}
\hspace{3ex}
\xymatrix@R=3ex@C=3ex{
TK \ar[r]^-{\coev^r_{A,TK}} \ar[d]_{T\coev^r_{A,K}} \ar@{}[dr]|{\diagram \label{coevr1}} & \HHom{A}{A \TTens TK} \ar[d]^{\tpp_2} \\
T \HHom{A}{A \TTens K} \ar[r]_-{\thh_2^{-1}} & \HHom{A}{T(K \TTens A)}
}$$
$$\xymatrix@R=3ex@C=3ex{
(T^{-1}\HHom{A}{K}) \TTens TA \ar[d]_{T^{-1}\thh_{1,TA,K}\TTens id} \ar[r]^{\tpp_2} \ar@{}[drr]|{\diagram \label{evl2}} & T(T^{-1}\HHom{A}{K} \TTens A) \ar[r]^-{\tpp_1^{-1}} & \HHom{A}{K} \TTens A \ar[d]^{\ev^l_{A,K}} \\
\HHom{TA}{K} \TTens TA \ar[rr]_{\ev^l_{TA,K}} & & K
}$$
$$\xymatrix@R=3ex@C=3ex{
(TA  \TTens T^{-1}\HHom{A}{K}) \ar[d]_{id \TTens T^{-1}\thh_{1,TA,K}} \ar[r]^{\tpp_1} \ar@{}[drr]|{\diagram \label{evr2}} & T(A \TTens T^{-1}\HHom{A}{K}) \ar[r]^-{\tpp_2^{-1}} & A \TTens \HHom{A}{K} \ar[d]^{\ev^r_{A,K}} \\
TA \TTens \HHom{TA}{K} \ar[rr]_{\ev^r_{TA,K}} & & K
}$$
$$\xymatrix@R=3ex@C=3ex{
K \ar[rr]^-{\coev^l_{TA,K}} \ar[d]_{\coev^l_{A,K}} \ar@{}[drr]|{\diagram \label{coevl2}} & & \HHom{TA}{K \TTens TA} \ar[d]^{T^{-1}\thh_{1,TA,K\TTens TA}^{-1}} \\ 
\HHom{A}{K \TTens A} \ar[r]^-*!/^1ex/{\labelstyle T^{-1}\thh_2^{-1}} & T^{-1}\HHom{A}{T(K \TTens A)} \ar[r]^-{\tpp_2^{-1}} & T^{-1} \HHom{A}{K \TTens TA}
}$$
$$\xymatrix@R=3ex@C=3ex{
K \ar[rr]^-{\coev^r_{TA,K}} \ar[d]_{\coev^r_{A,K}} \ar@{}[drr]|{\diagram \label{coevr2}} & & \HHom{TA}{TA \TTens K} \ar[d]^{T^{-1}\thh_{1,TA,TA \TTens K}^{-1}} \\ 
\HHom{A}{A \TTens K} \ar[r]^-*!/^1ex/{\labelstyle T^{-1}\thh_2^{-1}} & T^{-1}\HHom{A}{T(A \TTens K)} \ar[r]^-{\tpp_1^{-1}} & T^{-1} \HHom{A}{TA \TTens K}
}$$
\end{lemm}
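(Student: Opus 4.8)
The plan is to derive all eight diagrams from the single fact that $(-\TTens *,\HHom{*}{-})$ and $(*\TTens -,\HHom{*}{-})$ are \emph{suspended} ACBs in the sense of Definition \ref{defiSACRB}; indeed $\thh_1,\thh_2$ are by construction (Proposition \ref{suspACRBExists}) exactly the structure maps making $\HHom{*}{-}$ the suspended right adjoint of $\TTens$. Recall that $\ev^l,\coev^l$ (resp. $\ev^r,\coev^r$) are the counit $\counit$ and unit $\unit$ of the first (resp. second) of these ACBs. I would first separate the four diagrams \eqref{evl1}, \eqref{evr1}, \eqref{coevl1}, \eqref{coevr1}, in which only the non-parameter variable $K$ is suspended, from the four diagrams \eqref{evl2}, \eqref{evr2}, \eqref{coevl2}, \eqref{coevr2}, in which the parameter $A$ is suspended: the two groups are governed respectively by Point 1 and Point 2 of Definition \ref{defiSACRB}.

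For the first group I would unwind that, by Point 1 of Definition \ref{defiSACRB} applied to the fixed parameter $A$, the counit and unit are morphisms of suspended functors, i.e. satisfy the diagram \diag{sus} of Definition \ref{defiMorSusp}. The only work is to check that the composite suspended structure on the source functor is the one drawn: on $C\mapsto\HHom{A}{C}\TTens A$ it is $\tpp_1\circ(\thh_2\TTens id)$ (apply $r_2=\thh_2$ inside, then $l_2=\tpp_1$), which is precisely the left column of \eqref{evl1}; and on $C\mapsto\HHom{A}{C\TTens A}$ it is $\thh_2\circ\HHom{A}{\tpp_1}$, which is the content of \eqref{coevl1}. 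The right-hand versions \eqref{evr1}, \eqref{coevr1} are identical, with $l_2=\tpp_2$ replacing $\tpp_1$ since they come from the second ACB.

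For the second group I would match \eqref{evl2}, \eqref{evr2} and \eqref{coevl2}, \eqref{coevr2} with the counit- and unit-compatibility diagrams of Point 2 of Definition \ref{defiSACRB}. This requires pulling the corner $\HHom{TA}{K}$ back to $T^{-1}\HHom{A}{K}$ along $\thh_1$ --- which is exactly the vertical arrow $T^{-1}\thh_{1,TA,K}\TTens id$ --- and rewriting the square using the inverses $\tpp_1^{-1},\tpp_2^{-1}$ together with the cancellations $TT^{-1}=Id$ permitted by Remark \ref{TT-1Id}. Since $\thh_1$ was \emph{defined} (in the proof of Proposition \ref{suspACRBExists}) precisely so that the Point 2 diagrams commute, there is no further content once these identifications are made. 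The right-hand diagrams I would then obtain from the left-hand ones by conjugating with the symmetry $\cc$ and invoking its suspension compatibility \diag{\cc}, which is the relation tying the two ACBs together. The main obstacle is purely bookkeeping: tracking which tensor factor each $\tpp_i$ suspends, the order in which $\thh$ and $\tpp$ are composed, and the inverses and $TT^{-1}=Id$ cancellations in the parameter squares. Once the composite suspended structures are written out correctly, each diagram is a verbatim instance of Definition \ref{defiSACRB}.
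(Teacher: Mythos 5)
Your proposal is correct and takes essentially the same route as the paper: the paper's proof is precisely that the first four diagrams are instances of Point 1 of Definition \ref{defiSACRB} (the unit and counit being morphisms of suspended functors for each fixed parameter $A$) and the last four are instances of Point 2 (the parameter-suspension compatibilities), with the right-hand versions handled by the second suspended ACB $(*\TTens -,\HHom{*}{-})$ obtained via $\cc$. Your unwinding of the composite suspended structures and of the shifted $``\thh_1"$ maps is exactly the bookkeeping the paper leaves implicit in calling the verification straightforward.
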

\begin{proof}
This is a straightforward consequence of Point 1 of Definition \ref{defiSACRB} for the first four diagrams and of Point 2 of Definition \ref{defiSACRB} for the other four. 
\end{proof}

\subsection{Bidual isomorphism} \label{BidualIsomorphism}

We still assume that $(\cC,\TTens)$ is a monoidal category with an internal 
Hom as in the previous section.
We now show that the functor $D_K=\HHom{-}{K}$ naturally defines a duality on the category $\cC$, and that in the suspended case that the dualities $D_{TK}$ and $TD_K$ are naturally isomorphic. 

To form the adjoint couple $(D_K,D_K^o,\bid_K,\bid_K^o)$, we define the bidual morphism of functors $\bid_K: Id \to D_K^o D_K$ as the image of the right evaluation by the adjunction $(-\TTens*,\HHom{*}{-})$ isomorphism
$$\xymatrix{
\Hom(A \TTens \HHom{A}{K},K) \ar[r]^{\sim} & \Hom(A,\HHom{\HHom{A}{K}}{K})
}.$$

It is functorial in $A$ and defines a morphism of functors from $Id$ to $D_K^o D_K$. Note that its definition uses the adjunction $(-\TTens*,\HHom{*}{-})$ and the right evaluation, which is not the counit of this adjunction but of the one obtained from it by using 
$\cc$; so the fact that the monoidal category is symmetric is essential, here. One 
cannot proceed with only one of these adjunctions. In the suspended case, $D_K$ becomes
a suspended functor via $T^{-1} \thh_{1,-,K}^{-1} T: D_K T \to T^{-1} D_K$.

\begin{prop} \label{susBifBid_prop}
In the suspended (or triangulated) case, $\bid_K$ is a morphism of suspended (or exact) 
functors.
\end{prop}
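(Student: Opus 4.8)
The plan is to avoid any explicit manipulation of the bidual map and instead read the statement off the general theory of suspended adjunctions. Recall that $\bid_K$ is, by its very construction, the unit of the adjoint couple $(D_K,D_K^o,\bid_K,\bid_K^o)$ from $\cC$ to $\cC^o$, and that we have already singled out the suspension structure $d = T^{-1}\thh_{1,-,K}^{-1}T\colon D_K T \to T^{-1}D_K$ on the left adjoint $D_K$, coming from the suspended bifunctor structure of $\HHom{*}{-}$ produced in Proposition \ref{suspACRBExists}. The opposite category $\cC^o$ carries the suspension $(T^{-1})^o$ agreed upon in Section \ref{SuspTriangCat}.

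First I would apply Point 1 of Proposition \ref{suspAdjExists} to the couple $(D_K,D_K^o)$: the structure $d$ on $D_K$ determines a unique suspension $d^o$ on $D_K^o$ turning $(D_K,D_K^o)$ into a suspended adjoint couple, which by definition means that its unit and counit are morphisms of suspended functors. Specialised to the unit, this is exactly the assertion that $\bid_K\colon Id \to D_K^o D_K$ is a morphism of suspended functors, where $D_K^o D_K$ is equipped with the composite suspension built from $d$ and $d^o$ -- precisely the suspension of the double dual underlying the suspended category with duality $(\cC,D_K,\bid_K)$. In the triangulated case I would then add the exactness bookkeeping: $\HHom{*}{-}$ is exact in both variables by the standing assumptions of a triangulated closed symmetric monoidal category, so $(D_K,d)$ is $\delta$-exact for some $\delta$, and Point 2 of Proposition \ref{suspAdjExists} makes $(D_K^o,d^o)$ $\delta$-exact as well. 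Hence $D_K^o D_K$ is $\delta^2 = 1$-exact by Remark \ref{signTF}, and since $Id$ is also $1$-exact the governing sign $\delta_{Id}\delta_{D_K^o D_K}$ of Definition \ref{morphTriangFunc} equals $1$; being a morphism of suspended functors therefore coincides with being a morphism of exact functors, so the suspended statement upgrades for free.

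The one point that genuinely deserves attention -- and the place I would spend the verification if one prefers to describe the suspension on $D_K^o$ by the same $\thh_1$-formula as on $D_K$ rather than to define it through Proposition \ref{suspAdjExists} -- is the identification of these two a priori different suspensions on $D_K^o$. By the uniqueness clause of Proposition \ref{suspAdjExists} it suffices to check that the formula-defined one already makes the adjunction suspended, i.e. that the square $\diag{sus}$ of Definition \ref{defiMorSusp} for $\bid_K$ commutes. I would do this by writing $\bid_K$ through the left coevaluation and the right evaluation (the mixture forced by the symmetry $\cc$, as noted just before the proposition) and feeding in the compatibilities of $\ev^r$ and $\coev^l$ with the suspension already recorded in Lemma \ref{diagEv} (the right-evaluation and left-coevaluation diagrams therein). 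This reduces everything to a diagram chase with squares of type $\diag{mf}$ together with the axioms of Definition \ref{defiSACRB}; the main obstacle is purely organisational, namely keeping track of the order in which $\tpp$, $\thh$ and the symmetry are applied, and I expect no sign surprises precisely because every ingredient is a unit or counit of a suspended (co)evaluation adjunction.
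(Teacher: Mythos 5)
Your proof is correct, but only by virtue of its last paragraph; the reduction occupying the first two paragraphs is logically idle. The proposition is a statement about the specific suspension $T^{-1}\thh_{1,-,K}^{-1}T$ placed on $D_K$, and its analogue on $D_K^o$, not about a suspension abstractly produced on a right adjoint. Point 1 of Proposition \ref{suspAdjExists} does yield some $d^o$ for which the unit is a suspended morphism, but to transport this to the $\thh_1$-defined structure you must invoke the uniqueness clause, and verifying its hypothesis is verbatim the square $\diag{sus}$ of Definition \ref{defiMorSusp} for $\bid_K$ --- that is, the proposition itself. So the detour buys nothing; moreover it presupposes that $(D_K,D_K^o,\bid_K,\bid_K^o)$ is an adjoint couple, which the paper only establishes afterwards, in Proposition \ref{HomHomSuspAdj_prop}. (The triangle identities proved there are independent of the present proposition, so no genuine circle arises, but within the paper's ordering your third paragraph is the only self-contained argument --- and note it needs no adjunction between $D_K$ and $D_K^o$ at all, only the suspended ACB $(-\TTens *,\HHom{*}{-})$ already in place.) That kernel coincides with the paper's own proof: the paper first disposes of signs by observing that $D_K^oD_K$ is $1$-exact whatever the sign of $D_K$ --- your $\delta^2=1$ computation via Remark \ref{signTF} and Definition \ref{morphTriangFunc} is the same point --- and then obtains the required square by (suspended) adjunction from Diagram \diag{\ref{evr2}} of Lemma \ref{diagEv}, which is exactly your plan of expressing $\bid_K$ through $\coev^l$ and $\ev^r$ and feeding in the compatibilities of Lemma \ref{diagEv} and Definition \ref{defiSACRB}. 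I would therefore restructure: state the sign observation, transpose Diagram \diag{\ref{evr2}} through the suspended adjunction directly, and drop the appeal to Proposition \ref{suspAdjExists} altogether.
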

\begin{proof}
First note that in the exact case, whatever the sign of $D_K$ is, $D_K^o D_K$ is $1$-exact, so there is no sign involved in the diagram
$$\xymatrix@R=3ex@C=12ex{
TA \ar[d]_{T\bid_K} \ar[r]^{\bid_K T} & \HHom{\HHom{TA}{K}}{K} \ar[d]^{T^{-1}\thh_{1,TA,K}} \\
T\HHom{\HHom{A}{K}}{K} \ar[r]_{\thh_{1,\HHom{A}{K},K}^{-1}}
& \HHom{T^{-1}\HHom{A}{K}}{K}
}$$
that we have to check (see Definitions \ref{morphTriangFunc} and \ref{defiMorSusp}). It is obtained by (suspended) adjunction from Diagram \diag{\ref{evr2}} in Lemma \ref{diagEv}
\end{proof}

\begin{defi} \label{defiDualizing}
We say that $K$ is a dualizing object when $\bid_K$ is an isomorphism of (suspended) functors. 
\end{defi}

\begin{prop} \label{HomHomSuspAdj_prop}
The functors $\HHom{-}{*}:\cC \times \cC^o \to \cC^o$ and $\HHom{-}{*}^o: \cC^o \times \cC \to \cC$ form a (suspended) ACB with unit $\bid$ and counit $\bid^o$. 
\end{prop}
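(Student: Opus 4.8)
The plan is to verify the two clauses of Definition~\ref{defiAdjBif} for the pair $(\HHom{-}{*},\HHom{-}{*}^o)$, with the parameter $K$ taken in $\cC^o$ (since $K\mapsto D_K=\HHom{-}{K}$ is contravariant once $D_K$ is viewed as a functor $\cC\to\cC^o$): first that $(D_K,D_K^o,\bid_K,\bid_K^o)$ is an adjoint couple for each fixed $K$, and then that $\bid$ and $\bid^o$ are generalized transformations as $K$ varies. In the suspended case one adds the two diagrams of Point~2 of Definition~\ref{defiSACRB}. I would treat these in turn.

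For the per-parameter adjunction, I would first make the bijection it must induce explicit. By construction $\bid_K$ is the image of the right evaluation $\ev^r_{-,K}$ under the adjunction isomorphism of $(-\TTens *,\HHom{*}{-})$, so the associated bijection is the composite
\[
\Hom(B,\HHom{A}{K})\;\cong\;\Hom(B\TTens A,K)\;\stackrel{\cc}{\cong}\;\Hom(A\TTens B,K)\;\cong\;\Hom(A,\HHom{B}{K}).
\]
Because $\cc=\cc^{-1}$, this bijection is symmetric in $A$ and $B$; this is precisely the self-adjointness of $(D_K,D_K^o)$, and it forces the counit to be the unit reread in $\cC^o$, i.e.\ $\bid_K^o$. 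Consequently the two triangle identities are interchanged by the passage to $\cC^o$ and both reduce to the single relation \eqref{bidualFormulaEq}, which I would deduce from the zig-zag identities for the left and right (co)evaluations, that is from the triangle identities of the two ACBs $(-\TTens *,\HHom{*}{-})$ and $(*\TTens -,\HHom{*}{-})$ together with the symmetry $\cc$.

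The generalized-transformation condition in $K$ is, I expect, the main obstacle, because $K$ occurs in $D_K^oD_K=\HHom{\HHom{-}{K}}{K}$ both covariantly and contravariantly, so this is a genuine wedge condition and not plain naturality. The mechanism that produces it is that $\bid_K$ is obtained from $\ev^r_{-,K}$ through the adjunction isomorphism of $(-\TTens *,\HHom{*}{-})$ \emph{whose parameter is $\HHom{-}{K}$}, and this parameter itself moves with $K$. Thus, combining the ordinary naturality of $\ev^r_{A,K}$ in $K$ with the fact that the unit and counit of $(-\TTens *,\HHom{*}{-})$ are generalized transformations in their parameter, one gets the two squares of Definition~\ref{defiAdjBif} for $\bid$ and, dually, for $\bid^o$. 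This is exactly the bookkeeping performed in the proof of Lemma~\ref{adjabbif}, to which I would appeal; keeping the op/variance conventions straight (the parameter in $\cC^o$, and $\bid^o$ being $\bid$ reread in the opposite category) is the delicate part.

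Finally, the suspended refinement is almost formal once the ACB is in hand. The bifunctor $\HHom{-}{*}$ is a suspended bifunctor via $\thh_1,\thh_2$ (Section~\ref{TensProHom}), so Proposition~\ref{suspACRBExists} equips $\HHom{-}{*}^o$ with a unique suspended bifunctor structure making $(\HHom{-}{*},\HHom{-}{*}^o)$ a suspended ACB; this already contains the two diagrams of Point~2 of Definition~\ref{defiSACRB}, whose content at bottom is the suspension-compatibility of the (co)evaluations, e.g.\ diagrams \diag{\ref{evr2}} and \diag{\ref{coevr2}} of Lemma~\ref{diagEv}. That $\bid$ (hence $\bid^o$) is a morphism of suspended functors is already recorded in Proposition~\ref{susBifBid_prop}, so no further sign computation is needed, and uniqueness throughout is controlled by Lemma~\ref{isomorphACRB}.
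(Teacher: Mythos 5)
Your proposal follows the paper's proof in all essentials. Your explicit symmetric bijection
$$\Hom(B,\HHom{A}{K})\;\cong\;\Hom(B\TTens A,K)\;\cong\;\Hom(A\TTens B,K)\;\cong\;\Hom(A,\HHom{B}{K})$$
is precisely what the paper's large Hom-set diagram encodes: its two chases of $Id_{\HHom{\HHom{A}{K}}{K}}$ verify that this bijection is symmetric (via functoriality of $\cc$ and the adjunction bijections) and that its unit and counit are $\bid$ and $\bid^o$, which is the same as your observation that both triangle identities collapse to the single relation \eqref{bidualFormulaEq}. The generalized-transformation condition in $K$ is discharged by the same bookkeeping in both texts (the paper leaves it to the reader, ``just using that the unit of the adjunction $(-\TTens *,\HHom{*}{-})$ is one''), and Point 1 of Definition \ref{defiSACRB} is obtained, as in the paper, from Proposition \ref{susBifBid_prop}.

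The one place where your write-up is loose is the final step. Proposition \ref{suspACRBExists} produces \emph{some} suspended bifunctor structure on the right adjoint, unique subject to making the pair a suspended ACB; but Proposition \ref{HomHomSuspAdj_prop} asserts the suspended ACB property for the \emph{given} structure on $\HHom{-}{*}^o$, namely the opposite of $(\thh_1,\thh_2)$. So the uniqueness statement does not by itself ``contain'' the diagrams of Point 2: you must still identify the structure induced by Proposition \ref{suspACRBExists} with the pre-existing one, and that identification is exactly the direct verification the paper performs, checking the two diagrams of Point 2 of Definition \ref{defiSACRB} by means of diagrams \diag{\ref{evr2}} and \diag{\ref{coevl2}} of Lemma \ref{diagEv} (note \diag{\ref{coevl2}}, not \diag{\ref{coevr2}} as you guessed). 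This is a small repair rather than a change of method: the ingredients you cite suffice once this step is made explicit, and your appeal to Lemma \ref{isomorphACRB} is then superfluous.
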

\begin{proof}
We first have to prove
that $(D_K,D_K^o,\bid_K,\bid_K^o)$ is an adjoint couple in the usual sense. We already know that $\bid_K$ is a suspended morphism. Consider the following diagram, in which all vertical maps are isomorphisms. We use the notation $f^\sharp: \Hom(F',G) \to \Hom(F,G)$ and $f_\sharp: \Hom(G,F) \to \Hom(G,F')$ for the maps induced by $f: F \to F'$.
The unlabeled morphisms are just adjunction bijections,
and we set $\bid_{A,K}:=(\bid_K)_A$.
$$\xymatrix@R=3ex@C=8ex{
\Hom(\HHom{A}{K},\HHom{A}{K}) & & \Hom(\HHom{A}{K}, \HHom{\HHom{\HHom{A}{K}}{K}}{K}) \ar[ll]_-{(\HHom{\bid_{A,K}}{Id_K})_\sharp} \\
\Hom(\HHom{A}{K}\TTens A,K) \ar[u] & & \Hom(\HHom{A}{K} \TTens \HHom{\HHom{A}{K}}{K},K) \ar[ll]^-{(Id_{\HHom{A}{K}}\TTens \bid_{A,K})^\sharp} \ar[u] \\
\Hom(A\TTens \HHom{A}{K},K) \ar[u]^{\cc^\sharp_{A,\HHom{A}{K}}} \ar[d] & & \Hom(\HHom{\HHom{A}{K}}{K} \TTens \HHom{A}{K},K) \ar[ll]^-{(\bid_{A,K} \TTens Id_{\HHom{A}{K}})^\sharp} \ar[u]_{\cc^\sharp_{\HHom{\HHom{A}{K}}{K},\HHom{A}{K}}} \ar[d] \\
\Hom(A, \HHom{\HHom{A}{K}}{K}) & & \Hom(\HHom{\HHom{A}{K}}{K}, \HHom{\HHom{A}{K}}{K}) \ar[ll]^-{(\bid_{A,K})^\sharp} \\
}$$
The diagram commutes by functoriality of $\cc$ and the adjunction bijections.
Now $Id_{\HHom{\HHom{A}{K}}{K}}$ in the lower right set is sent to $\bid_{\HHom{A}{K},K}$ in the upper right set, which is in turn sent to $\HHom{\bid_{A,K}}{K} \circ \bid_{\HHom{A}{K},K}$ in the upper left set. But $Id_{\HHom{\HHom{A}{K}}{K}}$ is also sent to $\bid_{A,K}$ in the lower left set, which is sent to $Id_{\HHom{A}{K}}$ in the upper left set by definition of $\bid_{A,K}$. This proves the two required formulas (see Definition 
\ref{defadjcouple})
for the composition of the unit and the counit in the adjoint couple 
(which are identical in this case). We leave to the reader the easy fact that $\bid_{A,K}$ is a generalized transformation (in $K$) (just using that the unit of the adjunction $(-\TTens *,\HHom{*}{-})$ is one).
The adjoint couple is then a suspended adjoint couple by Proposition \ref{susBifBid_prop}. This proves Point 1 of Definition \ref{defiSACRB}. Point 2 is proved using diagrams \diag{\ref{evr2}} and \diag{\ref{coevl2}}.
\end{proof}

\begin{coro} \label{tripleCat_coro}
The triple $(\cC,D_K,\bid_K)$ is a (suspended) category with duality (see Definition \ref{catwithduality}). When $\cC$ is triangulated closed, $D_K$ is exact, and $(\cC,D_K,\bid_K)$ is a triangulated category with duality which we often denote by $\cC_K$ for short. When $K$ is a dualizing object, the duality is strong. 
\end{coro}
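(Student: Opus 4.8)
The plan is to read essentially everything off from Proposition \ref{HomHomSuspAdj_prop}, which has already carried out the substantive work; the corollary is then a matter of specializing the parameter and unwinding Definition \ref{catwithduality}.

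First I would fix the parameter $K$. Proposition \ref{HomHomSuspAdj_prop} asserts that $(\HHom{-}{*},\HHom{-}{*}^o)$ is a (suspended) ACB with unit $\bid$ and counit $\bid^o$. By the very definition of an ACB (Definition \ref{defiAdjBif}), fixing the entry $K$ in the parameter slot yields a (suspended) adjoint couple $(D_K,D_K^o,\bid_K,\bid_K^o)$ from $\cC$ to $\cC^o$, where $D_K=\HHom{-}{K}$ carries the suspended structure $T^{-1}\thh_{1,-,K}^{-1}T\colon D_K T \to T^{-1}D_K$ recorded in Section \ref{BidualIsomorphism}. This is exactly the data required by Definition \ref{catwithduality} for a (suspended) category with duality. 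In particular the bidual identity \eqref{bidualFormulaEq} does not need a separate verification: it is one of the triangle identities of this adjoint couple, as already observed in the remark following Definition \ref{catwithduality}.

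Next, for the triangulated statement, I would invoke the standing hypothesis fixed in Section \ref{TensProHom} for a triangulated closed symmetric monoidal category, namely that $\HHom{*}{-}$ is exact in its first variable. Since $D_K=\HHom{-}{K}$, this hypothesis says precisely that $(D_K,d)$ is $\delta$-exact, so the adjoint couple obtained above is an exact-functor datum and $(\cC,D_K,\bid_K)$ is a triangulated category with duality in the sense of Definition \ref{catwithduality}. The strong case is then immediate: when $K$ is a dualizing object, $\bid_K$ is an isomorphism by Definition \ref{defiDualizing}, which is exactly the condition in Definition \ref{catwithduality} for the duality to be strong.

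I do not expect a genuine obstacle here, since the real verifications (the coincident triangle identities for $\bid_K$, the fact that $\bid_K$ is a suspended/exact morphism, and the suspended-ACB axioms) were already discharged in the proof of Proposition \ref{HomHomSuspAdj_prop} and in Proposition \ref{susBifBid_prop}. The only point demanding care is the bookkeeping in the first paragraph: correctly matching the parameter-specialized ACB — its right adjoint $D_K^o$, its unit and counit $\bid_K$ and $\bid_K^o$, and the suspended structure on $D_K$ — with the precise shape of the datum $(D,D^o,\bid,\bid^o)$ demanded by Definition \ref{catwithduality}, and flagging explicitly that the first-variable exactness assumption is what supplies the $\delta$-exactness of $D_K$ in the triangulated case.
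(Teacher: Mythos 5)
Your proposal is correct and matches the paper's own route: the corollary is stated as an immediate consequence of Proposition \ref{HomHomSuspAdj_prop}, obtained exactly as you describe by fixing the parameter $K$ in the (suspended) ACB to get the adjoint couple $(D_K,D_K^o,\bid_K,\bid_K^o)$, with exactness of $D_K$ supplied by the standing first-variable exactness assumption of Section \ref{TensProHom} and strongness by Definition \ref{defiDualizing}. Your observation that formula \eqref{bidualFormulaEq} is just a triangle identity of the adjunction is also precisely the paper's remark following Definition \ref{catwithduality}.
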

\begin{prop} \label{DTKTDK}
The functors $D_K$ and $D_{TK}$ are exact. The isomorphism $\thh_2:\HHom{-}{T(*)} \to T\HHom{-}{*}$ defines a suspended duality preserving functor $\pair{Id_\cC,\thh_{2,-,K}}$ from $\cC_{TK}$ to $T(\cC,D_K,\bid_K)$. This functor is an isomorphism of 
triangulated categories with duality and therefore induces an isomorphism on 
Witt groups.
\end{prop}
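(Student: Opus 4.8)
The plan is to verify the two requirements of Definition \ref{DualPresFunc_defi} for the pair $\pair{Id_\cC,\thh_{2,-,K}}$, namely that $\thh_{2,-,K}$ is a morphism of suspended exact functors and that the square $\diag{P}$ commutes, and then to deduce the remaining statements formally. First I would settle exactness: by Corollary \ref{tripleCat_coro} (equivalently, by the standing assumption that $\HHom{-}{*}$ is exact in its first variable) the functor $D_K=\HHom{-}{K}$ is exact, say $\delta$-exact, and $D_{TK}=\HHom{-}{TK}$ is then $\delta$-exact too, since exactness in the first variable does not depend on the parameter. As $T$ is $(-1)$-exact (Remark \ref{signTF}), the functor $TD_K$ occurring in $T(\cC,D_K,\bid_K)=(\cC,TD_K,-\delta\bid')$ (Definition \ref{defiSuspTriCat}) is $(-\delta)$-exact, while the underlying functor $Id_\cC$ of our pair is $1$-exact, so nothing has to be checked for it.

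Next I would check that $\thh_{2,-,K}\colon D_{TK}\to TD_K$ provides the morphism $f$ of Definition \ref{DualPresFunc_defi}. That $\thh_2$ commutes, up to the sign $-1$, with the first-variable suspension $\thh_1$ is exactly the anticommutativity built into the suspended bifunctor $(\HHom{*}{-},\thh_1,\thh_2)$ (Definition \ref{defiSuspBif}); hence $\thh_2$ is a morphism of suspended functors whose square $\diag{sus}$ is $(-1)$-commutative. Feeding $\nu=-1$ into Lemma \ref{isoDeltaExact} recovers the signs $\delta$ and $-\delta$ found above, and since the product of the two exactness signs equals $\delta\cdot(-\delta)=-1$ and matches this $(-1)$-commutativity, $\thh_2$ is a morphism of exact functors in the sense of Definition \ref{morphTriangFunc}. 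This is precisely where the sign $-\delta$ in the suspended duality of Definition \ref{defiSuspTriCat} is forced.

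The heart of the proof is the commutativity of $\diag{P}$ for $F=Id_\cC$, $D_1=D_{TK}$, $\bid_1=\bid_{TK}$, $D_2=TD_K$, $\bid_2=-\delta\bid'$ and $f$ induced by $\thh_{2,-,K}$. By construction (Section \ref{BidualIsomorphism}) $\bid_{TK}$ is the $(-\TTens*,\HHom{*}{-})$-adjoint of the right evaluation $\ev^r_{-,TK}$, whereas the shifted bidual $\bid'$ is obtained from $\bid_K$, the adjoint of $\ev^r_{-,K}$, through the shift procedure of Definition \ref{shiftedAdjoint}. Diagram \diag{\ref{evr1}} of Lemma \ref{diagEv} relates $\ev^r_{-,TK}$ and $\ev^r_{-,K}$ precisely through $id\TTens\thh_2$ and $\tpp_2$; passing to adjoints under $(-\TTens*,\HHom{*}{-})$ and inserting the suspension isomorphisms turns this identity of evaluations into $\diag{P}$, the sign $-\delta$ coming out exactly as prescribed by Definition \ref{defiSuspTriCat}. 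Equivalently, since $\bid$ is the unit of the suspended ACB $(\HHom{-}{*},\HHom{-}{*}^o)$ of Proposition \ref{HomHomSuspAdj_prop}, this is nothing but the relevant instance of the compatibility of the unit with the parameter suspension, i.e. Point 2 of Definition \ref{defiSACRB}, already established there. I expect this sign bookkeeping --- matching the sign produced by \diag{\ref{evr1}} with the $-\delta$ of Definition \ref{defiSuspTriCat} --- to be the only genuinely delicate point of the argument.

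Finally, as $\thh_2$ is an isomorphism, $\pair{Id_\cC,\thh_{2,-,K}}$ is a strong duality preserving functor; its underlying functor being the isomorphism of categories $Id_\cC$, it admits the evident inverse $\pair{Id_\cC,\thh_{2,-,K}^{-1}}$ and is therefore an isomorphism of (triangulated) categories with duality. Because $\diag{P}$ intertwines $\bid_{TK}$ and $-\delta\bid'$ through isomorphisms, one duality is strong exactly when the other is, so that $K$ is dualizing iff $TK$ is; in that case Proposition \ref{dualPresIsTransfer_prop}, applied both to the pair and to its inverse, yields mutually inverse maps on Witt groups, giving the asserted isomorphism.
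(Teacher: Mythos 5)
Your proposal is correct and takes essentially the same route as the paper: exactness and the required anticommutativity of $\diag{sus}$ follow from Remark \ref{signTF} together with the suspended-bifunctor anticommutativity of $(\HHom{-}{*},\thh_1,\thh_2)$, and the commutativity of $\diag{P}$ is obtained by adjunction from diagram $\diag{\ref{evr1}}$ of Lemma \ref{diagEv}, which is exactly the paper's argument. Your additional last paragraph merely makes explicit the inverse pair $\pair{Id_\cC,\thh_{2,-,K}^{-1}}$ and the resulting Witt-group isomorphism, steps the paper leaves implicit.
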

\begin{proof}
We know by \ref{signTF} that $TD_K$ is $(-1)$-exact. Diagram $\diag{sus}$ should therefore be anti-commutative (see Definition \ref{morphTriangFunc}). It follows from the fact that $(\HHom{-}{*},\thh_1,\thh_2)$ is a suspended bifunctor. 
A square obtained by adjunction from $\diag{\ref{evr1}}$ in Lemma \ref{diagEv} 
then implies that $\thh_2$ defines a duality preserving functor 
(see Definition \ref{DualPresFunc_defi}). 
\end{proof}

We conclude this section by a trivial lemma for future reference.

\begin{lemm} \label{isoKisoWitt}
Let $\iota: K \to M$ be a morphism. Then $I_\iota=\pair{Id,\tilde{\iota}}$, where $\tilde{\iota}: D_K \to D_M$ is induced by $\iota$, is a duality preserving functor. 
This respects composition: if $\kappa:M \to N$ is another morphism,
then $I_{\kappa \iota}=I_\kappa I_\iota$. Let $\iota$ be an isomorphism, then if $K$ is dualizing and $D_K$ $\delta$-exact, the same is true for $M$ and $D_M$, and $I_\iota$ induces an isomorphism on Witt groups denoted by $I_\iota^W$.
\end{lemm}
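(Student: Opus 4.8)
The plan is to dispatch the three assertions in turn: the first carries the (modest) content, while the second and third are formal consequences of the functoriality of $\HHom{-}{-}$ in its second variable.

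For the first assertion I take $\tilde{\iota}\colon D_K \to D_M$ to be the morphism of functors with components $\tilde{\iota}_A = \HHom{A}{\iota}$, obtained by applying the covariant functoriality of the internal Hom in its second variable to $\iota\colon K \to M$. To see that $\pair{Id,\tilde{\iota}}$ is a duality preserving functor from $\cC_K$ to $\cC_M$, I must check that the diagram $\diag{P}$ of Definition \ref{DualPresFunc_defi} commutes, the roles of $f$ and $f^o$ being played by $\tilde{\iota}$ and its opposite transformation $\tilde{\iota}^o$. Since $F = Id$, this diagram involves only $\bid_K$, $\bid_M$ and the two maps induced by $\iota$, and it amounts to the generalized-transformation property (in the sense of Definition \ref{defiAdjBif}) of the unit $\bid$ of the ACB $(\HHom{-}{*},\HHom{-}{*}^o)$ of Proposition \ref{HomHomSuspAdj_prop}, evaluated at the parameter morphism $\iota\colon K \to M$ — the parameter of that ACB being exactly the object $K$ defining $D_K$. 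It therefore commutes by the fact, recorded in that proof, that $\bid_{A,K}$ is a generalized transformation in $K$; this is the step I expect to carry what little weight the lemma has. In the suspended or triangulated case one must moreover check that $\tilde{\iota}$ is a morphism of suspended functors for the structures $T^{-1}\thh_{1,-,K}^{-1}T$ on $D_K$ and $D_M$, which is immediate from the naturality of $\thh_1$ and $\thh_2$ in the second Hom-variable, $\tilde{\iota}$ being post-composition with $\iota$ there.

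For compatibility with composition I would unwind the composition law for duality preserving functors, $\pair{Id,\tilde{\kappa}}\pair{Id,\tilde{\iota}} = \pair{Id,\tilde{\kappa}\circ\tilde{\iota}}$, and invoke functoriality of $\HHom{A}{-}$: for every $A$ one has $\HHom{A}{\kappa\iota} = \HHom{A}{\kappa}\circ\HHom{A}{\iota}$, hence $\widetilde{\kappa\iota} = \tilde{\kappa}\circ\tilde{\iota}$ and so $I_{\kappa\iota} = I_\kappa I_\iota$. In the same way $\HHom{A}{id_K} = id$ shows that $I_{id_K}$ is the identity duality preserving functor.

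For the last assertion, if $\iota$ is an isomorphism then each $\tilde{\iota}_A = \HHom{A}{\iota}$ is an isomorphism, so $\tilde{\iota}$ is an isomorphism of functors and $I_\iota$ is strong. Transporting exactness across the natural isomorphism $\tilde{\iota}$ (as in Lemma \ref{isoDeltaExact} with $\nu = 1$) shows that $D_M$ is $\delta$-exact, and in diagram $\diag{P}$ — whose top, right-hand and bottom maps are now isomorphisms — $\bid_M$ appears as a composite of isomorphisms, so $M$ is dualizing; by Corollary \ref{tripleCat_coro} both $\cC_K$ and $\cC_M$ are triangulated categories with strong duality. Since $Id$ is $1$-exact, Proposition \ref{dualPresIsTransfer_prop} applies and yields the homomorphism $I_\iota^W$, given on classes by $[\psi]\mapsto[\tilde{\iota}_A\circ\psi]$. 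This formula is visibly functorial in $\iota$, so the composition law already proved gives $I_{\iota^{-1}}^W I_\iota^W = I_{id_K}^W = \mathrm{id}$ and $I_\iota^W I_{\iota^{-1}}^W = I_{id_M}^W = \mathrm{id}$; hence $I_\iota^W$ is an isomorphism with inverse $I_{\iota^{-1}}^W$.
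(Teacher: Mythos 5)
Your proof is correct, and it supplies exactly the verification the paper leaves to the reader (the lemma is stated without proof as ``trivial''): your key step, identifying diagram $\diag{P}$ for $\pair{Id,\tilde{\iota}}$ with the generalized-transformation property of $\bid$ in the parameter $K$ recorded in the proof of Proposition \ref{HomHomSuspAdj_prop}, is precisely the intended mechanism. The remaining points --- functoriality of $\HHom{A}{-}$ giving $I_{\kappa\iota}=I_\kappa I_\iota$, Lemma \ref{isoDeltaExact} with $\nu=1$ for the $\delta$-exactness of $D_M$, reading off that $\bid_M$ is an isomorphism from $\diag{P}$, and inverting $I_\iota^W$ via $I_{\iota^{-1}}^W$ --- are all handled correctly and in the natural way.
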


\section{Functors between closed monoidal categories} \label{FunctClosedMon}

Assume from now on that all categories $\cC$ (maybe with an index) are symmetric monoidal
and equipped with an internal Hom, satisfying the set-up of the previous section.
We say that a functor $f^*: \cC_1 \to \cC_2$ is a symmetric monoidal (suspended, exact in both variables) functor when it comes equipped with an isomorphism of (suspended) bifunctors $\fp: f^*(-) \TTens f^*(*) \to f^*(- \TTens *)$ and, when a unit $\one$ for the tensor product is considered, an
isomorphism $f^*(\one) \simeq \one$ making the standard diagrams commutative
(see \cite[section XI.2]{MacLane98} for the details where such functors are
called {\it strongly monoidal}) \\

We will consider the following assumptions (used in the definition of some morphisms of functors): 

\noindent \assum{f}{The functor $f^*: \cC_1 \to \cC_2$ is symmetric monoidal (suspended, exact in both variables).} \label{assufmonoidal}
\assum{f}{We have a functor $f_*: \cC_2 \to \cC_1$ that fits into an adjoint couple $(f^*,f_*,\unit_*^*,\counit_*^*)$.} \label{assuAdjf*f*}
\assum{f}{We have a functor $f^!: \cC_1 \to \cC_2$ that fits into an adjoint couple $(f_*,f^!,\unit_*^!,\counit_*^!)$.} \label{assuAdjf*f!}
\assum{f}{The morphism $\q:f_*(-)\TTens * \to f_* (- \TTens f^*(*))$ from Proposition \ref{projFormMorphExists} is an isomorphism (the ``projection formula'' isomorphism).} \label{assuProjFormIso}
\assum{f,g}{The morphism of functors $\eps$ of Section \ref{BaseChange} is an isomorphism.} \label{epsIso}

In the following, we will define several natural transformations and establish commutative diagrams involving them. Since there are so many of them, for the convenience of the reader, we include a Table \ref{tableMorphFunct}, which displays (from the left to the right): the name of the natural 
transformation, its source and target functor, the necessary assumptions 
to define the natural transformation and where it is defined. 
\begin{table}
\begin{tabular}{|c|c|c|c|c|c|}
\hline
$\fp$ & $f^*(-) \TTens f^*(*) \to f^*(- \TTens *)$ & \assumption{\ref{assufmonoidal}}{f} & Section \ref{FunctClosedMon} \\
\hline
$\fh$ & $f^* \HHom{-}{*} \to \HHom{f^*(-)}{f^*(*)}$ & \assumption{\ref{assufmonoidal}}{f} & Prop \ref{defifh} \\
\hline
$\fg$ & $f_*(-) \TTens f_*(*) \to f_*(- \TTens *)$ & \assumption{\ref{assufmonoidal}}{f} \assumption{\ref{assuAdjf*f*}}{f} & Prop. \ref{fg_defi} \\
\hline
$\ff$ & $f_* \HHom{-}{*} \to \HHom{f_*(-)}{f_*(*)}$ & \assumption{\ref{assufmonoidal}}{f} \assumption{\ref{assuAdjf*f*}}{f} & Prop. \ref{defiffg} \\
\hline
$\q$ & $f_*(-) \TTens * \to f_*(- \TTens f^*(*))$ & \assumption{\ref{assufmonoidal}}{f} \assumption{\ref{assuAdjf*f*}}{f} & Prop. \ref{projFormMorphExists} \\
\hline
$\qh$ & $\HHom{-}{f_*(*)} \to f_* \HHom{f^*(-)}{*}$ & \assumption{\ref{assufmonoidal}}{f} \assumption{\ref{assuAdjf*f*}}{f} & Prop. \ref{projFormMorphExists} \\
\hline
$\rr$ & $f_* \HHom{*}{f^!(-)} \to \HHom{f_*(*)}{-}$ & 
\shortstack{\assumption{\ref{assufmonoidal}}{f} 
\assumption{\ref{assuAdjf*f*}}{f} \assumption{\ref{assuAdjf*f!}}{f} } & Thm. \ref{PushForward0_theo} \\
\hline
$\ssp$ & $f^!(- \TTens *) \to f^!(-) \TTens f^*(*)$ & \assumption{\ref{assufmonoidal}}{f} \assumption{\ref{assuAdjf*f*}}{f} \assumption{\ref{assuAdjf*f!}}{f} \assumption{\ref{assuProjFormIso}}{f} & Prop. \ref{projFormIso} \\
\hline
$\sh$ & $f^! \HHom{*}{-} \to \HHom{f^*(*)}{f^!(*)}$ & \assumption{\ref{assufmonoidal}}{f} \assumption{\ref{assuAdjf*f*}}{f} \assumption{\ref{assuAdjf*f!}}{f} \assumption{\ref{assuProjFormIso}}{f} & Prop. \ref{projFormIso} \\
\hline
$\dd_{K,M}$ & $D_K \TTens D_M \to D_{K \TTens M}$ & & Def. \ref{defidd} \\
\hline
$\xi$ & $\bar{g}^* f^* \to \bar{f}^* g^*$ & \shortstack{\assumption{\ref{assufmonoidal}}{f} \assumption{\ref{assufmonoidal}}{\bar{f}} \assumption{\ref{assufmonoidal}}{g} \assumption{\ref{assufmonoidal}}{\bar{g}} \\ \assumption{\ref{assuAdjf*f*}}{g} \assumption{\ref{assuAdjf*f*}}{\bar{g}} \assumption{\ref{assuAdjf*f!}}{g} \assumption{\ref{assuAdjf*f!}}{\bar{g}}} & Section \ref{BaseChange} \\
\hline
$\eps$ & $f^* g_* \to \bar{g}_* \bar{f}^*$ & \shortstack{\assumption{\ref{assufmonoidal}}{f} \assumption{\ref{assufmonoidal}}{\bar{f}} \assumption{\ref{assufmonoidal}}{g} \assumption{\ref{assufmonoidal}}{\bar{g}} \\ \assumption{\ref{assuAdjf*f*}}{g} \assumption{\ref{assuAdjf*f*}}{\bar{g}} \assumption{\ref{assuAdjf*f!}}{g} \assumption{\ref{assuAdjf*f!}}{\bar{g}}} & Section \ref{BaseChange} \\
\hline
$\gam$ & $\bar{f}^* \bar{g}^! \to g^! f^*$ & \shortstack{\assumption{\ref{assufmonoidal}}{f} \assumption{\ref{assufmonoidal}}{\bar{f}} \assumption{\ref{assufmonoidal}}{g} \assumption{\ref{assufmonoidal}}{\bar{g}} \assumption{\ref{assuAdjf*f*}}{g} \\ \assumption{\ref{assuAdjf*f*}}{\bar{g}} \assumption{\ref{assuAdjf*f!}}{g} \assumption{\ref{assuAdjf*f!}}{\bar{g}} \assumption{\ref{epsIso}}{f,g}} & Section \ref{BaseChange} \\
\hline
\end{tabular}
\caption{Morphisms of functors} \label{tableMorphFunct}
\end{table}

\subsection{The monoidal functor $f^*$}

In this section, we obtain duality preserving functors and morphisms related to a monoidal functor $f^*$.

\begin{prop} \label{defifh}
Under Assumption \assumption{\ref{assufmonoidal}}{f}, there is a unique morphism 
$$\fh: f^*\HHom{*}{-} \to \HHom{f^*(*)}{f^*(-)}$$
of (suspended) bifunctors such that the diagrams
$$\xymatrix{
\HHom{f^* X}{f^*(-\TTens X)} \ar@{}[dr]|{\diagram \label{Lapp1}} & f^*\HHom{X}{-\TTens X} \ar[l]_-{\fh} \\
 \HHom{f^* X}{f^*(-)\TTens f^*X} \ar[u]^{\fp} & f^*(-) \ar[l] \ar[u]
}
\hspace{-2ex}
\xymatrix{
f^*(-) \ar@{}[dr]|{\diagram \label{L'app1}} & \HHom{f^* X}{f^*(-)} \TTens f^*X \ar[l] \\
f^*(\HHom{X}{-}\TTens X) \ar[u] & f^*\HHom{X}{-} \TTens f^*X \ar[l]_{\fp} \ar[u]_{\fh}
}
$$
commute for every $X \in \cC_1$. For any $A$ and $B$, $\fh$ is given by the composition
$$\xymatrix@C=4ex{f^* \HHom{A}{B} \ar[r]^-{\coev^l} & \HHom{f^*A}{f^*\HHom{A}{B}\TTens f^*A} \ar[r]^-{\fp} & \HHom{f^*A}{f^*(\HHom{A}{B} \TTens A)} \ar[r]^-{\ev^l} & \HHom{f^*A}{f^*B}.
}$$
\end{prop}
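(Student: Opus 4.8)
The plan is to obtain $\fh$ as the \emph{mate} of $\fp$ with respect to the tensor--Hom adjunctions on $\cC_1$ and $\cC_2$, using Lemma \ref{adjabbif}. Recall from Section \ref{TensProHom} that $(-\TTens *, \HHom{*}{-})$ is an ACB (a suspended ACB in the suspended case), whose counit is the left evaluation $\ev^l$ and whose unit is the left coevaluation $\coev^l$. In $\cC_1$ this gives an ACB $(J_1,K_1)=(-\TTens *,\HHom{*}{-})$ with parameter in $\cC_1$. In $\cC_2$ the analogous ACB has parameter in $\cC_2$; the first thing I would do is pull its parameter back along $f^*$ by means of Lemma \ref{changeParam} (Lemma \ref{changeParamSus} in the suspended case), obtaining an ACB $(J_2,K_2)=(-\TTens f^*(*),\HHom{f^*(*)}{-})$ with parameter in $\cC_1$. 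Since Lemma \ref{changeParam} changes the parameter ``in the obvious way'' without altering the unit and counit, the counit and unit of $(J_2,K_2)$ at a parameter $X\in\cC_1$ are exactly $\ev^l_{f^*X,-}$ and $\coev^l_{f^*X,-}$.

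Next I would apply Lemma \ref{adjabbif} (resp. Lemma \ref{adjabbifT} in the suspended case) with $H=H'=f^*$. With the identifications above, the morphism $a\colon J_2(X,f^*(-))\trafo f^*J_1(X,-)$ reads $f^*(-)\TTens f^*X \to f^*(-\TTens X)$, which is precisely $\fp$ (a morphism of (suspended) bifunctors by Assumption \assumption{\ref{assufmonoidal}}{f}; note that we use only that $\fp$ is a morphism, never that it is invertible). The lemma then produces a unique mate $b\colon f^*K_1(X,-)\trafo K_2(X,f^*(-))$, that is, a unique morphism $f^*\HHom{X}{-}\to\HHom{f^*X}{f^*(-)}$ of (suspended) bifunctors, and this is $\fh$. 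Unwinding $J_i$, $K_i$, $\unit_i=\coev^l$ and $\counit_i=\ev^l$, the two commutative diagrams $\diag{H}$ and $\diag{H'}$ guaranteed by Lemma \ref{adjabbif} are exactly diagrams $\diag{\ref{Lapp1}}$ and $\diag{\ref{L'app1}}$; uniqueness of $\fh$ subject to their commutativity is part of the conclusion of the lemma.

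Finally, the explicit formula is simply read off from the construction of the mate inside the proof of Lemma \ref{adjab}, namely $b = K_2H\counit_1 \circ K_2 a K_1 \circ \unit_2 H' K_1$. Substituting $\unit_2=\coev^l$, $a=\fp$ and $\counit_1=\ev^l$ turns this into the composition $\coev^l$, then $\fp$, then $\ev^l$ displayed in the statement. I expect no serious obstacle here: the entire argument is a transcription of the mates lemma. The one point that genuinely needs care — and which I would state explicitly — is the parameter bookkeeping, i.e. verifying that after pulling the $\cC_2$-ACB back along $f^*$ the unit and counit become $\coev^l_{f^*X,-}$ and $\ev^l_{f^*X,-}$; this is what makes the abstract diagrams $\diag{H}$, $\diag{H'}$ coincide on the nose with $\diag{\ref{Lapp1}}$, $\diag{\ref{L'app1}}$. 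In the suspended case the only addition is that $\fp$ is a morphism of suspended bifunctors, so Lemma \ref{adjabbifT} delivers $\fh$ as a morphism of suspended bifunctors automatically.
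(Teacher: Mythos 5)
Your proposal is correct and is essentially identical to the paper's proof, which likewise applies Lemma \ref{adjabbif} with $H=H'=f^*$, $J_1=-\TTens *$, $K_1=\HHom{*}{-}$, $J_2=-\TTens f^*(*)$, $K_2=\HHom{f^*(*)}{-}$ and $a=\fp$, defining $\fh=b$ and reading the explicit composition off the mate formula in Lemma \ref{adjab}. The only difference is that you make explicit the change-of-parameter step via Lemma \ref{changeParam} (resp.\ \ref{changeParamSus}), which the paper leaves implicit; that is sound bookkeeping, not a different argument.
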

\begin{proof}
Apply Lemma \ref{adjabbif} to $H=H'=f^*$, $J_1= - \TTens *$, $K_1= \HHom{*}{-}$, $J_2=(- \TTens f^*(*))$, $K_2=\HHom{f^*(*)}{-}$ and $a=\fp$ (recall that in an ACB, the variable denoted $*$ is the parameter, as explained in Definition \ref{defiAdjBif}).
Then define $\fh=b$.
\end{proof}

\begin{theo} (existence of the pull-back) \label{PullBack0_theo} 
Under Assumption \assumption{\ref{assufmonoidal}}{f}, the morphism 
$$\fh_K: f^*D_K^o \to D_{f^*K}^o(f^*)^o$$ 
defines a duality preserving functor $\pair{f^*,\fh_K}$ of (suspended, triangulated) categories with duality from $(\cC_1)_K$ to $(\cC_2)_{f^*K}$.
\end{theo}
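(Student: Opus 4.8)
By Definition \ref{DualPresFunc_defi}, showing that $\pair{f^*,\fh_K}$ is duality preserving amounts to checking that the square $\diag{P}$ commutes (the requirement that $\fh_K$ and $\fh_K^o$ be mates is equivalent to it). The plan is to verify it objectwise: at an object $A$ it is the equality of two morphisms $f^*A \to \HHom{f^*\HHom{A}{K}}{f^*K}$,
\[
\fh_{\HHom{A}{K},K}\circ f^*(\bid_{A,K}) \;=\; \HHom{\fh_{A,K}}{f^*K}\circ \bid_{f^*A,f^*K},
\]
where $\fh_{A,K}$ denotes $\fh$ with its second (active) variable set to $K$. Since the bidual $\bid$ is by construction the image of the right evaluation $\ev^r$ under the adjunction $(-\TTens *,\HHom{*}{-})$, I would transpose this identity across that adjunction on $\cC_2$, turning both sides into morphisms $f^*A\TTens f^*\HHom{A}{K}\to f^*K$; as the transposition is bijective, proving the transposed identity suffices, and it has the advantage of trading both occurrences of $\bid$ for $\ev^r$.

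For the left-hand side, I would first commute the transpose past $f^*(\bid_{A,K})$ using naturality of $\fp$, then apply the defining diagram \ref{L'app1} of $\fh$ in the form $\ev^l\circ(\fh\TTens id)=f^*(\ev^l)\circ\fp$, and finally use the very definition of the bidual, $\ev^l_{\HHom{A}{K},K}\circ(\bid_{A,K}\TTens id)=\ev^r_{A,K}$. This collapses the transposed left-hand side to $f^*(\ev^r_{A,K})\circ \fp_{A,\HHom{A}{K}}$. For the right-hand side, I would move $\HHom{\fh_{A,K}}{f^*K}$ across the evaluation by the generalized-transformation naturality of $\ev^l$ (the $\diag{gen}$ diagram of Definition \ref{defiAdjBif}), use the interchange law for $\TTens$, and apply the definition of the bidual once more, now at $f^*A,f^*K$; this collapses the transposed right-hand side to $\ev^r_{f^*A,f^*K}\circ(id\TTens \fh_{A,K})$.

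Everything then reduces to the single identity $f^*(\ev^r_{A,K})\circ \fp_{A,\HHom{A}{K}}=\ev^r_{f^*A,f^*K}\circ(id\TTens\fh_{A,K})$, the right-handed analogue of the defining property \ref{L'app1} of $\fh$. This is the step I expect to be the main obstacle, and it is precisely where symmetry becomes indispensable. I would prove it by writing $\ev^r=\ev^l\circ \cc$ on both sides, invoking the compatibility of $\fp$ with the symmetry that is part of $f^*$ being symmetric monoidal under \assumption{\ref{assufmonoidal}}{f} (namely $f^*(\cc)\circ\fp=\fp\circ\cc$), applying the left-handed identity \ref{L'app1} already used above, and finally transporting the two symmetries back through $\fh\TTens id$ by naturality of $\cc$. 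This settles the monoidal case.

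In the suspended and triangulated settings no further hand computation is needed: $\fh$ is a morphism of suspended bifunctors (Proposition \ref{defifh}) and $\bid_K$ is a morphism of suspended functors (Proposition \ref{susBifBid_prop}), so $\fh_K$ automatically meets the suspension compatibility demanded of a duality preserving functor, while the $\delta$-exactness of $f^*$ and of the dualities $D_K,D_{f^*K}$ is guaranteed by \assumption{\ref{assufmonoidal}}{f} together with Corollary \ref{tripleCat_coro}. The recurring difficulty throughout is the left/right bookkeeping: the bidual is manufactured from $\ev^r$ whereas $\fh$ is characterised through $\ev^l$, so the diagram closes up only after the symmetry $\cc$ and the symmetric-monoidal coherence of $\fp$ are brought to bear.
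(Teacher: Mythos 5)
Your proof is correct and is essentially the paper's proof read through the tensor--Hom adjunction: the key identity you isolate, $f^*(\ev^r_{A,K})\circ\fp_{A,\HHom{A}{K}} = \ev^r_{f^*A,f^*K}\circ(id\TTens\fh_{A,K})$, is precisely the paper's diagram \diag{\ref{L'app1}'}, which the paper also obtains from \diag{\ref{L'app1}} together with the compatibility of $\fp$ with $\cc$, and your appeal to bijectivity of transposition merely replaces the paper's pasting with $\coev^l$, diagram \diag{\ref{Lapp1}} and functoriality of the coevaluation. The suspended/triangulated case is likewise handled identically, via Proposition \ref{defifh} (with Proposition \ref{susBifBid_prop} in the background).
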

\begin{coro} (Pull-back for Witt groups) \label{PullBack0Witt_coro}
When the dualities and the duality preserving functor are strong (\ie $\bid_K$, $\bid_{f^* K}$ and $\fh_K$ are isomorphisms), $\pair{f^*,\fh_K}$ induces a morphism of Witt groups $f^*_W:W^*(\cC_1,K) \to W^*(\cC_2,f^*K)$
by Proposition \ref{dualPresIsTransfer_prop}.
\end{coro}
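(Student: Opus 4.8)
The plan is to unwind Definition \ref{DualPresFunc_defi} and reduce the whole theorem to the single commutativity $\diag{P}$, since everything else is automatic. Indeed, Assumption \assumption{\ref{assufmonoidal}}{f} makes $f^*$ a (suspended, exact) functor, and Proposition \ref{defifh} makes $\fh$ a morphism of (suspended) bifunctors, so fixing the parameter at $K$ turns $\fh_K$ into a morphism of (suspended) functors $f^*D_K^o \to D_{f^*K}^o (f^*)^o$, with $\fh_K^o$ being by definition its opposite. The bidual adjoint couples $(D_K,D_K^o,\bid_K,\bid_K^o)$ and $(D_{f^*K},D_{f^*K}^o,\bid_{f^*K},\bid_{f^*K}^o)$ needed to speak of mates are supplied by Proposition \ref{HomHomSuspAdj_prop} and Corollary \ref{tripleCat_coro}. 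By Lemma \ref{adjab} (resp. Theorem \ref{adjsquareT}), saying that $\fh_K$ and $\fh_K^o$ are mates with respect to these couples is \emph{equivalent} to the commutativity of $\diag{P}$, so this is the only thing left to prove. In the triangulated setting no sign intervenes, because $D_K^o D_K$ and $D_{f^*K}^o D_{f^*K}$ are $1$-exact (as already observed in the proof of Proposition \ref{susBifBid_prop}); hence $\diag{P}$ is an equality of morphisms of exact functors whose commutativity may be checked after forgetting the triangulated structure.

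To prove $\diag{P}$ I would evaluate it at an object $A \in \cC_1$ and transpose it across the tensor--Hom adjunction $(-\TTens *,\HHom{*}{-})$ of $\cC_2$ relevant to the common target $\HHom{f^*\HHom{A}{K}}{f^*K}$. Both composites then become morphisms
$$f^*A \TTens f^*\HHom{A}{K} \to f^*K.$$
Recall that $\bid_K$ is, by its very definition in Section \ref{BidualIsomorphism}, the transpose of the right evaluation $\ev^r$. Thus the counter-clockwise branch $D_{f^*K}^o \fh_K^o \circ \bid_{f^*K}f^*$ transposes to $\ev^r_{f^*A,f^*K}$ precomposed with $\mathrm{id}\TTens \fh_{K,A}$, while the clockwise branch $\fh_K D_K \circ f^*\bid_K$ transposes, after feeding in the explicit formula for $\fh$ from Proposition \ref{defifh} (built from $\coev^l$, $\fp$ and $\ev^l$), into $f^*(\ev^r_{A,K})$ suitably twisted by $\fp$. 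Concretely I would rewrite the clockwise transpose using the characterizing diagrams $\diag{\ref{Lapp1}}$ and $\diag{\ref{L'app1}}$ of Proposition \ref{defifh}, which say precisely that $\fh$ intertwines the left evaluation with $f^*(\ev^l)\circ\fp$, together with the naturality of the adjunction bijection.

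The hard part will be the bookkeeping of the symmetry $\cc$. The bidual $\bid_K$ is manufactured from the \emph{right} evaluation $\ev^r$, whereas $\fh$ is manufactured from the \emph{left} evaluation and coevaluation; matching the two branches forces one to commute $\fp$ past $\cc$, and it is exactly here that the hypothesis that $f^*$ is \emph{symmetric} monoidal enters — the compatibility of $\fp$ with $\cc$, which also played the essential role in Proposition \ref{HomHomSuspAdj_prop}. I would therefore split the transposed square into a region handled by $\diag{\ref{Lapp1}}$/$\diag{\ref{L'app1}}$ (the reduction of $\fh$ through $\coev^l,\ev^l$) and a region handled by the symmetric monoidality of $\fp$ (the $\cc$-twist relating $\ev^l$ to $\ev^r$), and glue them by naturality. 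A more formal alternative would be to check that $\fh_K^o$ is a morphism of bifunctors for the bidual ACB of Proposition \ref{HomHomSuspAdj_prop}, let Lemma \ref{adjabbif} produce its mate automatically satisfying $\diag{P}$, and then identify that mate with $\fh_K$ via the uniqueness clause of Proposition \ref{defifh}; but this merely repackages the same symmetry computation, so I would carry out the transpose-and-chase directly.
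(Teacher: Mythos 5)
Your proposal is correct and, modulo presentation, is the paper's own argument: the corollary is immediate from Theorem \ref{PullBack0_theo} together with Proposition \ref{dualPresIsTransfer_prop}, and your verification of $\diag{P}$ rests on exactly the ingredients the paper uses --- the characterizing diagrams $\diag{\ref{Lapp1}}$ and $\diag{\ref{L'app1}}$ of Proposition \ref{defifh}, the compatibility of $\fp$ with the symmetry $\cc$ (which produces the paper's $\diag{\ref{L'app1}'}$), the definition of $\bid_K$ as the adjoint transpose of $\ev^r$, and Proposition \ref{defifh} again for the suspended/exact compatibility. Transposing the square across the tensor--Hom adjunction first, instead of inserting $\coev^l$ and chasing in place as the paper's displayed diagram does, is only a cosmetic reorganization of the same computation.
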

\begin{proof}[Proof of Theorem \ref{PullBack0_theo}]
We need to show that the diagram
$$\xymatrix@!C=20ex{
f^* \ar[d]_{\bid_K f^*} \ar[r]^{f^* \bid_K} & f^* D_K^o D_K \ar[d]^{\fh_K D_K} \\
D_{f^* K}^o D_{f^* K} f^* \ar[r]^{D_{f^*K}^o \fh_K^o} & D_{f^* K}^o (f^*)^o D_K
}$$
commutes. This follows from the commutative diagram (for all $A$ in $\cC_1$)
$$\xymatrix@R=3ex@C=1ex{
f^* A \ar[d]_{\coev^l} \ar[r]^-{\coev^l} \ar@{}[dr]|{\diag{\ref{Lapp1}}} & f^* \HHom{\HHom{A}{K}}{A \TTens \HHom{A}{K}} \ar[d]_{\fh} \ar[r]^-{\ev^r} \ar@{}[dr]|{\diag{mf}} & f^* \HHom{\HHom{A}{K}}{K} \ar[d]^{\fh} \\
\HHom{f^*\HHom{A}{K}}{f^* A \TTens f^* \HHom{A}{K}} \ar[r]^-{\fp} \ar@{}[dr]|{\diag{\ref{L'app1}'}} \ar[d]_{\fh} & \HHom{f^*\HHom{A}{K} }{f^* (A \TTens \HHom{A}{K})} \ar[r] & \HHom{f^* \HHom{A}{K}}{f^* K} \\
\HHom{f^*\HHom{A}{K}}{f^* A \TTens \HHom{f^*A}{f^*K}} \ar@/_/[rru]_{\ev^r}  & & \HHom{\HHom{f^* A}{f^* K}}{f^* K} \ar[u]_{\fh} \\
\HHom{\HHom{f^* A}{f^* K}}{f^* A \TTens \HHom{f^* A}{f^* K}} \ar[u]^{\fh} \ar@/_/[rru]_{\ev^r} \ar@{}[ur]|{\diag{mf}} & & \\
}$$
where $\diag{\ref{L'app1}'}$ is obtained from $\diag{\ref{L'app1}}$ by using the compatibility of $\fp$ with $\cc$. By functoriality of $\coev$, the counit of the adjunction of bifunctors $(- \TTens *,\HHom{-}{*})$, we can complete the left vertical part of the diagram as a commutative square by a morphism $\coev^l$ from $f^* A$ to the bottom entry. The outer part of this bigger diagram is therefore the one we are looking for. 
In the suspended (or triangulated) case, $\fh$ is a morphism of suspended functors by Proposition \ref{defifh}. 
\end{proof}

\subsection{Adjunctions $(f^*,f_*)$ and $(f_*,f^!)$ and the projection morphism}

In this section, we will assume that we have adjoint couples $(f^*,f_*)$ and $(f_*,f^!)$, and obtain the projection morphism $f_*(-)\TTens * \to f_*(-\TTens f^*(*))$ as well as several related commutative diagrams. We also construct the morphism $\rr$ which turns $f_*$ into a duality preserving functor (Theorem \ref{PushForward0_theo}).

\begin{prop} \label{fg_defi}
Assume \assumption{\ref{assufmonoidal}}{f} and \assumption{\ref{assuAdjf*f*}}{f}. Then
there is a unique morphism of (suspended) bifunctors
$$\fg: f_*(-) \TTens f_*(-) \to f_* (-\TTens -)$$
such that the diagrams
$$\xymatrix{
- \TTens X \ar[d] \ar[r] \ar@{}[dr]|{\diagram \label{Happ0}} & f_* f^*(-) \TTens f_* f^* X \ar[d]^{\fg} \\ 
f_* f^* (- \TTens X) \ar[r]^{\fp^{-1}} & f_*(f^*(-)\TTens f^* X) 
}
\xymatrix{
f^*(f_*(-) \TTens f_*X) \ar[d]_{\fg} \ar[r]_{\fp^{-1}} \ar@{}[dr]|{\diagram \label{H'app0}} & f^* f_* (-) \TTens f^* f_* X \ar[d] \\
f^* f_*(-\TTens X) \ar[r] & - \TTens X 
}
$$
commute for every $X \in \cC_1$. For any $A$ and $B$, $\fg$ is given by the composition
$$\xymatrix{
f_*A \TTens f_*B \ar[r]^-{\unit_*^*} & f_* f^* (f_*A \TTens f_*B) \ar[r]^-{\fp^{-1}} & f_* (f^* f_*A \TTens f^* f_*B) \ar[r]^-{\counit_*^* \TTens \counit_*^*} & f_* (A \TTens B).
}$$
\end{prop}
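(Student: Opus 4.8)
The plan is to produce $\fg$ as the \emph{mate} of the inverse monoidal isomorphism $\fp^{-1}$ with respect to the adjunction $(f^*,f_*)$, in exact parallel with the way $\fh$ was obtained from $\fp$ in Proposition \ref{defifh}. This is nothing but the classical mechanism turning a strong monoidal left adjoint into a lax monoidal right adjoint. Concretely, I would apply the mates Lemma \ref{adjab} to the square whose two horizontal functors are both the tensor product $-\TTens-$, whose left (two--variable) vertical adjoint couple is the product couple $(f^*\times f^*,f_*\times f_*)$ with unit $\unit_*^*\times\unit_*^*$ and counit $\counit_*^*\times\counit_*^*$, and whose right vertical couple is $(f^*,f_*)$. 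Taking $a=\fp^{-1}\colon f^*(-\TTens -)\to f^*(-)\TTens f^*(-)$ as the given transformation $J_2H'\to HJ_1$, the lemma returns a unique mate $b\colon H'K_1\to K_2H$, i.e. a unique $\fg\colon f_*(-)\TTens f_*(-)\to f_*(-\TTens -)$, and the zigzag $K_2H\counit_1\circ K_2aK_1\circ\unit_2H'K_1$ prescribed by the proof of Lemma \ref{adjab} unwinds verbatim to the stated composition $f_*(\counit_*^*\TTens\counit_*^*)\circ f_*(\fp^{-1})\circ\unit_*^*$. Moreover the two commutativities $\diag{H'}$ and $\diag{H}$ guaranteed by the lemma are, after this translation, exactly the required diagrams \ref{Happ0} and \ref{H'app0}. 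Since the source category here is the product $\cC_1\times\cC_1$, the resulting $\fg$ is automatically natural in both arguments, i.e. a morphism of bifunctors, and uniqueness is part of the statement of Lemma \ref{adjab}.

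The only delicate point is the assertion that $\fg$ is a morphism of \emph{suspended} bifunctors, that is, that both compatibility squares of Definition \ref{defiMorSuspBif} --- one for $\tpp_1$, one for $\tpp_2$ --- commute. Here the product-category argument cannot be invoked as such, because $-\TTens-$ carries two \emph{different} suspension isomorphisms $\tpp_1,\tpp_2$ which only anticommute (Definition \ref{defiSuspBif}), so $\TTens\colon\cC\times\cC\to\cC$ is not a suspended functor for the product suspension. I would instead argue one variable at a time: with the second argument $Y$ fixed, $-\TTens Y$ is a genuine suspended functor via $\tpp_1$, so the suspended form of the mates lemma (Theorem \ref{adjsquareT}) already gives that $\fg_{-,Y}$ is a morphism of suspended functors in the first variable; fixing the first argument and using $\tpp_2$ treats the second variable symmetrically.

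I expect this last verification --- rather than the construction of the underlying natural transformation --- to be the main obstacle, since it is precisely where the sign conventions of Definition \ref{defiSuspBif} come into play. Checking that the maps obtained for varying $Y$ assemble into a single transformation of bifunctors, and that the two suspension squares hold together, is the same bookkeeping as in the proof of Proposition \ref{suspACRBExists}: it comes down to the naturality of $\unit_*^*$ and $\counit_*^*$ and to the fact that $\fp$ intertwines the suspension data of the source and target tensor products, so I would organise the computation exactly along those lines.
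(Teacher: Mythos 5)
Your construction of $\fg$ is exactly the paper's: apply Lemma \ref{adjab} with $H=H'=(-\TTens-)$, $J_1=f^*\times f^*$, $K_1=f_*\times f_*$, $J_2=f^*$, $K_2=f_*$ and $a=\fp^{-1}$; the unique mate $b=\fg$ is the stated zigzag $f_*(\counit_*^*\TTens\counit_*^*)\circ f_*\fp^{-1}\circ\unit_*^*$, and $\diag{H'}$, $\diag{H}$ become \diag{\ref{Happ0}} and \diag{\ref{H'app0}}, with uniqueness built into the lemma. The only divergence is how the suspended claim is settled. The paper does it in one sentence: since $\fg$ is, by the lemma, equal to the explicit composite above, and each factor is a morphism of suspended functors or bifunctors ($\fp$ is an isomorphism of suspended bifunctors by \assumption{\ref{assufmonoidal}}{f}, and $\unit_*^*$, $\counit_*^*$ are suspended because $(f^*,f_*)$ is a suspended adjoint couple), the composite is a morphism of suspended bifunctors in the sense of Definition \ref{defiMorSuspBif} --- no suspended mates argument on the product square is needed. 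Your observation that $\TTens$ is not a suspended functor for the product suspension $T\times T$ (one lands in $T^2$, and $\tpp_1,\tpp_2$ only anticommute) is correct, and it is precisely the point that the paper's parenthetical appeal to Theorem \ref{adjsquareT} glosses over; your fix --- fixing one argument and applying the suspended mates theorem with $\tpp_1$, then symmetrically with $\tpp_2$ --- is a valid alternative, since Definition \ref{defiMorSuspBif} demands exactly suspendedness in each variable separately, and your closing remark that the verification reduces to naturality of $\unit_*^*$, $\counit_*^*$ and the fact that $\fp$ intertwines the suspension data is in substance the same bookkeeping as the paper's one-liner. Both routes are correct; the paper's is shorter because the explicit formula is already in hand, while yours makes the suspension subtlety explicit.
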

\begin{proof}
Apply Lemma \ref{adjab} (resp. Theorem \ref{adjsquareT}) to $H=H'=(-\TTens-)$, $J_1=f^*\times f^*$ $K_1= f_* \times f_*$, $J_2= f^*$, $K_2=f_*$, and $a=\fp^{-1}$ to obtain a unique $\fg=b$ satisfying \diag{\ref{Happ0}} and \diag{\ref{H'app0}} and given by the above composition. In the suspended case, $\fg$ is a morphism of suspended bifunctors because it is given by a composition of morphisms of suspended functors and bifunctors.
\end{proof}

\begin{prop} \label{defiffg} Assume \assumption{\ref{assufmonoidal}}{f} and \assumption{\ref{assuAdjf*f*}}{f}.
Then, there is a unique morphism 
$$\ff: f_*\HHom{*}{-} \to  \HHom{f_*(*)}{f_*(-)}$$
of (suspended) bifunctors such that the diagrams
$$\xymatrix{
\HHom{f_*X}{f_*(-\TTens X)} \ar@{}[dr]|{\diagram \label{Happ1}} & f_*\HHom{X}{-\TTens X} \ar[l]_{\ff} \\
 \HHom{f_* X}{f_*(-)\TTens f_*X} \ar[u]^{\fg} & f_*(-) \ar[l] \ar[u]
}
\xymatrix{
f_*(-) \ar@{}[dr]|{\diagram \label{H'app1}} & \HHom{f_* X}{f_*(-)} \TTens f_*X \ar[l] \\
f_*(\HHom{X}{-}\TTens X) \ar[u] & f_*\HHom{X}{-} \TTens f_*X \ar[l]_{\fg} \ar[u]_{\ff \TTens id}
}$$
commute for every $X \in \cC_1$. For any $A$ and $B$, $\ff$ is given by the composition
$$\xymatrix@C=4ex{
f_*\HHom{A}{B} \ar[r]^-{\coev^l} & \HHom{f_*A}{f_*\HHom{A}{B} \TTens f_*A} \ar[r]^-{\fg} & \HHom{f_*A}{f_*(\HHom{A}{B} \TTens A)} \ar[r]^-{\ev^l} & \HHom{f_*A}{f_*B}.
}$$
\end{prop}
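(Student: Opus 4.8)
The plan is to proceed exactly as in the proof of Proposition \ref{defifh}, replacing the monoidal comparison $\fp$ of $f^*$ by the morphism $\fg$ attached to $f_*$ in Proposition \ref{fg_defi}. Since the statement asks for a bifunctor morphism $\ff$ singled out by a pair of compatibility diagrams, the right instrument is the bifunctor version of the mates lemma, namely Lemma \ref{adjabbif} in the plain case and its suspended refinement Lemma \ref{adjabbifT} in the suspended case.

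I would apply Lemma \ref{adjabbif} with parameter category $\cX = \cC_2$, taking the first adjoint couple of bifunctors to be the tensor--Hom pair $(J_1,K_1) = (-\TTens *, \HHom{*}{-})$ on $\cC_2$, and the second to be $(J_2,K_2) = (-\TTens f_*(*), \HHom{f_*(*)}{-})$ on $\cC_1$, the parameter of the latter being transported from $\cC_1$ into $\cC_2$ along $f_*$ by Lemma \ref{changeParam}. With $H = H' = f_*$, the morphism $a: J_2(*,H'(-)) \to H J_1(*,-)$ unwinds to $f_*(-) \TTens f_*(*) \to f_*(-\TTens *)$, and I would set $a = \fg$. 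Lemma \ref{adjabbif} then delivers a unique $b: f_*\HHom{*}{-} \to \HHom{f_*(*)}{f_*(-)}$, which I define to be $\ff$.

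There remains only the identification of the lemma's two output diagrams with those in the statement: under these substitutions $\diag{H'}$ becomes precisely \ref{Happ1} and $\diag{H}$ becomes \ref{H'app1}, the units of the two tensor--Hom couples appearing as the coevaluations $\coev^l$ and the counits as the evaluations $\ev^l$. The explicit factorization of $\ff$ is read off from the canonical formula for $b$ in Lemma \ref{adjab}, namely $K_2 H \counit_1 \circ K_2 a K_1 \circ \unit_2 H' K_1$: the unit $\unit_2$ of the second couple supplies the leading $\coev^l$, applying $\HHom{f_*A}{-}$ to $a = \fg$ supplies the middle arrow, and $K_2 H \counit_1$ supplies the trailing $\ev^l$. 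For the suspended assertion I would invoke Lemma \ref{adjabbifT} together with the fact, recorded in Proposition \ref{fg_defi}, that $\fg$ is a morphism of suspended bifunctors, so that $\ff$ is one as well.

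I do not anticipate any genuine difficulty, the argument being entirely formal once the correct instance of the mates lemma has been pinned down. The single point that demands care is the bookkeeping of the parameter: the target $\HHom{f_*(*)}{f_*(-)}$ has to be organised as an adjoint couple of bifunctors on $\cC_1$ whose parameter lives in $\cC_2$ via $f_*$ (Lemma \ref{changeParam}), so that both couples share the common parameter category $\cC_2$ required by Lemma \ref{adjabbif} and so that the induced $a$ is exactly $\fg$ rather than a reshuffled variant.
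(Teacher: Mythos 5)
Your proposal is correct and coincides with the paper's own proof, which likewise obtains $\ff$ by applying Lemma \ref{adjabbif} (resp.\ Lemma \ref{adjabbifT} in the suspended case) to $J_1=(-\TTens *)$, $K_1=\HHom{*}{-}$, $J_2=(-\TTens f_*(*))$, $K_2=\HHom{f_*(*)}{-}$, $H=H'=f_*$ and $a=\fg$, with the diagrams \diag{\ref{Happ1}} and \diag{\ref{H'app1}} and the explicit composition read off exactly as you describe. Your remark on transporting the parameter along $f_*$ via Lemma \ref{changeParam} only makes explicit what the paper leaves implicit.
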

\begin{proof}
Apply Lemma \ref{adjabbif} (resp. Lemma \ref{adjabbifT}) to $J_1=(-\TTens *)$, $K_1=\HHom{*}{-}$, $J_2=(-\TTens f_*(*))$, $K_2=\HHom{f_*(*)}{-}$, $H=H'=f_*$ and $a=\fg$. 
\end{proof}

\begin{lemm}
The diagram
$$\xymatrix{
f_*A \TTens f_* B \ar[d]_{\cc(f_* \TTens f_*)} \ar[r]^{\fg} \ar@{}[dr]|{\diagram \label{fgc}} & f_* (A \TTens B) \ar[d]^{f_* \cc} \\
f_*B \TTens f_* A \ar[r]^{\fg} & f_* (B \TTens A)
}$$
is commutative.
\end{lemm}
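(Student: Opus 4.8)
The plan is to expand $\fg$ via the explicit composition given in Proposition \ref{fg_defi} and then slide the symmetry $\cc$ from the source of \diag{\ref{fgc}} to its target, step by step through that composition. The only substantive input will be the compatibility of $\fp$ with the symmetry, which is part of Assumption \assumption{\ref{assufmonoidal}}{f} (the point being that $f^*$ is \emph{symmetric} monoidal, not merely monoidal); everything else is naturality of the unit and counit of the adjunction $(f^*,f_*)$ and of $\cc$ itself.

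Concretely, I would write $\fg_{A,B}$ as $f_*(\counit_*^* \TTens \counit_*^*) \circ f_*(\fp^{-1}) \circ \unit_*^*$ applied to $f_*A \TTens f_*B$, and likewise $\fg_{B,A}$ applied to $f_*B \TTens f_*A$, so that the counterclockwise leg of the square is $\fg_{B,A} \circ \cc_{f_*A,f_*B}$. Then I would transport $\cc$ rightward in three moves. First, naturality of $\unit_*^*$ with respect to $\cc_{f_*A,f_*B}$ rewrites $\unit_*^* \circ \cc$ as $f_*f^*(\cc) \circ \unit_*^*$. Second, the symmetry axiom for the monoidal functor $f^*$, in the inverted form $\fp^{-1}_{f_*B,f_*A} \circ f^*(\cc) = \cc \circ \fp^{-1}_{f_*A,f_*B}$ in $\cC_2$ and pushed through $f_*$, moves $f_*f^*(\cc)$ past $f_*(\fp^{-1})$, turning it into $f_*(\cc_{f^*f_*A,f^*f_*B})$. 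Third, naturality of $\cc$ with respect to the pair $\counit_*^*$, pushed through $f_*$, carries $f_*(\cc_{f^*f_*A,f^*f_*B})$ past $f_*(\counit_*^* \TTens \counit_*^*)$ and deposits it as $f_*(\cc_{A,B})$ at the very end. What remains to the right of this final $f_*\cc$ is exactly $\fg_{A,B}$, so the counterclockwise leg equals $f_*\cc \circ \fg$, which is the clockwise leg. This proves the commutativity.

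There is essentially no obstacle here: the whole argument is a diagram chase whose single piece of real content is the symmetry compatibility of $\fp$, the remaining steps being pure naturality. In the suspended or triangulated setting nothing changes, since $\fg$ and $\fp$ are morphisms of suspended bifunctors and the identities invoked are the same ones. As an alternative one could bypass the explicit formula and deduce the claim from the uniqueness clause of Lemma \ref{adjab}: $\fg$ is the mate of $\fp^{-1}$, and the symmetry of $\fp^{-1}$ forces the corresponding symmetry of its mate by uniqueness; but the direct chase above is the shortest route.
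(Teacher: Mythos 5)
Your proof is correct, and every step checks out: the counterclockwise leg $\fg_{B,A}\circ\cc$ is rewritten as $f_*\cc\circ\fg_{A,B}$ by three exchanges, namely naturality of $\unit_*^*$ against $\cc_{f_*A,f_*B}$, the symmetry axiom $\fp^{-1}\circ f^*(\cc)=\cc\circ\fp^{-1}$ pushed through $f_*$, and naturality of $\cc$ against $\counit_*^*\TTens\counit_*^*$ pushed through $f_*$. However, the route is genuinely different from the paper's. The paper does not unfold the composite $f_*(\counit_*^*\TTens\counit_*^*)\circ f_*(\fp^{-1})\circ\unit_*^*$ at all; instead it applies Lemma \ref{cube} to a cube whose top and bottom faces carry the exchange functor $x$ and the symmetry $\cc$, whose vertical 2-cells are $\fp^{-1}$, and whose commutativity is exactly the symmetry compatibility of $\fp$ with $\cc$; passing to mates along the vertical adjunctions $(f^*,f_*)$ turns $\fp^{-1}$ into $\fg$ (by the very construction of $\fg$ in Proposition \ref{fg_defi}), and the resulting commutative cube is the square \diag{\ref{fgc}}. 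This is essentially the ``uniqueness of mates'' alternative you sketch in your last sentence, packaged once and for all; so your fallback is the paper's actual proof, while your main argument is the elementary one. What each buys: your chase is self-contained and makes the single substantive input (that $f^*$ is \emph{symmetric} monoidal, not merely monoidal) completely explicit, at the cost of manipulating units and counits by hand; the paper's cube argument avoids all explicit unit/counit bookkeeping, is uniform with the mechanism used for nearly every other diagram in the article, and transfers to the suspended case without comment since Lemma \ref{cube} holds verbatim there (Theorem \ref{adjsquareT}). Your remark that the suspended case needs no change is also right, since the identities you invoke are naturality statements that hold equally for the suspended morphisms $\fp$, $\fg$, $\unit_*^*$ and $\counit_*^*$.
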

\begin{proof}
Apply Lemma \ref{cube} to the cube
$$\xymatrix@!C=3ex@!R=2ex{
\cC_2 \times \cC_2 \ar[rr]^{\TTens} \ar[dr]^{x} & & \cC_2 \ar[dr]^{Id} & \\
 & \cC_2 \times \cC_2 \ar@2[ur]^{c} \ar[rr]^{\TTens} & & \cC_2 \\
\cC_1 \times \cC_1 \ar[uu]^{f^* \times f^*} \ar[dr]_{x} & & & \\
 & \cC_1 \times \cC_1 \ar@{=}[luuu]|{\rule[-0.5ex]{0ex}{2ex} id} \ar[uu]_(.40){f^* \times f^*} \ar[rr]_{\TTens} & & \cC_1 \ar@2[lluu]_{\fp^{-1}} \ar[uu]_{f^*}
} 
\hspace{5ex}
\xymatrix@!C=3ex@!R=2ex{
\cC_2 \times \cC_2 \ar[rr]^{\TTens} & & \cC_2 \ar[dr]^{Id} & \\
 & & & \cC_2 \\
\cC_1 \times \cC_1 \ar[uu]^{f^*\times f^*} \ar[dr]_{x} \ar[rr] & & \cC_1 \ar@2[lluu]_(.60){\fp^{-1}} \ar[uu]^{f^*} \ar[dr]_{Id} & \\
 & \cC_1 \times \cC_1 \ar@2[ur]^{c} \ar[rr]_{\TTens} & & \cC_1 \ar@{=}[luuu] \ar[uu]_{f^*}
} 
$$
where $x$ is the functor exchanging the components. Note that the morphism of functors $f_*(- \TTens *) \to f_*(-) \TTens f_*(*)$ obtained on the front and back squares indeed coincides with $\fg$ by construction.
\end{proof}

\begin{prop}
In the suspended case, under Assumption \assumption{\ref{assuAdjf*f*}}{f}
there is a unique way of turning $f_*$ into a suspended functor such that $(f^*,f_*)$ is a suspended adjoint couple. If further Assumption \assumption{\ref{assuAdjf*f!}}{f}
holds, then there is a unique way of turning $f^!$ into a suspended functor such that $(f_*,f^ !)$ is a suspended adjoint couple.
\end{prop}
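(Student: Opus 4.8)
The plan is to deduce both claims directly from Proposition \ref{suspAdjExists}, which is tailor-made for this situation: given an adjoint couple $(L,R)$ whose left adjoint $L$ already carries a suspension structure, its Point 1 produces a \emph{unique} isomorphism $RT \to TR$ turning $R$ into a suspended functor and $(L,R)$ into a suspended adjoint couple. The only real point is to invoke it twice, in the correct order, so that at each stage the relevant left adjoint is already known to be suspended. Since the proposition is stated in the suspended case, Point 2 (the exactness statement) is not needed here.

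First I would treat $f_*$. By Assumption \assumption{\ref{assufmonoidal}}{f}, the functor $f^*$ is a suspended functor, coming with its isomorphism $f^* T \to T f^*$, and by Assumption \assumption{\ref{assuAdjf*f*}}{f} the pair $(f^*, f_*)$ is an adjoint couple. Applying Point 1 of Proposition \ref{suspAdjExists} with left adjoint $L = f^*$ and right adjoint $R = f_*$ yields a unique isomorphism $f_* T \to T f_*$ that makes $f_*$ a suspended functor and $(f^*, f_*)$ a suspended adjoint couple. This settles both the existence and the uniqueness asserted in the first half of the statement.

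Next I would treat $f^!$, now exploiting the suspension on $f_*$ just obtained. Under the further Assumption \assumption{\ref{assuAdjf*f!}}{f}, the pair $(f_*, f^!)$ is an adjoint couple, and its left adjoint $f_*$ is, by the previous paragraph, a suspended functor. A second application of Point 1 of Proposition \ref{suspAdjExists}, this time with $L = f_*$ and $R = f^!$, produces the unique suspension structure $f^! T \to T f^!$ compatible with the adjunction, which is exactly what the second half of the statement requires.

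I do not expect any genuine obstacle, as the content is entirely carried by Proposition \ref{suspAdjExists}. The one point requiring care is the direction of the adjunctions together with the sequential dependence of the two steps: the proposition always transports a suspension from the \emph{left} adjoint to the right one, so the suspended structure on $f_*$ must be established before the couple $(f_*, f^!)$ can be fed into it. Since in both adjoint couples the left adjoint is precisely the functor already known to be suspended ($f^*$ in the first step, $f_*$ in the second), the hypotheses are verified in the right order and the two invocations chain together cleanly.
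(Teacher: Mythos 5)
Your proposal is correct and matches the paper's argument exactly: the paper's proof simply states that both results follow directly from Proposition \ref{suspAdjExists}, which is precisely your two sequential applications of its Point 1, first to $(f^*,f_*)$ and then to $(f_*,f^!)$ using the suspension on $f_*$ just obtained. Your explicit attention to the order of the two steps and the left-adjoint direction is a faithful elaboration of what the paper leaves implicit.
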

\begin{proof}
Both results follow directly from Proposition \ref{suspAdjExists}. 
\end{proof}

\begin{prop} \label{projFormMorphExists}
Under assumptions \assumption{\ref{assufmonoidal}}{f} and \assumption{\ref{assuAdjf*f*}}{f}, there is a unique morphism 
$$\q:f_*(-)\TTens * \to f_* (- \TTens f^*(*))$$
and a unique isomorphism
$$\qh: \HHom{*}{f_*(-)} \to f_* \HHom{f^* (*)}{-}$$
of (suspended) bifunctors such that the diagrams
$$\xymatrix{
\HHom{X}{f_*(-\TTens f^* X)} \ar@{}[dr]|{\diagram \label{Rapp1}} & f_*\HHom{f^* X}{-\TTens f^* X} \ar[l]^{\qh^{-1}} \\
\HHom{X}{f_*(-)\TTens X} \ar[u]^{\q} & f_*  \ar[l] \ar[u]
}\hspace{-3ex}
\xymatrix{
f^* \ar@{}[dr]|{\diagram \label{R'app1}} & \HHom{X}{f_*(-)} \TTens X \ar[l] \\
f_*(\HHom{f^*X}{-} \TTens f^* X) \ar[u] & f_*\HHom{f^*X}{-} \TTens X \ar[l]_{\q} \ar[u]_{\qh^{-1}} 
}$$
$$\xymatrix{
- \TTens X \ar[d] \ar[r] \ar@{}[dr]|{\diagram \label{G1app1}} & f_* f^*(-) \TTens X \ar[d]^{\q} \\ 
f_* f^* (- \TTens X) \ar[r]^{\fp^{-1}} & f_*(f^*(-)\TTens f^* X) 
}
\xymatrix{
f^*(f_*(-) \TTens X) \ar[d]_{\q} \ar[r]_{\fp^{-1}} \ar@{}[dr]|{\diagram \label{G2app1}} & f^* f_* (-) \TTens f^* X \ar[d] \\
f^* f_*(-\TTens f^* X) \ar[r] & - \TTens f^* X 
}
$$
$$\xymatrix{
\HHom{X}{-} \ar[d] \ar[r] \ar@{}[dr]|{\diagram \label{F1app1}} & \HHom{X}{f_* f^*(-)} \ar[d]^{\qh} \\ 
f_* f^* \HHom{X}{-} \ar[r]^-{\fh} & f_* \HHom{f^* X}{f^*(-)}
}
\xymatrix{
f^* \HHom{X}{f_*(-)} \ar[d]_{\qh} \ar[r]_-{\fh} \ar@{}[dr]|{\diagram \label{F2app1}} & \HHom{f^* X}{f^* f_* (-)} \ar[d] \\
f^* f_* \HHom{f^* X}{-} \ar[r] & \HHom{f^* X}{-}
}
$$
commute for any $X \in \cC_1$. For any $A$ and $B$, $\q$ is given by the composition
$$\xymatrix@C=4ex{
f_*A \TTens B \ar[r]^-{\unit^*_*} & f_* f^* (f_* A \TTens B) \ar[r]^-{\fp} & f_*(f^* f_* A \TTens f^*B) \ar[r]^-{\counit^*_*} & f_* (A\TTens f^* B)
}$$
$\qh$ by
$$\xymatrix@C=4ex{
\HHom{A}{f_*B} \ar[r]^-{\unit^*_*} & f_* f^* \HHom{A}{f_*B} \ar[r]^-{\fh} & f_* \HHom{f^*A}{f^*f_*B} \ar[r]^-{\counit^*_*} & f_* \HHom{f^*A}{B}
}$$
and $\qh^{-1}$ by
$$\xymatrix@C=4ex{
f_* \HHom{f^*A}{B} \ar[r]^-{\coev^l} & \HHom{A}{f_*\HHom{f^*A}{B} \TTens A} \ar[r]^-{\q} & \HHom{A}{f_*(\HHom{f^*A}{B} \TTens f^*A)} \ar[r]^-{\ev^l} & \HHom{A}{f_*B}.
}$$
\end{prop}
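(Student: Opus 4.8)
The plan is to obtain both $\q$ and $\qh$ from a single application of Theorem \ref{theoGenAdj} (in its suspended incarnation), Point 3, starting from the monoidal isomorphism $\fp$. First I set up the dictionary. Take the two horizontal adjoint couples to be $(L,R)=(L',R')=(f^*,f_*)$, so that $\cC_1=\cC_1'$ and $\cC_2=\cC_2'$ are the two given categories. For the vertical ACBs with parameter in $\cC_1$, take $(G_1,F_1)=(-\TTens *,\HHom{*}{-})$, the tensor--Hom ACB of $\cC_1$, and take $(G_2,F_2)=(-\TTens f^*(*),\HHom{f^*(*)}{-})$, obtained from the tensor--Hom ACB of $\cC_2$ by the change of parameter functor $f^*$ via Lemma \ref{changeParam} (resp. Lemma \ref{changeParamSus} in the suspended case, which guarantees that this is again a suspended ACB).

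Under this dictionary one reads off the sources and targets of the eight morphisms of Theorem \ref{theoGenAdj}: the isomorphism $g_L\colon G_2(*,L(-))\to L'G_1(*,-)$ is exactly $\fp\colon f^*(-)\TTens f^*(*)\to f^*(-\TTens *)$, with $g'_L=\fp^{-1}$; the mate $f_L\colon LF_1(*,-)\to F_2(*,L'(-))$ is $\fh$, the morphism already produced in Proposition \ref{defifh} (the two constructions agree by the uniqueness in Lemma \ref{adjabbif}); the morphism $g_R\colon G_1(*,R(-))\to R'G_2(*,-)$ is $\q$; and the mate $f'_R\colon F_1(*,R'(-))\to RF_2(*,-)$ is $\qh$, with $f_R=\qh^{-1}$. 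Applying Point 3 with $g_L=\fp$ then yields, through Points 1 and 2, the morphisms $f_L=\fh$ and $g_R=\q$, and finally the two mutually inverse morphisms $f_R$ and $f'_R$; this gives $\q$ as a morphism and $\qh$ as an isomorphism with inverse $\qh^{-1}$. Uniqueness of $\q$ and $\qh$ is the uniqueness asserted in Theorem \ref{theoGenAdj}.

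It remains only to match diagrams and formulas. The six diagrams to be verified are, under the dictionary, literally the diagrams \diag{R}, \diag{R'}, \diag{G_1}, \diag{G_2}, \diag{F_1} and \diag{F_2} of Theorem \ref{theoGenAdj}, which become \diag{\ref{Rapp1}}, \diag{\ref{R'app1}}, \diag{\ref{G1app1}}, \diag{\ref{G2app1}}, \diag{\ref{F1app1}} and \diag{\ref{F2app1}}; their commutativity is part of the conclusion of Points 1 and 2. (The remaining diagrams \diag{L} and \diag{L'} of the theorem are precisely the pull-back relations between $\fp$ and $\fh$ already recorded in Proposition \ref{defifh}, so nothing new is needed there.) The explicit composites stated for $\q$, $\qh$ and $\qh^{-1}$ are the mate formulas appearing in the proofs of Lemma \ref{adjab} and Theorem \ref{adjsquare}: $g_R$ is the unit--$\fp^{-1}$--counit sandwich for $(f^*,f_*)$, $f'_R$ the unit--$\fh$--counit sandwich, and $f_R$ the $\coev^l$--$\q$--$\ev^l$ sandwich for the vertical ACB. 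In the suspended case all of these are automatically morphisms of suspended bifunctors, since Theorem \ref{theoGenAdj} applies verbatim to suspended ACBs.

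The only real work is the bookkeeping of this translation: one must set up the change of parameter so that both vertical ACBs genuinely share the parameter category $\cC_1$, and then check that the eight source/target pairs of the theorem line up with the morphisms named here. In the suspended setting the extra point to confirm is that $(G_2,F_2)$ is a \emph{suspended} ACB, which is exactly the content of Lemma \ref{changeParamSus} together with Proposition \ref{suspACRBExists}; once this is in place, no separate sign or suspension computation is required.
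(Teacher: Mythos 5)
Your proof is correct and follows essentially the same route as the paper, whose entire proof is to apply Point 3 of Theorem \ref{theoGenAdj} with exactly your dictionary ($L=L'=f^*$, $R=R'=f_*$, $G_1=-\TTens *$, $G_2=-\TTens f^*(*)$, $F_1=\HHom{*}{-}$, $F_2=\HHom{f^*(*)}{-}$, $g_L=(g'_L)^{-1}=\fp$) and then set $\q=g_R$ and $\qh=f'_R$. Your extra bookkeeping --- the change of parameter via Lemma \ref{changeParamSus}, the identification $f_L=\fh$ by the uniqueness in Lemma \ref{adjabbif}, and reading off the explicit composites as the mate formulas --- is precisely what the paper leaves implicit.
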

\begin{proof}
Apply Point 3 of Theorem \ref{theoGenAdj} with $L=L'=f^*$, $R=R'=f_*$, $G_1=-\TTens *$, $G_2= - \TTens f^*(*)$, $F_1= \HHom{*}{-}$, $F_2=\HHom{f^*(*)}{-}$, $g_L=(g'_L)^{-1}=\fp$.
Then define $\q=g_R$ and $\qh=f_R'$.
\end{proof}

\begin{lemm} \label{fgq}
The composition 
$$\xymatrix{f_*(-)\TTens (-) \ar[r]^-{id \TTens \unit_*^*} & f_*(-)\TTens f_* f^*(-) \ar[r]^{\fg} & f_*(- \TTens f^*(-))}$$
coincides with $\q$.
\end{lemm}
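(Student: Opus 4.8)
The plan is to compare the two morphisms of bifunctors $f_*(-)\TTens(-) \to f_*(-\TTens f^*(-))$ directly, by expanding both sides into the explicit composites provided by Propositions \ref{fg_defi} and \ref{projFormMorphExists}. Fix $A \in \cC_2$ and $B \in \cC_1$ and abbreviate $\unit=\unit_*^*$, $\counit=\counit_*^*$. On the one hand, Proposition \ref{projFormMorphExists} gives
\[
\q_{A,B}\colon f_*A \TTens B \xrightarrow{\unit} f_*f^*(f_*A \TTens B) \xrightarrow{\fp^{-1}} f_*(f^*f_*A \TTens f^*B) \xrightarrow{\counit \TTens id} f_*(A \TTens f^*B).
\]
On the other hand, applying $id_{f_*A}\TTens \unit_B$ and then the formula for $\fg_{A,f^*B}$ from Proposition \ref{fg_defi} gives the composite
\[
f_*A \TTens B \xrightarrow{id \TTens \unit} f_*A \TTens f_*f^*B \xrightarrow{\unit} f_*f^*(f_*A \TTens f_*f^*B) \xrightarrow{\fp^{-1}} f_*(f^*f_*A \TTens f^*f_*f^*B) \xrightarrow{\counit \TTens \counit} f_*(A \TTens f^*B).
\]

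First I would use naturality of the unit $\unit\colon Id \to f_*f^*$ applied to the morphism $id_{f_*A}\TTens \unit_B$ to slide the leading $id \TTens \unit_B$ past the second $\unit$; this rewrites the second composite so that it begins with $\unit_{f_*A \TTens B}$ followed by $f_*f^*(id \TTens \unit_B)$. Both composites now share the common first leg $\unit_{f_*A \TTens B}$, and since $f_*$ is a functor it suffices to compare the remaining morphisms in $\cC_2$, that is, to show that
\[
(\counit_A \TTens \counit_{f^*B}) \circ \fp^{-1} \circ f^*(id \TTens \unit_B) = (\counit_A \TTens id)\circ \fp^{-1}
\]
as morphisms $f^*(f_*A \TTens B) \to A \TTens f^*B$.

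Next I would invoke the bifunctoriality of the structural isomorphism $\fp$ (its naturality in the second variable with respect to $\unit_B$), which moves $f^*(id \TTens \unit_B)$ across $\fp^{-1}$ and turns the left-hand side into $(\counit_A \TTens(\counit_{f^*B}\circ f^*\unit_B))\circ \fp^{-1}$. The final step is the triangle identity for the adjoint couple $(f^*,f_*)$, namely $\counit_{f^*B}\circ f^*(\unit_B)=id_{f^*B}$, which collapses the second tensor factor and produces exactly the right-hand side. This proves the claimed equality.

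The whole argument is formal, built only from naturality squares (of type $\diag{mf}$), the bifunctoriality of $\fp$, and one triangle identity (of type $\diag{adj}$); consequently nothing extra is needed in the suspended or triangulated setting, where $\unit$, $\counit$ and $\fp$ are already morphisms of suspended (bi)functors, so the equality of the underlying morphisms upgrades automatically. The only genuine care required is bookkeeping: keeping the naturality square for $\unit$ and the bifunctoriality square for $\fp$ correctly oriented, and applying the triangle identity to the correct tensor factor. Once both composites are rewritten with the common left leg $\unit_{f_*A \TTens B}$, this is routine.
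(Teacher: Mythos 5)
Your proof is correct and is essentially the paper's own argument: the paper assembles exactly your three steps (naturality of $\unit_*^*$, naturality of $\fp^{-1}$ in the second variable, and the triangle identity $\counit_{f^*B}\circ f^*\unit_B = id$) into one commutative diagram whose outer paths are the defining composites of $\q$ and of $\fg\circ(id\TTens\unit_*^*)$. Your equational presentation, including the reduction to morphisms in $\cC_2$ under $f_*$, matches it step for step.
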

\begin{proof}
This follows from the commutative diagram (for any $A$ and $B$ in $\cC_2$)
$$\xymatrix@R=3ex{
f_* A \TTens B \ar[d] \ar@/_12ex/[ddd]_{\q} \ar[r] \ar@{}[dr]|{\diag{mf}} & f_* A \TTens f_* f^* B \ar[d] \ar@/^3ex/[dddr]^{\fg}\\
f_* f^*(f_* A \TTens B) \ar[d]_{\fp^{-1}} \ar[r] \ar@{}[dr]|{\diag{mf}} & f_* f^*(f_* A \TTens f_* f^* B) \ar[d]^{\fp^{-1}} \\
f_* (f^* f_* A \TTens f^* B) \ar[d] \ar[r] \ar@{}[dr]|{\diag{mf}} & f_* (f^* f_* A \TTens f^* f_* f^* B) \ar[d] \\
f_* (A \TTens f^* B) \ar[r] \ar@/_6ex/[rr]_{Id}^{\diag{adj}}  & {f_* (A \TTens f^* f_* f^* B)} \ar[r] & f_* (A \TTens f^* B) \\
}$$
in which the curved maps are indeed $\fg$ and $\q$ by construction.
\end{proof}
\begin{lemm}
The composition
$$\xymatrix@C=7ex{f_*(-) \TTens f_*(-) \ar[r]^{\q} & f_*(- \TTens f^* f_* (-)) \ar[r]^-{f_*(id \TTens \counit_*^*)} & f_*(- \TTens -)}$$
coincides with $\fg$.
\end{lemm}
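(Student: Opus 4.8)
The plan is to read this off the two defining formulas, treating it as the exact dual of Lemma \ref{fgq}: there one \emph{pre}composes $\fg$ with a unit to recover $\q$, whereas here one \emph{post}composes $\q$ with a counit to recover $\fg$. First I would specialize the parameter of $\q$ to $f_*(-)$. By Proposition \ref{projFormMorphExists}, the value of $\q$ on a pair $(A,B)$ of objects of $\cC_2$ is then
$$f_*A \TTens f_*B \arr{\unit_*^*} f_*f^*(f_*A \TTens f_*B) \arr{\fp^{-1}} f_*(f^*f_*A \TTens f^*f_*B) \arr{\counit_*^* \TTens id} f_*(A \TTens f^*f_*B),$$
while by Proposition \ref{fg_defi} the map $\fg$ on $(A,B)$ is
$$f_*A \TTens f_*B \arr{\unit_*^*} f_*f^*(f_*A \TTens f_*B) \arr{\fp^{-1}} f_*(f^*f_*A \TTens f^*f_*B) \arr{\counit_*^* \TTens \counit_*^*} f_*(A \TTens B).$$
The key point is that the first two arrows are literally the same in both composites; the two morphisms differ only in their last arrow, where $\q$ applies the counit to the left tensor factor alone.

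Next I would post-compose the specialized $\q$ with $f_*(id \TTens \counit_*^*)$, as in the statement. Since $f_*$ is a functor, this turns the final arrow $f_*(\counit_*^* \TTens id)$ into $f_*\bigl((id \TTens \counit_*^*)\circ(\counit_*^* \TTens id)\bigr)$, and by bifunctoriality (the interchange law) of $\TTens$ the inner composite is $\counit_*^* \TTens \counit_*^*$, the two counits acting on independent tensor factors. This is exactly the last arrow in the formula for $\fg$, so the two total composites agree on every pair $(A,B)$, which is the assertion of the lemma.

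I do not expect a genuine obstacle: once the definitions are unwound the statement reduces to the interchange law for $\TTens$, and the only points demanding care are matching the correct counit to the correct tensor factor and using $\fp^{-1}$ (not $\fp$) in the middle arrow, consistently with the diagram in the proof of Lemma \ref{fgq}. Should a fully diagrammatic argument in the style of that lemma be preferred, one could instead build a ladder of $\diag{mf}$ squares comparing the two composites and close it with a single triangle identity $\diag{adj}$; but the direct comparison of the two formulas above is the shortest route.
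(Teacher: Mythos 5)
Your argument is correct and is essentially the paper's own proof: the paper likewise reads the lemma off the construction of $\q$ (its explicit composite in Proposition \ref{projFormMorphExists}) together with the explicit composite characterizing $\fg$ (Proposition \ref{fg_defi}, equivalently diagram \diag{\ref{Happ0}}), the only computation being the interchange law merging $f_*(\counit_*^* \TTens id)$ followed by $f_*(id \TTens \counit_*^*)$ into $f_*(\counit_*^* \TTens \counit_*^*)$. Your side remark is also accurate: the middle arrow in the displayed composite for $\q$ is indeed $\fp^{-1}$, notwithstanding the label ``$\fp$'' printed in Proposition \ref{projFormMorphExists}.
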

\begin{proof}
This follows directly from the construction of $\q$ and diagram \diag{\ref{Happ0}}.
\end{proof}
\begin{lemm} \label{ffqh}
The composition 
$$\xymatrix{f_*\HHom{f^*(-)}{*} \ar[r]^-{\ff} & \HHom{f_*f^*(-)}{f_*(*)} \ar[r]^-{\unit_*^*} & \HHom{-}{f_*(*)} }$$
coincides with $\qh$.
\end{lemm}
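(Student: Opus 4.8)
The plan is to expand $\ff$ via its defining formula and then to replace the morphism $\fg$ occurring there by the projection morphism $\q$ using Lemma \ref{fgq}; the unit $\unit_*^*$ inserted as the second arrow is exactly what makes this replacement possible. Write $c$ for the composition in the statement. Reading off its two arrows, $c$ runs from $f_*\HHom{f^*A}{B}$ to $\HHom{A}{f_*B}$, which are the source and target of the explicit composition built from $\coev^l$, $\q$ and $\ev^l$ recorded in Proposition \ref{projFormMorphExists}. I will show that $c$ agrees with that composition.

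First I would substitute the formula for $\ff$ from Proposition \ref{defiffg}, taken with first argument $f^*A$. This presents the first arrow of $c$ as $\coev^l$, which creates the contravariant slot $f_*f^*A$, followed by $\fg$ and then $\ev^l$, both of which act only in the covariant slot and leave the contravariant slot $f_*f^*A$ fixed. The second arrow of $c$ is the map $\HHom{\unit_*^*}{\mathrm{id}}$ induced in the contravariant slot by $\unit_*^*\colon A \to f_*f^*A$. Since $\fg$ and $\ev^l$ act in the covariant slot, naturality of $\HHom{-}{-}$ lets me commute $\HHom{\unit_*^*}{\mathrm{id}}$ back past both of them, so that it is applied directly after the initial $\coev^l$.

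The crucial step is then to slide $\unit_*^*$ across this $\coev^l$. Because $\coev^l$ is the unit of the ACB $(-\TTens *, \HHom{*}{-})$, it is a generalized transformation in the parameter, so the square $\diag{gen}$ of Definition \ref{defiAdjBif} commutes for the parameter morphism $\unit_*^*\colon A \to f_*f^*A$. This square identifies $\HHom{\unit_*^*}{\mathrm{id}} \circ \coev^l$ (landing in the slot $f_*f^*A$) with $\coev^l$ (landing in the slot $A$), postcomposed in the covariant slot by the map induced by $\mathrm{id}\TTens\unit_*^*$. After this rewriting the two middle arrows of $c$ become, in the covariant slot, the composite
\[
f_*\HHom{f^*A}{B}\TTens A \longrightarrow f_*\HHom{f^*A}{B}\TTens f_*f^*A \longrightarrow f_*\bigl(\HHom{f^*A}{B}\TTens f^*A\bigr),
\]
that is, $\fg \circ (\mathrm{id}\TTens\unit_*^*)$, with $\fg=\fg_{\HHom{f^*A}{B},\,f^*A}$ the very instance appearing in $\ff$.

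Finally, Lemma \ref{fgq} identifies precisely this composite $\fg \circ (\mathrm{id}\TTens\unit_*^*)$ with $\q$. Hence $c$ is rewritten as $\coev^l$, then $\q$, then $\ev^l$, which is verbatim the composition of Proposition \ref{projFormMorphExists} expressing the inverse of the isomorphism $\qh$. Thus $c$ is inverse to $\qh$, which is the content of Lemma \ref{ffqh}. The only genuine obstacle is the sliding step: one must verify that $\diag{gen}$, applied to the parameter morphism $\unit_*^*$, produces exactly the factor $\mathrm{id}\TTens\unit_*^*$ that Lemma \ref{fgq} consumes. The remaining identifications are routine naturality of $\coev^l$ and $\ev^l$.
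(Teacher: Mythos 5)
Your argument is correct and is essentially the paper's own proof: the paper assembles exactly your steps --- expanding $\ff$ by its defining formula, the $\diag{gen}$ square for $\coev^l$ at the parameter morphism $\unit_*^*$, two naturality ($\diag{mf}$) squares, and Lemma \ref{fgq} to recognize $\q$ --- into a single commutative diagram whose outer boundary is the claim. Your concluding observation is also right that the resulting $\coev^l$--$\q$--$\ev^l$ composite is, by Proposition \ref{projFormMorphExists}, the formula for $\qh^{-1}$, so the lemma's phrase ``coincides with $\qh$'' is loose about direction, as the paper itself confirms by citing this lemma as identifying the composition with $\qh^{-1}$ in the proof of Lemma \ref{ffr}.
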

\begin{proof}
This follows from the commutative diagram (for any $A$ in $\cC_1$ and $B$ in $\cC_2$)
$$\xymatrix@R=4ex{
f_* \HHom{f^* A}{B} \ar[d]_{\coev^l} \ar@<-3ex>@/_15ex/[ddd]_{\ff} \ar[r]^{\coev^l} \ar@<-3ex>@{}[r]|{\diag{gen}} & \HHom{A}{f_*\HHom{f^*A}{B}\TTens A} \ar[d]^{\unit_*^*} \ar@<6ex>@/^9ex/[dd]^{\q} \\
\HHom{f_*f^* A}{f_* \HHom{f^*A}{B}\TTens f_* f^* A} \ar[d]_{\fg} \ar[r]^{\unit_*^*} \ar@{}[dr]|{\diag{mf}} & \HHom{A}{f_*\HHom{f^*A}{B}\TTens f_*f^*A} \ar[d]^{\fg} \\
\HHom{f_*f^* A}{f_*(\HHom{f^*A}{B}\TTens f^* A)} \ar[d]_{\ev^l} \ar[r] \ar@<-3ex>@{}[r]|{\diag{mf}} & \HHom{A}{f_*(\HHom{f^*A}{B}\TTens f^*A)} \ar[d]^{\ev^l} \\
\HHom{f_*f^* A}{f_*B} \ar[r]^{\unit_*^*} & \HHom{A}{f_*B} \\
}$$
in which the left curved arrow is $\ff$ by construction, the right one is $\q$ by Lemma \ref{fgq} and the composition from the top left corner to the bottom right 
one along the upper right corner is then $\qh$ by construction.
\end{proof}

\begin{theo}(existence of the push-forward) \label{PushForward0_theo} 
Under assumptions \assumption{\ref{assufmonoidal}}{f}, \assumption{\ref{assuAdjf*f*}}{f}, \assumption{\ref{assuAdjf*f!}}{f}, let $\rr$ be the (suspended, exact) bifunctor defined by the composition 
$$\xymatrix{f_*\HHom{-}{f^!(-)} \ar[r]^-{\ff} & \HHom{f_*(-)}{f_*f^!(-)} \ar[r]^-{\counit_*^!} & \HHom{f_*(-)}{-}}.$$
Then $\pair{f_*,\rr_K}$ is a (suspended, exact) duality preserving functor from $(\cC_2,D_{f^! K}, \bid_{f^! K})$ to $(\cC_1,D_K,\bid_K)$.
\end{theo}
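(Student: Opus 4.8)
The plan is to verify the defining diagram $\diag{P}$ of Definition \ref{DualPresFunc_defi} for $\pair{f_*,\rr_K}$, and the cleanest route is to factor this pair through $\ff$ so as to reuse the argument of Theorem \ref{PullBack0_theo}. First I would prove the following $f_*$-analogue of that theorem: for every object $M$ of $\cC_2$, the morphism $\ff_M: f_* D_M^o \to D_{f_*M}^o (f_*)^o$ makes $\pair{f_*,\ff_M}$ a (suspended, exact) duality preserving functor from $(\cC_2,D_M,\bid_M)$ to $(\cC_1,D_{f_*M},\bid_{f_*M})$.

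The proof of this intermediate statement is formally identical to the proof of Theorem \ref{PullBack0_theo}, substituting $f_*$ for $f^*$, the morphism $\fg$ of Proposition \ref{fg_defi} for the monoidal isomorphism $\fp$, and $\ff$ for $\fh$. Indeed $\ff$ is assembled from $\coev^l$, $\fg$ and $\ev^l$ by the very composition that assembles $\fh$ from $\coev^l$, $\fp$ and $\ev^l$, and it obeys the defining diagrams $\diag{\ref{Happ1}}$ and $\diag{\ref{H'app1}}$, which are the literal analogues of $\diag{\ref{Lapp1}}$ and $\diag{\ref{L'app1}}$. The single non-formal input of the proof of Theorem \ref{PullBack0_theo} was the compatibility of $\fp$ with the symmetry $\cc$, used to pass from $\diag{\ref{L'app1}}$ to the variant $\diag{\ref{L'app1}'}$; here this is supplied by $\diag{\ref{fgc}}$, which is exactly the compatibility of $\fg$ with $\cc$. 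Feeding this into the same large commutative diagram then yields $\diag{P}$ for $\pair{f_*,\ff_M}$. In the suspended case $\ff$ is a morphism of suspended bifunctors by Proposition \ref{defiffg}, and $f_*$ is exact of the same sign as $f^*$ by Point 2 of Proposition \ref{suspAdjExists}, so the pair is suspended and exact. Note that invertibility of $\fg$ is never needed, since $\fg$ enters only in the forward direction, exactly as $\fp$ did.

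Next I would specialise $M=f^!K$ and post-compose with the functor induced by the counit. By Lemma \ref{isoKisoWitt} the morphism $\counit_*^!: f_*f^!K \to K$ of $\cC_1$ gives a duality preserving functor $I_{\counit_*^!}=\pair{Id,\widetilde{\counit_*^!}}$ from $(\cC_1,D_{f_*f^!K},\bid_{f_*f^!K})$ to $(\cC_1,D_K,\bid_K)$. The composition rule $\pair{F',f'}\pair{F,f}=\pair{F'F,f'F^o \circ F'f}$ gives $I_{\counit_*^!}\circ\pair{f_*,\ff_{f^!K}}=\pair{f_*,\widetilde{\counit_*^!}(f_*)^o \circ \ff_{f^!K}}$, and object by object its structural morphism is the composite $f_*\HHom{-}{f^!K} \xrightarrow{\ff} \HHom{f_*(-)}{f_*f^!K} \xrightarrow{\counit_*^!} \HHom{f_*(-)}{K}$, which is precisely $\rr_K$. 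Thus $\pair{f_*,\rr_K}=I_{\counit_*^!}\circ\pair{f_*,\ff_{f^!K}}$, and since a composition of (suspended, exact) duality preserving functors is again one, the theorem follows.

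The hard part will be the intermediate statement on $\ff$: all the real work is in re-running the large diagram of Theorem \ref{PullBack0_theo} with $\fg$ in place of $\fp$, the only genuinely new ingredient being the use of $\diag{\ref{fgc}}$ in place of the symmetric-monoidal compatibility of $\fp$. One must be careful that $\bid$ is defined through the \emph{right} evaluation (so it secretly involves $\cc$), keeping the left/right evaluation bookkeeping and the opposite-category reading of the structural morphisms consistent; once $\diag{P}$ holds for $\ff$, the passage to $\rr$ via $I_{\counit_*^!}$ is purely formal.
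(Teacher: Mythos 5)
Your proposal is correct, but it reorganizes the argument rather differently from the paper, so a comparison is worthwhile. The paper verifies diagram \diag{P} for $\pair{f_*,\rr_K}$ head-on: a first large diagram in which $\ff$ and the counit $\counit_*^!$ are interleaved (assembled from $\diag{\ref{Happ1}}$, the square $\diag{\ref{H'app1}'}$ obtained from $\diag{\ref{H'app1}}$ via $\cc$, and \diag{mf} squares, closed up using functoriality of $\coev$), followed by a second diagram whose essential ingredient is a \diag{gen} square expressing that $\bid$ is a generalized transformation in its parameter, used to identify the boundary composite with $\bid_K f_*$ followed by $D_K^o\rr^o$. Your factorization $\pair{f_*,\rr_K}=I_{\counit_*^!}\circ\pair{f_*,\ff_{f^!K}}$ splits exactly these two diagrams apart. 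Your intermediate statement --- that $\pair{f_*,\ff_M}$ is duality preserving from $(\cC_2,D_M,\bid_M)$ to $(\cC_1,D_{f_*M},\bid_{f_*M})$ --- is the paper's first diagram with the counit stripped out, and it is indeed proved verbatim as Theorem \ref{PullBack0_theo} with $(f_*,\fg,\ff)$ replacing $(f^*,\fp,\fh)$; you correctly identify that the only non-formal input, the compatibility with $\cc$, is supplied by $\diag{\ref{fgc}}$, and that invertibility of $\fg$ is never used. The counit step is then delegated to Lemma \ref{isoKisoWitt}, whose first assertion does not require $\iota$ to be invertible, so applying it to $\counit_*^!:f_*f^!K\to K$ is legitimate; the identification of the composite's structural morphism $\widetilde{\counit_*^!}(f_*)^o\circ\ff_{f^!K}$ with $\rr_K$ via the composition rule for duality preserving pairs is correct, as is the handling of the suspended/exact refinements. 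One caveat: Lemma \ref{isoKisoWitt} is stated in the paper without proof (``a trivial lemma''), and its duality-preserving assertion is precisely the \diag{gen} square appearing in the paper's second diagram, i.e.\ the parameter-naturality of $\bid$ mentioned in the proof of Proposition \ref{HomHomSuspAdj_prop}; so your route does not avoid that verification, it merely quotes it. What your decomposition buys is modularity and reuse: the statement about $\pair{f_*,\ff_M}$ has independent interest, and composing with $I_\iota$-type twists is exactly the mechanism exploited later (e.g.\ the functors $\func{f,\phi}_*$ of Section \ref{reformcorrectdual_sec}). What the paper's monolithic proof buys is economy --- no extra intermediate theorem --- at the cost of interweaving the counit through every row of the diagram.
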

\begin{proof}
We need to prove that the diagram \diag{P} in Definition \ref{DualPresFunc_defi} is commutative. We consider the commutative diagram
$$\xymatrix@R=3ex@C=0.5ex{
f_* A \ar[r]^-{\coev^l} \ar[d]_{\coev^l} \ar@{}[dr]|{\diag{\ref{Happ1}}} & f_* \HHom{\HHom{A}{f^!K}}{A \TTens \HHom{A}{f^!K}} \ar[r]^-{\ev^r} \ar[d]_{\ff} & f_*\HHom{\HHom{A}{f^!K}}{f^! K} \ar@/^12ex/[ddl]^{\ff} \ar@{}[dddl]^{\diag{mf}} \\
\HHom{f_*\HHom{A}{f^! K}}{f_* A \TTens f_* \HHom{A}{f^! K}} \ar[r] \ar[d]_{id \TTens \ff} \ar@{}[dr]|{\diag{\ref{H'app1}'}}  & \HHom{f_*\HHom{A}{f^! K}}{f_*(A \TTens \HHom{A}{f^! K})} \ar[d]_{\ev^r} \\
\HHom{f_*\HHom{A}{f^! K}}{f_* A \TTens \HHom{f_* A}{f_* f^! K}} \ar[r]^-{\ev^r} \ar[d]_{
id \TTens \counit_*^!} \ar@{}[dr]|{\diag{mf}} & \HHom{f_*\HHom{A}{f^! K}}{f_*f^! K} \ar[d]_{\counit_*^!} \\
\HHom{f_*\HHom{A}{f^! K}}{f_* A \TTens \HHom{f_* A}{K}} \ar[r]^-{\ev^r} & \HHom{f_*\HHom{A}{f^! K}}{K}
}$$
where \diag{\ref{H'app1}'} is obtained from \diag{\ref{H'app1}} by using $\cc$. The top row is $f_* \bid_{f^! K}$ and the composition from the top right corner to the bottom one is $\rr$, both by definition. The result follows if we prove that the composition from the top left corner to the bottom right one, going through the bottom left corner, is $\bid_K f_*$ followed by $D_K^o \rr^o$. This follows from the commutative diagram
$$\xymatrix@R=4ex{
f_* A \ar[r]^-{\coev^l} \ar[d]_{\coev^l} \ar@{}[dr]|{\diag{gen}} & \HHom{f_*\HHom{A}{f^! K}}{f_* A \TTens f_* \HHom{A}{f^! K}} \ar[d]^{id \TTens \rr} \\ 
\HHom{\HHom{f_* A}{K}}{f_* A \TTens \HHom{f_* A}{K}} \ar[r]^-{\rr} \ar[d]_{\ev^r} \ar@{}[dr]|{\diag{mf}} & \HHom{f_*\HHom{A}{f^! K}}{f_* A \TTens \HHom{f_* A}{K}} \ar[d]^{\ev^r} \\
\HHom{\HHom{f_* A}{K}}{K} \ar[r]^-{\rr} & \HHom{f_*\HHom{A}{f^! K}}{K}
}$$
in which the left vertical composition is $\bid_K f_*$ and the composition along the upper right corner is the composition along the lower left corner of the previous diagram.
\end{proof}

\subsection{When the projection morphism is invertible}

In this section, we prove that the projection morphism is an isomorphism if and only if the morphism $\rr$ is. In that case, we obtain a new morphism $\ssp$ that will be used in Section \ref{ProjForm_sec} to state a projection formula.
-
\begin{prop} \label{projFormIso}
Under assumptions \assumption{\ref{assufmonoidal}}{f}, \assumption{\ref{assuAdjf*f*}}{f} and \assumption{\ref{assuAdjf*f!}}{f}, there is a unique morphism
$$\sh': \HHom{f^*(*)}{f^!(-)} \to f^! \HHom{*}{-}$$
of (suspended) bifunctors such that the diagrams
$$\xymatrix{
\HHom{f^*X}{-} \ar[r] \ar[d] \ar@{}[dr]|{\diagram \label{F1app2}} & \HHom{f^*X}{f^! f_*(-)} \ar[d]^{\sh'} \\
f^! f_* \HHom{f^*X}{-} \ar[r]^{\qh^{-1}} & f^! \HHom{X}{f_*(-)} 
}
\xymatrix{
f_* \HHom{f^*X}{f^!(-)} \ar[d]_{\sh'} \ar[r]_{\qh^{-1}} \ar@{}[dr]|{\diagram \label{F2app2}} & \HHom{X}{f_*f^!(-)} \ar[d] \\
f_* f^! \HHom{X}{-} \ar[r] & \HHom{X}{-}
}$$
commute for any $X \in \cC_1$. The morphism of bifunctors $\sh'$ is invertible if and only if $\q$ is invertible (Assumption \assumption{\ref{assuProjFormIso}}{f}). In that case, we denote by $\sh$ the inverse of $\sh'$, and there is a unique morphism of (suspended) bifunctors
$$\ssp: f^!(-)\TTens f^*(*) \to f^!(- \TTens *)$$
such that the diagrams
$$\xymatrix{
\HHom{f^*X}{f^!(-\TTens X)} \ar@{}[dr]|{\diagram \label{Rapp2}} & f^! \HHom{X}{- \TTens X} \ar[l]^-{\sh} \\
\HHom{f^*X}{f^!(-)\TTens f^*X} \ar[u]^{\ssp} & f^! \ar[l] \ar[u]
}
\xymatrix{
f^! \ar@{}[dr]|{\diagram \label{R'app2}} & \HHom{f^* X}{f^!(-)} \TTens f^*X \ar[l] \\
f^! (\HHom{X}{-} \TTens X) \ar[u] & f^!\HHom{X}{-} \TTens f^*X \ar[l]_{\ssp} \ar[u]_{\sh}
}$$
$$\xymatrix{
-\TTens f^* X \ar[d] \ar[r] \ar@{}[dr]|{\diagram \label{G1app2}} & f^! f_* (-) \TTens f^* X \ar[d]^{\ssp} \\
f^! f_* (-\TTens f^*X) \ar[r]^{\q^{-1}} & f^!(f_*(-) \TTens X) 
}
\xymatrix{
f_*(f^!(-)\TTens f^* X) \ar[d]_{\ssp} \ar[r]_{\q^{-1}} \ar@{}[dr]|{\diagram \label{G2app2}} & f_* f^!(-) \TTens X \ar[d] \\
f_* f^! (- \TTens X) \ar[r] & - \TTens X 
}
$$
commute for any $X \in \cC_1$. For any $A$ and $B$, $\sh'$ is given by the composition
$$\xymatrix@C=4ex{
\HHom{f^*A}{f^!B} \ar[r]^-{\unit^!_*} & f^! f_* \HHom{f^*A}{f^!B} \ar[r]^-{\qh^{-1}} & f^! \HHom{A}{f_* f^! B} \ar[r]^-{\counit^!_*} & f^! \HHom{A}{B}
}$$
and $\ssp$ by
$$\xymatrix@C=4ex{
f^!A \TTens f^* B \ar[r]^-{\unit^!_*} & f^!f_*(f^!A \TTens f^*B) \ar[r]^-{\q^{-1}} & f^!(f_*f^!A \TTens B) \ar[r]^-{\counit^!_*} & f^!(A \TTens B)
}$$
or equivalently by
$$\xymatrix@C=4ex{
f^!A \TTens f^* B \ar[r]^-{\coev^l} & f^! \HHom{B}{A \TTens B} \TTens f^* B \ar[r]^-{\sh} & \HHom{f^* B}{f^!(A \TTens B)} \TTens f^* B \ar[r]^-{\ev^l} & f^!(A \TTens B).
}$$
\end{prop}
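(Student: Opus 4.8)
The plan is to realize the entire statement as a single instance of Theorem \ref{theoGenAdj}, with the outer (horizontal) adjoint couple taken to be $(f_*,f^!)$ on both rows -- so $L=L'=f_*$ and $R=R'=f^!$, available under Assumptions \assumption{\ref{assuAdjf*f*}}{f} and \assumption{\ref{assuAdjf*f!}}{f} -- and the two vertical ACBs taken to be the tensor--Hom adjunctions $(G_1,F_1)=(-\TTens f^*(*),\HHom{f^*(*)}{-})$ and $(G_2,F_2)=(-\TTens *,\HHom{*}{-})$, the first being the tensor--Hom ACB reparametrised through $f^*$ (Lemma \ref{changeParam}). Reading off the eight morphisms of the theorem for this square, one sees that $g_L$ has the source and target of $\q$, that $f_L$ has those of $\qh^{-1}$, that $f'_R$ has those of $\sh'$, that $g_R$ has those of $\ssp$, and that $f_R$ has those of $\sh$.

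First I would construct $\sh'$ with no hypothesis on $\q$. Since $\qh$ is an isomorphism by Proposition \ref{projFormMorphExists}, I may set $f_L:=\qh^{-1}$ and let $\sh':=f'_R$ be its mate under Point 2 of Theorem \ref{theoGenAdj} (equivalently, apply Lemma \ref{adjabbif} to the couple $(f_*,f^!)$). The defining diagrams \diag{\ref{F1app2}} and \diag{\ref{F2app2}} are exactly $\diag{F_1}$ and $\diag{F_2}$ of the theorem, the explicit composition is the standard formula for $f'_R$ in terms of $f_L$, $\unit_*^!$ and $\counit_*^!$, and uniqueness is the uniqueness of the mate.

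The core of the argument is the invertibility equivalence, which I would obtain from Point 3. For this I must first check that $\q$ is the Point-1 mate of $f_L=\qh^{-1}$ in the present square, i.e. that the diagrams $\diag{L}$ and $\diag{L'}$ attached to $g_L=\q$ and $f_L=\qh^{-1}$ commute. But these diagrams have the very same objects and arrows as the diagrams $\diag{R}$ and $\diag{R'}$ attached to $\q$ and $\qh^{-1}$ in the square used to build them in Proposition \ref{projFormMorphExists} -- namely the already established \diag{\ref{Rapp1}} and \diag{\ref{R'app1}} -- so they hold. Point 3 then applies in both directions: from $g_L=\q$ together with $g'_L=\q^{-1}$ (when $\q$ is invertible) it produces mutually inverse $f_R,f'_R$, whence $\sh'=f'_R$ is invertible with inverse $\sh:=f_R$; conversely, from $f_R=\sh$ together with $f'_R=\sh'=\sh^{-1}$ (when $\sh'$ is invertible) it produces mutually inverse $g_L=\q$ and $g'_L$, whence $\q$ is invertible. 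This is the claimed equivalence, and it simultaneously produces $\ssp:=g_R$ as the mate of $g'_L=\q^{-1}$, with defining diagrams \diag{\ref{G1app2}} and \diag{\ref{G2app2}} equal to $\diag{G_1}$ and $\diag{G_2}$ and first explicit formula the mate formula in $\q^{-1}$, $\unit_*^!$, $\counit_*^!$.

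Finally, the diagrams \diag{\ref{Rapp2}} and \diag{\ref{R'app2}} relating $\sh$ and $\ssp$ are $\diag{R}$ and $\diag{R'}$ of the theorem for $f_R=\sh$ and $g_R=\ssp$ (Point 1), and unwinding the associated mate formula for $g_R$ through the tensor--Hom unit and counit $\coev^l$, $\ev^l$ yields the second, equivalent composition given for $\ssp$. The suspended (triangulated) case is identical, using the suspended forms of Theorem \ref{theoGenAdj} and of the tensor--Hom ACBs. I expect the only genuinely delicate step to be the bookkeeping that exhibits $\q$, $\qh^{-1}$, $\sh'$, $\ssp$, $\sh$ as five mates of one and the same square; once the coincidence of $\diag{L},\diag{L'}$ here with \diag{\ref{Rapp1}} and \diag{\ref{R'app1}} bridges this square to the one defining $\q$ and $\qh$, Point 3 of Theorem \ref{theoGenAdj} does all the remaining work.
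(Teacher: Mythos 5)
Your proposal is correct and takes essentially the same route as the paper, which applies Theorem \ref{theoGenAdj} to exactly this square ($L=L'=f_*$, $R=R'=f^!$, $G_1=-\TTens f^*(*)$, $F_1=\HHom{f^*(*)}{-}$, $G_2=-\TTens *$, $F_2=\HHom{*}{-}$): Point 2 with $f_L=\qh^{-1}$ yields $\sh'=f'_R$ and \diag{\ref{F1app2}}, \diag{\ref{F2app2}}, Point 3 gives the invertibility equivalence, and Point 2 applied to $g'_L=\q^{-1}$ gives $\ssp=g_R$ with the remaining diagrams. Your explicit verification that $\diag{L}$ and $\diag{L'}$ for this square coincide with the already-established \diag{\ref{Rapp1}} and \diag{\ref{R'app1}} is exactly the content the paper compresses into its parenthetical remark that $\q$ and $\qh$ correspond to each other through Point 1 by construction.
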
 
\begin{proof}
Apply Point 2 of Theorem \ref{theoGenAdj} with $L=L'=f_*$, $R=R'=f^!$, $G_1=-\TTens f^*(*)$, $G_2= - \TTens *$, $F_1= \HHom{f^*(*)}{-}$, $F_2=\HHom{*}{-}$ and $f_L=\qh^{-1}$ to obtain $\sh'=f'_R$ and diagrams \diag{\ref{F1app2}} and \diag{\ref{F2app2}}. Then Point 3 of the same Theorem tells us that $\sh'=f'_R$ is invertible if and only if $\q=g_L$ is (Note that $\q$ and $\qh$ indeed correspond to each other through Point 1 of the Theorem by construction). In that case, we can use Point 2 to obtain $\ssp=g_R$ from $\pi^{-1}=g'_L$ and diagrams \diag{\ref{Rapp2}}, \diag{\ref{R'app2}}, \diag{\ref{G1app2}} and \diag{\ref{G2app2}}.
\end{proof}

In particular, under Assumption \assumption{\ref{assuProjFormIso}}{f}, this gives isomorphisms $\sh_K: f^! D_K^o \to D_{f^!K}^o (f^*)^o$ (in $\cC_2$) for each $K$.

\begin{lemm} \label{ffr}
The composition 
$$\xymatrix{f_*\HHom{-}{f^!(-)} \ar[r]^-{\counit^*_*} & f_* \HHom{f^*f_*(-)}{f^!(-)} \ar[r]^-{\sh'} & f_*f^! \HHom{f_*(-)}{-} \ar[r]^{\counit^!_*} & \HHom{f_*(-)}{-}}$$
coincides with $\rr$.
\end{lemm}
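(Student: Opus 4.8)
The plan is to strip off the two counits that bracket the proposed composition and reduce the statement to a single comparison of $\ff$ with $\qh^{-1}$. Fix objects $A \in \cC_2$ and $B \in \cC_1$ and unwind the asserted composite to
$$f_*\HHom{A}{f^!B} \xrightarrow{f_*\HHom{\counit_*^*}{\mathrm{id}}} f_*\HHom{f^*f_*A}{f^!B} \xrightarrow{f_*\sh'} f_*f^!\HHom{f_*A}{B} \xrightarrow{\counit_*^!} \HHom{f_*A}{B}.$$
The first move is to recognise the last two arrows as an instance of the defining diagram \diag{\ref{F2app2}} of $\sh'$, taken with parameter $X = f_*A$: that diagram says precisely that $\counit_*^! \circ f_*\sh' = \HHom{f_*A}{\counit_*^!}\circ \qh^{-1}$. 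Substituting, the asserted composite becomes $\HHom{f_*A}{\counit_*^!}\circ \qh^{-1}_{f_*A,f^!B}\circ f_*\HHom{\counit_*^*}{\mathrm{id}}$, whereas by the very definition of $\rr$ in Theorem \ref{PushForward0_theo} we have $\rr = \HHom{f_*A}{\counit_*^!}\circ \ff_{A,f^!B}$. Hence it suffices to prove, for all $A, C \in \cC_2$, the identity
$$\ff_{A,C} = \qh^{-1}_{f_*A,C}\circ f_*\HHom{\counit_*^*}{\mathrm{id}},$$
the case $C = f^!B$ being the one we need.

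For this last identity I would expand $\qh^{-1}$ by means of Lemma \ref{ffqh}, which, read off the sources and targets at hand, identifies $\qh^{-1}_{A',C}$ with $\HHom{\unit_*^*}{\mathrm{id}}\circ \ff_{f^*A',C}$. Setting $A' = f_*A$ and precomposing with $f_*\HHom{\counit_*^*}{\mathrm{id}}$, the naturality of $\ff$ in its contravariant variable (a $\diag{mf}$ square, applied to $\counit_*^*$ at $A$) slides the counit across $\ff$ and turns the right-hand side into $\HHom{f_*\counit_*^*\circ \unit_*^*f_*}{f_*C}\circ \ff_{A,C}$. The inner composite $f_*\counit_*^*\circ \unit_*^*f_*$ is exactly one of the triangle identities of the adjunction $(f^*,f_*)$, hence equals $\mathrm{id}_{f_*}$, so the factor $\HHom{-}{f_*C}$ collapses to the identity and only $\ff_{A,C}$ survives. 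This establishes the reduced identity, and with it the lemma.

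The work here is bookkeeping rather than genuine computation: no evaluation or coevaluation maps reappear, and once the reduction is made every face is either \diag{\ref{F2app2}}, a naturality square $\diag{mf}$, or an adjunction triangle $\diag{adj}$. The one point demanding care is matching parameters, so that \diag{\ref{F2app2}} is invoked with $X = f_*A$ and Lemma \ref{ffqh} is used with the orientation its source and target determine; getting either slot wrong is the only real pitfall. As with the neighbouring lemmas, the whole argument can equally be packaged as a single large commutative diagram whose inner cells carry the labels \diag{\ref{F2app2}}, $\diag{mf}$ and $\diag{adj}$.
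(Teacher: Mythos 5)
Your proof is correct and is essentially the paper's own argument, just linearized into equations: the paper assembles the same three ingredients---the defining square $\diag{\ref{F2app2}}$ of $\sh'$ at the parameter $X=f_*A$ (with its middle column identified as $\qh^{-1}$ via Lemma \ref{ffqh}), the naturality $\diag{mf}$ of $\ff$ in the contravariant slot applied to $\counit_*^*$, and the triangle identity $f_*\counit_*^*\circ\unit_*^*f_*=id_{f_*}$---into a single commutative diagram. You also handled the one subtlety correctly: the statement of Lemma \ref{ffqh} says the composition coincides with $\qh$, but as you observed from sources and targets (and as the paper's own proof of this lemma confirms) it computes $\qh^{-1}$, which is exactly how you used it.
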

\begin{proof}
This follows from the commutative diagram (for every $A$ in $\cC_2$ and $K$ in $\cC_1$)
$$\xymatrix@R=3ex{
f_*\HHom{A}{f^!K} \ar[d]_{\ff} \ar[r] \ar@{}[dr]|{\diag{mf}} & f_* \HHom{f^* f_* A}{f^!K} \ar[d]^{\ff} \ar[r]_{\sh'} \ar@{}[ddr]|{\diag{\ref{F2app2}}} & f_* f^! \HHom{f_*A}{K} \ar[dd]^{\counit^!_*} \\
\HHom{f_*A}{f_*f^!K} \ar[r] \ar@<2ex>@{}[dr]|(.80){\diag{adj}} \ar@{=}[dr] & \HHom{f_*f^*f_* A}{f_*f^!K} \ar[d]^{\unit_*^*} & \\
 & \HHom{f_* A}{f_* f^! K} \ar[r]^{\counit_*^!} & \HHom{f_* A}{K}
}$$
identifying the vertical composition in the middle with $\qh^{-1}$ 
by Lemma \ref{ffqh} to recognize $\diag{\ref{F2app2}}$. 
\end{proof}

\begin{prop} \label{adjpairsf_*_prop}
Under assumptions \assumption{\ref{assufmonoidal}}{f}, \assumption{\ref{assuAdjf*f*}}{f}, \assumption{\ref{assuAdjf*f!}}{f}, $\rr$ is an isomorphism if and only if \assumption{\ref{assuProjFormIso}}{f} holds.
\end{prop}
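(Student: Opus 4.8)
The plan is to compare $\rr$ and $\q$ directly by passing to $\Hom$-sets and applying the Yoneda lemma. One might hope to read off the equivalence from the factorisation $\rr=\counit_*^!\circ f_*\sh'\circ\counit_*^*$ of Lemma \ref{ffr} together with Proposition \ref{projFormIso}, but the two counit maps flanking $\sh'$ are not invertible, so this factorisation alone does not transfer invertibility in either direction. Throughout let $A$ denote an object of $\cC_2$ and $B,K$ objects of $\cC_1$.

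First I would fix $A$ and apply $\Hom_{\cC_1}(B,-)$ to the component $\rr_{A,K}\colon f_*\HHom{A}{f^!K}\to\HHom{f_*A}{K}$. Using successively the adjunction $(f^*,f_*)$, the ACB $(-\TTens *,\HHom{*}{-})$ and the adjunction $(f_*,f^!)$, the source becomes
$$\Hom_{\cC_1}(B,f_*\HHom{A}{f^!K})\cong\Hom_{\cC_2}(f^*B,\HHom{A}{f^!K})\cong\Hom_{\cC_2}(f^*B\TTens A,f^!K)\cong\Hom_{\cC_1}(f_*(f^*B\TTens A),K),$$
while the ACB alone rewrites the target as $\Hom_{\cC_1}(B,\HHom{f_*A}{K})\cong\Hom_{\cC_1}(B\TTens f_*A,K)$. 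All of these bijections are natural in $B$ and $K$, so $\Hom_{\cC_1}(B,\rr_{A,K})$ is identified, naturally in $B$ and $K$, with $\Hom_{\cC_1}(\delta_{A,B},K)$ for a morphism $\delta_{A,B}\colon B\TTens f_*A\to f_*(f^*B\TTens A)$.

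The crux is to identify $\delta_{A,B}$ with the projection morphism up to symmetry, namely to show that $\delta_{A,B}$ is of the form $f_*(\cc)\circ\q_{A,B}\circ\cc$. This is the only real computation: one traces the identity morphism backwards through the four bijections above and through the definition $\rr=\counit_*^!\circ\ff$ of Theorem \ref{PushForward0_theo}, using that $\ff$ is by construction the mate of $\fg$ (Proposition \ref{defiffg}) and that $\fg$ composed with the unit of $(f^*,f_*)$ recovers $\q$ (Lemma \ref{fgq}). I expect this step to be the main obstacle, being a diagram chase of the same nature as the proofs of Lemmas \ref{fgq} and \ref{ffr}; the preparatory lemmas are, however, tailored to make it routine.

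Granting this identification, the conclusion is immediate. Fix $A$: by the displayed bijections, $\rr_{A,K}$ is an isomorphism for every $K$ if and only if $\Hom_{\cC_1}(\delta_{A,B},K)$ is bijective for all $B$ and $K$, which by the Yoneda lemma (applied in $B$ and in $K$) holds if and only if $\delta_{A,B}$ is an isomorphism for every $B$. As $\cc$ is always invertible, $\delta_{A,B}$ is invertible precisely when $\q_{A,B}$ is. Letting $A$ and $B$ range over all objects, $\rr$ is an isomorphism if and only if $\q$ is, i.e. if and only if Assumption \assumption{\ref{assuProjFormIso}}{f} holds; by Proposition \ref{projFormIso} this is in turn equivalent to the invertibility of $\sh'$, in accordance with Lemma \ref{ffr}.
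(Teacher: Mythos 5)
Your proposal is correct, and it takes a genuinely different route from the paper. The paper's proof stays entirely within its mates calculus: by Proposition \ref{HomHomSuspAdj_prop} and Lemma \ref{changeParamSus} the bidual adjunctions $(\HHom{-}{*},\HHom{-}{*}^o,\bid,\bid^o)$ serve as the vertical ACBs in a square of Theorem \ref{theoGenAdj} with horizontal adjunctions $(f^*,f_*)$ and $((f^!)^o,f_*^o)$; starting from $g'_L=(\sh')^o$, the resulting $g_R$ is identified with $\rr^o$ via Lemma \ref{ffr}, the resulting $f_R$ with $\rr$ by uniqueness (its characterizing diagrams being those of Theorem \ref{PushForward0_theo}), and Point 3 of that theorem then transfers invertibility: $\rr$ is invertible iff $\sh'$ is, which by Proposition \ref{projFormIso} holds iff $\q$ is. You instead show that $\rr$ intertwines two corepresentable functors and apply Yoneda, which bypasses $\sh'$ and Proposition \ref{projFormIso} altogether (your closing appeal to them is only a consistency check). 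The one step you defer does hold, and more easily than you fear: tracing $id_{f_*(f^*B\TTens A)}$ back through your four bijections, the triangle identity for $(f_*,f^!)$ cancels the $\counit_*^!$ in the definition of $\rr$ against $f_*(\unit_*^!)$, diagram \diag{\ref{H'app1}} converts ``$\ff$ followed by $\ev^l$'' into ``$\fg$ followed by $f_*(\ev^l)$'', and the triangle identity for $(-\TTens A,\HHom{A}{-})$ then leaves $\delta_{A,B}=\fg_{f^*B,A}\circ(\unit_*^*\TTens id_{f_*A})$, which equals $f_*(\cc)\circ\q_{A,B}\circ\cc$ by Lemma \ref{fgq}, naturality of $\cc$ and diagram \diag{\ref{fgc}} --- exactly your claim. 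Since a morphism of suspended functors is invertible precisely when its underlying morphism of functors is, your Hom-set argument also covers the suspended and triangulated cases, matching the paper's uniform treatment. What each approach buys: the paper's proof reuses the formal adjunction machinery already built, and en route registers $\rr$ as the mate of $(\sh')^o$ together with the commutative diagrams \diag{R} and \diag{R'}, which is structurally useful elsewhere; yours is more elementary and self-contained, relating $\rr$ to $\q$ directly through representability, at the cost of one explicit (but short) diagram chase.
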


\begin{coro} (Push-forward for Witt groups) \label{PushForward0Witt_coro}
When the dualities $D_{f^!K}$ and $D_K$ are strong and
Assumption \assumption{\ref{assuProjFormIso}}{f} is
satisfied, then $\pair{f_*,\rr_K}$ is a strong duality preserving functor
and thus induces a morphism of Witt groups
$$f_*^W:W^*(\cC_2,f^! K) \to W^*(\cC_1,K).$$
\end{coro}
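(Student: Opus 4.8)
The plan is to deduce the statement from Proposition \ref{projFormIso}, which already establishes that $\sh'$ (equivalently $\sh$, equivalently $\ssp$) is invertible if and only if $\q$ is, i.e. if and only if Assumption \assumption{\ref{assuProjFormIso}}{f} holds. Thus it suffices to prove that $\rr$ is invertible if and only if $\sh'$ is. The two morphisms $\rr$ and $\sh'$ are exactly the duality-compatibility data attached to the two adjoint functors $f_*$ and $f^!$: by Theorem \ref{PushForward0_theo} the pair $\rr_K$, $\rr_K^o$ are mates with respect to the internal-Hom self-adjunctions of Proposition \ref{HomHomSuspAdj_prop} (this is precisely the commutativity of diagram $\diag{P}$), while $\sh'$ plays the analogous role on the $f^!$ side.

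First I would assemble these data into a single adjoint square to which Theorem \ref{theoGenAdj} applies. Take the parameter category to be $\cC_1$, let the top adjoint couple be $(f_*,f^!)$ and the bottom one be $(f_*^o,(f^*)^o)$ on the opposite categories; the latter is a genuine adjoint couple precisely because $(f^*,f_*)$ is one. For the vertical ACBs take the self-adjunction $(\HHom{-}{*},\HHom{-}{*}^o)$ of Proposition \ref{HomHomSuspAdj_prop} on $\cC_1$ together with its pull-back along $f^!$ (Lemma \ref{changeParam}), so that the dualizing object $K$ is the parameter on one side and $f^!K$ on the other. Unwinding the definitions, $\rr$ is the morphism $g'_L$ of this square, its reverse $g_L$ being the map $\HHom{f_*(-)}{K}\to f_*\HHom{-}{f^!K}$, while the associated $R$-side morphism is $\sh'$; this identification is the explicit factorisation recorded in Lemma \ref{ffr}.

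With the square in place, Point 3 of Theorem \ref{theoGenAdj} tells us that $g_L$ is invertible if and only if $f_R$ is, the inverse of one being constructed from the inverse of the other exactly as in the proof of Theorem \ref{adjsquare}. In our notation this says that $\rr$ is invertible if and only if $\sh'$ is. Combining with Proposition \ref{projFormIso} yields that $\rr$ is an isomorphism if and only if Assumption \assumption{\ref{assuProjFormIso}}{f} holds, and Corollary \ref{PushForward0Witt_coro} then follows, $\pair{f_*,\rr_K}$ being strongly duality preserving precisely in that case.

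The main obstacle is the bookkeeping in the middle paragraph: one must check that the morphism occurring as $g'_L$ in the opposite-category square really is the $\rr$ of Theorem \ref{PushForward0_theo}, and that its companion really is the $\sh'$ of Proposition \ref{projFormIso}. Both identifications reduce, via the uniqueness clauses in Points 1 and 2 of Theorem \ref{theoGenAdj}, to comparing the defining diagrams $\diag{\ref{Happ1}}$, $\diag{\ref{H'app1}}$, $\diag{\ref{F1app2}}$, $\diag{\ref{F2app2}}$ with the factorisation of Lemma \ref{ffr}; the only delicate point is tracking the passage to opposite categories, since it is there that $f^!$ and $f^*$ exchange roles.
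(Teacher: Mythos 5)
Your proposal is correct and is essentially the paper's own argument: just as in the proof of Proposition \ref{adjpairsf_*_prop} (the ingredient the paper cites for this corollary, together with Theorem \ref{PushForward0_theo}, Definition \ref{Wgraded} and Proposition \ref{dualPresIsTransfer_prop}), you reduce everything to the equivalence ``$\rr$ invertible iff $\sh'$ invertible'' by applying Point 3 of Theorem \ref{theoGenAdj} to the square built from the couples $(f_*,f^!)$ and $(f_*^o,(f^*)^o)$ and the internal-Hom ACBs of Proposition \ref{HomHomSuspAdj_prop}, one of them twisted along $f^!$, identifying the mates via the uniqueness clauses and Lemma \ref{ffr}, and then conclude with Proposition \ref{projFormIso}. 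The only deviations are cosmetic: the paper writes the transposed square (top couple $(f^*,f_*)$, bottom $((f^!)^o,f_*^o)$, starting from $g'_L=(\sh')^o$), and with the standard conventions for morphisms of functors into an opposite category your labels are swapped --- $\rr$ occurs (op'ed, as $\rr^o$) in the position $g_L$ rather than $g'_L$, and $\sh'$ as $f'_R$ --- which is harmless since Point 3 treats each pair symmetrically.
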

\begin{proof}
This follows from Definition \ref{Wgraded},
Theorem \ref{PushForward0_theo}, Proposition
\ref{adjpairsf_*_prop} and Proposition \ref{dualPresIsTransfer_prop}.
\end{proof}

\begin{proof}[Proof of Proposition \ref{adjpairsf_*_prop}]
Let us first recall that $(\HHom{-}{*},\HHom{-}{*}^o,\bid,\bid^o)$ is a (suspended) ACB from $\cC_1$ to $\cC_1^o$ with parameter in $\cC_1^o$ by Proposition \ref{HomHomSuspAdj_prop}, and thus $(\HHom{-}{(f^!)^o(*)},\HHom{-}{f^!(*)}^o,\bid,\bid^o)$ is a (suspended) ACB from $\cC_2$ to $\cC_2^o$ with parameter in $\cC_1^o$ by Lemma \ref{changeParamSus}. We then apply Theorem \ref{theoGenAdj} with the square
$$\xymatrix@C=10ex{
\cC_1 \ar@<1.5ex>[d]^{\HHom{-}{*}} \ar@{}[d]|{\cC_1^o} \ar@<0.5ex>[rr]^{f^*} & & \cC_2 \ar@<1.5ex>[d]^{\HHom{-}{(f^!)^o(*)}} \ar@{}[d]|{\cC_1^o} \ar@<0.5ex>[ll]^{f_*} \\
\cC_1^o \ar@<1.5ex>[u]^{\HHom{-}{*}^o} \ar@<0.5ex>[rr]^{(f^!)^o} & & \cC_2^o \ar@<1.5ex>[u]^{\HHom{-}{f^!(*)}^o} \ar@<0.5ex>[ll]^{f_*^o} \\
}
$$ 
starting with $g'_L=(\sh')^o$. By Point 2 of \loccit, we obtain a unique $g_R$ such that diagrams \diag{G1} and \diag{G2} commute. This $g_R$ coincides with $\rr^o$ by Lemma \ref{ffr}. By Point 1 of \loccit, we then obtain a unique $f_R$ such that diagrams \diag{R} and \diag{R'} commute. But $f_R$ has to coincide with $\rr$ by uniqueness, since those commutative diagrams were already proven in Theorem \ref{PushForward0_theo} with $\rr$ and $\rr^o$. By Point 3 of \loccit, $f_R$ is an isomorphism if and only if  $g'_L=(\sh')^o$ is, which is an isomorphism if and only if \assumption{\ref{assuProjFormIso}}{f} holds by Proposition \ref{projFormIso}. We have thus proved that $\rr$ is an isomorphism if and only if \assumption{\ref{assuProjFormIso}}{f} holds. 
\end{proof}

\subsection{Products} 

We now define one more classical morphism related to products for Witt groups. It is easily checked that since $(-\TTens *, \HHom{*}{-})$ forms an ACB from $\cC$ to $\cC$ with parameter in $\cC$, $((-_1 \TTens *_1) \times (-_2 \TTens *_2), \HHom{*_1}{-_1} \times \HHom{*_2}{-_2})$ forms an ACB from $\cC \times \cC$ to $\cC \times \cC$ with parameter in $\cC \times \cC$, and $(- \TTens (*_1 \TTens *_2), \HHom{*_1 \TTens *_2}{-})$ forms an ACB from $\cC$ to $\cC$ with parameter in $\cC \times \cC$. We can thus consider the situation of Lemma \ref{adjabbif} applied to the square
$$\xymatrix@C=30ex{
\cC \times \cC \ar[r]^{\TTens} \ar@<2ex>[d]^{\HHom{*_1}{-_1} \times \HHom{*_2}{-_2}} \ar@{}[d]|{\cC \times \cC} & \cC \ar@<1ex>[d]^{\HHom{*_1 \TTens *_2}{-}} \ar@{}[d]|{\cC} \\
\cC \times \cC \ar[r]_{\TTens} \ar@<2ex>[u]^{(-_1 \TTens *_1)\times (-_2 \TTens *_2)} & \cC \ar@<1ex>[u]^{- \TTens(*_1 \TTens *_2)}
}$$
and to the morphism of bifunctors $\exch: (-_1 \TTens -_2) \TTens (*_1 \TTens *_2) \to (-_1 \TTens *_1) \TTens (-_2 \TTens *_2)$. We thus obtain its mate, a morphism of bifunctors 
$$\dd : \HHom{*_1}{-_1} \TTens \HHom{*_2}{-_2} \to \HHom{*_1 \TTens *_2}{-_1 \TTens -_2}$$
which, for any $A,B,K,M$ is given by the composition
$$\xymatrix@R=2ex@C=8ex{
 \HHom{A}{K} \TTens \HHom{B}{M} \ar[r]^-{\coev^l} & \HHom{A \TTens B}{(\HHom{A}{K}\TTens \HHom{B}{M})\TTens (A \TTens B)} \ar[d]^{\exch} \\
 \HHom{A \TTens B}{K \TTens M} & \HHom{A \TTens B}{(\HHom{A}{K}\TTens A) \TTens (\HHom{B}{M} \TTens B)} \ar[l]_-{\ev^l \TTens \ev^l} 
}$$
and obtain the following commutative diagrams
$$\xymatrix@R=2ex{
(\HHom{X_1}{K} \TTens \HHom{X_2}{M})\TTens (X_1 \TTens X_2) \ar[d]_{\exch} \ar[r]^{\dd} \ar@{}[dr]|{\diagram \label{Happ4}} & \HHom{X_1 \TTens X_2}{K \TTens M} \TTens (X_1 \TTens X_2) \ar[d]^{\ev^l} \\
(\HHom{X_1}{K} \TTens X_1) \TTens (\HHom{X_2}{M} \TTens X_2) \ar[r]_-{\ev^l \TTens \ev^l} & K \TTens M
}$$
$$\xymatrix@R=2ex{
K \TTens M \ar[r]^-{\coev^l \TTens \coev^l} \ar[d]_{\coev^l} \ar@{}[dr]|{\diagram \label{H'app4}} & \HHom{X_1}{K \TTens X_1} \TTens \HHom{X_2}{M \TTens X_2} \ar[d]^{\dd} \\
\HHom{X_1 \TTens X_2}{(K \TTens M)\TTens (X_1 \TTens X_2)} \ar[r]_-{\exch} & \HHom{X_1 \TTens X_2}{(K \TTens X_1) \TTens (M \TTens X_2)} 
}$$ 
as well.
\begin{defi} \label{defidd}
Consider objects $K,M \in \cC$. 
We write $\dd_{K,M}$ for the morphism of functors $D_K(-_1) \TTens D_M(-_2) 
\to D_{K \TTens M}(-_1\TTens -_2)$ defined above where $-_1=K$ and $-_2=M$.
\end{defi}

The proofs of the next two results are left to the reader. They are not difficult although they require large commutative diagrams.

\begin{prop}\label{ddmor}
The morphism $\dd_{K,M}$ is a morphism of suspended bifunctors.
\end{prop}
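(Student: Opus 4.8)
By Definition \ref{defiMorSuspBif}, we must check that $\dd_{K,M}$ is compatible with the two suspended bifunctor structures: one square expressing compatibility with the suspension of the first variable $-_1$, and one for the second variable $-_2$. Since the whole construction is symmetric in its two arguments up to the symmetry $\cc$, and $\cc$ is itself compatible with the suspension by the axiom $\diag{\cc}$, it suffices to treat the first variable; the verification for the second is entirely analogous.

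The conceptual route is to exploit that $\dd_{K,M}$ is, by construction, the mate of the exchange morphism $\exch$ for the vertical adjunctions of the square displayed just before Definition \ref{defidd}. Both of these vertical adjunctions are suspended ACBs: the left-hand one is a product of the basic suspended ACB $(-\TTens *,\HHom{*}{-})$, and the right-hand one is obtained from it by reparametrising along the suspended functor $\TTens$, hence is a suspended ACB by Lemma \ref{changeParamSus}. Now $\exch$ is built purely from the associativity and symmetry constraints of the monoidal structure, and these are compatible with the suspended structure $(\tpp_1,\tpp_2)$ of $\TTens$ exactly by the axioms $\diag{assoc}$ and $\diag{\cc}$; thus $\exch$ is a morphism of suspended bifunctors. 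The suspended mate machinery (Lemma \ref{adjabbifT}, the suspended form of Lemma \ref{adjabbif}, applied in the setting of Definition \ref{defiSACRB}) produces mates that are again morphisms of suspended bifunctors, so $\dd_{K,M}$ is one as well.

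Unwinding this into the explicit computation is what the promised large commutative diagram amounts to: starting from $\dd_{K,M}=(\ev^l\TTens\ev^l)\circ\exch\circ\coev^l$ and replacing $-_1$ by its suspension, one pastes the coevaluation compatibility $\diag{\ref{coevl2}}$, the passage of $\exch$ through $\tpp_1$ and $\tpp_2$, and the evaluation compatibility $\diag{\ref{evl2}}$ from Lemma \ref{diagEv}, all the remaining cells being of type $\diag{mf}$. The main obstacle is bookkeeping rather than conceptual: because $\tpp_1$ and $\tpp_2$ anticommute (Definition \ref{defiSuspBif}) and the source suspension on $\HHom{-_1}{K}\TTens\HHom{-_2}{M}$ combines $\thh_1$ with $\tpp_1$, one must fix the precise order in which these maps are applied and check that the two sign-carrying traversals of $\exch$ are arranged so that the net sign is trivial, as required by the (sign-free) condition of Definition \ref{defiMorSuspBif}.
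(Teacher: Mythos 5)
The paper contains no written proof of this proposition: it is explicitly left to the reader with the remark that it ``requires large commutative diagrams''. Measured against that hint, the explicit route in your last paragraph is exactly the intended argument, and you have identified the correct ingredients: starting from $\dd=(\ev^l\TTens\ev^l)\circ\exch\circ\coev^l$, one pastes $\diag{\ref{coevl2}}$, a cell expressing compatibility of $\exch$ with the $``\tpp"$-moves (which follows from $\diag{assoc}$, $\diag{\cc}$ and the anticommutation of Definition \ref{defiSuspBif}), and $\diag{\ref{evl2}}$, with $\diag{mf}$ cells in between, once per variable. Your observation that the source suspension combines $``\thh_1"$ with $``\tpp_1"$ and that the order of the $``\tpp"$-moves must be fixed is also the right warning.

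Your conceptual shortcut, however, does not parse as stated. First, $\TTens$ is a suspended \emph{bifunctor}, not a suspended functor from $(\cC\times\cC,\,T\times T)$ to $(\cC,T)$: one has $\TTens\circ(T\times T)\simeq T^2\TTens$ rather than $T\TTens$, so Lemma \ref{changeParamSus} cannot be applied to it. Second, and relatedly, running Lemma \ref{adjabbifT} with parameter category $\cX=\cC\times\cC$ carrying the diagonal suspension $T\times T$ would at best give compatibility of $\dd$ with the \emph{simultaneous} suspension of both parameters (that is what condition (2) of Definition \ref{defiSACRB} controls), whereas Proposition \ref{ddmor} requires the two separate squares of Definition \ref{defiMorSuspBif}, for suspending $*_1$ alone and $*_2$ alone; the former does not imply the latter. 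The repair is the per-variable version you gesture at when you say it suffices to treat one variable: fix $*_2$, take $\cX=\cC$ in the first slot, and reparametrize the bottom ACB along the honest suspended functor $(-)\TTens *_2$ (suspended via $\tpp_1$), for which Lemma \ref{changeParamSus} and Lemma \ref{adjabbifT} do apply and yield the first square; the second square uses $*_1\TTens(-)$ via $\tpp_2$, or your symmetry argument through $\diag{\cc}$ once one records that $\cc$ exchanges $\tpp_1$ and $\tpp_2$. With that correction, and granting that the asserted triviality of the net sign in the $\exch$-cell still has to be checked by hand (this is precisely the bookkeeping the paper calls the large commutative diagram), your outline goes through; no step of it fails in an essential way.
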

The proof of the following lemma uses the unit of the tensor product.
\begin{lemm} \label{KMdualKtensMdual}
If $K$ is dualizing, $M$ is dualizing and $\dd_{K,M}$ is an isomorphism, then $K \TTens M$ is dualizing.
If $\fh_{f,K}$, $\fh_{f,M}$ and $\dd_{K,M}$ are isomorphisms, then $\fh_{f,K \TTens M}$ is an isomorphism.
\end{lemm}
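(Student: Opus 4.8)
The plan is to read each invertibility statement off a single commutative square attached to a duality preserving functor, and then to spread invertibility from objects of the shape $A\TTens B$ to \emph{all} objects by means of the unit isomorphism $C\cong C\TTens\one$. For the first assertion the relevant functor is the product pairing $\pair{-\TTens-,\dd_{K,M}}$, which by Proposition \ref{existProduct} is a duality preserving functor from $(\cC\times\cC,D_K\times D_M,\bid_K\times\bid_M)$ to $(\cC,D_{K\TTens M},\bid_{K\TTens M})$ (alternatively its square \diag{P} is obtained directly from the defining diagrams of $\dd$); for the second it is the interplay of $\fh$ with the two product morphisms $\dd$ and the monoidal isomorphism $\fp$.

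First I would evaluate the square \diag{P} of Definition \ref{DualPresFunc_defi} for $\pair{-\TTens-,\dd_{K,M}}$ at a pair $(A,B)$. Its top edge is $\bid_{A,K}\TTens\bid_{B,M}$, its right and bottom edges are built from $\dd_{K,M}$ (namely $\dd_{K,M}(D_K\times D_M)$ and $D_{K\TTens M}^o\,\dd_{K,M}^o$), and its left edge is exactly $\bid_{A\TTens B,\,K\TTens M}$. Since $K$ and $M$ are dualizing the top edge is invertible, and since $\dd_{K,M}$ is an isomorphism so are the right and bottom edges; commutativity then forces $\bid_{A\TTens B,\,K\TTens M}$ to be an isomorphism for all $A,B$. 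To reach an arbitrary object $C$ I would invoke the right unit isomorphism $C\cong C\TTens\one$: by naturality of $\bid_{K\TTens M}$ it identifies $\bid_{C,\,K\TTens M}$ with $\bid_{C\TTens\one,\,K\TTens M}$, which is invertible by the previous step, so $\bid_{C,\,K\TTens M}$ is invertible and $K\TTens M$ is dualizing (Definition \ref{defiDualizing}).

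For the second assertion I would run the same scheme with $\fh$ in place of $\bid$. Starting from the defining composition of $\fh_{A\TTens B,\,K\TTens M}$ (Proposition \ref{defifh}) together with the defining diagrams of $\dd_{K,M}$ and of $\dd_{f^*K,f^*M}$, one gets a commutative square which, modulo the isomorphisms $\fp$, identifies $\fh_{A\TTens B,\,K\TTens M}\circ f^*(\dd_{K,M})$ with $\dd_{f^*K,f^*M}\circ(\fh_{A,K}\TTens\fh_{B,M})$; this is the content behind Proposition \ref{fpDualPres} and can be verified here directly from the mate descriptions of $\fh$ and $\dd$. Because $\dd_{K,M}$, $\fh_{A,K}$, $\fh_{B,M}$ and $\fp$ are isomorphisms, this square reduces invertibility of $\fh_{A\TTens B,\,K\TTens M}$ to invertibility of the corresponding value of $\dd_{f^*K,f^*M}$; I would then set $B=\one$, use $f^*\one\cong\one$ and $C\cong C\TTens\one$ to confine attention to values of $\dd_{f^*K,f^*M}$ on objects lying in the image of $f^*$, and transport invertibility there across $f^*$ from $\dd_{K,M}$. \textbf{The hard part} is precisely this last transport: the square ties $\fh_{K\TTens M}$ and $\dd_{f^*K,f^*M}$ together \emph{symmetrically}, so the delicate point is to show that the needed instances of $\dd_{f^*K,f^*M}$ are genuinely invertible rather than merely equivalent to the desired conclusion, and it is here that the hypothesis on $\dd_{K,M}$ combined with the unit object must be used to break the apparent circularity. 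Once those instances are known to be isomorphisms, commutativity yields $\fh_{C,\,K\TTens M}$ invertible for every $C$, so $\fh_{f,K\TTens M}$ is an isomorphism.
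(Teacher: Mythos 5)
Your first assertion is proved correctly, and by what is evidently the intended route: the paper leaves the proof to the reader with the one hint that it ``uses the unit of the tensor product'', which is exactly your two-step argument — evaluate the square \diag{P} of Definition \ref{DualPresFunc_defi} for $\pair{-\TTens-,\dd_{K,M}}$ at $(A,B)$ (top edge $\bid_{A,K}\TTens\bid_{B,M}$ invertible, right and bottom edges built from the isomorphism $\dd_{K,M}$, left edge $\bid_{A\TTens B,K\TTens M}$), then pass from objects $A\TTens B$ to arbitrary $C$ via $C\cong C\TTens\one$ and naturality of $\bid_{K\TTens M}$.

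The second assertion, however, is not proved, and the gap is exactly at the point you flag as ``the hard part''. The only relation you invoke is the square \diag{\ref{f^*dualpres_diag}} of Proposition \ref{fpDualPres}, which at $(A,B)$ identifies, modulo the invertible $\fp$-edges, the composite $\dd_{f^*K,f^*M,(f^*A,f^*B)}\circ(\fh_{A,K}\TTens\fh_{B,M})$ with $\fh_{A\TTens B,K\TTens M}\circ f^*(\dd_{K,M,(A,B)})$. Under the hypotheses, $\fh_{A,K}$, $\fh_{B,M}$ and $f^*(\dd_{K,M})$ are invertible, so this square yields only the \emph{equivalence}: $\fh_{A\TTens B,K\TTens M}$ is invertible if and only if $\dd_{f^*K,f^*M}$ is invertible at $(f^*A,f^*B)$. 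Your proposed resolution — ``transport invertibility across $f^*$ from $\dd_{K,M}$'' — produces invertibility of $f^*(\dd_{K,M})$, which is already one of the known edges of the square, \emph{not} of $\dd_{f^*K,f^*M}$; the two differ precisely by the unknown edge $\fh_{K\TTens M}$, so this transport restates the circularity rather than breaking it. Nor does specializing to $B=\one$ help by itself: modulo the identifications $\HHom{\one}{P}\cong P$, $X\TTens\one\cong X$ and $f^*\one\cong\one$, the instance $\dd_{f^*K,f^*M,(f^*C,\one)}$ you need is the canonical map $\HHom{f^*C}{f^*K}\TTens f^*M\to\HHom{f^*C}{f^*K\TTens f^*M}$ in $\cC_2$, which is not invertible in a general closed symmetric monoidal category (it is, for instance, when $f^*M$ or $f^*C$ is dualizable); the hypothesis on $\dd_{K,M}$ controls only the $\cC_1$-analogue $\HHom{C}{K}\TTens M\to\HHom{C}{K\TTens M}$, whose image under $f^*$ is again $f^*(\dd_{K,M})$. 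So the sentence ``Once those instances are known to be isomorphisms\dots'' assumes exactly what remains to be shown: a complete proof must supply an independent argument for the invertibility of these $\cC_2$-instances of $\dd_{f^*K,f^*M}$ (e.g.\ by constructing an explicit inverse from the evaluation/coevaluation description, or from dualizability of the objects in play), and your write-up contains no such argument.
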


In \cite{Gille03}, Gille and Nenashev define two natural products for Witt groups. These products coincide up to a sign. We just choose one of them (the left product, for example), and refer to it as {\it the} product, but everything works fine with the other one too. Let us recall the basic properties of the product, rephrasing \cite{Gille03}
in our terminology.

\begin{theo}\label{GiNe} (\cite[Definition 1.11 and Theorem 2.9]{Gille03}) 
Let $\cC_1$, $\cC_2$ and $\cC_3$ be triangulated categories with dualities $D_1$, $D_2$ and $D_3$. Let $(B,b_1,b_2): \cC_1 \times \cC_2 \to \cC_3$ be a suspended bifunctor 
(see Definition \ref{defiSuspBif}) and $d: B (D_1^o \times D_2^o) \trafo D_3^o B^o$ be an isomorphism of suspended bifunctors (see Definition \ref{defiMorSuspBif}) that makes $\pair{B,d}$ a duality preserving functor (see Definition \ref{DualPresFunc_defi}, here $\cC_1 \times \cC_2$ is endowed with the duality $D_1 \times D_2$). Then $\pair{B,d}$ induces a product
$$\W(\cC_1) \times \W(\cC_2) \to \W(\cC_3).$$
\end{theo}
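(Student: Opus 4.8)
The plan is to realise the desired product as the Witt-group homomorphism induced by $\pair{B,d}$, viewed as a duality preserving functor out of the product category. First I would note that $(\cC_1 \times \cC_2, D_1 \times D_2, \bid_1 \times \bid_2)$ is a triangulated category with duality (triangles being pairs of triangles), that its duality is strong whenever those of $\cC_1$ and $\cC_2$ are, and that there is a canonical decomposition $\W(\cC_1 \times \cC_2) \simeq \W(\cC_1) \times \W(\cC_2)$, since a symmetric space over a product is a pair of symmetric spaces and orthogonal sums are computed componentwise. Under this identification the sought pairing is the composite of this decomposition with a homomorphism $\W(\cC_1 \times \cC_2) \to \W(\cC_3)$, which is the kind of map that Proposition \ref{dualPresIsTransfer_prop} produces for exact duality preserving functors. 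Concretely, a pair of nonsingular symmetric forms $\phi_1 \colon A_1 \to D_1 A_1$ and $\phi_2 \colon A_2 \to D_2 A_2$ should be sent to the form $d_{A_1,A_2} \circ B(\phi_1,\phi_2) \colon B(A_1,A_2) \to D_3 B(A_1,A_2)$.

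Next I would check the two routine points, exactly as in the proof of Proposition \ref{dualPresIsTransfer_prop}. Nonsingularity of the image form is immediate: $B(\phi_1,\phi_2)$ is an isomorphism because $\phi_1$ and $\phi_2$ are and $B$ is a bifunctor, while $d$ is an isomorphism by hypothesis. Symmetry of the image form is precisely the commutativity of diagram $\diag{P}$ of Definition \ref{DualPresFunc_defi} for $\pair{B,d}$, which is part of the assumption that $\pair{B,d}$ is duality preserving; in the suspended setting the compatibility with $\bid_1 \times \bid_2$ and $\bid_3$ reduces further to $d$ being a morphism of suspended bifunctors. Biadditivity of the resulting pairing then follows from $B$ being additive in each variable, so that $B$ commutes with orthogonal sums up to the canonical isometries.

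The one genuinely non-formal point, and the main obstacle, is well-definedness on Witt classes, i.e. that the product of a neutral space with an arbitrary space is again neutral. Here one cannot simply invoke Proposition \ref{dualPresIsTransfer_prop}, because $B$, although exact in each variable separately, need not be exact as a functor on the product triangulated category $\cC_1 \times \cC_2$ (a general triangle there is a pair of triangles, which $B$ does not respect). Instead I would argue directly: given a Lagrangian $\alpha \colon L \to A_1$ for $(A_1,\phi_1)$, sitting in an exact triangle whose third map is the adjoint form $D_1\alpha \circ \phi_1$, I apply the exact functor $B(-,A_2)$ to this triangle; using the isomorphism $d_{L,A_2} \circ B(\mathrm{id},\phi_2)$ to identify $B(D_1 L, A_2)$ with $D_3 B(L,A_2)$, one obtains an exact triangle exhibiting $B(L,A_2) \to B(A_1,A_2)$ as a Lagrangian of the product space. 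Checking that the connecting morphism carries the correct (anti)symmetry, which is where the suspended-bifunctor data $b_1,b_2$ and the sign conventions of Definition \ref{deltaexact} enter, is the delicate part; it is the triangulated analogue of the classical fact that $L \TTens A_2$ is a sublagrangian of $A_1 \TTens A_2$, and it is exactly the content of \cite[Theorem 2.9]{Gille03}. By the symmetric argument (using exactness of $B(A_1,-)$ when the second factor is neutral) the pairing then descends to a well-defined map $\W(\cC_1) \times \W(\cC_2) \to \W(\cC_3)$.
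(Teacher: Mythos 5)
Your third paragraph is, in context, faithful to the paper: the paper offers no proof of Theorem \ref{GiNe} at all, quoting it from \cite[Definition 1.11 and Theorem 2.9]{Gille03}, so deferring the sign-sensitive sublagrangian verification to that reference is exactly what the authors do. Your sketch of that step --- apply the exact functor $B(-,A_2)$ to the Lagrangian triangle of $(A_1,\phi_1)$ and use $d_{L,A_2}\circ B(\mathrm{id},\phi_2)$ to exhibit $B(L,A_2)\to B(A_1,A_2)$ as a Lagrangian of the product form, with the suspension data $b_1,b_2$ governing the symmetry of the connecting morphism --- is consonant with how Gille and Nenashev actually argue, and your observation that $B$ fails to be exact as a functor on the product triangulated category $\cC_1\times\cC_2$, so that Proposition \ref{dualPresIsTransfer_prop} cannot be invoked wholesale, is correct. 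Likewise nonsingularity, the identification of symmetry of the image form with diagram $\diag{P}$, and biadditivity are fine.

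However, that very observation contradicts your first paragraph, which contains a genuine error: the pairing cannot be realized as the composite of the isomorphism $\W(\cC_1)\times\W(\cC_2)\simeq\W(\cC_1\times\cC_2)$ with a group homomorphism $\W(\cC_1\times\cC_2)\to\W(\cC_3)$. Write $P$ for the pairing. Any such composite is additive in the pair, so $P(x,y)=P((x,0)+(0,y))=P(x,0)+P(0,y)$; since $B(A_1,0)\cong 0$ by additivity of $B$ in each variable, $P(x,0)=P(0,y)=0$, and the composite would vanish identically, whereas the product is biadditive, not additive. Equivalently, orthogonal sum in $\cC_1\times\cC_2$ is componentwise and $B$ converts a componentwise sum into a four-term sum, so $\pair{B,d}$ induces no map on $\W(\cC_1\times\cC_2)$ in the sense of Proposition \ref{dualPresIsTransfer_prop} --- for this additivity reason as well as for the exactness reason you name. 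The correct route, which your later paragraphs implicitly follow and which is the one of \cite{Gille03}, is to define the pairing directly on pairs of representative symmetric spaces via $(\phi_1,\phi_2)\mapsto d_{A_1,A_2}\circ B(\phi_1,\phi_2)$ and prove descent to Witt classes in each variable separately; the proposed factorization through $\W(\cC_1\times\cC_2)$ should simply be deleted.
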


The following proposition is not stated in \cite{Gille03}, but easily follows from the construction of the product.

\begin{prop}
Let $\sigma:B \to B'$ be an isomorphism of suspended bifunctors that 
is duality preserving. Then $B$ and $B'$ induce the same product on Witt groups.
\end{prop}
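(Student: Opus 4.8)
The plan is to reduce the claim to Proposition \ref{samemorphismWitt_prop}. Recall from Theorem \ref{GiNe} that the product attached to a duality preserving bifunctor $\pair{B,d}$ is induced by viewing $\pair{B,d}$ as a duality preserving functor from $(\cC_1\times\cC_2, D_1\times D_2, \bid_1\times\bid_2)$ to $(\cC_3,D_3,\bid_3)$: concretely, the product of the classes of two symmetric forms $\phi_1: A_1\to D_1A_1$ and $\phi_2:A_2\to D_2A_2$ is the class of the form $d_{A_1,A_2}\circ B(\phi_1,\phi_2)$ on the object $B(A_1,A_2)$, that is, it is the external pairing $\W(\cC_1)\times\W(\cC_2)\to\W(\cC_1\times\cC_2)$ followed by the morphism on Witt groups induced by $\pair{B,d}$ through Proposition \ref{dualPresIsTransfer_prop}. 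The same description holds verbatim for $\pair{B',d'}$, and crucially the external pairing does not involve $B$ at all.

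First I would make explicit that the hypothesis ``$\sigma$ is a duality preserving isomorphism of suspended bifunctors'' is exactly the assertion that $\sigma$ is a strong morphism of duality preserving functors from $\pair{B,d}$ to $\pair{B',d'}$ in the sense of Definition \ref{MorphDualPresFunc_defi}, applied to the two functors $\cC_1\times\cC_2\to\cC_3$ equipped with the product dualities. Indeed, the compatibility diagram $\diag{M}$ for $\sigma$ unwinds to the identity $D_3^o\sigma^o\circ d=d'\circ\sigma(D_1^o\times D_2^o)$ of morphisms of bifunctors, which is precisely the bifunctor duality preserving condition, and since $\sigma$ is an isomorphism the morphism is strong.

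With this identification in hand, the result is immediate from Proposition \ref{samemorphismWitt_prop}: the strong duality preserving isomorphism $\sigma$ between the functors $\pair{B,d}$ and $\pair{B',d'}$ forces them to induce the same morphism $\W(\cC_1\times\cC_2)\to\W(\cC_3)$, and composing with the common external pairing shows that $B$ and $B'$ yield the same product. If one prefers to argue by hand at the level of forms, the naturality of $\sigma$ gives $B'(\phi_1,\phi_2)\circ\sigma_{A_1,A_2}=\sigma_{D_1A_1,D_2A_2}\circ B(\phi_1,\phi_2)$, and feeding this into $\diag{M}$ evaluated at $(A_1,A_2)$ exhibits the isomorphism $\sigma_{A_1,A_2}$ as an isometry between $d_{A_1,A_2}\circ B(\phi_1,\phi_2)$ and $d'_{A_1,A_2}\circ B'(\phi_1,\phi_2)$, so the two forms represent the same Witt class.

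The argument is essentially formal once the dictionary is set up, so the only point deserving care, and the main potential obstacle, is the bookkeeping that translates the bifunctor notion of a duality preserving morphism into the single functor Definition \ref{MorphDualPresFunc_defi} on the product category $\cC_1\times\cC_2$ (matching sources, targets and the op decorations), together with the observation that the external pairing used to build the product is literally independent of the bifunctor. Both are routine, which is exactly why the statement was announced as an easy consequence of the construction of the product.
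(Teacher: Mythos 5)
Your fallback, form-level argument is exactly the intended proof: the paper gives no argument beyond the remark that the claim ``easily follows from the construction of the product,'' and what it has in mind is precisely your last computation. By Theorem \ref{GiNe} the product of classes represented by $\phi_1$ and $\phi_2$ is the class of $d_{A_1,A_2}\circ B(\phi_1,\phi_2)$ on $B(A_1,A_2)$, and naturality of $\sigma$ combined with diagram \diag{M} (which, as you say, is the bifunctor duality-preserving condition for $\sigma$, and makes sense with no exactness hypothesis) shows that $\sigma_{A_1,A_2}$ is an isometry from $d_{A_1,A_2}\circ B(\phi_1,\phi_2)$ to $d'_{A_1,A_2}\circ B'(\phi_1,\phi_2)$; since $\sigma$ is moreover a morphism of \emph{suspended} bifunctors, the same computation applies to the shifted products. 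So your proposal does contain a complete and correct proof.

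Your primary route, however, has a genuine gap and cannot be repaired. Propositions \ref{dualPresIsTransfer_prop} and \ref{samemorphismWitt_prop} apply only to $1$-exact functors, and $B$, viewed as a single functor on $\cC_1\times\cC_2$ with the componentwise triangulated structure, is not exact --- it is not even additive, since $B(A_1\oplus A_1',A_2\oplus A_2')$ contains the cross terms $B(A_1,A_2')$ and $B(A_1',A_2)$; exactness of a suspended bifunctor in each variable separately does not give exactness on the product category (the ``tensor product'' of two triangles is not a triangle). This is exactly why Theorem \ref{GiNe} is a genuine theorem of Gille--Nenashev, proved by sub-Lagrangian and octahedron arguments, rather than an instance of Proposition \ref{dualPresIsTransfer_prop}. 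Worse, the asserted factorization of the product through the external pairing $\W(\cC_1)\times\W(\cC_2)\to\W(\cC_1\times\cC_2)$ is false on general grounds: a form $(\phi_1,\phi_2)$ in $\cC_1\times\cC_2$ is neutral iff both components are, so $\W(\cC_1\times\cC_2)\cong\W(\cC_1)\oplus\W(\cC_2)$, and if the biadditive product $x\cdot y$ were this identification followed by a group homomorphism $h$, then $x\cdot y=h(x,0)+h(0,y)=x\cdot 0+0\cdot y=0$ identically. So the dictionary identifying $\sigma$ as a strong morphism of duality preserving functors in the sense of Definition \ref{MorphDualPresFunc_defi} is fine, but the reduction to Proposition \ref{samemorphismWitt_prop} must be discarded; keep the isometry argument, which is the paper's proof.
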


Let us now apply this to our context. 

\begin{prop} (product) \label{existProduct}
The morphism of functors $\dd_{K,M}$ turns the functor $\TTens$ into a duality preserving functor $\pair{-_1 \TTens -_2, \dd_{K,M}}$ from $(\cC\times \cC,D_K\times D_M,\bid_K\times \bid_M)$ to $(\cC, D_{K\TTens M},\bid_{K \TTens M})$. 
\end{prop}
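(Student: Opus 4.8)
The object to analyze is $\pair{-_1 \TTens -_2, \dd_{K,M}}$, from $(\cC\times\cC, D_K\times D_M, \bid_K\times\bid_M)$ to $(\cC, D_{K\TTens M}, \bid_{K\TTens M})$. Since $\TTens$ is exact in both variables by assumption and $\dd_{K,M}$ is a morphism of suspended bifunctors by Proposition \ref{ddmor}, the only remaining point for Definition \ref{DualPresFunc_defi} is that $\dd_{K,M}$ and its mate $\dd_{K,M}^o$ are related by the square $\diag{P}$ (checking $\diag{P}$ suffices, as $\diag{P}$ and its opposite are equivalent by Lemma \ref{adjabbif}). Writing $F=\TTens$, $D_1=D_K\times D_M$, $D_2=D_{K\TTens M}$, this means that for all $A,B$ the square with top edge $\bid_K\TTens\bid_M$, right edge $\dd_{K,M}$ taken at $\HHom{A}{K},\HHom{B}{M}$, left edge $(\bid_{K\TTens M})_{A\TTens B}$, and bottom edge $\HHom{\dd_{K,M}}{K\TTens M}$ commutes; both composites are maps $A\TTens B \to \HHom{\HHom{A}{K}\TTens\HHom{B}{M}}{K\TTens M}$.

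The plan is to follow the proof of Theorem \ref{PullBack0_theo} step by step, replacing $\fp$ by $\exch$, $\fh$ by $\dd$, and the defining diagrams $\diag{\ref{Lapp1}},\diag{\ref{L'app1}}$ by the defining diagrams $\diag{\ref{Happ4}},\diag{\ref{H'app4}}$ of $\dd$. First recall that, by its very definition, $\bid_K$ at $A$ is the composite $A \arr{\coev^l} \HHom{\HHom{A}{K}}{A\TTens\HHom{A}{K}} \arr{\HHom{\HHom{A}{K}}{\ev^r_{A,K}}} \HHom{\HHom{A}{K}}{K}$, and similarly for $\bid_M$ and $\bid_{K\TTens M}$. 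Using this, I would compute the two paths of $\diag{P}$ as follows. For the top-right path, naturality of $\dd$ in its output objects (a cell $\diag{mf}$) together with $\diag{\ref{H'app4}}$ applied at $X_1=\HHom{A}{K}$, $X_2=\HHom{B}{M}$ rewrites it as $\HHom{\HHom{A}{K}\TTens\HHom{B}{M}}{(\ev^r_{A,K}\TTens\ev^r_{B,M})\circ\exch}\circ\coev^l$. For the left-bottom path, since $\HHom{\dd}{-}$ and $\HHom{-}{\ev^r}$ act in different variables (a cell $\diag{mf}$) and the generalized-transformation square $\diag{gen}$ of Definition \ref{defiAdjBif} for the unit $\coev^l$ applied to the parameter morphism $\dd_{A,B}\colon\HHom{A}{K}\TTens\HHom{B}{M}\to\HHom{A\TTens B}{K\TTens M}$ relocates the coevaluation, it becomes $\HHom{\HHom{A}{K}\TTens\HHom{B}{M}}{\ev^r_{A\TTens B,\,K\TTens M}\circ(id\TTens\dd_{A,B})}\circ\coev^l$. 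Both paths are thus of the shape $\HHom{\HHom{A}{K}\TTens\HHom{B}{M}}{(\,\cdot\,)}\circ\coev^l$, so it remains to identify the two inner maps $(A\TTens B)\TTens(\HHom{A}{K}\TTens\HHom{B}{M})\to K\TTens M$, i.e. to prove
$$(\ev^r_{A,K}\TTens\ev^r_{B,M})\circ\exch = \ev^r_{A\TTens B,\,K\TTens M}\circ(id\TTens\dd_{A,B}).$$

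This last identity is the genuinely delicate point, and the step I expect to be the main obstacle. It is the compatibility of $\dd$, through $\exch$, with the \emph{right} evaluation, whereas $\dd$ (and the diagram $\diag{\ref{Happ4}}$, which gives the corresponding statement for the \emph{left} evaluation, $(\ev^l\TTens\ev^l)\circ\exch=\ev^l\circ(\dd\TTens id)$) is built from left evaluation and coevaluation. To pass from the left version to the right version one commutes the evaluations through the symmetry $\cc$; this is exactly the analogue of the step in Theorem \ref{PullBack0_theo} where $\diag{\ref{L'app1}'}$ is obtained from $\diag{\ref{L'app1}}$ ``by using the compatibility of $\fp$ with $\cc$'', the difference being that here one invokes the compatibility of $\exch$ with $\cc$, which in turn rests on the associativity and hexagon coherence of the symmetric monoidal structure. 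Once this compatibility is recorded the inner maps agree, the outer rectangle $\diag{P}$ commutes, and $\pair{\TTens,\dd_{K,M}}$ is a duality preserving functor; combined with Corollary \ref{tripleCat_coro} and Theorem \ref{GiNe} this yields the announced product on Witt groups.
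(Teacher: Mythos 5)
Your proposal is correct: the two paths of $\diag{P}$ do reduce, exactly as you compute via $\bid=\HHom{-}{\ev^r}\circ\coev^l$, naturality, the defining diagrams \diag{\ref{Happ4}}/\diag{\ref{H'app4}} and the $\diag{gen}$ square for $\coev^l$ at the parameter morphism $\dd_{A,B}$, to the identity $(\ev^r\TTens\ev^r)\circ\exch=\ev^r\circ(id\TTens\dd_{A,B})$, which indeed holds because writing $\ev^r=\ev^l\circ\cc$ and invoking naturality of $\cc$ and \diag{\ref{Happ4}} turns it into the coherence identity $(\cc\TTens\cc)\circ\exch=\exch\circ\cc$, valid by the hexagon and pentagon axioms. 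The paper gives no written proof of this proposition, and your argument is precisely the intended adaptation of its proof of Theorem \ref{PullBack0_theo} (with $\fp$ replaced by $\exch$, $\fh$ by $\dd$, and \diag{\ref{Lapp1}}, \diag{\ref{L'app1}} by \diag{\ref{Happ4}}, \diag{\ref{H'app4}}); the only slip is cosmetic, namely that the equivalence of $\diag{P}$ with its opposite diagram is Lemma \ref{adjab} rather than Lemma \ref{adjabbif}.
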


\begin{coro} (product for Witt groups)
By Theorem \ref{GiNe}, 
when the dualities and the functor are strong (\ie $\bid_K$, $\bid_M$ and $\dd_{K,M}$ are isomorphisms, and thus $\bid_{K \TTens M}$ as well by Lemma \ref{KMdualKtensMdual}), 
$\pair{-_1\TTens-_2, \dd_{K,M}}$ induces a product
$$\xymatrix{\W (\cC,D_K)\times \W(\cC,D_M) \ar[r]^-{.} & \W(\cC,D_{K \TTens M})}$$
on Witt groups.
\end{coro}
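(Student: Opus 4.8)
The plan is to verify directly that $\pair{-_1\TTens-_2,\dd_{K,M}}$ is duality preserving in the sense of Definition \ref{DualPresFunc_defi}, i.e. that the diagram $\diag{P}$ commutes. Taking $F=-_1\TTens-_2$, $D_1=D_K\times D_M$ and $D_2=D_{K\TTens M}$, this square evaluated at a pair of objects $(A,B)$ has corners $A\TTens B$, then $\HHom{\HHom{A}{K}}{K}\TTens\HHom{\HHom{B}{M}}{M}$ (reached along the top by $\bid_K\TTens\bid_M$), then $\HHom{\HHom{A\TTens B}{K\TTens M}}{K\TTens M}$ (reached along the left by $\bid_{K\TTens M}$) and finally $\HHom{\HHom{A}{K}\TTens\HHom{B}{M}}{K\TTens M}$; the right edge is $\dd_{K,M}$ at $(D_KA,D_MB)$ and the bottom edge is $\HHom{\dd_{K,M}}{K\TTens M}$. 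This is formally the same verification as in the proofs of Theorems \ref{PullBack0_theo} and \ref{PushForward0_theo}, with $\fh$ (resp. $\rr$) replaced by $\dd_{K,M}$, and I would carry it out by the same method.

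First I would unfold each bidual. By its very definition $\bid_{K,A}$ is the transpose of the right evaluation $\ev^r_{A,K}$ under the adjunction $(-\TTens*,\HHom{*}{-})$, so it factors as the left coevaluation $\coev^l$ followed by the internal Hom $\HHom{\HHom{A}{K}}{\ev^r_{A,K}}$; the same holds for $\bid_{M,B}$ and for $\bid_{K\TTens M,A\TTens B}$. After this unfolding all four composites in the square become chains consisting of a $\coev^l$ followed by (an internal Hom of) a right evaluation, which is exactly the shape handled in Theorem \ref{PullBack0_theo}. Then I would assemble the large commutative diagram as there: start at $A\TTens B$, use the functoriality of $\coev^l$ (the counit of the ACB $(-\TTens*,\HHom{*}{-})$) to build the outer coevaluation square, and use the two defining diagrams $\diag{\ref{Happ4}}$ and $\diag{\ref{H'app4}}$ of $\dd_{K,M}$ to commute $\dd$ and $\HHom{\dd}{-}$ past the evaluations.

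The only genuine subtlety is that the biduals are built from the right evaluation $\ev^r$ while $\dd$ and the diagrams $\diag{\ref{Happ4}}$, $\diag{\ref{H'app4}}$ are built from the left evaluation $\ev^l$: exactly as in the passage from $\diag{\ref{L'app1}}$ to $\diag{\ref{L'app1}'}$ (resp. from $\diag{\ref{H'app1}}$ to $\diag{\ref{H'app1}'}$) in the earlier proofs, I would insert the symmetry $\cc$ and use the coherence of the symmetric monoidal structure (the compatibility of $\exch$ with $\cc$) to bridge the two. The outer boundary of the completed diagram is then precisely $\diag{P}$. In the suspended (triangulated) case nothing more is required: $\dd_{K,M}$ is a morphism of suspended bifunctors by Proposition \ref{ddmor} and each $\bid$ is a morphism of suspended functors by Proposition \ref{susBifBid_prop}, so $\diag{P}$ holds as a diagram of suspended (exact) functors by construction. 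I expect the main obstacle to be purely organizational: tracking the two tensor factors simultaneously and threading the symmetry $\cc$ correctly so that the $\ev^r$-form of the biduals matches the $\ev^l$-form used to define $\dd_{K,M}$. Once $\cc$ is inserted in the right places, every remaining cell is a naturality square $\diag{mf}$, an adjunction triangle $\diag{adj}$, or one of $\diag{\ref{Happ4}}$, $\diag{\ref{H'app4}}$.
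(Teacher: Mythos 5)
Your proposal is correct and follows essentially the paper's own route: the corollary per se is just the application of Theorem \ref{GiNe} (with Lemma \ref{KMdualKtensMdual} supplying the strongness of $\bid_{K\TTens M}$, as the statement itself records), and the genuine content is Proposition \ref{existProduct}, which the paper states without written proof and which you verify by exactly the technique used for Theorems \ref{PullBack0_theo} and \ref{PushForward0_theo} --- unfolding each bidual as $\coev^l$ followed by an internal Hom of $\ev^r$, invoking the defining mate squares $\diag{\ref{Happ4}}$ and $\diag{\ref{H'app4}}$ of $\dd_{K,M}$ together with their $\cc$-corrected variants, and handling the suspended case via Propositions \ref{ddmor} and \ref{susBifBid_prop}. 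Your identification of the corners and edges of diagram $\diag{P}$ is accurate, and the $\cc$-insertion you flag is indeed the only subtlety, so no step in your outline would fail.
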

 
\section{Relations between functors}

In this section, we prove the main results on composition, base change, and the projection formula. 

\subsection{Composition} \label{Composition}

This section studies the behavior of pull-backs and push-for\-wards with respect 
to composition.
Let $\cK$ be a category whose objects are (suspended, triangulated) closed categories as in Section \ref{TensProHom}, and whose morphisms are (suspended, exact) functors. Let $\cB$ be another category, and let $(-)^*$ be a contravariant pseudo functor from $\cB$ to $\cK$, \ie a functor, except that instead of having equalities $f^* g^*= (g f)^*$ when $f$ and $g$ are composable, we only have isomorphisms of (suspended) functors $\ea_{g,f}:f^* g^* \trafo (g f)^*$. We also require that $(-)^*$ sends the identity of an object to the identity. We denote $X^*$ by $\cC_X$.
When, moreover, the diagram 
$$\xymatrix{
f^* g^* h^* \ar[d]_{\ea_{h,g}} \ar[r]^{\ea_{g,f}} \ar@{}[dr]|{\diagram \label{f^*pseudoFun}} & (g f)^* h^* \ar[d]^{\ea_{h,gf}} \\
f^* (h g)^* \ar[r]^{\ea_{hg,f}} & (h g f)^*
}$$
is commutative, we say that the pseudo functor is associative. 

\begin{rema}
An example for this setting 
is to take for $\cB$ the category of schemes
(or regular schemes) and $\cC_X=D^b(Vect(X))$ (or $\cC_X=D^b(O_X-Mod)$).
\end{rema}

We require that $(-)^*$ is a monoidal associative pseudo functor, which means that each $f^*$ is symmetric monoidal (Assumption \assumption{\ref{assufmonoidal}}{f}) and that the diagram
$$\xymatrix{
g^* f^* \TTens g^* f^* \ar[d]_{\ea \TTens \ea} \ar[r]^{\fp} \ar@{}[drr]|{\diagram \label{f^*tens}} & g^* (f^* \TTens f^*) \ar[r]^{\fp} & g^* f^*(- \TTens -) \ar[d]^{\ea} \\ 
(fg)^* \TTens (fg)^* \ar[rr]^{\fp} & & (fg)^* (- \TTens -)
}$$
commutes for any two composable $f$ and $g$.

\begin{nota}
Let $X$ be an object in $\cB$ and $K$ an object in $\cC_X$
We denote by $\cC_{X,K}$ the (suspended, triangulated) category with duality $(\cC_X,D_K,\bid_K)$ obtained by Corollary \ref{tripleCat_coro}. When $K$ is dualizing, we denote by $\W^i(X,K)$ the $i$-th shifted Witt group of $\cC_{X,K}$.
\end{nota}

We have already seen in Theorem \ref{PullBack0_theo} that under Assumption \assumption{\ref{assufmonoidal}}{f}, the couple $\pair{f^*,\fh_{K}}$ is a duality preserving functor between (suspended, triangulated) categories with dualities, under Assumption \assumption{\ref{assufmonoidal}}{f}. Recall the $I$-notation of Lemma \ref{isoKisoWitt}.
 
\begin{theo} (composition of pull-backs) \label{compof^*_theo}
For any two composable $f$ and $g$ in $\cB$, the isomorphism of functors $\ea_{g,f}: f^* g^* \trafo (gf)^*$ is a morphism of duality preserving functors from $I_{\ea_{g,f,K}}\pair{f^*,\fh_{f,g^*K}} \pair{g^*,\fh_{g,K}}$ to $\pair{(gf)^*,\fh_{gf,K}}$. 
\end{theo}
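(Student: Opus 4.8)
The plan is to verify the single commutative square $\diag{M}$ of Definition \ref{MorphDualPresFunc_defi} for the morphism of underlying functors $\rho=\ea_{g,f}\colon f^*g^*\trafo(gf)^*$. First I would make the source duality structure explicit: using the composition law $\pair{F',f'}\pair{F,f}=\pair{F'F,\,f'F^o\circ F'f}$ and the description $I_\iota=\pair{Id,\tilde{\iota}}$ of Lemma \ref{isoKisoWitt}, the source $I_{\ea_{g,f,K}}\pair{f^*,\fh_{f,g^*K}}\pair{g^*,\fh_{g,K}}$ has underlying functor $f^*g^*$ and structure morphism
$$\Psi=\widetilde{\ea_{g,f,K}}\,(f^*g^*)^o\circ\fh_{f,g^*K}(g^*)^o\circ f^*\fh_{g,K}.$$
The content of $\diag{M}$ is then the equality $D^o_{(gf)^*K}\,\ea_{g,f}^o\circ\Psi=\fh_{gf,K}\circ\ea_{g,f}D_K^o$ of morphisms $f^*g^*D_K^o\to D^o_{(gf)^*K}(f^*g^*)^o$.

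The conceptual point is that, by Proposition \ref{defifh}, the passage from the monoidal comparison $\fp$ to the duality comparison $\fh$ is the formation of a mate, and this passage is functorial. Concretely, the composite structure $\fh_{f,g^*K}(g^*)^o\circ f^*\fh_{g,K}$ is the mate of the composite monoidal structure $f^*\fp_g\circ\fp_f$ carried by the functor $f^*g^*$ (mates compose — here this is the pasting of the two tensor--Hom adjunction squares, cf. Lemma \ref{cube} and Lemma \ref{adjabbif}), while $\fh_{gf,K}$ is by definition the mate of $\fp_{gf}$. Hence $\diag{M}$ is the image, under this mate correspondence, of the tensor-side comparison between $f^*\fp_g\circ\fp_f$ and $\fp_{gf}$.

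That tensor-side comparison is exactly the assumption that $(-)^*$ is a monoidal pseudo functor: diagram $\diag{\ref{f^*tens}}$ says precisely that $\ea_{g,f}$ intertwines $f^*\fp_g\circ\fp_f$ with $\fp_{gf}$, i.e. that $\ea_{g,f}$ is a morphism of monoidal functors. Since the mate correspondence of Lemma \ref{adjabbif} is a bijection compatible with composition, this identity transports to $\diag{M}$. The factor $\widetilde{\ea_{g,f,K}}$ supplied by the correction $I_{\ea_{g,f,K}}$ is exactly what converts the dualizing object $f^*g^*K$ of the composite into the dualizing object $(gf)^*K$ of $\pair{(gf)^*,\fh_{gf,K}}$, so that both functors indeed target the same category with duality. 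A reader preferring an explicit computation can instead substitute the formula $\fh\colon f^*\HHom{A}{B}\xrightarrow{\coev^l}\HHom{f^*A}{f^*\HHom{A}{B}\TTens f^*A}\xrightarrow{\fp}\HHom{f^*A}{f^*(\HHom{A}{B}\TTens A)}\xrightarrow{\ev^l}\HHom{f^*A}{f^*B}$ of Proposition \ref{defifh} at each occurrence and chase the resulting diagram, the only non-formal inputs being $\diag{\ref{f^*tens}}$ and the naturality ($\diag{mf}$) of $\coev^l$, $\ev^l$, $\fp$ and $\ea$.

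The hard part will be bookkeeping rather than anything conceptual: one must keep track of the two distinct monoidal functors $f^*$ and $g^*$, of the pervasive opposite-category decorations $(-)^o$ of the duality formalism, and above all of the two different roles played by $\ea_{g,f}$ — evaluated at a varying object (through $\rho$ and $\rho^o$) versus evaluated at the fixed dualizing object $K$ (through the correction $\widetilde{\ea_{g,f,K}}$), which are reconciled only by the naturality of $\ea$ together with $\diag{\ref{f^*tens}}$. Note that the associativity diagram $\diag{\ref{f^*pseudoFun}}$ is not needed, since only a single composite $gf$ is involved, and that no invertibility of $\fh$, $\bid$ or $\ea$ is required for $\diag{M}$ to make sense or to hold.
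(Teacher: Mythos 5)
Your proposal is correct and is essentially the paper's own argument: the paper likewise reduces the claim to the explicit square \diag{M} (the hexagon in $f^*g^*\HHom{-}{K}$ involving $\fh$, $\ea$ and $\ea^o$) and, using that $\fh$ is the mate of $\fp$ by Proposition \ref{defifh}, deduces its commutativity by applying the mate-pasting Lemma \ref{cube} to a cube whose commutativity is precisely diagram \diag{\ref{f^*tens}}. Your ancillary observations --- the bookkeeping role of $I_{\ea_{g,f,K}}$, the two distinct occurrences of $\ea$ (at a varying object versus at the fixed $K$), and the fact that neither \diag{\ref{f^*pseudoFun}} nor any invertibility hypothesis is needed --- all match the paper's treatment.
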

\begin{coro} (composition of pull-backs for Witt groups)
By Proposition \ref{samemorphismWitt_prop}, for composable morphisms $f$ and $g$ in $\cB$, the pull-back on Witt groups defined in Corollary \ref{PullBack0Witt_coro} satisfies $(gf)^*_W =I_{\ea_{g,f,K}}^W f^*_W g^*_W$.
\end{coro}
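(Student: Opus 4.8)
The plan is to unwind Definition \ref{MorphDualPresFunc_defi} for the morphism $\rho=\ea_{g,f}$ and then reduce the resulting single commutativity to the monoidal compatibility of the pseudo functor, \ie diagram $\diag{\ref{f^*tens}}$. First I would write down the duality structure of the source functor. Setting $\psi:=\fh_{f,g^*K}(g^*)^o\circ f^*\fh_{g,K}$ for the structure map of the composite $\pair{f^*,\fh_{f,g^*K}}\pair{g^*,\fh_{g,K}}$ (via the rule $\pair{F',f'}\pair{F,f}=\pair{F'F,f'F^o\circ F'f}$ recalled after Definition \ref{DualPresFunc_defi}), the structure map of $I_{\ea_{g,f,K}}\pair{f^*,\fh_{f,g^*K}}\pair{g^*,\fh_{g,K}}$ is
$$\Phi=\bigl(\widetilde{\ea_{g,f,K}}\,(f^*g^*)^o\bigr)\circ\psi\colon f^*g^*D_K^o\longrightarrow D_{(gf)^*K}^o(f^*g^*)^o,$$
where $\widetilde{\ea_{g,f,K}}\colon D_{f^*g^*K}^o\to D_{(gf)^*K}^o$ is the map of Lemma \ref{isoKisoWitt}. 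With this, diagram $\diag{M}$ of Definition \ref{MorphDualPresFunc_defi} is the single identity
$$D_{(gf)^*K}^o\,\ea_{g,f}^o\circ\Phi=\fh_{gf,K}\circ\ea_{g,f}D_K^o,\qquad(\star)$$
both sides being morphisms $f^*g^*D_K^o\to D_{(gf)^*K}^o((gf)^*)^o$, and the whole theorem is equivalent to $(\star)$. Note that it is exactly the presence of $\widetilde{\ea_{g,f,K}}$ inside $\Phi$, \ie the correction $I_{\ea_{g,f,K}}$, that makes the two sides of $(\star)$ share the same target, as emphasised in the introduction.

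The key point is that, by Proposition \ref{defifh}, each of $\fh_g$, $\fh_f$ and $\fh_{gf}$ is the \emph{mate} of the corresponding monoidal isomorphism $\fp$ for the tensor/internal-Hom adjunctions (Lemma \ref{adjabbif}), and is the \emph{unique} morphism of bifunctors satisfying the defining diagrams $\diag{\ref{Lapp1}}$ and $\diag{\ref{L'app1}}$. I would therefore transport the composite structure $\psi$ along the isomorphism $\ea_{g,f}$ to obtain a morphism of bifunctors $\Theta\colon(gf)^*\HHom{*}{-}\to\HHom{(gf)^*(*)}{(gf)^*(-)}$, and then verify that $\Theta$ satisfies the characterising diagram $\diag{\ref{L'app1}}$ (equivalently $\diag{\ref{Lapp1}}$) of $\fh_{gf}$; by the uniqueness in Proposition \ref{defifh} this forces $\Theta=\fh_{gf}$, which is precisely $(\star)$. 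To check the characterising diagram for $\Theta$ I would paste the evaluation square $\diag{\ref{L'app1}}$ for $\fh_g$ on top of the corresponding square for $\fh_f$ (with parameter in $g^*K$), and then convert every occurrence of $f^*g^*$ into $(gf)^*$ using the components of $\ea_{g,f}$. The only non-formal input required to close this pasted diagram is the compatibility of $\ea_{g,f}$ with the two monoidal structures, \ie diagram $\diag{\ref{f^*tens}}$, together with the naturality of $\coev^l$, $\ev^l$ (which are the unit and counit of an ACB, hence generalized transformations in the sense of Definition \ref{defiAdjBif}) and of $\ea_{g,f}$ itself. Conceptually $(\star)$ is nothing but the mate of $\diag{\ref{f^*tens}}$, and the passage from the $\fp$-level identity to the $\fh$-level identity is an instance of Lemma \ref{cube}; one could equally organise the whole proof as a single application of that lemma to the cube built from $\diag{\ref{f^*tens}}$ and the relevant tensor/Hom adjunctions.

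The main obstacle will be purely bookkeeping rather than conceptual. One must keep careful track of the opposite-functor decorations $(-)^o$ throughout, and, above all, reconcile the two distinct ways in which $\ea_{g,f}$ enters $(\star)$: as $\ea_{g,f}$ acting on objects (the maps $\ea_{g,f}D_K^o$ and $\ea_{g,f}^o$) and as $\widetilde{\ea_{g,f,K}}$ acting in the dualizing slot of the internal Hom inside $\Phi$. These are matched by the naturality of $\coev^l$ and $\ev^l$ in the dualizing object together with the naturality of $\ea_{g,f}$, applied at the components $\ea_{g,f,A}$ and $\ea_{g,f,K}$. A secondary subtlety is that $\fh$ is defined through the \emph{left} (co)evaluation, whose construction invokes the symmetry $\cc$; accordingly $\diag{\ref{f^*tens}}$ may have to be used in its $\cc$-twisted form, which is legitimate because $(-)^*$ is a \emph{symmetric} monoidal pseudo functor so that each $\ea_{g,f}$ is compatible with $\cc$. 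Once these identifications are in place the pasted diagram closes formally, giving $(\star)$ and hence the theorem; note that only the monoidal compatibility $\diag{\ref{f^*tens}}$ for the single pair $(f,g)$ is needed, and not the associativity coherence $\diag{\ref{f^*pseudoFun}}$.
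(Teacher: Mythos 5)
Your proposal is correct and in substance the same as the paper's: the paper likewise reduces the corollary, via Proposition \ref{samemorphismWitt_prop}, to the commutativity of diagram \diag{M} of Definition \ref{MorphDualPresFunc_defi} for $\rho=\ea_{g,f}$ (your identity $(\star)$), and obtains it from the fact that $\fp$ and $\fh$ are mates (Proposition \ref{defifh}) together with \diag{\ref{f^*tens}}, packaged as a single application of Lemma \ref{cube} --- precisely the alternative organisation you describe at the end. Your primary uniqueness-of-mates phrasing is an equivalent repackaging of that cube argument; note only that your caution about a $\cc$-twisted form of \diag{\ref{f^*tens}} is not actually needed here (the paper's cube involves only $\fp$, the evaluation adjunctions and naturality of $\ea$), while your observation that the associativity coherence \diag{\ref{f^*pseudoFun}} is never used matches the paper.
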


\begin{proof}[Proof of Theorem \ref{compof^*_theo}]
The claim amounts to checking that the diagram
$$\xymatrix@R=3ex{
f^* g^* \HHom{-}{K} \ar[d]_{\ea} \ar[r]^-{\fh} & f^* \HHom{g^*(-)}{g^* K} \ar[r]^-{\fh} & \HHom{f^* g^*(-)}{f^* g^* K} \ar[d]^{\ea} \\
(gf)^* \HHom{-}{K} \ar[r]^-{\fh} & \HHom{(gf)^*(-)}{(gf)^* K} \ar[r]^-{\ea^o} & \HHom{f^* g^* (-)}{(gf)^* K}
}$$
is commutative. Since by construction of $\fh$ (Proposition \ref{defifh}), $\fp$ and $\fh$ are mates, this commutativity follows from Lemma \ref{cube} applied to the cube 
$$\xymatrix@!C=2ex@!R=3ex{
\cC_Z \ar[rr]^{(gf)^*} \ar[dr]^{g^*} & & \cC_X \ar[dr]^{Id} & \\
 & \cC_Y \ar@2[ur]^{\ea} \ar[rr]^{f^*} & & \cC_X \\
 \cC_Z \ar[uu]^{- \TTens K} \ar[dr]_{g^*} & & & \\
 & \cC_Y \ar@2[luuu]_{\fp} \ar[uu]_(.3){- \TTens g^* K} \ar[rr]_{f^*} & & \cC_X \ar@2[lluu]_{\fp} \ar[uu]_{- \TTens f^* g^* K}
}
\hspace{-2ex}
\xymatrix@!C=2ex@!R=3ex{
 \cC_Z \ar[rr]^{(gf)^*} & & \cC_X \ar[dr]^{Id} & \\
 & & & \cC_X \\
 \cC_Z \ar[uu]^{- \TTens K} \ar[dr]_{g^*} \ar[rr]^{(gf)^*} & & \cC_X \ar@2[lluu]^{\fp} \ar[uu]^(.65){- \TTens (gf)^* K} \ar[dr]_{Id} & \\
 & \cC_Y \ar@2[ur]^{\ea} \ar[rr]_{f^*} & & \cC_X \ar@2[luuu]|{\rule[-.5ex]{0ex}{2ex} id \TTens \ea} \ar[uu]_{- \TTens f^* g^* K}
}$$
which is commutative as Diagram \diag{\ref{f^*tens}} is.
\end{proof}
A covariant pseudo functor $(-)_*$ is defined in the same way as a contravariant one, except that we are given isomorphisms in the other direction: $\eb_{g,f}: (gf)_* \to g_* f_*$. 
\begin{defi}
Let $(-)^*$ (resp. $(-)_*$) be a (suspended, exact) contravariant (resp. covariant) pseudo functor from $\cB$ to $\cK$. We say that $((-)^*,(-)_*)$ is an adjoint couple of pseudo functors if $(f^*,f_*)$ is a (suspended, exact) adjoint couple for every $f$ (in particular $(-)^*$ and $(-)_*$ coincide on objects), and the diagrams
$$\begin{array}{c}
\xymatrix{
Id \ar[d] \ar[r] \ar@{}[dr]|{\diagram \label{compAdj1}} & (gf)_* (gf)^* \ar[d]^{\eb} \\
g_*f_*f^* g^* \ar[r]^{\ea} & g_* f_* (gf)^* 
}
\end{array}\hfill
\hspace{5ex}
and
\hspace{5ex}\begin{array}{c} 
\xymatrix{
f^* g^* (gf)_* \ar[d]_{\ea} \ar[r]^{\eb} \ar@{}[dr]|{\diagram \label{compAdj2}} & f^*g^* g_*f_* \ar[d] \\
(gf)^* (gf)_* \ar[r] & Id
}
\end{array}$$
commute for any composable $f$ and $g$. 
\end{defi}
Spelling out the symmetric notion when the left adjoint pseudofunctor is covariant and the right one contravariant is left to the reader. 

As usual, the right (or left) adjoint is unique up to unique isomorphism.

\begin{lemm} \label{adjWeak}
Let $(-)^*$ be a contravariant pseudo-functor. Assume that for any $f^*$, we are given a right (suspended, exact) adjoint $f_*$ which is the identity when $f$ is the identity.
Then there is a unique collection of isomorphisms $\eb_{g,f}:(g f)_* \trafo g_* f_*$ such that 
$((-)^*,(-)_*)$ forms an adjoint couple of pseudo functors.
\end{lemm}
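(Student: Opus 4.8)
The plan is to construct each $\eb_{g,f}$ by appealing to the uniqueness of adjoints, using the fact that a composite of adjunctions supplies a ready-made right adjoint for the composite functor $f^*g^*$. First I would fix composable morphisms $f$ and $g$ in $\cB$, with $g^*\colon \cC_Z\to\cC_Y$ and $f^*\colon \cC_Y\to\cC_X$, and record two adjoint couples from $\cC_Z$ to $\cC_X$ sharing the same left adjoint up to the given coherence isomorphism. On one side we have $((gf)^*,(gf)_*)$; on the other side, Lemma \ref{composeadj} applied to $(g^*,g_*)$ and $(f^*,f_*)$ yields the adjoint couple $(f^*g^*,g_*f_*)$. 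The pseudofunctoriality isomorphism $\ea_{g,f}\colon f^*g^*\trafo(gf)^*$ is precisely an isomorphism between the two left adjoints.

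Next I would feed the isomorphism $\ea_{g,f}^{-1}\colon(gf)^*\trafo f^*g^*$ into Lemma \ref{isoadjoints}. This produces a \emph{unique} isomorphism $\eb_{g,f}\colon(gf)_*\trafo g_*f_*$ of the associated right adjoints, characterised by the two identities $\unit'=\eb_{g,f}\,\ea_{g,f}^{-1}\circ\unit$ and $\counit'=\counit\circ\ea_{g,f}\,\eb_{g,f}^{-1}$, where $(\unit,\counit)$ is the unit--counit pair of $((gf)^*,(gf)_*)$ and $(\unit',\counit')$ that of the composite couple $(f^*g^*,g_*f_*)$. Since the latter was built by Lemma \ref{composeadj}, its unit is the composite $\mathrm{Id}\to g_*g^*\to g_*f_*f^*g^*$ assembled from the units of $(g^*,g_*)$ and $(f^*,f_*)$, and its counit is the composite $f^*g^*g_*f_*\to f^*f_*\to\mathrm{Id}$; these are exactly the maps running along the boundaries of \eqref{compAdj1} and \eqref{compAdj2}.

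The remaining step is formal. I would expand the Godement products $\eb_{g,f}\,\ea_{g,f}^{-1}$ and $\ea_{g,f}\,\eb_{g,f}^{-1}$ via the interchange law of Section \ref{NotConv} and verify that the two identities of Lemma \ref{isoadjoints} are literally the commutativity of \eqref{compAdj1} and \eqref{compAdj2}. For example, writing $\counit'=\counit\circ\ea_{g,f}(gf)_*\circ f^*g^*\eb_{g,f}^{-1}$ and post-composing with $f^*g^*\eb_{g,f}$ collapses the last two factors to give $\counit\circ\ea_{g,f}(gf)_*=\counit'\circ f^*g^*\eb_{g,f}$, which is \eqref{compAdj2}; the dual manipulation on the unit yields \eqref{compAdj1}. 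Uniqueness of the whole collection is then immediate from the uniqueness clause of Lemma \ref{isoadjoints}, while the normalisation $(\mathrm{id})_*=\mathrm{id}$ is assumed, so $(-)_*$ is a pseudo functor and $((-)^*,(-)_*)$ an adjoint couple of pseudo functors. I expect the only delicate point to be the bookkeeping of the directions of the coherence isomorphisms and the matching of the composite unit and counit of Lemma \ref{composeadj} to the arrows drawn in \eqref{compAdj1}--\eqref{compAdj2}; the suspended (exact) refinement costs nothing extra, since the unique $\eb_{g,f}$ produced by Lemma \ref{isoadjoints} is a composition of morphisms of suspended functors and is therefore automatically a morphism of suspended functors.
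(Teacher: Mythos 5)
Your proof is correct, and it produces the same mate $\eb_{g,f}$ as the paper, but via a different decomposition into the paper's adjunction lemmas. The paper's proof is a one-line appeal to Theorem \ref{adjsquare} (Theorem \ref{adjsquareT} in the suspended case), instantiated with $(L,R)=(f^*,f_*)$, $(F_1,G_1)=(g^*,g_*)$, $(F_2,G_2)=(Id,Id)$ and $(L',R')=((gf)^*,(gf)_*)$; with this instantiation the theorem delivers the isomorphism $(gf)_*\trafo g_*f_*$ \emph{together with} the commutative diagrams $\diag{\ref{compAdj1}}$ and $\diag{\ref{compAdj2}}$, with no further computation. You instead form the composite adjoint couple $(f^*g^*,g_*f_*)$ by Lemma \ref{composeadj} and apply the uniqueness-of-adjoints Lemma \ref{isoadjoints} to $l=\ea_{g,f}^{-1}$, which is legitimate and yields the same unique isomorphism; the price is that the two characterizing identities $\unit'=\eb_{g,f}\,\ea_{g,f}^{-1}\circ\unit$ and $\counit'=\counit\circ\ea_{g,f}\,\eb_{g,f}^{-1}$ must still be converted into $\diag{\ref{compAdj1}}$ and $\diag{\ref{compAdj2}}$, which you carry out correctly with the interchange law; since $\ea_{g,f}$ is invertible the conversion is reversible, so uniqueness of the collection follows as you claim. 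Conceptually the two routes are the same mates calculus --- Lemma \ref{isoadjoints} is essentially Lemma \ref{adjab} with $H=H'=Id$, and Theorem \ref{adjsquare} is the packaged form of Lemma \ref{adjab} --- so the paper's packaging buys a computation-free proof, while yours makes the unit/counit bookkeeping explicit. Two cosmetic points: in the counit manipulation you say ``post-composing'' with $f^*g^*\eb_{g,f}$ where you are in fact composing at the source (pre-composition); and your closing remark on the suspended refinement is a valid substitute for the paper's citation of Theorem \ref{adjsquareT}, because the explicit formula from the proof of Lemma \ref{isoadjoints}, namely $\eb_{g,f}=g_*f_*\counit\circ g_*f_*\,\ea_{g,f}(gf)_*\circ\unit'(gf)_*$, is a composite of morphisms of suspended functors (a suspended version of Lemma \ref{isoadjoints} itself is not stated in the paper, so this justification is actually needed).
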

\begin{proof}
Apply Theorem \ref{adjsquare} (or \ref{adjsquareT} in the suspended case) to $(L,R)=(f^*,f_*)$, $(F_1,G_1)=(g^*,g_*)$, $(F_2,G_2)=(Id,Id)$ and $(L',R')=((gf)^*,(gf)_*)$. This gives the required isomorphism $(gf)_* \trafo g_*f_*$ and the diagrams $\diag{\ref{compAdj1}}$ and $\diag{\ref{compAdj2}}$. 
\end{proof}

\begin{lemm} \label{adjWeakAssoc}
The right (resp. left) adjoint of an associative pseudo functor is associative.
\end{lemm}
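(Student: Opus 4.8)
The plan is to prove that if $(-)^*$ is an associative contravariant pseudo functor with right adjoint $(-)_*$ (whose pseudo functor isomorphisms $\eb_{g,f}$ are those produced by Lemma \ref{adjWeak}), then the diagram \eqref{f^*pseudoFun} written with $\eb$ in place of $\ea$ and all arrows reversed, namely
$$\xymatrix{
(hgf)_* \ar[d]_{\eb_{hg,f}} \ar[r]^{\eb_{h,gf}} & (gf)_* h_* \ar[d]^{\eb_{g,f} h_*} \\
f_* (hg)_* \ar[r]^{f_* \eb_{h,g}} & f_* g_* h_*
}$$
commutes. The key observation is that each $\eb_{g,f}$ is, by Lemma \ref{adjWeak}, the mate of the identity-like comparison $\ea_{g,f}$ under the adjoint couples $(f^*g^*, g_*f_*)$ and $((gf)^*,(gf)_*)$; so the two composites in the square above are the mates of the two composites of \eqref{f^*pseudoFun}, which agree by the associativity hypothesis. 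I would therefore reduce the whole statement to ``mates of equal morphisms are equal'', i.e. to the uniqueness clause in Lemma \ref{adjab}.

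First I would set up the four-fold composite $(hgf)^*$ and the three adjoint couples $(f^*,f_*)$, $(g^*,g_*)$, $(h^*,h_*)$, together with the composites given by Lemma \ref{composeadj}. Then I would apply Lemma \ref{adjab} (or Theorem \ref{adjsquareT} in the suspended case) in the configuration where the left adjoints are the pull-backs and the right adjoints are the push-forwards, taking $a$ to be the relevant composite of $\ea$'s. Concretely, both $\eb_{g,f}h_* \circ \eb_{h,gf}$ and $f_*\eb_{h,g} \circ \eb_{hg,f}$ are mates of morphisms $f^*g^*h^* \to (hgf)^*$ built out of two $\ea$'s; by the functoriality of mate formation with respect to composition of adjoint couples (Lemma \ref{composeadj}), these two morphisms are exactly the two sides of \eqref{f^*pseudoFun}. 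Since \eqref{f^*pseudoFun} commutes by the associativity assumption on $(-)^*$, the uniqueness part of Lemma \ref{adjab} forces the two mates to coincide, which is precisely the desired commutativity.

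The main obstacle I anticipate is purely bookkeeping: one must check that the mate of a \emph{composite} of comparison isomorphisms, taken across a \emph{composite} of adjoint couples, really equals the corresponding composite of the individual mates $\eb$. This is the compatibility of the mate construction with the composition of adjunctions in Lemma \ref{composeadj}, and it can be verified by pasting the two defining squares $\diag{H}$ and $\diag{H'}$ of Lemma \ref{adjab} for the constituent adjunctions, exactly as in the proof of that lemma. Once this pasting identity is in hand, the associativity of $\eb$ is immediate from the associativity of $\ea$, with no further computation. I would leave the diagram-pasting verification to the reader, as it uses only the unit--counit triangle identities and the naturality squares already invoked throughout Section \ref{DualFormAdj}.

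The parenthetical statement about the left adjoint is dual: it follows by the same argument applied to the adjoint couples $(f_*,f^*)$ read in the opposite categories, so I would simply remark that the left-adjoint case is proved \emph{mutatis mutandis}.
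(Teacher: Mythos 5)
Your proof is correct, and since the paper's own proof of this lemma is literally ``Left to the reader,'' yours supplies precisely the argument its machinery is designed for: the defining diagrams \diag{\ref{compAdj1}} and \diag{\ref{compAdj2}} of Lemma \ref{adjWeak} exhibit $\eb_{g,f}$ as the unique mate, in the sense of Lemma \ref{adjab} with $H=H'=Id$ and the composite adjunction of Lemma \ref{composeadj}, of $\ea_{g,f}$, and the reduction of the associativity of $\eb$ to the commutativity of \eqref{f^*pseudoFun} via the uniqueness clause of Lemma \ref{adjab} is sound. The two pasting facts you defer --- that whiskering by a fixed adjoint couple preserves mates, and that the mate of a composite of comparison maps across composed adjoint couples is the reverse-order composite of the individual mates --- do follow from the triangle identities exactly as in the proof of Lemma \ref{adjab}, and the strict associativity of composition of adjoint couples (the unit and counit of the triple composite from Lemma \ref{composeadj} are the same however one brackets it) makes the triple composite adjunction $(f^*g^*h^*,\,h_*g_*f_*)$ unambiguous, which your argument tacitly needs. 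The one blemish is notational: you compose functors in diagrammatic (left-to-right) order, whereas the paper composes right-to-left, so with the paper's convention $\eb_{g,f}:(gf)_* \trafo g_*f_*$ your square must read
$$\xymatrix{
(hgf)_* \ar[d]_{\eb_{hg,f}} \ar[r]^{\eb_{h,gf}} & h_*(gf)_* \ar[d]^{h_*\eb_{g,f}} \\
(hg)_*f_* \ar[r]_{\eb_{h,g}f_*} & h_*g_*f_*
}$$
as otherwise the displayed composites do not typecheck; your concluding remark that the left-adjoint case follows \emph{mutatis mutandis} is fine.
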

\begin{proof}
Left to the reader.
\end{proof}

\begin{lemm} \label{adjWeakMon_lemm}
Let $((-)^*,(-)_*)$ be an adjoint pair of pseudo-functors (left contravariant and right covariant). If $(-)^*$ satisfies the commutativity of \diag{\ref{f^*tens}}, then the diagram
$$\xymatrix@R=3ex{
(gf)_* A \TTens (gf)_* B \ar[d]_{\eb \TTens \eb} \ar[rr]^-{\fg} \ar@{}[drr]|{\diagram \label{f_*tens}} & & (gf)_* (A \TTens B) \ar[d]^{\eb} \\ 
g_* f_* A \TTens g_* f_* B \ar[r]^{\fg} & g_* (f_* A \TTens f_*B) \ar[r]^{\fg} & g_* f_* (A \TTens B)
}$$
is commutative for every $A$ and $B$.
\end{lemm}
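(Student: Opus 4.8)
The plan is to read \diag{\ref{f_*tens}} as the image of the pull-back identity \diag{\ref{f^*tens}} under the operation ``pass to mates'', and to deduce it from the very mate calculus that defined $\fg$ and $\eb$. Recall that, by Proposition \ref{fg_defi}, for a monoidal functor the morphism $\fg$ is \emph{by definition} the mate of $\fp^{-1}$ with respect to the tensor-product ACBs and the adjunction between pull-back and push-forward; and that, by Lemma \ref{adjWeak}, $\eb$ is the mate of $\ea$ (it is produced by Theorem \ref{adjsquare} with $(L,R)=(f^*,f_*)$, $(F_1,G_1)=(g^*,g_*)$, $(F_2,G_2)=(\mathrm{Id},\mathrm{Id})$ and $(L',R')=((gf)^*,(gf)_*)$). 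Thus every arrow of \diag{\ref{f_*tens}} is the mate of the corresponding arrow of \diag{\ref{f^*tens}}, and the content of the lemma is exactly that taking mates transports the commutativity of \diag{\ref{f^*tens}} to that of \diag{\ref{f_*tens}}; this is the situation governed by Lemma \ref{cube}.

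I would carry this out in two steps. First, set $h^*=f^*g^*=(gf)^*$, which by Lemma \ref{composeadj} has right adjoint $h_*=g_*f_*$ and whose monoidal structure is $\fp_h=f^*\fp_g\circ\fp_f$; let $\fg_h$ denote the mate of $\fp_h^{-1}$ for this composite adjunction. The first claim is the compositionality of the mate,
\[
\fg_h=g_*\fg_f\circ\fg_g,
\]
which follows either from Lemma \ref{cube} applied to the two stacked pull-back squares (vertical edges $g^*$ and $f^*$, horizontal faces the tensor-product squares, exactly as in the proof that \diag{\ref{fgc}} commutes), or directly by checking that $g_*\fg_f\circ\fg_g$ satisfies the diagram \diag{\ref{Happ0}} characterising $\fg_h$ and invoking the uniqueness in Proposition \ref{fg_defi}; here one uses that the unit and counit of $(h^*,h_*)$ are the prescribed composites of those of $(g^*,g_*)$ and $(f^*,f_*)$.

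Second, transport along the pseudo-functor isomorphism $(\ea,\eb)$. Diagram \diag{\ref{f^*tens}}, read for the composite $gf$ with $\ea\colon f^*g^*\to(gf)^*$, says precisely that $\fp_{gf}$ and $\fp_h$ correspond to one another under the isomorphism of adjoint couples of Lemma \ref{adjWeak}, namely $\fp_{gf}\circ(\ea\TTens\ea)=\ea\circ\fp_h$. Feeding this commuting square into Lemma \ref{cube}, whose vertical edges are now $\ea$ on the pull-backs and its mate $\eb$ on the push-forwards, yields the mate identity $\eb\circ\fg_{gf}=\fg_h\circ(\eb\TTens\eb)$. Combined with the first step this gives $\eb\circ\fg_{gf}=g_*\fg_f\circ\fg_g\circ(\eb\TTens\eb)$, which is the commutativity of \diag{\ref{f_*tens}}.

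I expect the main obstacle to be the bookkeeping in the first step. Because $\fg$ is a morphism of \emph{bi}functors, the compositionality of the mate must be run through the ACB formalism (Lemmas \ref{adjabbif} and \ref{AdjComposeBifFonc}) rather than the plain Lemma \ref{adjab}, and one must keep the two different whiskerings straight, by $f^*$ on the pull-back side (in $f^*\fp_g$) versus by $g_*$ on the push-forward side (in $g_*\fg_f$), when the intermediate category $\cC_Y$ is inserted between the source and the target. A more economical route that avoids splitting the adjunction altogether is to fix $A,B$, set $\Phi=\eb^{-1}\circ g_*\fg_f\circ\fg_g\circ(\eb\TTens\eb)$, and verify that $\Phi$ satisfies the defining diagram \diag{\ref{Happ0}} of $\fg_{gf}$ for the morphism $gf$; the uniqueness in Proposition \ref{fg_defi} then forces $\Phi=\fg_{gf}$, which is again \diag{\ref{f_*tens}}. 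That single verification is a diagram chase fed by \diag{\ref{Happ0}} for $f$ and for $g$, by the unit-compatibility \diag{\ref{compAdj1}}, and by \diag{\ref{f^*tens}}.
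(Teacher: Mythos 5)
Your proof is correct, but it is organized differently from the paper's. The paper proves \diag{\ref{f_*tens}} by a \emph{single} application of Lemma \ref{cube}: it writes down one cube in which the $f$-adjunction has already been spent (the top face carries $\fg$ for $f$), the left face carries $(\ea\times\ea)\circ(\unit^*_*\times\unit^*_*)$ --- exactly the unit-side description of the mate defining $\eb$ --- and it verifies the commutativity of that cube by one explicit planar chase built from \diag{\ref{Happ0}}, \diag{\ref{f^*tens}} and naturality squares; mating only the $g$- and $gf$-adjunctions then produces \diag{\ref{f_*tens}} in one stroke. You instead factor the statement through the composite adjunction $(f^*g^*,\,g_*f_*)$ of Lemma \ref{composeadj} into two standard mate identities: horizontal compositionality, $\fg_h=g_*\fg_f\circ\fg_g$, and transport along $(\ea,\eb)$, where your key input --- that $\eb$ is by construction the mate of $\ea$ with respect to the composite adjunction --- is precisely the content of diagrams \diag{\ref{compAdj1}} and \diag{\ref{compAdj2}} in Lemma \ref{adjWeak}, so that step is legitimate. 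What your route buys is that no ad hoc verification diagram is needed, each step being an instance of general mate calculus (or of the uniqueness clause of Proposition \ref{fg_defi}); what it costs is the introduction of the intermediate structure map $\fg_h$ and a second cube (or uniqueness) argument where the paper needs only one. One small correction: your worry about having to run the first step through the ACB formalism (Lemmas \ref{adjabbif} and \ref{AdjComposeBifFonc}) is unfounded --- the paper constructs $\fg$ in Proposition \ref{fg_defi} via the plain Lemma \ref{adjab} applied on product categories ($J_1=f^*\times f^*$, $K_1=f_*\times f_*$, $H=H'=-\TTens-$), with no parameter category in play, so ordinary mate compositionality on product categories suffices; the ACB machinery is only needed for the internal-Hom-type mates such as $\fh$, $\ff$ and $\qh$. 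Your fallback argument (check that $\eb^{-1}\circ g_*\fg_f\circ\fg_g\circ(\eb\TTens\eb)$ satisfies \diag{\ref{Happ0}} for $gf$ and invoke uniqueness) is also sound, and is in spirit the closest of your variants to the paper's explicit chase.
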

\begin{proof}
The commutative diagram
$$\xymatrix@R=3ex@C=3ex{
g^*(A \TTens B) \ar[r]^-{\fp^{-1}} \ar[d]_{\unit_*^*} \ar@{}[dr]|{\diag{mf}} & g^* A \TTens g^* B \ar[r]^-{\unit_*^* \TTens \unit_*^*} \ar[d] \ar@{}[dr]|{\diag{\ref{Happ0}}} & f_* f^* g^* A \TTens f_* f^* g^* B \ar[d]^{\fg} \ar@<3ex>@/^15ex/[dd]|{f_* \ea \TTens f_* \ea} \\
f_* f^* g^* (A \TTens B) \ar[r] \ar[d]_{f_* \ea} \ar@{}[drr]|{\diag{\ref{f^*tens}}} & f_* f^* (g^* A \TTens g^* B) \ar[r] & f_* (f^* g^* A \TTens f^* g^* B) \ar[dl] \ar@{}[d]|{\diag{mf}} \\
f_* (gf)^* (A \TTens B) \ar[r]^-{\fp^{-1}} & f_* ((gf)^* A \TTens (gf)^* B) & f_* (gf)^* A \TTens f_* (gf)^* B \ar[l]_-{\fg} \\
}$$
shows that the cube
$$\xymatrix@!C=2ex@!R=3ex{
\cC_X \times \cC_X \ar[rr]^{- \TTens -}  \ar[dr]^*-{\labelstyle f_* \times f_*} & & \cC_X \ar[dr]^{f_*} \\
 & \cC_Y \times \cC_Y \ar@2[ur]_(.6){\fg} \ar[rr]^{- \TTens -} & & \cC_Y \\
 \cC_Z \times \cC_Z \ar[uu]^{(gf)^* \times (gf)^*} \ar[dr]_{Id \times Id} & & & \\
 & \cC_Z \times \cC_Z \ar@2[luuu]|{\stackrel{\labelstyle (\ea \times \ea)\circ}{(\unit_*^* \times \unit^*_*)}} \ar[uu]_(.4){g^* \times g^*} \ar[rr]_{- \TTens -} & & \cC_Z \ar@2[lluu]_{\fp^{-1}} \ar[uu]_{g^*}
}
\xymatrix@!C=2ex@!R=3ex{
\cC_X \times \cC_X \ar[rr]^{- \TTens -} & & \cC_X \ar[dr]^{f_*} & \\
 & & & \cC_Y \\
\cC_Z \times \cC_Z \ar[uu]^{(gf)^* \times (gf)^*} \ar[dr]_{Id \times Id} \ar[rr]^{- \TTens -} & & \cC_Z \ar@2[lluu]^{\fp^{-1}} \ar[uu]^{(gf)^*} \ar[dr]_{Id} & \\
 & \cC_Z \times \cC_Z \ar@2[ur]^{id} \ar[rr]_{- \TTens -} & & \cC_Z \ar@2[luuu]_{\ea} \ar[uu]_{g^*}
}$$
is commutative. We apply Lemma \ref{cube} to it and obtain a new commutative cube which is Diagram \diag{\ref{f_*tens}}. Note that the morphism of functors on the left face of our first cube indeed gives $\eb$ in the resulting cube, by construction of $\eb$.
\end{proof}

Let us now consider the subcategory $\cB'$ of $\cB$ with the same objects, but whose morphisms are only those $f:X \to Y$ satisfying assumptions \assumption{\ref{assufmonoidal}}{f}, \assumption{\ref{assuAdjf*f*}}{f} and \assumption{\ref{assuAdjf*f!}}{f}.
We then choose successive right adjoints $f_*$ and $f^!$ for each morphism $f$ (they are unique up to unique isomorphism, and we choose $(Id_X)_*=Id_X^!=Id_{\cC_X}$ for simplicity), and by Lemma \ref{adjWeak}, using $(-)^*$, we turn them into (suspended, exact) pseudo functors
$$(-)_*: \cB' \to \cK \hspace{10ex} (-)^!: \cB' \to \cK$$ 
with structure morphisms $\eb_{g,f}$ and $\ec_{g,f}$.
They are associative by Lemma \ref{adjWeakAssoc}. 

\begin{theo} \label{compof_*0_theo}
For morphisms $f:X \to Y$ and $g: Y \to Z$ in $\cB'$ and $K$ an object of $\cC_Z$, the isomorphism of functors $\eb_{g,f}: (gf)_* \to g_* f_*$ defined above is a morphism of duality preserving functors from $\pair{(gf)_*,\rr_K} I_{\ec_{g,f,K}}$ to $\pair{g_*,\rr_K} \pair{f_*,\rr_{g^!K}}$. 
\end{theo}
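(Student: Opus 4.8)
The plan is to unwind the definition of a morphism of duality preserving functors and reduce to a single concrete square, then feed it into the cube machinery exactly as in the proof of Theorem~\ref{compof^*_theo}. By Definition~\ref{MorphDualPresFunc_defi}, and after computing the structure morphisms of the two composites via the composition rule $\pair{F',f'}\pair{F,f}=\pair{F'F,f'F^o\circ F'f}$, the assertion is equivalent to the commutativity, naturally in $A\in\cC_X$, of the square
$$\xymatrix@C=16ex{
(gf)_*\HHom{A}{f^!g^!K} \ar[d]_{\eb} \ar[r]^-{\rr_K\circ (gf)_*\tilde{\ec}} & \HHom{(gf)_*A}{K} \ar[d]^{\eb} \\
g_*f_*\HHom{A}{f^!g^!K} \ar[r]_-{\rr_K\circ g_*\rr_{g^!K}} & \HHom{g_*f_*A}{K}
}$$
where $\tilde{\ec}$ is induced in the second variable by $\ec_{g,f,K}\colon f^!g^!K\to (gf)^!K$ and the vertical arrows are induced by the isomorphism $\eb_{g,f}$ (their direction is immaterial since $\eb$ is invertible). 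Here I use the equality $f^!(g^!K)=f^!g^!K$, so that $\rr_{g^!K}$ for the morphism $f$ indeed lands in $\HHom{f_*(-)}{g^!K}$.

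First I would substitute the definition $\rr=\counit_*^!\circ\ff$ from Theorem~\ref{PushForward0_theo}, which splits the square into an \emph{internal-Hom} layer carried by $\ff$ and a \emph{counit} layer carried by $\counit_*^!$ and $\ec$. For the $\ff$-layer, recall that $\ff$ is by construction the mate of $\fg$ (Proposition~\ref{defiffg}) and that $\fg$ satisfies the composition compatibility of Lemma~\ref{adjWeakMon_lemm}, i.e.\ diagram \diag{\ref{f_*tens}}. Applying Lemma~\ref{cube} to the cube built from the $\fg$-data and $\eb$ --- the precise analogue of the cube used in the proof of Theorem~\ref{compof^*_theo}, where \diag{\ref{f^*tens}} played the role now played by \diag{\ref{f_*tens}} --- transports this into the corresponding composition compatibility for $\ff$, namely that $\eb$ intertwines $(gf)_*\HHom{-}{*}\xrightarrow{\ff}\HHom{(gf)_*(-)}{(gf)_*(*)}$ with the two-step $\ff$ for $g_*f_*$. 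This is the heart of the diagram chase and is formally identical to the pull-back case once $\fh,\fp$ are replaced by $\ff,\fg$.

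It then remains to reconcile the second variable, i.e.\ to show that passing from $f^!g^!K$ to $(gf)^!K$ along $\ec$ and applying $\counit_*^!$ is compatible with $\eb$. This is exactly governed by the adjoint-pseudofunctor compatibility diagrams relating $\eb$ and $\ec$: the pair $((-)_*,(-)^!)$ is an adjoint couple of pseudo functors (with $(-)_*$ covariant left adjoint and $(-)^!$ contravariant right adjoint), obtained via Lemma~\ref{adjWeak}, so $\ec$ is by definition the mate of $\eb$ and they satisfy the symmetric analogues of diagrams \diag{\ref{compAdj1}} and \diag{\ref{compAdj2}}. Combining these counit identities with the naturality of $\counit_*^!$ and with the $\ff$-compatibility from the previous paragraph closes the square.

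The main obstacle is the bookkeeping caused by the extra $f^!$/$g^!$ adjunction layer, which makes this genuinely harder than Theorem~\ref{compof^*_theo}: there $\fh$ was directly the mate of $\fp$, whereas here $\rr$ is $\ff$ (a mate of $\fg$) \emph{post-composed} with the counit $\counit_*^!$, so two different adjunctions --- the $(-\TTens *,\HHom{*}{-})$ adjunction used to form $\ff$ and the $(f_*,f^!)$ adjunction encoded in $\ec$ and $\counit_*^!$ --- must be threaded through in the right order. Getting the cube of Lemma~\ref{cube} to produce exactly the $\ff$-layer, and then matching the surviving $\ec$ and $\counit_*^!$ terms against the $\eb$--$\ec$ compatibility diagrams (with the correct $(-)^o$ variances for the dualities $D_K^o$), is where the care is required.
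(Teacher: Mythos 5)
Your proposal is correct, and it starts exactly where the paper does: the reduction to the single square (your square is the paper's Diagram \diag{\ref{compo_diag}}, up to inverting the right-hand $\eb$), resting on \diag{\ref{f_*tens}} from Lemma \ref{adjWeakMon_lemm} and on Lemma \ref{cube}. Where you genuinely diverge is in the final assembly. You factor $\rr=\counit_*^!\circ\ff$ (Theorem \ref{PushForward0_theo}) and apply Lemma \ref{cube} only with respect to the tensor--Hom adjunctions, obtaining the $\ff$-analogue of \diag{\ref{f_*tens}}; after cancelling the $\HHom{\eb_A^{\pm1}}{-}$ factors, what survives is precisely the counit identity $\counit^!_{*,g}\circ g_*\counit^!_{*,f}\circ \eb_{f^!g^!K} = \counit^!_{*,gf}\circ (gf)_*\ec_{g,f,K}$, which, as you say, is the analogue of \diag{\ref{compAdj2}} for the adjoint couple of pseudofunctors $((-)_*,(-)^!)$ and holds by the very construction of $\ec$ via Lemma \ref{adjWeak} --- so your two-layer argument closes. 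The paper instead makes a \emph{single} application of Lemma \ref{cube}, to the cube form of \diag{\ref{f_*tens}} whose vertical edges are the composite functors $g_*f_*(-\TTens B)$, $(gf)_*(-\TTens B)$, $g_*(-\TTens f_*B)$ and $-\TTens(gf)_*B$: since their right adjoints $\HHom{B}{f^!g^!(-)}$, $\HHom{B}{(gf)^!(-)}$, $\HHom{f_*B}{g^!(-)}$, $\HHom{(gf)_*B}{-}$ already package the $(f_*,f^!)$ and $(g_*,g^!)$ adjunctions together with tensor--Hom (Lemma \ref{composeadj}), the mates produced are directly the $\rr$'s and $\ec$, and the commutativity of the output cube \emph{is} \diag{\ref{compo_diag}}, with no separate counit layer. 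So the ``two adjunctions threaded in the right order'' that you single out as the main obstacle is exactly what the paper absorbs by composing the adjunctions before taking mates; your counit-layer verification is the by-hand expansion of that one-shot argument. Both routes are valid: yours is more explicit about where $\ec$ enters and what it costs, the paper's is more economical and stays entirely inside the cube calculus.
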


\begin{coro} (composition of push-forwards) \label{compof_*0Witt_coro}
By Proposition \ref{samemorphismWitt_prop}, for composable morphisms $f$ and $g$ in $\cB'$, the push-forward on Witt groups defined in Corollary \ref{PushForward0Witt_coro} satisfies $(gf)^W_* I_{\ec_{g,f,K}}^W=g^W_* f^W_*$.
\end{coro}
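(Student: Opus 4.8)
The plan is to verify the single commutative square $\diag{M}$ of Definition \ref{MorphDualPresFunc_defi} with $\rho=\eb_{g,f}$. First I would unwind the two structure morphisms using the composition rule $\pair{F',f'}\pair{F,f}=\pair{F'F,f'F^o\circ F'f}$ together with the $I$-notation of Lemma \ref{isoKisoWitt}. The source functor $\pair{(gf)_*,\rr_K}I_{\ec_{g,f,K}}$ carries $\Phi=\rr_K\circ (gf)_*\widetilde{\ec_{g,f,K}}$ (first the duality correction $\widetilde{\ec}$ induced by $\ec_{g,f,K}\colon f^!g^!K\to (gf)^!K$, then the push-forward $\rr$ for $gf$), while the target functor $\pair{g_*,\rr_K}\pair{f_*,\rr_{g^!K}}$ carries $\Psi=\rr_K (f_*)^o\circ g_*\rr_{g^!K}$. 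Both have source $(\cC_X,D_{f^!g^!K})$ and target $(\cC_Z,D_K)$, and $\eb_{g,f}\colon (gf)_*\to g_*f_*$ is a morphism of the underlying functors, so the statement reduces to the identity $D_K^o\eb^o\circ\Phi=\Psi\circ\eb D_{f^!g^!K}^o$.

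Next I would replace each $\rr$ by its definition from Theorem \ref{PushForward0_theo}, namely $\rr=\counit_*^!\circ\ff$, and split the resulting square into two pieces glued by the naturality of $\ff$ in its second variable. The first piece involves only the bifunctor morphisms $\ff$ and the isomorphism $\eb$; it asserts that $\eb$ is compatible with $\ff$ exactly as it is compatible with $\fg$ in diagram $\diag{\ref{f_*tens}}$ (Lemma \ref{adjWeakMon_lemm}). Since $\ff$ is built from $\fg$ as a mate (Proposition \ref{defiffg}, through its characterizing diagrams $\diag{\ref{Happ1}}$ and $\diag{\ref{H'app1}}$), this $\ff$-level compatibility follows from the $\fg$-level one by applying Lemma \ref{cube} to the appropriate cube, in complete analogy with the way $\diag{\ref{f^*tens}}$ produces the commutativity in the proof of Theorem \ref{compof^*_theo}. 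The second piece involves only the counits $\counit_*^!$ of the three adjunctions $(f_*,f^!)$, $(g_*,g^!)$, $((gf)_*,(gf)^!)$ together with $\eb$ and $\ec$: it is precisely the statement that $\eb$ and $\ec$ are mates, i.e. that the counit of $((gf)_*,(gf)^!)$ agrees with $\counit_*^!\circ g_*\counit_*^! g^!$ after correction by $\eb$ and $\ec$. This is the analogue of diagram $\diag{\ref{compAdj2}}$ for the right adjoint pair of pseudo functors, and it holds by the uniqueness of adjoints (Lemma \ref{composeadj} and Lemma \ref{isoadjoints}), which is exactly how $\ec$ was obtained from $\eb$ in Lemma \ref{adjWeak}. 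Pasting the two pieces, inserting naturality squares $\diag{mf}$ where needed, yields the identity; in the suspended or triangulated case no extra work is required, since all the morphisms involved are morphisms of suspended functors by construction.

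The main obstacle I anticipate is bookkeeping rather than conceptual: one must carefully match the internal-Hom objects on the two sides, since the composite push-forward naturally produces $(gf)^!K$ whereas the iterated one produces $f^!g^!K$. Reconciling these is exactly the purpose of the correction functor $I_{\ec_{g,f,K}}$, and the delicate step is to check that $\widetilde{\ec}$ threads correctly through the $\ff$-compatibility square so that it can be absorbed into the counit-compatibility square via the naturality of $\ff$ in its second variable. Once the cube for the $\ff$-part is set up with the object $g^!K$ placed in the correct slot, the remainder is a routine, if lengthy, diagram chase.
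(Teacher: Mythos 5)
Your proposal is correct, but it is worth separating two layers. The corollary itself is proved in the paper in one line: Theorem \ref{compof_*0_theo} asserts exactly that $\eb_{g,f}$ is a (strong) morphism of duality preserving functors from $\pair{(gf)_*,\rr_K}I_{\ec_{g,f,K}}$ to $\pair{g_*,\rr_K}\pair{f_*,\rr_{g^!K}}$, and Proposition \ref{samemorphismWitt_prop} then gives the equality of Witt group morphisms (your reduction to the single square $\diag{M}$ implicitly uses this, and is fine, though you should note that the strongness hypotheses of Corollary \ref{PushForward0Witt_coro} and the invertibility of $\eb$ are what license the application of Proposition \ref{samemorphismWitt_prop}). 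What you actually reprove is the theorem, and there your route differs from the paper's. The paper never splits $\rr$: it rewrites the commutativity of $\diag{\ref{f_*tens}}$ as a cube whose vertical edges are the twisted functors $g_*f_*(-\TTens B)$, $(gf)_*(-\TTens B)$, $g_*(-\TTens f^*B)$ and $-\TTens (gf)_*B$, and applies Lemma \ref{cube} once; the mates of these composite left adjoints are the full composites built from $\rr$, and $\ec$ appears automatically in the mate of $\eb$ (since $\ec$ is itself the mate of $\eb$ by the construction of Lemma \ref{adjWeak}), so diagram $\diag{\ref{compo_diag}}$ drops out in a single mate-passage. You instead decompose $\rr=\counit_*^!\circ\ff$ and verify two independent compatibilities --- the $\ff$-analogue of $\diag{\ref{f_*tens}}$ (correctly obtained from the $\fg$-statement by Lemma \ref{cube}, as $\ff$ is the mate of $\fg$ via Proposition \ref{defiffg}), and the $\eb$/$\ec$ counit identity, the analogue of $\diag{\ref{compAdj2}}$, which indeed holds by the construction of $\ec$ in Lemma \ref{adjWeak} --- and then paste them using naturality of $\ff$ in its second variable. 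Both arguments use the same toolkit; the paper's one-shot version buys minimal bookkeeping, since one never has to thread $\widetilde{\ec}$ through naturality squares (the step you rightly flag as the delicate one), while your modular version makes visible that the monoidal compatibility and the adjunction compatibility are logically independent ingredients, at the cost of a longer chase.
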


\begin{proof}[Proof of Theorem \ref{compof_*0_theo}]
We have to prove that the diagram \diag{M} of Definition \ref{MorphDualPresFunc_defi} which is here
$$\xymatrix{
(gf)_*\HHom{A}{f^!g^! K} \ar[r]^-{\ec} \ar[d]_{\eb} \ar@{}[drr]|{\diagram \label{compo_diag}} & (gf)_* \HHom{A}{(gf)^!K} \ar[r]^-{\rr} & \HHom{(gf)_*A}{K} \\
g_* f_* \HHom{A}{f^! g^! K} \ar[r]^{\rr} & g_* \HHom{f_* A}{g^! K} \ar[r]^{\rr} & \HHom{g_* f_* A}{K} \ar[u]_{\eb^o}
}$$
is commutative (for any $A$ and $B$).
The commutative diagram \diag{\ref{f_*tens}} may be written as the commutative cube
$$\xymatrix@!C=2ex@!R=3ex{
\cC_Z \ar[rr]^{Id} \ar[dr]^{Id} & & \cC_Z \ar[dr]^{Id} & \\
 & \cC_Z \ar@2[ur]^{id} \ar[rr]^{Id} & & \cC_Z \\
\cC_X \ar[uu]^{g_*f_*(-\TTens B)} \ar[dr]_{Id} & & & \\
 & \cC_X \ar@2[luuu]^{\eb} \ar[uu]_(.25)*-{\labelstyle (gf)_*(-\TTens B)} \ar[rr]_{(gf)_*} & & \cC_Z \ar@2[lluu]_{\fg} \ar[uu]|{-\TTens (gf)_*B}
}
\xymatrix@!C=2ex@!R=3ex{
\cC_Z \ar[rr]^{Id} & & \cC_Z \ar[dr]^{Id} & \\
 & & & \cC_Z \\
\cC_X \ar[uu]|{g_*f_*(-\TTens B)} \ar[dr]_{Id} \ar[rr]^{f_*} & & \cC_Y \ar@2[lluu]^{\fg} \ar[uu]^(.75)*-{\labelstyle g_*(- \TTens f^* B)} \ar[dr]^{g_*} & \\
 & \cC_X \ar@2[ur]^{\eb} \ar[rr]_{(gf)_*} & & \cC_Z \ar@2[luuu]|{\fg \circ (id \TTens \eb)} \ar[uu]_{- \TTens (gf)_* B}
}$$
out of which we get a new commutative cube by Lemma \ref{cube}, whose commutativity is equivalent to that of Diagram \diag{\ref{compo_diag}}.
\end{proof}

\subsection{Base change} \label{BaseChange}

The next fundamental theorem that we will prove is a base change formula. In this section, we fix a commutative diagram in $\cB$ with $g$ and $\bar{g}$ in $\cB'$.
$$\xymatrix{
V \ar[d]_{\bar{f}} \ar[r]^{\bar{g}} \ar@{}[dr]|{\diagram \label{cart_diag}} & Y \ar[d]^{f} \\
X \ar[r]_{g} & Z
}$$
Using Lemma \ref{adjab} (or its suspended version \ref{adjsquareT}) with $J_1=g^*$, $K_1=g_*$, $J_2=\bar{g}^*$, $K_2=\bar{g}_*$, $H=\bar{f}^*$, $H'=f^*$ and  the isomorphism of (suspended) functors
$$\xi:\bar{g}^* f^* \to (f \bar{g})^* =(g \bar{f})^* \to \bar{f}^* g^*$$
for $a$, we obtain its mate, a morphism
$$\eps: f^* g_* \to \bar{g}_* \bar{f}^*$$
and two commutative diagrams.
Assuming \assumption{\ref{epsIso}}{f,g} (\ie $\eps$ is an isomorphism)
and applying the same lemma to $J_1=g_*$, $K_1=g^!$, $J_2=\bar{g}_*$, $K_2=\bar{g}^!$, $H=f^*$, $H'=\bar{f}^*$ and $a=\eps^{-1}$, we obtain a morphism
$$\gam: \bar{f}^* g^! \to \bar{g}^! f^*$$
and two commutative diagrams.
$$\xymatrix{
\bar{g}_* \bar{f}^* g^! \ar[r]^-{\gam} \ar[d]_{\eps^{-1}} \ar@{}[dr]|{\diagram \label{Happ3}} & \bar{g}_* \bar{g}^! f^* \ar[d] \\
f^* g_* g^! \ar[r] & f^*
}
\hspace{10ex}
\xymatrix{
\bar{f}^* \ar[r] \ar[d] \ar@{}[dr]|{\diagram \label{H'app3}} & \bar{f}^* g^! g_* \ar[d]^{\gam} \\
\bar{g}^! \bar{g}_* \bar{f}^* \ar[r]^-{\eps^{-1}} & \bar{g}^! f^* g_*
}
$$

\begin{theo}(base change) \label{basechange0_theo}
Let $f$ and $\bar{f}$ be morphisms in $\cB$ and $g$ and $\bar{g}$ be morphisms in $\cB'$ fitting in the commutative diagram \diag{\ref{cart_diag}} such that Assumption \assumption{\ref{epsIso}}{f,g} is satisfied. Then, the isomorphism of functors $\eps: f^* g_* \to \bar{g}_* \bar{f}^*$ from $\pair{f^*,\fh_K} \pair{g_*,\rr_K}$ to $\pair{\bar{g}_*,\rr_{f^*K}} I_{\gam_K} \pair{\bar{f}^*,\fh_{g^!K}}$ is duality preserving. 
\end{theo}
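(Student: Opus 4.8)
The plan is to unwind the assertion into the single commutative square $\diag{M}$ of Definition \ref{MorphDualPresFunc_defi} applied with $\rho=\eps$. First I would record that both duality preserving functors in the statement have the same domain $\cC_X$ (with duality $D_{g^!K}$) and the same target $\cC_Y$ (with duality $D_{f^*K}$): the source $\pair{f^*,\fh_K}\pair{g_*,\rr_K}$ has underlying functor $f^*g_*$, while the target $\pair{\bar g_*,\rr_{f^*K}} I_{\gam_K}\pair{\bar f^*,\fh_{g^!K}}$ has underlying functor $\bar g_*\bar f^*$, so that $\eps:f^*g_*\to\bar g_*\bar f^*$ is indeed a candidate morphism between them. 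Using the composition law $\pair{F',f'}\pair{F,f}=\pair{F'F,f'F^o\circ F'f}$ together with $I_{\gam_K}=\pair{Id,\widetilde{\gam_K}}$, I would expand the two composite duality structures explicitly: the source structure is $\fh_K g_*^o\circ f^*\rr_K$ and the target structure is $\rr_{f^*K}(\bar f^*)^o\circ\bar g_*\widetilde{\gam_K}(\bar f^*)^o\circ\bar g_*\fh_{g^!K}$. Substituting these into $\diag{M}$ produces one large diagram in $\cC_Y$ whose edges are built only from $\fh$, $\rr$, $\widetilde{\gam_K}$, $\eps$ and $\eps^o$.

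Next I would exploit the mate genealogy of these transformations, exactly as in the proofs of Theorems \ref{compof^*_theo} and \ref{compof_*0_theo}. Recall that $\fh$ is the mate of the monoidal isomorphism $\fp$ (Proposition \ref{defifh}); that $\rr$ is assembled from $\ff$ (the mate of $\fg$) and the counit $\counit_*^!$ (Theorem \ref{PushForward0_theo}); that $\eps$ is the mate of $\xi$ with respect to the couples $(g^*,g_*)$ and $(\bar g^*,\bar g_*)$; and that $\gam$ is the mate of $\eps^{-1}$ with respect to $(g_*,g^!)$ and $(\bar g_*,\bar g^!)$, as encoded in the defining diagrams $\diag{\ref{Happ3}}$ and $\diag{\ref{H'app3}}$. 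The strategy is then to present the expanded $\diag{M}$ as the output of Lemma \ref{cube} applied to the vertical adjunctions $(g_*,g^!)$ and $(\bar g_*,\bar g^!)$ (which move the dualizing object past $g^!$, $\bar g^!$ and introduce the correction $\widetilde{\gam_K}$) and $(g^*,g_*)$, $(\bar g^*,\bar g_*)$ (which transport the push-forwards), so that commutativity of the big square is reduced to the commutativity of a single base cube \emph{before} adjunction.

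The base cube should express the fact that $\xi:\bar g^*f^*\to\bar f^*g^*$ is a \emph{monoidal} isomorphism, that is, it intertwines the two monoidal structures obtained by identifying $\bar g^*f^*$ and $\bar f^*g^*$ with $(f\bar g)^*=(g\bar f)^*$. This is precisely the monoidal pseudofunctoriality $\diag{\ref{f^*tens}}$ transported along $(f\bar g)^*=(g\bar f)^*$, combined with the defining diagrams $\diag{\ref{Happ3}}$, $\diag{\ref{H'app3}}$ of $\gam$, which are what force the correction $\widetilde{\gam_K}$ to cancel against $\eps$ on the dualizing object. Feeding this commutative base cube into Lemma \ref{cube} (and peeling off the formal counit $\counit_*^!$ occurring in $\rr$) should yield the commutativity of $\diag{M}$ and hence the theorem.

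The main obstacle is bookkeeping rather than conceptual: the opposite categories, the sheer number of natural transformations, and above all the interplay between the inverse $\eps^{-1}$ used to \emph{define} $\gam$ and the correction functor $I_{\gam_K}$ that appears in the target. The delicate point is to align the vertical and horizontal edges of the cube so that Lemma \ref{cube} applies and so that the $\widetilde{\gam_K}$-leg of the target structure cancels against the $\eps$-leg of $\diag{M}$ on the nose; this cancellation is exactly the mate property of $\gam$, so it is $\diag{\ref{Happ3}}$/$\diag{\ref{H'app3}}$ that carry the real weight. By contrast, each individual square in the expansion is of type $\diag{mf}$, $\diag{adj}$, or a shifted instance of an already-established compatibility, and so is routine once the cube is correctly identified.
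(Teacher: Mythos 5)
Your proposal is correct and follows essentially the same route as the paper: reduce diagram $\diag{M}$ (with $\rho=\eps$) via Lemma \ref{cube} to the monoidal compatibility of $\xi$, i.e.\ diagram \diag{\ref{f^*tens}} transported along $f\bar{g}=g\bar{f}$, using the mate descriptions of $\fh$, $\rr$, $\eps$ and $\gam$ (with \diag{\ref{Happ3}} carrying the cancellation of $\gam_K$ against $\eps$). The paper organizes exactly this bookkeeping by applying Lemma \ref{cube} twice and isolating your ``mate genealogy'' claims as its lemmas \ref{mates1_lemm} and \ref{mates2_lemm}.
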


This together with Proposition \ref{samemorphismWitt_prop} 
immediately implie sthe following.

\begin{coro} 
If in the situation of the theorem above $\gam_K$ is an isomorphism, then the pull-backs and push-forwards on Witt groups defined in corollaries \ref{PullBack0Witt_coro} and \ref{PushForward0Witt_coro} satisfy $f^*_W g_*^W = \bar{g}_*^W I_{\gam_K}^W \bar{f}^*_W$.
\end{coro}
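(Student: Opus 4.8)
The plan is to deduce the stated equality purely formally from Theorem \ref{basechange0_theo} together with the functoriality of the Witt-group construction, so that no further diagram chase is needed. First I would record the bookkeeping that makes all five arrows defined: under the strong hypotheses of Corollaries \ref{PullBack0Witt_coro} and \ref{PushForward0Witt_coro} (which are exactly what make $f^*_W$, $g_*^W$, $\bar f^*_W$ and $\bar g_*^W$ exist at all), each of the duality preserving functors $\pair{f^*,\fh_K}$, $\pair{g_*,\rr_K}$, $\pair{\bar f^*,\fh_{g^!K}}$ and $\pair{\bar g_*,\rr_{f^*K}}$ is $1$-exact and strong, while the hypothesis that $\gam_K$ is an isomorphism is precisely what Lemma \ref{isoKisoWitt} requires in order to define $I_{\gam_K}^W$. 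Tracing sources and targets through Theorems \ref{PullBack0_theo} and \ref{PushForward0_theo} and Lemma \ref{isoKisoWitt}, both composites $\pair{f^*,\fh_K}\pair{g_*,\rr_K}$ and $\pair{\bar g_*,\rr_{f^*K}}\, I_{\gam_K}\, \pair{\bar f^*,\fh_{g^!K}}$ run from $\cC_{X,g^!K}$ to $\cC_{Y,f^*K}$, so the two maps they induce are both homomorphisms $\W^*(X,g^!K)\to \W^*(Y,f^*K)$, matching the two sides of the claimed identity.

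Next I would invoke Theorem \ref{basechange0_theo}: it exhibits $\eps$ as a morphism of duality preserving functors between exactly these two composites, and $\eps$ is an isomorphism by Assumption \assumption{\ref{epsIso}}{f,g}. Since a composite of $1$-exact strong duality preserving functors is again $1$-exact, strong and duality preserving, Proposition \ref{samemorphismWitt_prop} applies to $\eps$ and shows that the two composites induce the \emph{same} homomorphism on Witt groups.

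It then remains only to identify each composite's induced homomorphism with the corresponding composite of Witt-group homomorphisms. This is the one genuine (but routine) point, and it follows from the explicit transfer formula of Proposition \ref{dualPresIsTransfer_prop}: writing a composite as $\pair{F',f'}\pair{F,f}=\pair{F'F,\,f'F^o\circ F'f}$, a form $\psi$ is sent to $(f'F^o\circ F'f)_A\circ F'F(\psi)=f'_{FA}\circ F'(f_A\circ F\psi)$, which is exactly the image of $\psi$ first under $\pair{F,f}$ and then under $\pair{F',f'}$; hence passing to Witt groups is functorial for composition of duality preserving functors. Applying this to the left composite yields $f^*_W g_*^W$ and to the right composite yields $\bar g_*^W I_{\gam_K}^W \bar f^*_W$, and equating the two (by the previous paragraph) gives $f^*_W g_*^W = \bar g_*^W I_{\gam_K}^W \bar f^*_W$. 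The only substantive input, the commutativity of diagram $\diag{M}$ for $\eps$, is already supplied by Theorem \ref{basechange0_theo}; the remaining work is entirely the tracking of sources, targets, and strongness hypotheses needed to descend to Witt groups.
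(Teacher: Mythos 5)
Your proposal is correct and follows exactly the paper's route: the paper derives this corollary in one line from Theorem \ref{basechange0_theo} together with Proposition \ref{samemorphismWitt_prop}, which is precisely your argument. The extra details you supply --- verifying strongness and $1$-exactness of the composites, and checking via Proposition \ref{dualPresIsTransfer_prop} that passage to Witt groups is functorial for composition of duality preserving functors --- are exactly the routine bookkeeping the paper leaves implicit, so nothing is missing and nothing diverges.
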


\begin{proof}[Proof of Theorem \ref{basechange0_theo}]
We have to prove that the diagram
$$\xymatrix@R=3ex{
f^*g_* \HHom{A}{g^! K} \ar[r]^-{\rr} \ar[d]_{\eps} \ar@{}[drrr]|{\diagram \label{basechange_diag}}& f^* \HHom{g_*A}{K} \ar[rr]^-{\fh} & & \HHom{f^* g_* A}{f^*K} \\
\bar{g}_* \bar{f}^* \HHom{A}{g^! K} \ar[r]^-{\fh} & \bar{g}_* \HHom{\bar{f}^* A}{\bar{f}^*g^!K} \ar[r]^-{\gam} & \bar{g}_* \HHom{\bar{f}^* A}{\bar{g}^! f^* K} \ar[r]^-{\rr} & \HHom{\bar{g}_* \bar{f}^* A}{f^* K} \ar[u]_{\eps^o}
}$$
(corresponding to Diagram \diag{M} of Definition \ref{MorphDualPresFunc_defi}) is commutative.
We first prove the two following lemmas.
\begin{lemm} \label{mates1_lemm}
The composition
$$\xymatrix{\bar{g}_*(\bar{f}^*(-) \TTens \bar{f}^*(*)) \ar[r]^-{\fp} & \bar{g}_* \bar{f}^* (- \TTens *) \ar[r]^-{\eps^{-1}} & f^* g_* (- \TTens *)}$$
and the composition
$$\xymatrix{\bar{f}^*\HHom{*}{g^!(-)} \ar[r]^-{\fh} & \HHom{\bar{f}^*(*)}{\bar{f}^*\bar{g}^!(-)} \ar[r]^-{\gam} & \HHom{\bar{f}^*(*)}{\bar{g}^!f^*(-)}}$$
are mates in Lemma \ref{adjabbif} when $J_1=g_*(-\TTens*)$, $K_1=\HHom{*}{g^!(-)}$, $J_2=\bar{g}_*(- \TTens \bar{f}^*(*))$, $K_2=\HHom{\bar{f}^*(*)}{\bar{g}^!(-)}$, $H=f^*$ and $H'=\bar{f}^*$.
\end{lemm}
\begin{proof}
By uniqueness, this follows from the proof of one of the commutative diagrams in Lemma \ref{adjabbif}. We choose \diag{H} which is as follows.
$$\xymatrix@R=3ex@C=3ex{
\bar{g}_*(\bar{f}^*\HHom{X}{g^!A} \TTens \bar{f}^* X) \ar[r]^-{\fh} \ar[d]_{\fp} \ar@{}[dr]|{\diag{\ref{L'app1}}} & \bar{g}_*(\HHom{\bar{f}^*X}{\bar{f}^*g^!A}\TTens \bar{f}^* X) \ar[d] \ar[r]^-{\gam} \ar@{}[dr]|{\diag{mf}} & \bar{g}_*(\HHom{\bar{f}^*X}{\bar{g}^!f^*A}\TTens \bar{f}^*X) \ar[d] \\
\bar{g}_*\bar{f}^*(\HHom{X}{g^!A} \TTens X) \ar[r] \ar[d]_{\eps^{-1}} \ar@{}[dr]|{\diag{mf}} & \bar{g}_* \bar{f}^* g^! A \ar[r]^-{\gam} \ar[d]_{\eps^{-1}} \ar@{}[dr]|{\diag{\ref{Happ3}}} & \bar{g}_*g^!f^*A \ar[d] \\
f^* g_* (\HHom{X}{g^!A} \TTens X) \ar[r] & f^* g_* g^! A \ar[r] & f^* A
}$$
\end{proof}
\begin{lemm} \label{mates2_lemm}
The composition
$$\xymatrix{\bar{g}_*(-)\TTens f^* g_*(*) \ar[r]^-{id \TTens \eps} & \bar{g}_*(-) \TTens \bar{g}_* \bar{f}^* (*) \ar[r]^-{\fg} & \bar{g}_* (- \TTens \bar{f}^*(*))}$$
and the composition
$$\xymatrix{\bar{g}_* \HHom{\bar{f}^*(*)}{\bar{g}^!(-)} \ar[r]^-{\rr} & \HHom{\bar{g}_*\bar{f}^*(*)}{-} \ar[r]^-{\eps^o} & \HHom{f^*g_*(*)}{-}}$$
are mates in Lemma \ref{adjabbif} when $J_1=\bar{g}_*(-\TTens \bar{f}^*(*))$, $K_1=\HHom{\bar{f}^*(*)}{\bar{g}^!(-)}$, $J_2=(-\TTens f^*g_*(*))$, $K_2=\HHom{f^*g_*(*)}{-}$, $H=Id$ and $H'=\bar{g}_*$. 
\end{lemm}
\begin{proof}
Similar to the proof of the previous Lemma, this follows from the commutative diagram
$$\xymatrix@R=3ex@C=3ex{
\bar{g}_* \HHom{\bar{f}^*X}{\bar{g}^!A} \TTens f^* g_* X \ar[d]_{id \TTens \eps} \ar[r]^-{\rr \TTens id} \ar@{}[dr]|{\diag{mf}} & \HHom{\bar{g}_* \bar{f}^* X}{A} \TTens f^* g_* X \ar[r]^-{\eps^o} \ar[d] \ar@{}[ddr]^{\diag{gen}} & \HHom{f^* g_* X }{A} \TTens f^* g_* X \ar[dd] \\
\bar{g}_* \HHom{\bar{f}^* X}{\bar{g}^! A} \TTens \bar{g}_* \bar{f}^* X \ar[d]_{\fg} \ar[r]_-{\rr \TTens id} & \HHom{\bar{g}_* \bar{f}^* X}{A} \TTens \bar{g}_* \bar{f}^* X \ar[dr]  \ar@{}[d]|{\diag{\ref{H'app1}''}} & \\
\bar{g}_* (\HHom{\bar{f}^* X}{\bar{g}^! A} \TTens \bar{f}^* X) \ar[r] & \bar{g}_* \bar{g}^! A \ar[r] & A
}$$
where \diag{\ref{H'app1}''} is easily obtained from \diag{\ref{H'app1}'} by looking at the definition of $\rr$ in Theorem \ref{PushForward0_theo}.
\end{proof}
The commutative diagram \diag{\ref{f^*tens}} easily implies (by definition of $\xi$) that the diagram
$$\xymatrix@R=3ex{
\bar{g}^*(f^* A \TTens f^* B) \ar[d]_{\fp} \ar[r]^-{\fp^{-1}} & \bar{g}^* f^* A \TTens \bar{g}^* f^* B \ar[r]^-{\xi \TTens \xi} & \bar{f}^* g^* A \TTens \bar{f}^* g^* B \ar[d]^{\fp} \\
\bar{g}^* f^* (A \TTens B) \ar[r]^-{\xi} & \bar{f}^* g^* (A \TTens B) \ar[r]^{\fp^{-1}} & \bar{f}^* (g^* A \TTens g^* B)
}$$
is commutative for any $A$ and $B$. It can be written as the commutative cube
$$\xymatrix@!C=2ex@!R=3ex{
\cC_X \times \cC_X \ar[rr]^{- \TTens -}  \ar[dr]^*-{\labelstyle \bar{f}^* \times \bar{f}^*} & & \cC_X \ar[dr]^{\bar{f}^*} \\
 & \cC_V \times \cC_V \ar@2[ur]_(.6){\fp} \ar[rr]^{- \TTens -} & & \cC_V \\
 \cC_Z \times \cC_Z \ar[uu]^{g^* \times g^*} \ar[dr]_{f^* \times f^*} & & & \\
 & \cC_Y \times \cC_Y \ar@2[luuu]|{\xi \times \xi} \ar[uu]_(.4){\bar{g}^* \times \bar{g}^*} \ar[rr]_{- \TTens -} & & \cC_Y \ar@2[lluu]_{\fp^{-1}} \ar[uu]_{\bar{g}^*}
}
\hspace{5ex}
\xymatrix@!C=2ex@!R=3ex{
\cC_X \times \cC_X \ar[rr]^{- \TTens -} & & \cC_X \ar[dr]^{\bar{f}^*} & \\
 & & & \cC_V \\
\cC_Z \times \cC_Z \ar[uu]^{g^* \times g^*} \ar[dr]_{f^* \times f^*} \ar[rr]^{- \TTens -} & & \cC_Z \ar@2[lluu]^{\fp^{-1}} \ar[uu]^{g^*} \ar[dr]_{f^*} & \\
 & \cC_Y \times \cC_Y \ar@2[ur]^{\fp} \ar[rr]_{- \TTens -} & & \cC_Y \ar@2[luuu]_{\xi} \ar[uu]_{\bar{g}^*}
}$$
out of which Lemma \ref{cube} gives the commutative cube
$$\xymatrix@!C=2ex@!R=3ex{
\cC_X \times \cC_X \ar[dd]_{g_* \times g_*} \ar[rr]^-{- \TTens -}  \ar[dr]^*-{\labelstyle \bar{f}^* \times \bar{f}^*} & & \cC_X \ar[dr]^{\bar{f}^*} \\
 & \cC_V \times \cC_V \ar[dd]^(.4){\bar{g}_* \times \bar{g}_*} \ar@2[ur]_(.6){\fp} \ar[rr]^-{- \TTens -} & & \cC_V \ar[dd]^{\bar{g}_*} \\
 \cC_Z \times \cC_Z \ar[dr]_{f^* \times f^*} \ar@2[ru]|{\rule[-.5ex]{0ex}{2ex} \eps \times \eps} & & & \\
 & \cC_Y \times \cC_Y \ar[rr]_-{- \TTens -} \ar@2[rruu]_{\fg} & & \cC_Y 
}
\hspace{5ex}
\xymatrix@!C=2ex@!R=3ex{
\cC_X \times \cC_X \ar[dd]_{g_* \times g_*} \ar[rr]^-{- \TTens -} & & \cC_X \ar[dr]^{\bar{f}^*} \ar[dd]^{g_*} & \\
 & & & \cC_V \ar[dd]^{\bar{g}_*} \\
\cC_Z \times \cC_Z \ar@2[rruu]^{\fg} \ar[dr]_{f^* \times f^*} \ar[rr]^-{- \TTens -} & & \cC_Z \ar@2[ru]_{\eps} \ar[dr]_{f^*} & \\
 & \cC_Y \times \cC_Y \ar@2[ur]^{\fp} \ar[rr]_-{- \TTens -} & & \cC_Y 
}$$
The commutativity of the last cube implies the commutativity of the following
cube for any $A$. 
$$\xymatrix@!C=2ex@!R=3ex{
\cC_Z \ar[rr]^{f^*} \ar[dr]^{Id} & & \cC_Y \ar[dr]^{Id} & \\
 & \cC_Z \ar@2[ur]^{id} \ar[rr]^{f^*} & & \cC_Y \\
\cC_X \ar[uu]^{g_*(- \TTens A)} \ar[dr]_{g_*} & & & \\
 & \cC_Z \ar@2[luuu]^{\fg} \ar[uu]_(.3){- \TTens g_* A} \ar[rr]_{f^*} & & \cC_Y \ar@2[lluu]_{\fp} \ar[uu]|{- \TTens f^* g_* A}
}
\xymatrix@!C=2ex@!R=3ex{
 \cC_Z \ar[rr]^{f^*} & & \cC_Y \ar[dr]^{Id} & \\
 & & & \cC_Y \\
 \cC_X \ar[uu]|{g_*(- \TTens A)} \ar[dr]_{g_*} \ar[rr]^{\bar{f}^*} & & \cC_V \ar@2[lluu]^{x} \ar[uu]^(.75)*-{\labelstyle \bar{g}_*(- \TTens \bar{f}^* A)} \ar[dr]_{\bar{g}_*} & \\
 & \cC_Z \ar@2[ur]^{\eps} \ar[rr]_{f^*} & & \cC_Y \ar@2[luuu]_{y} \ar[uu]_{- \TTens f^* g_* A}
}$$
where $x$ is the first composition in Lemma \ref{mates1_lemm} and $y$ is the first composition in Lemma \ref{mates2_lemm}. Applying Lemma \ref{cube} to this new cube gives a cube in which the mates of $x$ and $y$ may be described using lemmas \ref{mates1_lemm} and \ref{mates2_lemm}, and whose commutativity is the one of Diagram 
\diag{\ref{basechange_diag}}.
\end{proof}

Let us now establish two more commutative diagrams that will be useful in applications.
In geometric situations they allow us to check whether $\ssp$ is an isomorphism by restricting to open subsets. 
\begin{prop} \label{moreCommDiag}
The diagram
$$\xymatrix@R=3ex@C=10ex{
f^* g_*(-) \TTens f^*(*) \ar[dd]_{\fp} \ar[r]^{\eps} \ar@{}[ddrr]|{\diagram \label{epsqp}} & \bar{g}_* \bar{f}^*(-) \TTens f^*(*) \ar[r]^-{\q} & \bar{g}_* (\bar{f}^*(-) \TTens \bar{g}^* f^*(*)) \ar[d]^{\xi} \\ 
 & & \bar{g}_* (\bar{f}^*(-) \TTens \bar{f}^* g^* (*)) \ar[d]^{\fp} \\
f^* (g_*(-) \TTens *) \ar[r]^-{\q} & f^* g_* (- \TTens g^*(*)) \ar[r]^-{\eps} & \bar{g}_* \bar{f}^* (- \TTens g^*(*))
}$$
is commutative. If furthermore $\eps$ is an isomorphism (Assumption \assumption{\ref{epsIso}}{f,g}), $\gam$ is defined and the diagram
$$\xymatrix@R=3ex@C=10ex{
\bar{f}^*g^!(-) \TTens \bar{f}^*g^*(*) \ar[d]_{\fp} \ar[r]^-{\gam \TTens \xi^{-1}} \ar@{}[drr]|{\diagram \label{spgamm}} & \bar{g}^! f^*(-) \TTens \bar{g}^* f^*(*) \ar[r]^-{\ssp} & \bar{g}^! (f^*(-) \TTens f^*(*)) \ar[d]^{\fp} \\ 
\bar{f}^* (g^!(-) \TTens g^*(*)) \ar[r]^-{\ssp} & \bar{f}^* g^! (- \TTens *) \ar[r]^-{\gam} & \bar{g}^! f^* (- \TTens *)
}$$
is commutative.
\end{prop}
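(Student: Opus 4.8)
The plan is to establish the two diagrams in order, deriving the second from the first by passing to mates, so that only \diag{\ref{epsqp}} requires a genuine cube computation. For \diag{\ref{epsqp}}, my first move is to eliminate the projection morphisms in favour of $\fg$: by Lemma \ref{fgq} every occurrence of $\q$ (both the one for $g$ and the one for $\bar{g}$) factors as $\fg \circ (\mathrm{id} \TTens \unit)$, where $\unit$ is the unit of the relevant adjunction. After this substitution the diagram decomposes into two pieces. The piece controlled by $\fg$ is exactly the compatibility of $\fg$ with base change that is \emph{already} encoded in the commutative cube relating $\fg$, $\eps$, $\fp$ and $\xi$ built in the proof of Theorem \ref{basechange0_theo} (the cube obtained there from Diagram \diag{\ref{f^*tens}} by Lemma \ref{cube}). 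The remaining piece is a compatibility between the units $\unit$ of $(g^*,g_*)$ and $(\bar{g}^*,\bar{g}_*)$, the comparison $\xi$, and the base change $\eps$; this is precisely one of the two defining diagrams of $\eps$ as the mate of $\xi$ (Lemma \ref{adjab} as set up in Section \ref{BaseChange}).

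Concretely, I would assemble these two commutativities by reading them off a single cube to which Lemma \ref{cube} applies: the vertical edges are the adjunctions $(g^*,g_*)$ and $(\bar{g}^*,\bar{g}_*)$, the top and bottom faces carry the two tensor products, and the two remaining morphisms of functors are $\eps$ on one side and $\xi$ (converting $\bar{g}^* f^*$ into $\bar{f}^* g^*$) together with $\fp$ on the other. The commutativity of this cube is the content of the base change defining diagrams combined with \diag{\ref{f^*tens}}, and the insertion of the unit that turns $\fg$ into $\q$ is exactly what the mate construction of Lemma \ref{cube} supplies; the resulting front and back faces are the two composites of \diag{\ref{epsqp}}.

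For \diag{\ref{spgamm}} I would avoid a second cube computation and instead deduce it from \diag{\ref{epsqp}} by taking right adjoints. Under Assumption \assumption{\ref{epsIso}}{f,g}, Proposition \ref{projFormIso} exhibits each $\ssp$ (for $g$ and for $\bar{g}$) as the mate of the corresponding $\q^{-1}$ with respect to $(g_*,g^!)$, resp. $(\bar{g}_*,\bar{g}^!)$ (via Theorem \ref{theoGenAdj}), while $\gam$ is built from $\eps^{-1}$ as a mate with respect to the same two adjunctions. Since \diag{\ref{epsqp}} is the commutativity of a cube whose vertical maps $g_*,\bar{g}_*$ admit right adjoints $g^!,\bar{g}^!$, Lemma \ref{cube} guarantees that the cube obtained by replacing each such vertical map by its right adjoint and each of $\q$, $\eps$ by its mate is again commutative; inverting $\q$, $\eps$, $\xi$ where they occur, so that $\ssp$, $\gam$, $\xi^{-1}$ appear, reads off exactly \diag{\ref{spgamm}} (the outer $\fp$-corners being untouched by the adjunction).

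The main obstacle is bookkeeping rather than anything conceptual: one must track carefully which entry is the parameter in each ACB, the direction in which $\xi$ is applied (it goes $\bar{g}^* f^* \to \bar{f}^* g^*$, so $\xi$ enters \diag{\ref{epsqp}} and $\xi^{-1}$ enters \diag{\ref{spgamm}}), and the exact placement of the units when passing between $\q$ and $\fg$ through Lemma \ref{fgq}. A secondary point of care is that $\xi$ is itself defined through the pseudo functor isomorphisms $\ea$, so it is the compatibility \diag{\ref{f^*tens}} of $\ea$ with $\fp$ that ultimately feeds the $\fp$-corner of every cube; matching the orientation of $\ea$ consistently in both diagrams is where a direction error would be most likely to occur.
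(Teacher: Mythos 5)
Your proposal is correct and is essentially the paper's own proof: the paper establishes \diag{\ref{epsqp}} by one application of Lemma \ref{cube} to a cube of pullback functors carrying $\fp^{-1}$ and $\xi$, commutative by \diag{\ref{f^*tens}} together with $f\bar{g}=g\bar{f}$, the mates of $\fp^{-1}$ and $\xi$ along $(g^*,g_*)$ and $(\bar{g}^*,\bar{g}_*)$ being precisely $\q$ and $\eps$; it then gets \diag{\ref{spgamm}} exactly as you propose, by exchanging the top and bottom faces, inverting $\eps$, $\q$ (and $\xi$), and applying Lemma \ref{cube} a second time with respect to $(g_*,g^!)$ and $(\bar{g}_*,\bar{g}^!)$, so that $\ssp$ and $\gam$ appear as mates of $\q^{-1}$ and $\eps^{-1}$ while the $\fp$-corners are untouched. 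Your preliminary reduction via Lemma \ref{fgq} is unnecessary (and slightly misplaces the input: the starting cube commutes by \diag{\ref{f^*tens}} alone, the base-change diagrams being consequences of the mate construction, not hypotheses), but this does not affect the argument.
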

\begin{proof}
To get the first diagram, apply Lemma \ref{cube} to the cube
$$\xymatrix@!C=3ex@!R=2ex{
\cC_X \times \cC_Z \ar[rr]^{\bar{f}^* \times Id} \ar[dr]^*-{\labelstyle Id \TTens g^*} & & \cC_V \times \cC_Z \ar[dr]^*-{\labelstyle Id \TTens \bar{g}^*f^*} & \\
 & \cC_X \ar@2[ur]_*-{\labelstyle \fp^{-1}} \ar[rr]_{\bar{f}^*} & & \cC_V \\
\cC_Z \times \cC_Z \ar[uu]^{g^* \times Id} \ar[dr]_{\TTens} & & & \\
 & \cC_Z \ar@2[luuu]|{\rule[-.5ex]{0ex}{2ex} \fp^{-1}} \ar[uu]_{g^*} \ar[rr]_{f^*} & & \cC_Y \ar@2[lluu]^{\xi} \ar[uu]^{\bar{g}^*}
}
\hspace{5ex}
\xymatrix@!C=3ex@!R=2ex{
\cC_X \times \cC_Z \ar[rr]^{\bar{f}^* \times Id} & & \cC_V \times \cC_Z \ar[dr]^*-{\labelstyle Id \TTens \bar{g}^*f^*} & \\
 & & & \cC_V \\
\cC_Z \times \cC_Z \ar[uu]^{g^*\times Id} \ar[dr]_*-{\labelstyle \TTens} \ar[rr]^{f^* \times Id} & & \cC_Y \times \cC_Z \ar@2[lluu]^{\xi \times id} \ar[uu]^(.6)*-{\labelstyle \bar{g}^* \times Id} \ar[dr]_*-{\labelstyle Id \TTens f^*} & \\
 & \cC_Z \ar@2[ur]^*-{\labelstyle \fp^{-1}} \ar[rr]_{f^*} & & \cC_Y \ar@2[luuu] \ar[uu]_{\bar{g}^*}
}$$
which is commutative by $\diag{\ref{f^*tens}}$ (using $f \bar{g}=g\bar{f}$). This gives a cube 
whose commutativity is the one of
$\diag{\ref{epsqp}}$. Exchanging the top and bottom faces of this new cube (thus reversing the vertical arrows), inverting the 
front ($\eps$), back ($\eps \times Id$) 
and sides ($\q$) morphisms of functors and applying again Lemma \ref{cube} to this new cube 
shows that $\diag{\ref{spgamm}}$ is commutative too. 
\end{proof}

\subsection{Associativity of products} \label{assocProducts}

We now establish a few commutative diagrams implied by the compatibility of $f^*$ with the associativity of the tensor product. For simplicity, we hide in diagrams all associativity morphisms and bracketing concerning the tensor product (as if the tensor product was strictly associative).

Since $f^*$ is monoidal, the diagram (involving $\fp$)
$$\xymatrix@R=3ex{
f^* A \TTens f^* B \TTens f^* C \ar[r] \ar[d] \ar@{}[dr]|{\diagram \label{f^*assoc}} & f^*(A \TTens B) \TTens f^*C \ar[d] \\
f^* A \TTens f^* (B \TTens C) \ar[r] & f^* (A \TTens B \TTens C)
}$$
is commutative.

\begin{lemm}
Under assumptions \assumption{\ref{assufmonoidal}}{f} and \assumption{\ref{assuAdjf*f*}}{f}, the diagram (involving $\fg$)
$$\xymatrix@R=3ex{
f_* A \TTens f_* B \TTens f_* C \ar[r] \ar[d] \ar@{}[dr]|{\diagram \label{f_*assoc_diag}} & f_*(A \TTens B) \TTens f_*C \ar[d] \\
f_* A \TTens f_* (B \TTens C) \ar[r] & f_* (A \TTens B \TTens C)
}$$
is commutative 
\end{lemm}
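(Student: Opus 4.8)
The plan is to deduce this from the associativity \diag{\ref{f^*assoc}} of $\fp$ by the same mate-transfer mechanism already used for Lemma~\ref{fgc} and Lemma~\ref{adjWeakMon_lemm}, namely Lemma~\ref{cube}. Recall from Proposition~\ref{fg_defi} that $\fg$ is by construction the mate of $\fp^{-1}$ for the adjoint couple $(f^*,f_*)$ (applied with $H=H'=\TTens$). Each of the four arrows occurring in \diag{\ref{f_*assoc_diag}} is exactly the instance of $\fg$ obtained by mating the corresponding instance of $\fp^{-1}$ in one or two of the three tensor slots, so the two composites in \diag{\ref{f_*assoc_diag}} are the mates of the two composites of $\fp^{-1}$ underlying \diag{\ref{f^*assoc}}. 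Since \diag{\ref{f^*assoc}} says precisely that those two composites of $\fp$ (equivalently of $\fp^{-1}$) agree, uniqueness of mates will force the two composites of $\fg$ to agree, which is the claim.

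Concretely, first I would rewrite \diag{\ref{f^*assoc}} as the front and back routes of a commutative cube of functors whose vertical arrows are the pullbacks $f^* \times f^* \times f^* : \cC_1^{3} \to \cC_2^{3}$ and $f^* : \cC_1 \to \cC_2$, whose top and bottom faces are the triple tensor product $\cC_i^{3} \to \cC_i$ (treated as a single functor, using the convention of this subsection that $\TTens$ is strictly associative, so that no bracketing has to be carried), and whose diagonal $2$-cells are the relevant copies of $\fp^{-1}$; the commutativity of this cube in the sense of Lemma~\ref{cube} is exactly the content of \diag{\ref{f^*assoc}}, just as in the cube appearing in the proof of Lemma~\ref{adjWeakMon_lemm}. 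The vertical arrows admit the right adjoints $f_* \times f_* \times f_*$ and $f_*$, so Lemma~\ref{cube} applies and returns a new commutative cube in which the diagonal $2$-cells are replaced by their mates. By the construction of $\fg$ these mates are precisely the arrows $\fg \TTens id$, $id \TTens \fg$ and the two outer $\fg$'s of \diag{\ref{f_*assoc_diag}}, and the commutativity of the resulting cube is literally the assertion of the lemma.

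The only genuine obstacle is the careful set-up of the cube: one must lay out the triple tensor as an iterate of the bifunctor $\TTens$ so that the two ways of contracting three factors occupy the front and back, and then confirm that Lemma~\ref{cube} produces exactly these four instances of $\fg$ rather than some a priori different natural transformation. This last point needs no separate computation, since by the uniqueness clause of Lemma~\ref{adjab} each arrow is the unique mate of the corresponding $\fp^{-1}$; all that remains are verifications of type $\diag{mf}$. Alternatively, one could bypass the cube by substituting the explicit formula for $\fg$ from Proposition~\ref{fg_defi} into both composites and reducing to \diag{\ref{f^*assoc}} together with naturality and the triangle identities for $(f^*,f_*)$, but this yields a noticeably larger diagram, so the cube argument is to be preferred.
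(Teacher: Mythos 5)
The paper gives no written proof for this lemma (it is left to the reader), but your argument is correct and is precisely the technique the paper itself deploys for the neighbouring statements: rewrite the monoidal coherence square \diag{\ref{f^*assoc}} as a commutative cube with vertical edges $f^*\times f^*\times f^*$, $f^*\times f^*$, $f^*$ and diagonal $2$-cells the relevant copies of $\fp^{-1}$, then apply Lemma \ref{cube} and identify the mates with $\fg\TTens id$, $id\TTens\fg$ and $\fg$ via the uniqueness clause of Lemma \ref{adjab} and the construction of $\fg$ in Proposition \ref{fg_defi} -- exactly as in the proofs of Lemma \ref{fgc} and Lemma \ref{adjWeakMon_lemm}. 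Your identification of the componentwise mates and your appeal to the section's convention of suppressing associators are both sound, so this is the intended proof.
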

\begin{proof}
Left to the reader.
\end{proof}

\begin{prop}
Under assumptions \assumption{\ref{assufmonoidal}}{f} and \assumption{\ref{assuAdjf*f*}}{f}, the following diagrams are commutative.
$$\xymatrix@R=4ex@C=3ex{
f_* A \TTens B \TTens C \ar[d]_-{\q} \ar[r]_{\q} \ar@{}[dr]|{\diagram \label{projMorphAssoc_diag}} & f_* (A \TTens f^* B) \TTens C \ar[d]^{\q} \\
f_* (A \TTens f^* (B \TTens C)) \ar[r]^{\fp^{-1}} & f_* (A \TTens f^* B \TTens f^* C) 
}$$
$$\xymatrix@R=4ex@C=3ex{
f_* A \TTens f_* B \TTens C \ar[d]_-{id \TTens \q} \ar[r]_{\fg \TTens id} \ar@{}[dr]|{\diagram \label{projMorphAssoc2_diag}} & f_* (A \TTens B) \TTens C \ar[d]^{\q} \\
f_* (A \TTens f_* (B \TTens f^*C)) \ar[r]^{\fg} & f_* (A \TTens B \TTens f^* C) 
}
$$
\end{prop}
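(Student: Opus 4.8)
The plan is to recognize both diagrams as \emph{associativity} compatibilities and to reduce each of them to an associativity square that is already available: \diag{\ref{f^*assoc}} for the monoidal structure $\fp$ of $f^*$, and \diag{\ref{f_*assoc_diag}} for the lax structure $\fg$ of $f_*$. The input is the explicit description of $\q$ and $\fg$ from propositions \ref{projFormMorphExists} and \ref{fg_defi}, together with Lemma \ref{fgq}, which expresses $\q$ through $\fg$ and the unit $\unit_*^*$. Throughout, every square that is not one of these two associativity squares will be a naturality square of type \diag{mf} or a triangle identity of type \diag{adj}.

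For Diagram \diag{\ref{projMorphAssoc_diag}} I would expand $\q$ in both composites using its defining formula $f_*A \TTens X \to f_* f^*(f_*A \TTens X) \to f_*(f^* f_* A \TTens f^* X) \to f_*(A \TTens f^* X)$ (unit, $\fp$, counit). Carrying the spare tensor factor along and assembling the resulting large diagram, the naturality of $\unit_*^*$ and $\counit_*^*$ and the triangle identities account for all squares except a single one, which is precisely the instance of \diag{\ref{f^*assoc}} for $\fp$ (equivalently for $\fp^{-1}$) applied inside $f_*$. Alternatively, and more in the spirit of the proof of Lemma \ref{adjWeakMon_lemm}, one may read \diag{\ref{f^*assoc}} as a commutative cube whose vertical edges are $f^*$ with right adjoint $f_*$, and apply Lemma \ref{cube}: the dual cube has the two occurrences of $\q$ and the map $\fp^{-1}$ of \diag{\ref{projMorphAssoc_diag}} as its $2$-cells, and its commutativity is exactly the assertion to be proved.

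For Diagram \diag{\ref{projMorphAssoc2_diag}} I would instead eliminate $\q$ entirely. By Lemma \ref{fgq} each occurrence of $\q$ equals $\fg \circ (id \TTens \unit_*^*)$; substituting this for both $\q$'s and factoring out the common unit $\unit_*^*$ applied to the $C$-factor, both composites start from $f_*A \TTens f_*B \TTens f_* f^* C$ and become the two ways of applying $\fg$ to this triple. These agree by the associativity square \diag{\ref{f_*assoc_diag}} for $\fg$. The main obstacle in both parts is purely combinatorial: tracking the nested insertions and cancellations of $\unit_*^*$ and $\counit_*^*$ and checking that the one genuinely nontrivial square is the correctly-bracketed instance of \diag{\ref{f^*assoc}} (respectively \diag{\ref{f_*assoc_diag}}) rather than a mismatched shift of it. Using Lemma \ref{cube} for the first diagram and Lemma \ref{fgq} for the second is precisely what lets the adjunction bookkeeping be handled automatically, leaving only this final identification to be carried out by hand.
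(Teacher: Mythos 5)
Your proposal is correct, and for Diagram \diag{\ref{projMorphAssoc2_diag}} it coincides with the paper's proof verbatim: the authors also substitute $\q=\fg\circ(id\TTens\unit_*^*)$ from Lemma \ref{fgq}, slide the unit on the $C$-factor across $\fg\TTens id$ by an \diag{mf} square, and conclude by \diag{\ref{f_*assoc_diag}}. For Diagram \diag{\ref{projMorphAssoc_diag}} your route genuinely differs: the paper does \emph{not} go back to the monoidal axiom \diag{\ref{f^*assoc}}, but again decomposes both occurrences of $\q$ via Lemma \ref{fgq} and assembles a ladder out of \diag{\ref{Happ0}} (the defining square of $\fg$), two \diag{mf} squares, and once more the lax associativity \diag{\ref{f_*assoc_diag}}, so that a single associativity input serves both diagrams and no adjunction bookkeeping beyond the already-proved Lemma \ref{fgq} is needed. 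Your two variants are both viable: the brute expansion of $\q$ as $\counit_*^*\circ\fp\circ\unit_*^*$ reduces, after \diag{mf} and \diag{adj} cancellations, to the correctly bracketed instance of \diag{\ref{f^*assoc}} inside $f_*$; and the mate argument via Lemma \ref{cube} — using that $\q$ is the mate of $\fp^{-1}$, as the paper itself invokes in the proof of Theorem \ref{projform0_theo} — is exactly the mechanism the paper deploys for the neighbouring statements \diag{\ref{f_*tens}} and \diag{\ref{epsqp}} in Lemma \ref{adjWeakMon_lemm} and Proposition \ref{moreCommDiag}, with the whiskered $\fp^{-1}$ on the bottom face surviving unmated as ``a change in the parameter''. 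What your approach buys is independence from \diag{\ref{Happ0}} and from Lemma \ref{fgq} for the first square, working straight from the monoidal-functor axiom; what the paper's buys is a shorter two-by-two ladder and a uniform treatment of both diagrams through the single square \diag{\ref{f_*assoc_diag}} (whose own proof, note, is left to the reader and would itself be done by your cube argument), so the difference is one of where the associativity bookkeeping is paid, not of substance.
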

\begin{proof}
Using Lemma \ref{fgq} to decompose $\q$, Diagram \diag{\ref{projMorphAssoc_diag}} is
$$\xymatrix@R=3ex@C=2ex{
f_* A \TTens B \TTens C \ar[r]^-{\unit_*^*} \ar[dd]_{\unit_*^*} \ar@{}[ddr]_(.35){\labelstyle \diag{\ref{Happ0}}} & f_* A \TTens f_*f^* B \TTens C \ar[d]_{\unit_*^*} \ar[r]^-{\fg} \ar@{}[dr]|{\diag{mf}} & f_* (A \TTens f^* B) \TTens C \ar[d]^{\unit_*^*} \\
 & f_* A \TTens f_* f^* B \TTens f_* f^* C \ar[r] \ar[d]_{\fg} \ar@{}[dr]|{\diag{\ref{f_*assoc_diag}}} & f_* (A \TTens f^* B) \TTens f_* f^* C \ar[d]^{\fg} \\
f_* A \TTens f_* f^* (B \TTens C) \ar[r]^-{\fp^{-1}} \ar[dr]_{\fg} & f_* A \TTens f_* (f^* B \TTens f^* C) \ar[r]_-{\fg} \ar@{}[d]|{\diag{mf}} & f_* (A \TTens f^* B \TTens f^* C) \\
 & f_* (A \TTens f^* (B \TTens C)) \ar[ur]_{\fp^{-1}} & 
}$$
and Diagram \diag{\ref{projMorphAssoc2_diag}} is
$$\xymatrix@R=3ex{
f_* A \TTens f_* B \TTens C \ar[r]^-{\unit^*_*} \ar[d]_{\fg} \ar@{}[dr]|{\diag{mf}} & f_* A \TTens f_* B \TTens f_* f^* C \ar[d]^{\fg} \ar[r]^-{\fg} \ar@{}[dr]|{\diag{\ref{f_*assoc_diag}}} & f_* A \TTens f_* (B \TTens f^* C) \ar[d]^{\fg} \\
f_* (A \TTens B) \TTens C \ar[r]^{\unit^*_*} & f_* (A \TTens B) \TTens f_* f^* C \ar[r]^{\fg} & f_* (A \TTens B \TTens f^* C)
}$$
\end{proof}

\subsection{The monoidal functor $f^*$ and products}

We have the following diagram of duality preserving functors
$$\xymatrix@R=3ex{
(\cC_1)_K \times (\cC_1)_M \ar[d]_{\pair{f^*\times f^* , \fh_K \times \fh_M}} \ar[rr]^{\pair{-\TTens *, \dd_{K,M}}} & & (\cC_1)_{K \TTens M} \ar[d]^{\pair{f^*,\fh_{K\TTens M}}} \\
(\cC_2)_{f^*K} \times (\cC_2)_{f^*M} \ar@<1ex>@{}[r]^{\pair{-\TTens *, \dd_{f^* K, f^*M}}} \ar[r] & (\cC_2)_{f^*K \TTens f^*M} \ar[r]^{I_{\fp_{K,M}}} & (\cC_2)_{f^*(K \TTens M)}
}$$
where $I_{\fp}:(\cC_2)_{f^*K \TTens f^*M} \to (\cC_2)_{f^*(K \TTens M)}$ is
the duality preserving functor induced by $\fp$ by Lemma \ref{isoKisoWitt}).

\begin{prop} \label{fpDualPres} (the pull-back respects the product)
Under Assumption \assumption{\ref{assufmonoidal}}{f}, 
the isomorphism of suspended bifunctors $\fp: f^*(-) \TTens f^*(*) \to f^*(-\TTens*)$ is an isomorphism of duality preserving functors between the two duality preserving functors defined by the compositions above. 
\end{prop}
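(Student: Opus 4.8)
The plan is to verify directly the single square $\diag{M}$ of Definition \ref{MorphDualPresFunc_defi} expressing that $\rho=\fp$ is a morphism of duality preserving functors. Here the source functor has underlying functor $F=f^*(-)\TTens f^*(*)$ and the target functor has underlying functor $F'=f^*(-\TTens *)$, with $\fp\colon F\to F'$ the morphism under scrutiny. First I would write down the two structure maps via the composition rule $\pair{F',f}\pair{F,f}=\pair{F'F,f'F^o\circ F'f}$: the target structure map is $\fh_{K\TTens M}(-\TTens-)^o\circ f^*\dd_{K,M}$, while the source structure map unwinds, through the three factors $\pair{f^*\times f^*,\fh_K\times\fh_M}$, $\pair{-\TTens-,\dd_{f^*K,f^*M}}$ and $I_{\fp_{K,M}}$, into $\widetilde{\fp}\circ\dd_{f^*K,f^*M}\circ(\fh_K\TTens\fh_M)$ (up to the evident $(-)^o$ decorations), where $\widetilde{\fp}=\HHom{-}{\fp}$ is the map on dualities induced by $\fp\colon f^*K\TTens f^*M\to f^*(K\TTens M)$ as in Lemma \ref{isoKisoWitt}. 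Evaluated at $(A,B)$, both composites are then maps $f^*\HHom{A}{K}\TTens f^*\HHom{B}{M}\to\HHom{f^*(A\TTens B)}{f^*(K\TTens M)}$, and $\diag{M}$ asserts their equality.

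The key observation is that every building block is the mate of purely multiplicative data: $\fh$ is the mate of $\fp$ (Proposition \ref{defifh}), $\dd$ is the mate of the exchange $\exch$ (Definition \ref{defidd}), and $\widetilde{\fp}$ is induced by $\fp$ itself, appearing on the tensor side as $-\TTens\fp$ (exactly as the corner $id\TTens\ea$ enters the proof of Theorem \ref{compof^*_theo}). I would therefore not chase $\diag{M}$ on the internal-Hom side, but realize it as the image under Lemma \ref{cube} of a cube living on the tensor side. Concretely, working in the product ACB set-up used to define $\dd$ (parameter in $\cC\times\cC$), I would assemble a cube of bifunctors whose non-formal $2$-cells are copies of $\fp$ and of $\exch$, and whose two outer composites from top to bottom compare ``tensor-then-exchange followed by $f^*$'' with ``$f^*$ followed by tensor-then-exchange''. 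By the mate formalism of Lemmas \ref{adjabbif} and \ref{cube}, the commutativity of $\diag{M}$ follows from the commutativity of this tensor-side cube.

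The remaining point is that the tensor-side cube commutes, which is precisely the statement that $\fp$ is compatible with $\exch$. Since $\exch$ is built from associativity and symmetry isomorphisms, this compatibility is a formal consequence of the symmetric monoidal axioms for $f^*$: the (iterated) associativity compatibility of $\fp$ recorded in Diagram \diag{\ref{f^*assoc}}, together with the compatibility of $\fp$ with the symmetry $\cc$ that is part of the definition of a symmetric monoidal functor. Feeding this into Lemma \ref{cube} transports commutativity back to the Hom side and yields $\diag{M}$; that $\fp$ is an \emph{iso}morphism of duality preserving functors is then immediate, since $\fp$ is an isomorphism by Assumption \assumption{\ref{assufmonoidal}}{f}. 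I expect the main obstacle to be organizational rather than conceptual: correctly placing the parameter in $\cC\times\cC$ and the $I_{\fp_{K,M}}$-correction $-\TTens\fp$ in the right corner of an inherently larger cube than those of Theorems \ref{compof^*_theo}--\ref{basechange0_theo}, and keeping track of the symmetry twists $\cc$ that intervene because $\dd$ and the bidual $\bid$ are phrased via the right evaluation while $\fh$ is phrased via the left one. Once the cube is drawn, each face is an instance of $\diag{mf}$, of the defining diagrams of $\fh$ and $\dd$, or of Diagram \diag{\ref{f^*assoc}}, and no genuinely new computation is required.
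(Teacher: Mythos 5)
Your proposal is correct and takes essentially the same route as the paper: the paper also verifies \diag{M} by applying Lemma \ref{cube} to a tensor-side cube (at fixed objects $A$ and $B$) whose $2$-cells are $\fp$, $\fp\times\fp$, $\exch$ and $\fp\circ(id\TTens\fp)$, commutative by the compatibility of $f^*$ with the symmetry and associativity of the tensor product (Diagram \diag{\ref{f^*assoc}}). The one ingredient you describe only implicitly is isolated in the paper as Lemma \ref{mates3_lemm}, which identifies $\fp^o\circ\fh$ as the mate of $\fp\circ(id\TTens\fp)$, exactly the recognition of the $I_{\fp_{K,M}}$-correction on the right face of the resulting cube.
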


\begin{coro}
The pull-back and the product defined in Corollary \ref{PullBack0Witt_coro} and Proposition \ref{existProduct} satisfy
$$f^*_W(x.y)=I_{\fp_{K,M}}^W(f^*_W(x).f^*_W(y))$$ 
for all $x \in \W((\cC_1)_K)$ and $y \in \W((\cC_1)_M)$
by Proposition \ref{samemorphismWitt_prop} provided all necessary 
assumptions about being strong are satisfied.
\end{coro}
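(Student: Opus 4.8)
The plan is to verify that $\fp$ satisfies the commutativity of diagram $\diag{M}$ of Definition \ref{MorphDualPresFunc_defi}, which here expresses the compatibility between the duality-preserving structure of the ``down-then-right'' composite $I_{\fp_{K,M}} \circ \pair{-\TTens *, \dd_{f^*K,f^*M}} \circ \pair{f^*\times f^*, \fh_K \times \fh_M}$ (underlying functor $(A,B)\mapsto f^*(A)\TTens f^*(B)$) and that of the ``right-then-down'' composite $\pair{f^*,\fh_{K\TTens M}} \circ \pair{-\TTens *, \dd_{K,M}}$ (underlying functor $(A,B)\mapsto f^*(A\TTens B)$), the underlying morphism of functors being $\fp$. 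First I would compute the two structure morphisms explicitly using the composition rule $\pair{F',f'}\pair{F,f}=\pair{F'F,f'F^o\circ F'f}$ and the rule $I_\iota=\pair{Id,\tilde\iota}$ of Lemma \ref{isoKisoWitt}: the right-then-down structure is $\fh_{K\TTens M}(-\TTens *)^o \circ f^*\dd_{K,M}$, while the down-then-right one is the evident three-fold composite of $\fh_K\times\fh_M$, $\dd_{f^*K,f^*M}$ and $\tilde{\fp}$, each whiskered by the appropriate functors. Thus $\diag{M}$ unfolds into a diagram of morphisms between iterated tensor and internal-Hom objects, natural in $A,B,K,M$.

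Since $\fh$ is by construction (Proposition \ref{defifh}) the mate of $\fp$, and $\dd$ is by construction (Definition \ref{defidd}) the mate of $\exch$ assembled from $\coev^l$, $\exch$ and $\ev^l\TTens\ev^l$, I would not expand these into evaluations by brute force but rather exploit their mate characterizations, exactly as is done for $\fh$ in the proof of Theorem \ref{compof^*_theo} and for $\fg,\rr,\dd$ in the proof of Theorem \ref{basechange0_theo}. Concretely, I would reorganize the target diagram so that every occurrence of $\fh$ and $\dd$ enters through its defining squares ($\diag{\ref{Lapp1}}$, $\diag{\ref{L'app1}}$ for $\fh$; $\diag{\ref{Happ4}}$, $\diag{\ref{H'app4}}$ for $\dd$), the remaining faces being naturality squares of $\coev^l$ and $\ev^l$ of type $\diag{mf}$. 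This repackages the whole verification as the commutativity of a single cube of functors and natural transformations.

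Finally I would apply Lemma \ref{cube} to this cube. The point is that after the parts handled automatically by mates and by naturality of $\coev^l,\ev^l$ are absorbed, the residual base cube is precisely the coherence of the monoidal structure $\fp$: the associativity compatibility $\diag{\ref{f^*assoc}}$ together with the compatibility of $\fp$ with the symmetry $\cc$, both of which are part of Assumption \assumption{\ref{assufmonoidal}}{f} that $f^*$ is \emph{symmetric} monoidal. These are exactly what is needed to relate $\exch$ for the pair $(f^*K,f^*M)$, the two relevant instances of $\fp$, and $\exch$ for $(K,M)$. Because $\fp$ is an isomorphism, the resulting morphism of duality-preserving functors is automatically strong, which yields the asserted isomorphism. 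The main obstacle will be the bookkeeping: four dualizing objects are in play ($K,M$ downstairs and $f^*K,f^*M$ upstairs), $\dd$ occurs for two different pairs, and $I_{\fp_{K,M}}$ silently rebases the duality from $f^*K\TTens f^*M$ to $f^*(K\TTens M)$, so the real difficulty is aligning all the $\coev^l/\ev^l$-tails and the several instances of $\fp$ so that Lemma \ref{cube} applies and the base reduces cleanly to $\diag{\ref{f^*assoc}}$ and the $\fp$-versus-$\cc$ compatibility.
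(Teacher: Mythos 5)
Your proposal is correct and follows essentially the same route as the paper: the corollary is immediate from Proposition \ref{fpDualPres} via Proposition \ref{samemorphismWitt_prop}, and the paper proves Proposition \ref{fpDualPres} exactly as you outline, by verifying diagram \diag{M} through mate characterizations and a single application of Lemma \ref{cube} to a cube built from $\exch$ and the instances of $\fp$, whose commutativity is precisely the compatibility of $f^*$ with symmetry and associativity (diagrams \diag{\ref{fgc}} and \diag{\ref{f^*assoc}}), with strongness coming from $\fp$ being an isomorphism. The only cosmetic difference is that where you would feed in the defining squares of $\fh$ and $\dd$ separately, the paper packages the composite $\fp^o \circ \fh$ into a single mate statement (Lemma \ref{mates3_lemm}) and recognizes it as the right face of the resulting cube.
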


\begin{proof}[Proof of Proposition \ref{fpDualPres}]
We start with a lemma.
\begin{lemm} \label{mates3_lemm}
The composition
$$\xymatrix{
f^*(-) \TTens (f^* A \TTens f^*B) \ar[r]^-{id \TTens \fp} & f^*(-) \TTens f^*(A \TTens B) \ar[r]^-{\fp} & f^*(- \TTens (A \TTens B))
}$$
and the composition
$$\xymatrix{
f^* \HHom{A \TTens B}{-} \ar[r]^-{\fh} & \HHom{f^*(A \TTens B)}{f^*(-)} \ar[r]^-{\fp^o} & \HHom{f^* A \TTens f^* B}{-}
}$$
are mates in Lemma \ref{adjab} when $J_1=- \TTens (A \TTens B)$, $K_1=\HHom{A \TTens B}{-}$, $J_2=- \TTens (f^* A \TTens f^* B)$, $K_2=\HHom{f^* A \TTens f^* B}{-}$, $H=f^*$ and $H'=f^*$.
\end{lemm}
\begin{proof}
By uniqueness, it suffices to establish
the commutativity of diagram \diag{H} which is 
$$\xymatrix@R=3ex{
f^* \HHom{A \TTens B}{-} \TTens (f^* A \TTens f^* B) \ar[r]^-{\fh \TTens id} \ar[d]_{id \TTens \fp} \ar@{}[dr]|{\diag{mf}} & \HHom{f^* (A \TTens B)}{f^*(-)} \TTens (f^* A \TTens f^* B) \ar@<8ex>@/^10ex/[dd]^-{\fp^o} \ar[d]^{id \TTens \fp} \ar@{}[ddd]^{\diag{gen}}  \\
f^* \HHom{A \TTens B}{-} \TTens f^*( A \TTens B) \ar[r] \ar[d]_{\fp} \ar@{}[ddr]|(.25){\diag{\ref{L'app1}}} & \HHom{f^* (A \TTens B)}{f^*(-)} \TTens f^*(A \TTens B) \ar@<-1ex>[ddl]_{\coev^l} & \\
f^* (\HHom{A \TTens B}{-} \TTens (A \TTens B)) \ar[d]_-{\coev^l} & \HHom{f^* A \TTens f^* B}{f^*(-)} \TTens (f^* A \TTens f^* B) \ar[dl]^{\coev^l} \\  
 f^* (-) & 
}$$
\end{proof}
Now, according to Definition \ref{MorphDualPresFunc_defi}, we need to prove that the following diagram is commutative for any $A,B,K$ and $M$.
$$\xymatrix@R=3ex@C=2ex{
f^* \HHom{A}{K} \TTens f^* \HHom{B}{M} \ar[dd]_{\fp} \ar[r]^-{\fh \TTens \fh} \ar@{}[ddrr]|{\diagram \label{f^*dualpres_diag}} & \HHom{f^* A}{f^* K} \TTens \HHom{f^* B}{f^* M} \ar[r]^-{\dd} & \HHom{f^* A \TTens f^* B}{f^* K \TTens f^* M} \ar[d]^-{\fp} \\
 & & \HHom{f^* A \TTens f^* B}{f^* (K \TTens M)} \\
f^*( \HHom{A}{K} \TTens \HHom{B}{M}) \ar[r]^-{\dd} & f^* \HHom{A \TTens B}{K \TTens M} \ar[r]^-{\fh} & \HHom{f^* (A \TTens B)}{f^* (K \TTens M)} \ar[u]^{\fp^o}
}$$
We apply Lemma \ref{cube} to the cube
$$\xymatrix@!C=2ex@!R=3ex{
\cC_1 \times \cC_1 \ar[rr]^{- \TTens -} \ar[dr]^*-{\labelstyle f^* \times f^*} & & \cC_1 \ar[dr]^{f^*} & \\
 & \cC_2 \times \cC_2 \ar@2[ur]^{\fp} \ar[rr]^{- \TTens -} & & \cC_2 \\
 \cC_1 \times \cC_1 \ar[uu]^{\stackrel{\labelstyle (- \TTens A)}{\times(- \TTens B)}} \ar[dr]_{f^* \times f^*} & & & \\
 & \cC_2 \times \cC_2 \ar@2[luuu]|{\rule[-.5ex]{0ex}{2ex} \fp \times \fp} \ar[uu]_(.3){\stackrel{\labelstyle (- \TTens f^* A)}{\times (- \TTens f^* B)}} \ar[rr]_{- \TTens -} & & \cC_2 \ar@2[lluu]_{\exch} \ar[uu]|(.7){- \TTens (f^* A \TTens f^* B)}
}
\hspace{-2ex}
\xymatrix@!C=2ex@!R=3ex{
\cC_1 \times \cC_1 \ar[rr]^{- \TTens -} & & \cC_1 \ar[dr]^{f^*} & \\
 & & & \cC_2 \\
\cC_1 \times \cC_1 \ar[uu]|{\stackrel{\labelstyle (- \TTens A)}{\times (- \TTens B)}} \ar[dr]_{f^* \times f^*} \ar[rr]^{- \TTens -} & & \cC_1 \ar@2[lluu]^{\exch} \ar[uu]^(.7){- \TTens (A \TTens B)} \ar[dr]_{f^*} & \\
 & \cC_2 \times \cC_2 \ar@2[ur]^{\fp} \ar[rr]_{- \TTens -} & & \cC_2 \ar@2[luuu]|{\rule[-.5ex]{0ex}{2ex} \fp \circ (id \TTens \fp)} \ar[uu]_{- \TTens (f^* A \TTens f^*B)}
}$$
which is easily shown to be commutative using the compatibility of $f^*$ with the symmetry and the associativity of the tensor product (diagrams \diag{\ref{fgc}} and \diag{\ref{f^*assoc}}). This yields a new cube whose commutativity is the one of \diag{\ref{f^*dualpres_diag}} using Lemma \ref{mates3_lemm} to recognize the right face.
\end{proof}

\subsection{Projection formula} \label{ProjForm_sec}

The last theorem we want to prove is a projection formula.

\begin{theo} (projection formula) \label{projform0_theo}
Let $\cC_1$ and $\cC_2$ be closed categories and let $f^*$ be a monoidal functor (Assumption \assumption{\ref{assufmonoidal}}{f}) from $\cC_1$ to $\cC_2$ satisfying Assumptions \assumption{\ref{assuAdjf*f*}}{f}, \assumption{\ref{assuAdjf*f!}}{f} and \assumption{\ref{assuProjFormIso}}{f}. Let $K$ and $M$ be objects of $\cC_1$. Then, $\q$ is a morphism of duality preserving functors from 
$$\pair{Id \TTens Id, \dd_{K,M}} \pair{f_* \times Id,\rr_K \times id}$$
to 
$$\pair{f_*,\rr_{K \TTens M}} I_{\ssp_{K,M}} \pair{Id \TTens Id, \dd_{f^! K, f^* M}} \pair{Id \times f^*, id \times \fh_M}.$$ 
\end{theo}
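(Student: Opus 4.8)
The plan is to unwind Definition \ref{MorphDualPresFunc_defi} for $\rho = \q$ and then reduce the resulting square to the associativity diagrams of Section \ref{assocProducts} by a single application of Lemma \ref{cube}, following the template of the proofs of Theorem \ref{basechange0_theo} and Proposition \ref{fpDualPres}.

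First I would make the two composite duality morphisms explicit. Writing out the composition rule for duality preserving functors (stated just after Definition \ref{DualPresFunc_defi}) and substituting the definitions of $\rr$ (Theorem \ref{PushForward0_theo}), $\dd_{K,M}$ (Definition \ref{defidd}), $\ssp$ and $I_{\ssp_{K,M}}$ (Proposition \ref{projFormIso} and Lemma \ref{isoKisoWitt}), and $\fh_M$ (Proposition \ref{defifh}), the diagram $\diag{M}$ to be checked becomes, at objects $A \in \cC_2$ and $B \in \cC_1$, the commutativity of
\[
\dd_{K,M} \circ (\rr_K \TTens id) \;=\; \HHom{\q}{K \TTens M} \circ \rr_{K \TTens M} \circ f_*\bigl(\HHom{id}{\ssp_{K,M}} \circ \dd_{f^!K,f^*M} \circ (id \TTens \fh_M)\bigr) \circ \q,
\]
where on the left $\q = \q_{\HHom{A}{f^!K},\,\HHom{B}{M}}$ and on the right $\HHom{\q}{K \TTens M}$ is the image under $D_{K\TTens M}$ of the isomorphism $\q_{A,B}$ (available by Assumption \assumption{\ref{assuProjFormIso}}{f}).

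Next I would replace each composite arrow by the primitive morphism of which it is the mate, exactly as the auxiliary Lemmas \ref{mates1_lemm}--\ref{mates3_lemm} do in the earlier proofs. Here $\dd_{K,M}$ and $\dd_{f^!K,f^*M}$ are the mates of $\exch$, $\rr$ is expressed through $\ff$ and $\counit_*^!$ (Theorem \ref{PushForward0_theo}, together with Lemma \ref{ffr}), and $\ssp$ is characterized by its compatibility with $\q^{-1}$ through diagrams \diag{\ref{G1app2}} and \diag{\ref{G2app2}}. Proving one or two such mate lemmas --- with respect to the ACB $(- \TTens *, \HHom{*}{-})$ and the variants obtained from it by Lemma \ref{changeParamSus} and Lemma \ref{AdjComposeBifFonc} --- converts the displayed identity into the commutativity of a cube all of whose edges are built from $\q$, $\fg$, $\fp$ and the associativity constraint of $\TTens$.

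Finally I would exhibit this cube as the image under Lemma \ref{cube} of the commutative cube encoding the compatibility of $\q$ with associativity and $\fg$, namely diagrams \diag{\ref{projMorphAssoc_diag}} and \diag{\ref{projMorphAssoc2_diag}} together with \diag{\ref{f_*assoc_diag}}; applying Lemma \ref{cube} to the vertical adjunctions $(f^*,f_*)$ and $(f_*,f^!)$ then yields precisely $\diag{M}$. The main obstacle I expect is the bookkeeping forced by the fact that $\ssp$ is defined through the inverse $\q^{-1}$ (Proposition \ref{projFormIso}) while $\q$ itself is the comparison morphism $\rho$: one must arrange the mate lemmas so that the hidden $\q^{-1}$ inside $I_{\ssp_{K,M}}$ cancels against the explicit occurrences of $\q$. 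Concretely this cancellation is supplied by diagram \diag{\ref{G2app2}} relating $\ssp$ to $\q^{-1}$, and the delicate point is aligning the two cubes so that Lemma \ref{cube} applies verbatim rather than pasting the large diagram together by hand.
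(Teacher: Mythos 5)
Your overall architecture does track the paper's proof: the unwinding of \diag{M} in your first step is correct (it is exactly the paper's diagram \diag{\ref{projFormDualPres_diag}}), and your identification of the key cancellation is right --- the paper reduces \diag{\ref{projFormDualPres_diag}} to a diagram in which $\ssp$ and $f^!$ no longer occur (setting $N=f^!K$) precisely by decomposing $\rr$ and invoking \diag{\ref{G2app2}}, which is the mechanism you describe for cancelling the hidden $\q^{-1}$ inside $I_{\ssp_{K,M}}$ against the explicit occurrences of $\q$. Your mate lemmas with respect to the tensor/internal-Hom ACB also correspond to the paper's Lemmas \ref{mates4_lemm} and \ref{mates5_lemm}. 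But two of your concrete choices in the last step would fail as stated. First, the tensor-side cube cannot be assembled from \diag{\ref{projMorphAssoc_diag}}, \diag{\ref{projMorphAssoc2_diag}} and \diag{\ref{f_*assoc_diag}} alone: $\dd$ is the mate of the exchange $\exch\colon (-_1 \TTens -_2)\TTens(*_1 \TTens *_2) \to (-_1\TTens *_1)\TTens(-_2\TTens *_2)$, which involves the symmetry $\cc$ and not merely the associator, so the faces of your cube mix $\q$ and $\fg$ with $\exch$ and their commutation additionally requires the compatibility of $\fp$, $\fg$ and $\q$ with $\cc$ (\diag{\ref{fgc}} and the symmetric monoidality of $f^*$). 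The paper sidesteps assembling this by hand: its starting cube lives on the $f^*$-side, is commutative by ``a classical exercise on monoidal functors, symmetry and associativity'', and the $\q$/$\fg$-cube is then \emph{produced} by a first application of Lemma \ref{cube} along the adjunction $(f^*,f_*)$. Your alternative sourcing from Section \ref{assocProducts} is viable, but only after adjoining the symmetry compatibilities you omitted.

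Second --- and this is the step that would actually break --- your final application of Lemma \ref{cube} ``to the vertical adjunctions $(f^*,f_*)$ and $(f_*,f^!)$'' cannot yield \diag{M}. After the reduction via \diag{\ref{G2app2}}, the functor $f^!$ has disappeared from the diagram entirely, so $(f_*,f^!)$ has nothing left to act on; and transporting a cube along $(f^*,f_*)$ only exchanges $f$-functors and never introduces internal Homs, whereas the target diagram is a statement about the dualities $D_K$, $D_M$, $D_{K\TTens M}$. The adjunctions along which the last cube must be transported are the parametrized tensor/internal-Hom ACBs: vertical arrows of the form $(-\TTens A)\times(-\TTens f^*B)$, resp.\ $-\TTens f_*(A\TTens f^*B)$, with right adjoints $\HHom{A}{-}\times\HHom{f^*B}{-}$, resp.\ $\HHom{f_*(A\TTens f^*B)}{-}$ --- exactly the data $J_i$, $K_i$ of the paper's two mate lemmas. (This is also why the paper needs \emph{two} successive applications of Lemma \ref{cube}: one along $(f^*,f_*)$ on the tensor side, one along tensor--Hom to reach the Hom side; if you build the tensor-side cube directly as you propose, a single application suffices, but it must be the tensor--Hom one.) With these two corrections your outline merges into the paper's proof.
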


\begin{coro} (projection formula for Witt groups) 
By Proposition \ref{samemorphismWitt_prop}, under strongness assumptions necessary for their existence and when $\ssp_{K,M}$ is an isomorphism, the pull-back, push-forward and product on Witt groups defined in corollaries \ref{PullBack0Witt_coro}, \ref{PushForward0Witt_coro} and Proposition \ref{existProduct} satisfy the equality 
$$f_*^W (I_{\ssp_{K,M}}^W(x . f^*_W(y))) = f_*^W (x). y$$
in $\W((\cC_1)_{K \TTens M})$
for all $x \in \W((\cC_2)_{f^!K})$ and $y \in \W((\cC_1)_M)$. 
\end{coro}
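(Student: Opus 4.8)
The plan is to make Definition \ref{MorphDualPresFunc_defi} explicit for $\rho=\q$ and then to reduce the resulting square $\diag{M}$ to the associativity compatibilities of the projection morphism established in Section \ref{assocProducts}, via the cube lemma \ref{cube}, exactly as in the proofs of Theorems \ref{compof_*0_theo} and \ref{basechange0_theo}. First I would pin down the two functors: on a pair $(A,B)$ the source $\pair{-\TTens-,\dd_{K,M}}\pair{f_*\times Id,\rr_K\times id}$ sends $(A,B)$ to $f_*A\TTens B$, the target sends it to $f_*(A\TTens f^*B)$, and $\rho_{A,B}=\q_{A,B}$. Computing the composite duality structures and using the dualities $D_{f^!K}\times D_M$ on the source and $D_{K\TTens M}$ on the target, the square $\diag{M}$ to be proved commutative takes, for all $A,B$, the form
\[
\xymatrix@R=4ex@C=3ex{
f_*\HHom{A}{f^!K}\TTens\HHom{B}{M} \ar[d]_{\q} \ar[rr]^-{\dd\,\circ\,(\rr\TTens id)} & & \HHom{f_*A\TTens B}{K\TTens M} \\
f_*\bigl(\HHom{A}{f^!K}\TTens f^*\HHom{B}{M}\bigr) \ar[rr]_-{\rr\,\circ\,f_*\HHom{id}{\ssp}\,\circ\,f_*\dd\,\circ\,f_*(id\TTens\fh)} & & \HHom{f_*(A\TTens f^*B)}{K\TTens M} \ar[u]_{\HHom{\q}{K\TTens M}}
}
\]
(the right-hand map being $D_{K\TTens M}^o\q^o$, drawn upward as the $\eps^o$ in \diag{\ref{basechange_diag}}).

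Both vertical legs are instances of $\q$, while every horizontal structure map is built as a \emph{mate} via Lemmas \ref{adjab}/\ref{adjabbif}: $\dd$ is the mate of the rearrangement $\exch$, $\rr$ factors through $\ff$ and hence $\fg$ (Lemma \ref{ffr}), and $\ssp$ is defined from $\q^{-1}$. Following the template of Lemmas \ref{mates1_lemm}--\ref{mates3_lemm} and Proposition \ref{fpDualPres}, my first step would be to \emph{strip off the internal Hom}: establish a mates lemma expressing the Hom-side composites around the square through their tensor-side partners, so that the verification is transported to a statement involving only $\fp$, $\fg$, $\q$ and the associativity of $\TTens$. Here the identity $\q=\fg\circ(id\TTens\unit_*^*)$ of Lemma \ref{fgq} and the description of $\rr$ through $\ff$ are the tools that keep the $\unit_*^*,\counit_*^!$ bookkeeping under control.

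The second step is the cube reduction. I would assemble a cube of functors and apply Lemma \ref{cube}, arranging matters so that its base (the tensor-side face) is precisely the associativity compatibility of the projection morphism, namely diagrams \diag{\ref{projMorphAssoc_diag}} and \diag{\ref{projMorphAssoc2_diag}}, together with the monoidal associativity \diag{\ref{f^*assoc}}/\diag{\ref{f_*assoc_diag}} and the compatibility of $\fg$ with the symmetry \diag{\ref{fgc}}. The output cube's commutativity is then exactly $\diag{M}$, and the whole argument becomes formal once the base face is recognized as one of these already-proved diagrams.

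The main obstacle will be the handling of $\ssp$: since it is \emph{defined} through $\q^{-1}$ (Proposition \ref{projFormIso}) it cannot be manipulated directly, so $f_*\HHom{id}{\ssp}$ has to be eliminated using the defining diagrams \diag{\ref{G2app2}} (and \diag{\ref{G1app2}}), which trade $\ssp$ against $\q^{-1}$ after applying $f_*$ and the counit $\counit_*^!$. This is the mechanism by which the $\ssp$ on the target side cancels against the $\q$ on the left vertical leg, collapsing the diagram to one that only sees $\q$ and the associator. The real work — and the principal risk of a coherence error — lies in keeping the orders of the tensor factors and the (suppressed) bracketings consistent through this cancellation; the suspended/triangulated case then follows verbatim, since all the ingredients ($\dd$, $\q$, $\rr$, $\ssp$, $\fh$) were produced as morphisms of suspended (bi)functors.
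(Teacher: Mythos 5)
Your proposal is correct and, for the hard part, follows essentially the same architecture as the paper's proof of the underlying Theorem \ref{projform0_theo}: your explicit square is exactly the paper's \diag{\ref{projFormDualPres_diag}}, your elimination of $\ssp$ through \diag{\ref{G2app2}} (trading it for $\q^{-1}$ after applying $f_*$ and $\counit_*^!$, using the decomposition of $\rr$) is precisely the paper's reduction to \diag{\ref{muPiBet_diag}}, and your ``strip off the internal Hom'' step corresponds to the mates Lemmas \ref{mates4_lemm} and \ref{mates5_lemm} followed by Lemma \ref{cube}. The one genuine divergence is the coherence input at the base of the cube: the paper starts from a cube built purely out of pull-back data ($f^*$, $\fp^{-1}$, $\exch$), commutative by classical monoidal-functor coherence, and then applies Lemma \ref{cube} \emph{twice}, so that all needed $\q$/$\fg$ compatibilities are generated automatically as mates; you instead propose to feed in the independently proved associativity diagrams \diag{\ref{projMorphAssoc_diag}} and \diag{\ref{projMorphAssoc2_diag}} together with \diag{\ref{f_*assoc_diag}} and \diag{\ref{fgc}} (you would also want the symmetry compatibility of $\q$ itself, which follows from \diag{\ref{fgc}} via Lemma \ref{fgq}). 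That route works --- those diagrams encode exactly the commutativity of the paper's intermediate cube --- but it invokes more machinery than the paper needs: Section \ref{assocProducts} is in fact not used in the paper's proof, whereas in your organization it carries the base face. Finally, note that the statement you were asked to prove is the \emph{corollary}, and your plan stops at the commutativity of \diag{M}: you still owe the (one-line) conversion into the Witt-group identity, namely that $\q$ is an isomorphism by Assumption \assumption{\ref{assuProjFormIso}}{f}, so that Proposition \ref{samemorphismWitt_prop} (in its bifunctor form, via Theorem \ref{GiNe} and the proposition following it, together with corollaries \ref{PullBack0Witt_coro}, \ref{PushForward0Witt_coro} and Lemma \ref{isoKisoWitt}) applies to the two strong duality preserving composites and, evaluated on a pair of forms $(x,y)$, yields $f_*^W (I_{\ssp_{K,M}}^W(x . f^*_W(y))) = f_*^W (x). y$; this trivial step is the entirety of the paper's own proof of the corollary.
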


\begin{proof}[Proof of Theorem \ref{projform0_theo}]
According to Definition \ref{MorphDualPresFunc_defi}, we have to prove that 
the following diagram is commutative for any $A$, $B$, $K$ and $M$.
$$\xymatrix@R=3ex@C=2ex{
f_*\HHom{A}{f^! K} \TTens \HHom{B}{M} \ar[d]_{\q} \ar[r]^-{\rr} \ar@{}[ddrr]|{\diagram \label{projFormDualPres_diag}} & \HHom{f_*A}{K} \TTens \HHom{B}{M} \ar[r]^-{\dd} & \HHom{f_* A \TTens B}{K \TTens M} \\
f_*(\HHom{A}{f^! K} \TTens f^* \HHom{B}{M}) \ar[d]_{\fh} & & \HHom{f_* (A \TTens f^* B)}{K \TTens M} \ar[u]_{\q^o} \\
f_*(\HHom{A}{f^! K} \TTens \HHom{f^* B}{f^* M}) \ar[r]^-{\dd} &  f_* \HHom{A \TTens f^* B}{f^! K \TTens f^* M} \ar[r]^-{\ssp} & f_* \HHom{A \TTens f^* B}{f^! (K \TTens M)} \ar[u]_{\rr}
}$$
Decomposing $\rr$ as in its definition (see Theorem \ref{PushForward0_theo}) and
using a few \diag{mf} diagrams and Diagram \diag{\ref{G2app2}}, it is easy to reduce 
this to the commutativity of the following diagram (with $N=f^!K$).
$$\xymatrix@R=3ex@C=2ex{
f_* \HHom{A}{N} \TTens \HHom{B}{M} \ar[r]^-{\ff} \ar[d]_{\q} \ar@{}[ddrr]|{\diagram \label{muPiBet_diag}} & \HHom{f_* A}{f_* N} \TTens \HHom{B}{M} \ar[r]^{\dd} & \HHom{f_* A \TTens B}{f_* N \TTens M} \\
f_*(\HHom{A}{N} \TTens f^*\HHom{B}{M}) \ar[d]_{\fh} & & \HHom{f_*(A \TTens f^* B)}{f_* N \TTens M} \ar[u]_{\q^o} \\
f_*(\HHom{A}{N} \TTens \HHom{f^* B}{f^* M}) \ar[r]^-{\dd} & f_* \HHom{A \TTens f^* B}{N \TTens f^* M} \ar[r]^-{\ff} & \HHom{f_*(A \TTens f^* B)}{f_*(N \TTens f^* M)} \ar[u]_{\q^{-1}}
}$$
which requires the following preliminary lemmas.
\begin{lemm} \label{mates4_lemm}
The composition
$$\xymatrix@R=2ex@C=5ex{
f_*(-_1 \TTens -_2) \TTens f_*(A \TTens f^* B) \ar[r]^-{\fg} & f_*((-_1 \TTens -_2) \TTens (A \TTens f^*B)) \ar[d]^{\exch} \\
& f_* ((-_1 \TTens A) \TTens (-_2 \TTens f^*B)) 
}$$
and the composition
$$\xymatrix@C=2ex{
f_*(\HHom{A}{-_1} \TTens \HHom{f^*B}{-_2}) \ar[r]^-{\dd} & f_* \HHom{A \TTens f^* B}{-_1 \TTens -_2} \ar[r]^-{\ff} & \HHom{f_*(A \TTens f^* B)}{f_*(-_1 \TTens -_2)} 
}$$
are mates in Lemma \ref{adjab} when $J_1=(-_1 \TTens A)\times (-_2 \TTens f^*B)$, $K_1=\HHom{A}{-_1} \times \HHom{f^* B}{-_2}$, $J_2=- \TTens f_*(A \TTens f^* B)$, $K_2=\HHom{f_*(A \TTens f^* B)}{-}$, and $H=H'=f_*(-_1\TTens -_2)$.
\end{lemm}
\begin{proof}
By uniqueness, it suffices to prove Diagram \diag{H} which is here
$$\xymatrix@R=4ex@C=-1ex{
f_*(\HHom{A}{-_1}\TTens \HHom{f^* B}{-_2})\TTens f_*(A \TTens f^* B) \ar[r]^-{\dd} \ar[d]_{\fg} \ar@{}[dr]|{\diag{mf}} & f_* \HHom{A \TTens f^* B}{-_1 \TTens -_2} \TTens f_* (A \TTens f^* B) \ar[d]_{\fg} \ar@<9ex>@/^10ex/[dd]^-{\ff} \\
f_*((\HHom{A}{-_1} \TTens \HHom{f^*B}{-_2}) \TTens (A \TTens f^* B)) \ar[r]^-{\dd} \ar[d]_{\exch} \ar@{}[ddr]|(.25){\diag{\ref{Happ4}}} & f_* ( \HHom{A \TTens f^* B}{-_1 \TTens -_2} \TTens (A \TTens f^* B)) \ar[ddl]|(.5){\rule[-2ex]{0ex}{4ex} \rule{8ex}{0ex}}_(.3){\ev^l} \ar@{}[d]|{\diag{\ref{H'app1}}} \\
f_*((\HHom{A}{-_1}\TTens A)\TTens (\HHom{f^* B}{-_2} \TTens f^* B)) \ar[d]_{\ev^l} & \HHom{f_*(A \TTens f^* B)}{f_*(-_1 \TTens -_2)} \TTens f_* (A \TTens f^* B) \ar[dl]^{\ev^l} \\
f_*(-_1 \TTens -_2) & \\
}$$
\end{proof}
\begin{lemm} \label{mates5_lemm}
The composition
$$\xymatrix@C=2ex{
(-_1 \TTens -_2) \TTens f_*(A \TTens f^* B) \ar[r]^-{\q^{-1}} & (-_1 \TTens -_2) \TTens (f_* A \TTens B) \ar[r]^-{\exch} & (-_1 \TTens f_* A) \TTens (-_2 \TTens B)
}$$
and the composition
$$\xymatrix{
\HHom{f_* A}{-_1} \TTens \HHom{B}{-_2} \ar[r]^-{\dd} & \HHom{f_* A \TTens B}{-_1 \TTens -_2} \ar[r]^-{(\q^{-1})^o} & \HHom{f_*(A \TTens f^* B)}{-_1 \TTens -_2}
}$$
are mates in Lemma \ref{adjab} when $J_1=(-_1 \TTens f_* A)\times (-_2 \TTens B)$, $K_1=\HHom{f_* A}{-_1}\times \HHom{B}{-_2}$, $J_2=- \TTens f_* (A \TTens f^* B)$, $K_2=\HHom{f_*(A \TTens f^* B)}{-}$, and $H=H'=\TTens$.
\end{lemm}
\begin{proof}
By uniqueness, it suffices to prove that diagram \diag{H} commutes, which is 
$$\xymatrix@R=4ex@C=-1ex{
(\HHom{f_*A}{-_1}\TTens \HHom{B}{-_2})\TTens f_*(A \TTens f^* B) \ar[r]^-{\dd} \ar[d]_{\q^{-1}} \ar@{}[dr]|{\diag{mf}} & \HHom{f_* A \TTens B}{-_1 \TTens -_2} \TTens f_* (A \TTens f^* B) \ar[d]_{\q^{-1}} \ar@<10ex>@/^8ex/[dd]^-{(\q^{-1})^o} \\
(\HHom{f_*A}{-_1} \TTens \HHom{B}{-_2}) \TTens (f_* A \TTens B) \ar[r]^-{\dd} \ar[d]_{\exch} \ar@{}[ddr]|(.25){\diag{\ref{Happ4}}} & \HHom{f_* A \TTens B}{-_1 \TTens -_2} \TTens (f_* A \TTens B) \ar[ddl]|(.47){\rule[-2ex]{0ex}{4ex} \rule{4ex}{0ex}}_(.3){\ev^l} \ar@{}[d]|{\diag{gen}} \\
(\HHom{f_* A}{-_1}\TTens f_* A)\TTens (\HHom{B}{-_2} \TTens B) \ar[d]_{\ev^l} & \HHom{f_*(A \TTens f^* B)}{-_1 \TTens -_2} \TTens f_* (A \TTens f^* B) \ar[dl]^{\ev^l} \\
(-_1 \TTens -_2) & \\
}$$
\end{proof}
The cube
$$\xymatrix@!C=2ex@!R=3ex{
\cC_2 \times \cC_2 \ar[rr]^{\TTens} \ar[dr]|(.6)*!/_3ex/{\stackrel{\labelstyle (-\TTens f^* M)}{\labelstyle \times (- \TTens f^* B)}} & & \cC_2 \ar[dr]^*!/_1ex/{\labelstyle - \TTens (f^* M \TTens f^* B)} & \\
 & \cC_2 \TTens \cC_2 \ar@2[ur]_{\exch} \ar[rr]^{\TTens} & & \cC_2 \\
 \cC_1 \times \cC_1 \ar[uu]^{f^* \times f^*} \ar[dr]_*!/^1ex/{\labelstyle (-\TTens M)\TTens(-\TTens B)} & & & \\
 & \cC_1 \times \cC_1 \ar@2[luuu]|{\fp^{-1}\times \fp^{-1}} \ar[uu]_{f^* \times f^*} \ar[rr]_{\TTens} & & \cC_1 \ar@2[lluu]_{\fp^{-1}} \ar[uu]_{f^*}
}
\hspace{-7ex}
\xymatrix@!C=2ex@!R=3ex{
\cC_2 \times \cC_2 \ar[rr]^{\TTens} & & \cC_2 \ar[dr]^*!/_1ex/{\labelstyle -\TTens (f^*M\TTens f^*B)} & \\
 & & & \cC_2 \\
\cC_1 \times \cC_1 \ar[uu]^{f^* \times f^*} \ar[dr]_*!/^1ex/{\labelstyle (-\TTens M)\times (- \TTens B)} \ar[rr]^{\TTens} & & \cC_1 \ar@2[lluu]|{\fp^{-1}} \ar[uu]|{f^*} \ar[dr]|(.4)*!/^2ex/{\labelstyle -\TTens (M \TTens B)} & \\
 & \cC_1 \times \cC_1 \ar@2[ur]^{\exch} \ar[rr]_{\TTens} & & \cC_1 \ar@2[luuu]|{\stackrel{\labelstyle (id \TTens \fp^{-1})}{\circ \fp^{-1}}} \ar[uu]_{f^*}
}$$
is commutative by a classical exercise on monoidal functors, symmetry and associativity. Applying Lemma \ref{cube} to it, we get a new cube, in which the morphism of functors on the right face is $(id \TTens \fp^{-1})\circ \q$, simply because $\q$ is the mate of $\fp^{-1}$ (by Proposition \ref{projFormMorphExists}) and $(id \TTens \fp^{-1})$ is just a change in the parameter. The commutativity of the cube thus obtained can be rewritten as the commutativity of the cube
$$\xymatrix@!C=2ex@!R=3ex{
\cC_2 \times \cC_1 \ar[rr]^{f_* \times Id} \ar[dr]^(.4)*!/_1ex/{\labelstyle Id \times f^*} & & \cC_1 \times \cC_1 \ar[dr]^{\TTens} & \\
 & \cC_2 \times \cC_2 \ar@2[ur]^*!/_1ex/{\labelstyle \q^{-1}} \ar[rr]^{f_*(-\TTens -)} & & \cC_1 \\
\cC_2 \times \cC_1 \ar[uu]^{\stackrel{\labelstyle (-\TTens A)}{\times (-\TTens B)}} \ar[dr]_{Id \times f^*} & & & \\
 & \cC_2 \times \cC_2 \ar@2[luuu]|{id \times \fp} \ar[uu]_(.4){\stackrel{\labelstyle (-\TTens A)}{\times (-\TTens B)}} \ar[rr]_{f_*(-\TTens -)} & & \cC_1 \ar@2[lluu]_{x} \ar[uu]|(.7){- \TTens f_*(A \TTens f^*B)}
}
\hspace{-5ex}
\xymatrix@!C=2ex@!R=3ex{
\cC_2 \times \cC_1 \ar[rr]^{f_* \times Id} & & \cC_1 \times \cC_1 \ar[dr]^{\TTens} & \\
 & & &  \cC_1 \\
\cC_2 \times \cC_1 \ar[uu]^(.7)*!/_.5ex/{\stackrel{\labelstyle (-\TTens A)}{\labelstyle \times (- \TTens B)}} \ar[dr]_{Id \times f^*} \ar[rr]^{f_* \times Id} & & \cC_1 \times \cC_1 \ar@2[lluu]|{\fg \times id} \ar[uu]^(.75){\stackrel{\labelstyle (- \TTens f^*A)}{\times (-\TTens B)}} \ar[dr]_{\TTens} & \\
 & \cC_2 \times \cC_2 \ar@2[ur]^{\q^{-1}} \ar[rr]_{f_*(-\TTens-)} & & \cC_1 \ar@2[luuu]_{y} \ar[uu]_{-\TTens f_*(A \TTens f^* B)}
}$$
where $x$ and $y$ are the first compositions in lemmas \ref{mates4_lemm} and \ref{mates5_lemm}. Applying one more time Lemma \ref{cube} to this new cube yields another one whose commutativity is the one of \diag{\ref{muPiBet_diag}}.
\end{proof}

\section{Various reformulations} \label{variousReform_sec}

This section is devoted to reformulations of the main theorems in view of applications. The corollaries on Witt groups show that these reformulations are useful. The reformulations are obtained by changing the duality at the source or at the target using functors $I_\iota$ (see Lemma \ref{isoKisoWitt}) with $\iota$ adapted to the situation. For this reason, deriving those reformulations from the original theorems is very easy and most of the times follows from simple commutative diagrams involving functors of the form $I_\iota$. We therefore leave all proofs of Sections \ref{fofUnitObj_sec}, \ref{reformcorrectdual_sec} and \ref{reformFinalObject_sec} as exercises for the reader. 

\subsection{Reformulations using $f^!$ of unit objects} \label{fofUnitObj_sec}

In this section, we use the unit objects in the monoidal categories to formulate the main theorems in a different way. In the application to Witt groups and algebraic geometry, this will 
relate $f^!$ to $f^*$ using canonical sheaves. Let $\cB''$ denote the subcategory of $\cB'$ in which the morphisms $f$ are such that \assumption{\ref{assuProjFormIso}}{f} is satisfied ($\q$ is an isomorphism).

For any morphism $f: X \to Y$ in $\cB'$, we define 
$$\rel'_f = f^!(\one_Y).$$ 
To every morphism $f$, we associate a number $d_f$ such that $d_{gf}=d_f+d_g$ for composable morphisms $f$ an $g$. For example, for every object $X$ we may choose a number $d_X$ and set $d_f=d_X - d_Y$. We define
$$\rel_f =T^{-d_f} f^!(\one_Y).$$
\begin{rema}
In applications to schemes, $d_f$ can be the relative dimension of a morphism $f$ and $d_X$ can be the relative dimension of a smooth $X$ over a base scheme. In that case, $\rel'_f$ is isomorphic to a shifted line bundle and $\rel_f$ to a line bundle (the canonical sheaf). This explains why we introduce $d_f$ and $\rel_f$. On the other hand, it is always possible to set $d_f=0$ for any morphism $f$, in which case $\rel_f=\rel'_f$, thus statements in terms of $\rel_f$ also apply to $\rel'_f$. 
\end{rema}

For any two composable morphisms $f$ and $g$ in $\cB''$, let us denote by $i'_{g,f}$ the composition
$$\rel'_f \TTens f^* (\rel'_g) =f^! (\one_Y) \TTens f^*(\rel'_g)  \stackrel{\ssp}{\to} f^! (\rel'_g) = f^! g^! (\one_Z) \stackrel{\ec}{\simeq} (gf)^!(\one_Z) = \rel'_{gf}$$
and 
$$i_{g,f}: \rel_f \TTens f^*(\rel_g) \arr{} \rel_{gf}$$
the composition obtained by the same chain of morphisms and
then desuspending. 

\begin{lemm}\label{cocyclecon}
For any composable morphisms 
$\xymatrix@1{X \ar[r]^{f} & Y \ar[r]^{g} & Z \ar[r]^{h} & V}$ in $\cB ''$, 
the diagram of isomorphisms
$$\xymatrix@R=0ex@C=10ex{
 & \rel_f \TTens f^*(\rel_{hg}) \ar[dl]_-{i_{hg,f}} & \rel_f \TTens f^*(\rel_g \TTens g^*(\rel_h)) \ar[l]^-{id \TTens f^* i_{h,g}} \ar[dd]_{\wr}^{(id \TTens a)\circ (id \TTens \fp^{-1})} \\
\rel_{hgf} & & \\
 & \rel_{gf} \TTens (gf)^* (\rel_h) \ar[ul]^-{i_{h,gf}} & \rel_f \TTens f^*(\rel_g) \TTens (gf)^* (\rel_h) \ar[l]_-{i_{g,f} \TTens id} 
}$$
is commutative. In other words, $i_{g,f}$ (as well as $i'_{g,f}$) satisfies a cocycle condition.
\end{lemm}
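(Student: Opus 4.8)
The plan is to reduce at once to the primed morphisms $i'$ and then to unfold them into their two constituents, the relative projection isomorphism $\ssp$ and the pseudofunctor isomorphism $\ec$. Since $\rel_f=T^{-d_f}\rel'_f$ and $d_{gf}=d_f+d_g$, each $i_{g,f}$ is obtained from $i'_{g,f}$ by conjugating with the canonical suspension-shuffling isomorphisms of Definition \ref{defiHigher}; both legs of the square acquire the same total shift $T^{-d_{hgf}}$, and the coherence of $\tpp_1,\tpp_2$ makes the two reindexings agree, so it suffices to prove the cocycle identity for $i'$. By definition $i'_{g,f}=\ec_{g,f}(\one)\circ\phi_f(\rel'_g)$, where I abbreviate by $\phi_f(C):=\ssp_f(\one,C)\colon\rel'_f\TTens f^*C\to f^!C$ the action obtained by feeding the unit into the first slot of $\ssp$ (after the identification $\one\TTens C\cong C$).

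Substituting this description into the two legs, and using naturality of $\ssp$ (the $\diag{mf}$ bifunctor squares) to pull the inner $i'_{h,g}$ past $\ssp_f$, both composites become chains of $\phi$'s and $\ec$'s starting from $\rel'_f\TTens f^*(\rel'_g\TTens g^*\rel'_h)$. Matching them requires three inputs: (a) the cocycle of the pseudofunctor $(-)^!$, that is the $\diag{\ref{f^*pseudoFun}}$-analogue for $\ec$ furnished by Lemma \ref{adjWeakAssoc}; (b) a composition law for $\ssp$ under composition of $f$ and $g$; and (c) the associativity of $\ssp$ with respect to $\TTens$ and $\fp$, which is the exact analogue of $\diag{\ref{projMorphAssoc_diag}}$ (proved there for $\q$) with $\ssp$ in place of $\q$, and which I would prove by the same argument or dualize from it.

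The composition law (b) is
\[
\ssp_{gf}\circ(\ec\TTens\ea)\;=\;\ec\circ f^!(\ssp_g)\circ\ssp_f
\]
as morphisms $f^!g^!(-)\TTens f^*g^*(*)\to(gf)^!(-\TTens *)$. I would establish it by mating the corresponding law for the projection morphism, namely $\eb\circ\q_{gf}=g_*f_*(id\TTens\ea)\circ g_*(\q_f)\circ\q_g\circ(\eb\TTens id)$, which is the $\q$-translate of the composition diagram $\diag{\ref{f_*tens}}$ for $\fg$ and follows from it through Lemma \ref{fgq}. Because $\ssp$ is, by Proposition \ref{projFormIso} (via Point 2 of Theorem \ref{theoGenAdj}), the mate of $\q^{-1}$, Lemma \ref{adjabbif} converts the $\q$-identity into the asserted one for $\ssp$; the $\eb$ turns into $\ec$ under the adjoint-pseudofunctor structure and $\ea$ stays in place, so no new choices enter.

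Finally I would assemble the square. On the bottom leg, (c) collapses $\ssp_f(\rel'_g,g^*\rel'_h)\circ(\phi_f(\rel'_g)\TTens id)\circ(id\TTens\fp^{-1})$ into $\phi_f(\rel'_g\TTens g^*\rel'_h)$, the $\fp^{-1}$ cancelling the $\fp$ produced by (c), and (b) with first variable $\one$ rewrites $\phi_{gf}(\rel'_h)\circ(\ec\TTens\ea)\circ(\phi_f(\rel'_g)\TTens id)$ as $\ec\circ f^!(\phi_g(\rel'_h))\circ\ssp_f(\rel'_g,g^*\rel'_h)$; on the top leg naturality performs the same reduction. What remains on both sides is a single composite of $\ec$'s evaluated at $\one$, and the two orders coincide by precisely the cocycle (a). The hard part will be (b): because $\ssp$ is \emph{defined} through the inverse of $\q$, the only route I see from $\ssp_{gf}$ back to $\ssp_f,\ssp_g$ passes through the $\q$-law and a careful mate-and-invert argument, and the heaviest bookkeeping there is keeping track of the hidden unit and associativity isomorphisms flagged at the start of Section \ref{assocProducts}.
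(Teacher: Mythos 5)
The paper contains no proof of Lemma \ref{cocyclecon} to compare yours against: the preamble of Section \ref{variousReform_sec} explicitly leaves all proofs of Section \ref{fofUnitObj_sec} to the reader. Measured against the machinery the paper provides, your reconstruction is the natural intended one and is correct in outline. You isolate precisely the three inputs a proof needs: (a) associativity of the pseudofunctor $(-)^!$, which is indeed supplied by Lemma \ref{adjWeakAssoc}; (c) the $\ssp$-analogue of \diag{\ref{projMorphAssoc_diag}}, which does follow by the same invert-and-remate maneuver the paper uses to obtain \diag{\ref{spgamm}} from \diag{\ref{epsqp}} in Proposition \ref{moreCommDiag}; and (b) the composition law $\ssp_{gf}\circ(\ec\TTens\ea)=\ec\circ f^!(\ssp_g)\circ\ssp_f$, which is the genuinely new ingredient, is stated nowhere in the paper, and is exactly what makes the hexagon close. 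Your derivation of (b) is sound: the $\q$-level law follows from Lemma \ref{fgq}, \diag{\ref{f_*tens}} and \diag{\ref{compAdj1}} by a routine chase, and since $\ssp$ is constructed as a mate of $\q^{-1}$ (Proposition \ref{projFormIso} via Theorem \ref{theoGenAdj}) while $\ec$ is by definition the mate of $\eb$ (Lemma \ref{adjWeak}), passing to mates transports the identity; in practice this is two applications of Lemma \ref{cube} (one per adjunction, with an inversion in between, as in the paper's own cube arguments) rather than a single use of Lemma \ref{adjabbif}, but that is bookkeeping, not substance. Your final assembly --- naturality of $\ssp_f(\one,-)$ on the top leg, (c) then (b) with first variable $\one$ on the bottom leg, then (a) --- checks out, with the hidden unitor identifications $f^!(\one\TTens C)\simeq f^!C$ absorbed by naturality as you indicate.

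One caveat. In the reduction from $i$ to $i'$ you appeal to ``the coherence of $\tpp_1,\tpp_2$,'' but these \emph{anticommute} (Definition \ref{defiSuspBif}), and Remark \ref{remaHigherBif} warns precisely that mixed shuffles of suspensions through a bifunctor depend on the order in which $``\tpp_1"$ and $``\tpp_2"$ are applied; the identification $T^{-d_f}A\TTens T^{-d_g}B\simeq T^{-(d_f+d_g)}(A\TTens B)$ is only well defined up to sign until a convention is fixed. What makes the reduction work is not coherence but consistency: fix one shuffling order once and for all (implicit in the phrase ``and then desuspending'' in the definition of $i_{g,f}$), and then both legs of the hexagon are conjugated by the \emph{same} shuffle isomorphisms, so any signs occur identically on the two sides and cancel; alternatively, prove the primed statement and recover the unprimed one by this fixed convention, in line with the paper's remark that one may take $d_f=0$. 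With that sentence repaired, your argument is complete; this is a presentational fix, not a gap.
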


We can then reformulate the main theorems \ref{PushForward0_theo}, \ref{compof_*0_theo}, \ref{basechange0_theo} and \ref{projform0_theo} and their corollaries for Witt groups as follows. 

\begin{defi} (push-forward) \label{PushForward1_defi}
Let $f:X \to Y$ be a morphism in $\cB''$ and $K$ be an object in $\cC_Y$. We define a duality preserving functor $\cC_{X,\rel'_f \TTens f^* K} \to \cC_{Y,K}$ by the composition $\pair{f_*,\rr_K}I_{\ssp_{\one,K}}$. We denote it simply by $\pair{f_*}$ in the rest of the section ($K$ is always understood).
\end{defi}

\begin{defi} (push-forward for Witt groups) \label{PushForward1Witt_defi}
When the dualities are strong and $\ssp_{\one,K}$ is an isomorphism, $\pair{f_*}$ induces a push-forward 
$$f_*^W:\W^{i+d_f}(X,\rel_f \TTens f^* K) \to \W^{i}(Y,K)$$
on Witt groups (recall Proposition \ref{DTKTDK} to switch from suspending $\rel'_f$ to the suspended duality used in Definition \ref{Wgraded} of shifted Witt groups).
\end{defi}

\begin{theo} (composition of push-forwards) \label{Composition1_theo}
Let $f: X \to Y$ and $g: Y \to Z$ be morphisms in $\cB''$ and $K$ be an object of $\cC_Z$. The morphism $\eb_{g,f}:(gf)_* \to g_* f_*$ (see Section \ref{Composition}) is duality preserving between the compositions of duality preserving functors in the diagram 
$$\xymatrix@R=3ex{
\cC_{X,\rel'_{f}\TTens f^*(\rel'_{g}\TTens g^* K)} \ar[d]_{\pair{f_*}} \ar[r]^{I_{\iota}} & \cC_{X,\rel'_{gf}\TTens (gf)^* K} \ar[d]^{\pair{(gf)_*}} \ar@{=>}[dl]_{\eb_{g,f}} \\
\cC_{Y,\rel'_g \TTens g^* K} \ar[r]_{\pair{g_*}} & \cC_{Z,K}
}$$
where $\iota$ is the composition
$$\xymatrix@R=3ex@C=10ex{
\rel'_{f} \TTens f^*(\rel'_g \TTens g^*K) \ar[r]^-{Id \TTens \fp^{-1} } & \rel'_{f} \TTens f^* \rel'_{g} \TTens f^* g^* K \ar[r]^-{i'_{g,f} \TTens \ea_{g,f}} & \rel'_{gf} \TTens (gf)^* K.
}$$
\end{theo}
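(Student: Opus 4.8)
The plan is to deduce this reformulation from Theorem \ref{compof_*0_theo} by transporting the morphism $\eb_{g,f}$ along the duality-changing functors $I_\iota$ of Lemma \ref{isoKisoWitt}. Recall from Definition \ref{PushForward1_defi} that $\pair{f_*} = \pair{f_*,\rr_{\rel'_g \TTens g^*K}}\, I_{\ssp_{\one,\rel'_g \TTens g^* K}}$, that $\pair{g_*} = \pair{g_*,\rr_K}\, I_{\ssp_{\one, K}}$, and that $\pair{(gf)_*} = \pair{(gf)_*,\rr_K}\, I_{\ssp_{\one,K}}$ (this last $\ssp$ being the projection morphism for the composite $gf$). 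Since the target duality of all the composites is $D_K$, the whole content is a statement about the source dualities in $\cC_X$ together with the duality-preserving structure maps, which is governed by \diag{M} of Definition \ref{MorphDualPresFunc_defi}; this commutativity is precisely what Theorem \ref{compof_*0_theo} provides for the un-reformulated functors, so no diagram has to be rechecked by hand once the dualities are matched up.

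First I would simplify the left column $\pair{g_*}\pair{f_*}$. The functor $I_{\ssp_{\one,K}}$ sitting between $\pair{g_*,\rr_K}$ and $\pair{f_*,\rr_{\rel'_g \TTens g^* K}}$ can be pushed past $\pair{f_*,\rr}$ using that $\rr$ is a morphism of bifunctors (Theorem \ref{PushForward0_theo}): for any $\kappa\colon M \to M'$ one has the equality of duality-preserving functors $I_\kappa \pair{f_*,\rr_M} = \pair{f_*,\rr_{M'}} I_{f^!\kappa}$, whose verification is exactly the naturality square of $\rr$ in its second variable. Applying this with $\kappa = \ssp_{\one,K}$ and then collapsing the two resulting $I$-functors by $I_{\kappa'} I_{\kappa} = I_{\kappa'\kappa}$ (Lemma \ref{isoKisoWitt}) gives
\[
\pair{g_*}\pair{f_*} = \big(\pair{g_*,\rr_K}\pair{f_*,\rr_{g^!K}}\big)\, I_{j}, \qquad j = f^!(\ssp_{\one,K}) \circ \ssp_{\one,\rel'_g \TTens g^* K}.
\]
By Theorem \ref{compof_*0_theo}, $\eb_{g,f}$ is a morphism of duality-preserving functors from $\pair{(gf)_*,\rr_K} I_{\ec_{g,f,K}}$ to $\pair{g_*,\rr_K}\pair{f_*,\rr_{g^!K}}$; whiskering this morphism on the right by $I_j$ (whose underlying functor is the identity, so it does not alter $\eb_{g,f}$ as a morphism of functors) and again collapsing $I$-functors exhibits $\eb_{g,f}$ as a morphism from $\pair{(gf)_*,\rr_K} I_{\ec_{g,f,K}\circ j}$ to $\pair{g_*}\pair{f_*}$.

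Comparing this with the claimed source functor $\pair{(gf)_*} I_\iota = \pair{(gf)_*,\rr_K} I_{\ssp_{\one,K}\circ\iota}$, the theorem reduces to the single identity of morphisms in $\cC_X$
\[
\ec_{g,f,K}\circ j \;=\; \ssp_{\one,K}\circ \iota \colon \; \rel'_f \TTens f^*(\rel'_g \TTens g^*K) \longrightarrow (gf)^! K,
\]
call it $(\star)$; granting it, $I_{\ec_{g,f,K}\circ j} = I_{\ssp_{\one,K}\circ\iota}$ and the two source functors coincide, finishing the proof. Unwinding $j$, $\iota$, and the definition of $i'_{g,f}$, the identity $(\star)$ is exactly a compatibility of the projection morphism $\ssp$ (Proposition \ref{projFormIso}) for $f$, $g$, and $gf$ with the pseudo-functor structures $\ea$, $\ec$ and with $\fp$. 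I expect $(\star)$ to be the main obstacle: unlike the $I$-functor bookkeeping above it is a genuine multi-step diagram chase, essentially the statement that $\ssp$ for the composite $gf$ is assembled from $\ssp$ for $f$ and $g$. I would attack it by adjunction, rewriting $\ssp$ through its defining diagrams \diag{\ref{G1app2}} and \diag{\ref{G2app2}} so as to translate $(\star)$ into a statement about $\q$, and then combining the associativity diagrams \diag{\ref{projMorphAssoc_diag}} and \diag{\ref{projMorphAssoc2_diag}} with the adjoint-pseudo-functor compatibilities \diag{\ref{compAdj1}} and \diag{\ref{compAdj2}} (which tie $\ea$ to $\eb$, hence to its adjoint $\ec$) together with the monoidal compatibility \diag{\ref{f^*tens}}. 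The same bookkeeping underlying the cocycle condition of Lemma \ref{cocyclecon} handles the $\rel'$-factor $i'_{g,f}$, while the $g^*K$-factor is governed by $\ea_{g,f}$; the content of $(\star)$ is precisely that these two factors, glued by $\fp^{-1}$, reassemble into the projection morphism $\ssp_{\one,K}$ of the composite $gf$.
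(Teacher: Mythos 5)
Your proposal takes exactly the route the paper intends: the proofs of Section \ref{fofUnitObj_sec} are explicitly left as exercises, to be derived from the main theorems (here Theorem \ref{compof_*0_theo}) by bookkeeping with the duality-changing functors $I_\iota$ of Lemma \ref{isoKisoWitt}, and your interchange identity $I_\kappa\pair{f_*,\rr_M}=\pair{f_*,\rr_{M'}}I_{f^!\kappa}$ (which is just naturality of $\rr$ in its second variable), the whiskering observation, and the resulting reduction to the single equation $(\star)$ constitute precisely that bookkeeping, carried out correctly. The residual identity $(\star)$ --- that the morphism $\ssp$ for $gf$ is assembled from those for $f$ and $g$ via $\ec$, $\ea$ and $\fp$ --- is true and yields to exactly the tools you name (pass by mates through \diag{\ref{G1app2}} and \diag{\ref{G2app2}} to the analogous compatibility of $\q$ with composition, which follows from Lemma \ref{fgq}, Diagram \diag{\ref{f_*tens}} of Lemma \ref{adjWeakMon_lemm} and Diagram \diag{\ref{compAdj1}}), so nothing in your outline would fail; just be aware that this one step is a genuine diagram chase of the same nature as the unproven cocycle condition of Lemma \ref{cocyclecon}, and your proposal stops at a plan for it rather than executing it --- which is consistent with the paper, whose own proof is left entirely to the reader.
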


\begin{coro} (composition of push-forwards for Witt groups)
For $f$ and $g$ as in the theorem, the push-forwards on Witt groups of Definition \ref{PushForward1Witt_defi} satisfy $g_*^W f_*^W = (gf)_*^W I_{\iota}^W$.
\end{coro}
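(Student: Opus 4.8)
The plan is to obtain this as a purely formal consequence of Theorem \ref{Composition1_theo}; all the substantive work lies there, and the corollary is extracted by invoking Proposition \ref{samemorphismWitt_prop}, exactly as announced in the introduction for corollaries of this kind.

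First I would record what Theorem \ref{Composition1_theo} provides: the isomorphism of functors $\eb_{g,f}\colon (gf)_* \to g_* f_*$ is a morphism of duality preserving functors from $\pair{(gf)_*} I_\iota$ to $\pair{g_*}\pair{f_*}$, these being the two composites running from $\cC_{X,\rel'_f \TTens f^*(\rel'_g \TTens g^* K)}$ to $\cC_{Z,K}$ around the square. Because $\eb_{g,f}$ is an isomorphism of functors (it is part of the pseudo functor structure of $(-)_*$ furnished by Lemma \ref{adjWeak}), this is in fact a \emph{duality preserving isomorphism} between the two composite functors.

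Next I would check the hypotheses needed to apply Proposition \ref{samemorphismWitt_prop}. Under the strongness assumptions under which the push-forwards of Definition \ref{PushForward1Witt_defi} are defined --- the relevant dualities strong and $\ssp_{\one,-}$ invertible --- each of $\pair{f_*}$, $\pair{g_*}$ and $\pair{(gf)_*}$ is a $1$-exact strong duality preserving functor, while $I_\iota$ is strong because $\iota$ is a composite of the isomorphisms $\fp^{-1}$, $i'_{g,f}$ and $\ea_{g,f}$ (Lemma \ref{isoKisoWitt}). Since a composite of strong duality preserving functors is again strong, both $\pair{g_*}\pair{f_*}$ and $\pair{(gf)_*} I_\iota$ are $1$-exact strong duality preserving functors, and Proposition \ref{samemorphismWitt_prop} therefore shows that they induce the same homomorphism on Witt groups.

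Finally I would translate this into the displayed formula. The induced map of Proposition \ref{dualPresIsTransfer_prop} is compatible with composition, so a composite of duality preserving functors induces the composite of the induced Witt group maps; hence $\pair{g_*}\pair{f_*}$ induces $g_*^W f_*^W$ and $\pair{(gf)_*} I_\iota$ induces $(gf)_*^W I_\iota^W$, with the degree shifts handled by $d_f$, $d_g$ through Proposition \ref{DTKTDK} as in Definition \ref{PushForward1Witt_defi}. Equating the two induced maps yields $g_*^W f_*^W = (gf)_*^W I_\iota^W$, as required. I do not anticipate a genuine obstacle: the only care required is the bookkeeping relating $\rel'$ to $\rel$ and the degrees $d_f, d_g$, and confirming that every functor involved is strong so that Proposition \ref{samemorphismWitt_prop} genuinely applies.
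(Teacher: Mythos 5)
Your proposal is correct and is exactly the argument the paper intends: the authors leave the proofs in Section \ref{fofUnitObj_sec} as exercises and state in the introduction that such corollaries ``just rely on the classical propositions \ref{dualPresIsTransfer_prop} and \ref{samemorphismWitt_prop}'', which is precisely your route of applying Proposition \ref{samemorphismWitt_prop} to the duality preserving isomorphism $\eb_{g,f}$ of Theorem \ref{Composition1_theo} and using compatibility of the transfer with composition. The only point worth flagging is that the invertibility of $\iota$ (via $i'_{g,f}$, hence of $\ssp_{\one,\rel'_g}$) is an implicit hypothesis needed for $I_\iota^W$ to exist rather than a formal consequence of the strongness assumptions in Definition \ref{PushForward1Witt_defi}, matching the paper's tacit assumption in Lemma \ref{cocyclecon}.
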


\begin{theo} (base change)
In the situation of Theorem \ref{basechange0_theo}, with furthermore $g$ and $\bar{g}$ in $\cB''$, the morphism of functors $\eps: g_*f^* \to \bar{f}^* \bar{g}_*$ is duality preserving between the two compositions of duality preserving functors in the diagram
$$\xymatrix@R=3ex{
 & \cC_{V,\bar{f}^* (\rel'_g \TTens g^* K)} \ar[rr]^{I_\iota} & & \cC_{V,\rel'_{\bar{g}}\TTens \bar{g}^*f^* K} \ar[dr]^{\pair{\bar{g}_*}} & \\
\cC_{X,\rel'_g \TTens g^* K} \ar[ur]^{\pair{\bar{f}^*}} \ar[rr]_{\pair{g_*}} & & \cC_{Z,K} \ar[rr]_{\pair{f^*}} \ar@{=>}[u]^{\eps} & & \cC_{Y,f^* K}
}$$
where $\iota$ is defined as the composition
$$\xymatrix@R=3ex{\bar{f}^* (\rel'_g \TTens g^* K) \ar[r]^{\fp^{-1}} & \bar{f}^*\rel'_{g} \TTens \bar{f}^*g^* K \ar[d]_{Id \TTens \xi} \\ 
 & \bar{f}^* \rel'_g \TTens \bar{g}^* f^* K \ar[r]^-{\gam_{\one}\TTens Id} & \bar{g}^! f^* \one \TTens \bar{g}^* f^* K \ar@{}[r]|-{\simeq} &  \rel'_{\bar{g}}\TTens \bar{g}^*f^* K.
}$$
\end{theo}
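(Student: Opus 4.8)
The plan is to deduce this statement from the already-proven base change Theorem \ref{basechange0_theo} by changing the dualities through functors of the form $I_\iota$, exactly as announced at the beginning of the section. Write $\ssp^g$ and $\ssp^{\bar g}$ for the morphisms $\ssp$ of Proposition \ref{projFormIso} attached to $g$ and $\bar g$. Unfolding Definition \ref{PushForward1_defi}, the bottom composition of the diagram is
$$\pair{f^*}\pair{g_*} = \pair{f^*,\fh_K}\,\pair{g_*,\rr_K}\,I_{\ssp^g_{\one,K}},$$
and the top composition is
$$\pair{\bar g_*}\,I_\iota\,\pair{\bar f^*} = \pair{\bar g_*,\rr_{f^*K}}\,I_{\ssp^{\bar g}_{\one,f^*K}}\,I_\iota\,\pair{\bar f^*,\fh_{\rel'_g \TTens g^*K}},$$
both being duality preserving functors $\cC_{X,\rel'_g \TTens g^*K}\to\cC_{Y,f^*K}$ with underlying functors $f^*g_*$ and $\bar g_*\bar f^*$. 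The morphism to be tested is $\eps$ of Section \ref{BaseChange}.

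First I would whisker Theorem \ref{basechange0_theo} on the right by the duality preserving functor $I_{\ssp^g_{\one,K}}\colon\cC_{X,\rel'_g \TTens g^*K}\to\cC_{X,g^!K}$. This is legitimate because composing a morphism of duality preserving functors with a duality preserving functor again yields one (a direct check of diagram \diag{M}), and because the underlying functor of $I_{\ssp^g_{\one,K}}$ is the identity, so $\eps$ is unchanged as a natural transformation. This exhibits $\eps$ as a morphism of duality preserving functors from $\pair{f^*,\fh_K}\pair{g_*,\rr_K}I_{\ssp^g_{\one,K}}$ to $\pair{\bar g_*,\rr_{f^*K}}I_{\gam_K}\pair{\bar f^*,\fh_{g^!K}}I_{\ssp^g_{\one,K}}$. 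The source is literally the bottom composition, so it only remains to identify the target with the top composition.

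Cancelling the common left factor $\pair{\bar g_*,\rr_{f^*K}}$, the needed identification reads
$$I_{\gam_K}\,\pair{\bar f^*,\fh_{g^!K}}\,I_{\ssp^g_{\one,K}} \;=\; I_{\ssp^{\bar g}_{\one,f^*K}}\,I_\iota\,\pair{\bar f^*,\fh_{\rel'_g \TTens g^*K}},$$
an equality of duality preserving functors with underlying functor $\bar f^*$. To treat the left side I would commute the change of duality past the pull-back using naturality of $\fh$ in its duality variable: for $\kappa\colon K'\to M'$ one has $\pair{\bar f^*,\fh_{M'}}I_\kappa = I_{\widetilde{\bar f^*\kappa}}\pair{\bar f^*,\fh_{K'}}$, since the naturality square of the bifunctor $\fh$ in the second entry \emph{is} diagram \diag{M} for this composite. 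Taking $\kappa=\ssp^g_{\one,K}$ and using $I_\lambda I_\kappa=I_{\lambda\kappa}$ from Lemma \ref{isoKisoWitt}, both sides reduce to $I_{(-)}\pair{\bar f^*,\fh_{\rel'_g \TTens g^*K}}$, so the whole statement collapses to the single identity of change-of-duality morphisms
$$\gam_K\circ\bar f^*(\ssp^g_{\one,K}) \;=\; \ssp^{\bar g}_{\one,f^*K}\circ\iota \colon \bar f^*(\rel'_g \TTens g^*K)\longrightarrow\bar g^!f^*K.$$

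This last identity is the only genuine point, and I expect it to be the main obstacle. I would obtain it by specializing the already-proven diagram \diag{\ref{spgamm}} of Proposition \ref{moreCommDiag} to $-=\one$, $*=K$ and precomposing with $\fp^{-1}$: its left-then-bottom path becomes exactly $\gam_K\circ\bar f^*(\ssp^g_{\one,K})$, while its top-then-$\fp$ path becomes $\bar g^!(\fp)\circ\ssp^{\bar g}_{f^*\one,f^*K}\circ(\gam_\one\TTens\xi^{-1})\circ\fp^{-1}$. Matching the latter against the definition of $\iota$ leaves only the compatibility $\bar g^!(\fp)\circ\ssp^{\bar g}_{f^*\one,f^*K} = \ssp^{\bar g}_{\one,f^*K}\circ(\bar g^!(u)\TTens id)$, where $u\colon f^*\one\to\one$ is the unit isomorphism; this follows from naturality of $\ssp^{\bar g}$ in its first variable together with the standard coherence of the monoidal functor $f^*$ relating $\fp$ to $u$. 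The reformulations of Theorem \ref{projform0_theo} and the remaining statements of the section are obtained by the same mechanism, which is why they are left to the reader.
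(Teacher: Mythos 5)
Your proof is correct and takes precisely the route the paper intends: the paper leaves this reformulation as an exercise, asserting that it follows from Theorem \ref{basechange0_theo} by simple commutative diagrams involving functors of the form $I_\iota$, which is exactly your whiskering-by-$I_{\ssp_{\one,K}}$ and change-of-duality argument (using naturality of $\fh$ in the duality object and $I_\lambda I_\kappa = I_{\lambda\kappa}$). Your identification of the single genuine computation, namely $\gam_K\circ\bar f^*(\ssp_{\one,K})=\ssp_{\one,f^*K}\circ\iota$ via the second diagram of Proposition \ref{moreCommDiag} together with naturality of $\ssp$ in its first variable and the monoidal unit coherence of $f^*$, is exactly what those prepared diagrams are for, so nothing is missing.
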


\begin{coro} (base change for Witt groups)
In the situation of the theorem, when furthermore $\gam_\one$ is an isomorphism, the push-forwards and pull-backs for Witt groups of Definition \ref{PushForward1Witt_defi} and Corollary \ref{PullBack0Witt_coro} satisfy $f^*_W g_*^W = \bar{g}_*^W I_{\iota}^W \bar{f}^*_W$.
\end{coro}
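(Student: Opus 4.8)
The plan is to deduce this statement from the unreformulated base change Theorem~\ref{basechange0_theo} by the recipe announced at the start of Section~\ref{variousReform_sec}: the only change is that the dualities $D_{g^!K}$ and $D_{\bar{g}^!f^*K}$ have been traded for $D_{\rel'_g\TTens g^*K}$ and $D_{\rel'_{\bar{g}}\TTens\bar{g}^*f^*K}$, and this is exactly what the functors $I_{\ssp_{\one,K}}$ and $I_{\ssp_{\one,f^*K}}$ built into the reformulated push-forwards $\pair{g_*}=\pair{g_*,\rr_K}I_{\ssp_{\one,K}}$ and $\pair{\bar{g}_*}=\pair{\bar{g}_*,\rr_{f^*K}}I_{\ssp_{\one,f^*K}}$ of Definition~\ref{PushForward1_defi} accomplish. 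The hypotheses ($g,\bar{g}\in\cB''$ and Assumption~E for the square) guarantee that $\ssp_{\one,K}$, $\ssp_{\one,f^*K}$, $\gam_K$, $\gam_\one$ and hence $\iota$ are all defined. Theorem~\ref{basechange0_theo} asserts that $\eps$ is a morphism of duality preserving functors from $\pair{f^*,\fh_K}\pair{g_*,\rr_K}$ to $\pair{\bar{g}_*,\rr_{f^*K}}I_{\gam_K}\pair{\bar{f}^*,\fh_{g^!K}}$.

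First I would whisker this morphism on the right by $I_{\ssp_{\one,K}}\colon\cC_{X,\rel'_g\TTens g^*K}\to\cC_{X,g^!K}$. Since $I_{\ssp_{\one,K}}$ has underlying identity functor, the result is again $\eps$, now a morphism of duality preserving functors from the bottom composite $\pair{f^*}\pair{g_*}=\pair{f^*,\fh_K}\pair{g_*,\rr_K}I_{\ssp_{\one,K}}$ to $\pair{\bar{g}_*,\rr_{f^*K}}I_{\gam_K}\pair{\bar{f}^*,\fh_{g^!K}}I_{\ssp_{\one,K}}$. Since the top composite of the reformulated diagram is $\pair{\bar{g}_*,\rr_{f^*K}}I_{\ssp_{\one,f^*K}}I_\iota\pair{\bar{f}^*,\fh_{\rel'_g\TTens g^*K}}$ and the two share the left factor $\pair{\bar{g}_*,\rr_{f^*K}}$, it suffices to prove the equality of duality preserving functors
$$I_{\gam_K}\,\pair{\bar{f}^*,\fh_{g^!K}}\,I_{\ssp_{\one,K}}=I_{\ssp_{\one,f^*K}}\,I_\iota\,\pair{\bar{f}^*,\fh_{\rel'_g\TTens g^*K}}.$$
For this I would move $I_{\ssp_{\one,K}}$ past the pull-back using the naturality of $\fh$ in the dualizing object (Proposition~\ref{defifh}), which is the simple square $\pair{\bar{f}^*,\fh_{L_1}}I_\mu=I_{\bar{f}^*\mu}\pair{\bar{f}^*,\fh_{L_0}}$ for any $\mu\colon L_0\to L_1$; applying it with $\mu=\ssp_{\one,K}$ and then using $I_\kappa I_\iota=I_{\kappa\iota}$ of Lemma~\ref{isoKisoWitt}, both sides take the form $I_\nu\,\pair{\bar{f}^*,\fh_{\rel'_g\TTens g^*K}}$ for appropriate $\nu$, the two values of $\nu$ being $\gam_K\circ\bar{f}^*(\ssp_{\one,K})$ and $\ssp_{\one,f^*K}\circ\iota$, so the claim reduces to the single identity of morphisms $\bar{f}^*(\rel'_g\TTens g^*K)\to\bar{g}^!f^*K$
$$\gam_K\circ\bar{f}^*(\ssp_{\one,K})=\ssp_{\one,f^*K}\circ\iota.$$

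This last identity is precisely diagram~\diag{\ref{spgamm}} of Proposition~\ref{moreCommDiag} evaluated at first variable $\one$ and parameter $K$: there $g^!\one=\rel'_g$ and $\bar{g}^!\one=\rel'_{\bar{g}}$, the left-then-bottom composite $\gam\circ\ssp\circ\fp$ becomes $\gam_K\circ\bar{f}^*(\ssp_{\one,K})\circ\fp$, the top-then-right composite $\fp\circ\ssp\circ(\gam\TTens\xi^{-1})$ becomes $\ssp_{\one,f^*K}\circ\iota\circ\fp$ once one recognizes $\iota=(\gam_\one\TTens\xi^{-1})\circ\fp^{-1}$, and cancelling the common monoidal isomorphism $\fp$ yields the displayed equality. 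Post-composing with $\pair{\bar{g}_*,\rr_{f^*K}}$ and feeding the result back into the whiskered Theorem~\ref{basechange0_theo} then identifies the top and bottom composites of the reformulated diagram and shows $\eps$ is duality preserving between them, i.e. diagram~\diag{M} of Definition~\ref{MorphDualPresFunc_defi} commutes. The hard part is exactly this bookkeeping step: one must carefully propagate the strong-monoidal unit isomorphisms $f^*\one\simeq\one$ and $\one\TTens(-)\simeq(-)$ (and fix the direction convention for $\xi$) through \diag{\ref{spgamm}} so that its specialization matches $\ssp_{\one,K}$, $\ssp_{\one,f^*K}$, $\gam_K$ and $\iota$ on the nose; the remaining manipulations of $I_\iota$-functors and the whiskering of duality preserving morphisms are formal and work verbatim in the suspended and triangulated cases.
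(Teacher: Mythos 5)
Your proposal is correct and follows exactly the route the paper intends: the reformulations of Section \ref{variousReform_sec} are explicitly left as exercises in the $I_\iota$-formalism, and the square \diag{\ref{spgamm}} of Proposition \ref{moreCommDiag} is precisely the ``useful in applications'' diagram that you specialize at $(\one,K)$, with the right-whiskering of Theorem \ref{basechange0_theo} by $I_{\ssp_{\one,K}}$, the naturality identity $\pair{\bar{f}^*,\fh_{L_1}}I_\mu=I_{\bar{f}^*\mu}\pair{\bar{f}^*,\fh_{L_0}}$ (from $\fh$ being a morphism of bifunctors, Proposition \ref{defifh}), and $I_\kappa I_\iota=I_{\kappa\iota}$ doing the rest. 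Only the final, trivial step is left implicit and should be said: the hypothesis that $\gam_\one$ is an \emph{isomorphism} (not merely defined) makes $\iota$ an isomorphism, since its other factors $\fp$, $\xi$ and the unit identifications already are, so all functors involved are strong and Proposition \ref{samemorphismWitt_prop} applied to your duality preserving isomorphism $\eps$ yields the asserted equality $f^*_W g_*^W=\bar{g}_*^W I_{\iota}^W \bar{f}^*_W$.
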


\begin{theo} (projection formula)
In the situation of Theorem \ref{projform0_theo}, $\q$ is a morphism of duality preserving functors between the two compositions in the diagram
$$\xymatrix@R=3ex@C=10ex{
\cC_{\rel'_{f} \TTens f^* K} \times \cC_{f^* M} \ar[r]^-{\pair{\otimes}} & \cC_{\rel'_{f} \TTens f^*K \TTens f^* M} \ar[r]^-{I_{Id \TTens \fp_{K,M}}} & \cC_{\rel'_{f} \TTens f^*(K \TTens M)} \ar[d]^{\pair{f_*}} \\
\cC_{\rel'_{f} \TTens f^* K} \times \cC_{M} \ar[u]^{\pair{Id \times f^*}} \ar[r]^-{\pair{f_* \times Id}} & \cC_{K} \times \cC_{M} \ar@{=>}[u]^{\q} \ar[r]^-{\pair{\TTens}} & \cC_{K\TTens M}
}$$
\end{theo}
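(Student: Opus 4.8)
The plan is to deduce this reformulated projection formula directly from the unreformulated Theorem \ref{projform0_theo} by changing the dualizing objects through functors of the form $I_\iota$ (Lemma \ref{isoKisoWitt}), exactly as announced at the beginning of Section \ref{variousReform_sec}. The bridge between the two settings is the morphism $\ssp_{\one,K}\colon \rel'_f \TTens f^*K = f^!(\one)\TTens f^*K \to f^!(\one \TTens K) = f^!K$ (composed with the unit isomorphism), which by Definition \ref{PushForward1_defi} is precisely what realizes the reformulated push-forward $\pair{f_*}$ as $\pair{f_*,\rr_K}\,I_{\ssp_{\one,K}}$. First I would introduce the duality-changing functor $B := I_{\ssp_{\one,K}} \times Id$ from the reformulated source $\cC_{\rel'_f \TTens f^*K}\times \cC_M$ to the unreformulated source $\cC_{f^!K}\times \cC_M$, and record that $B$ is a duality preserving functor whose underlying functor is the identity. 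Write $S$ and $T$ for the source and target duality preserving functors of Theorem \ref{projform0_theo}, and $S'$ (the lower composition) and $T'$ (the upper composition) for the two sides of the reformulated diagram.

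The next step is to check $S' = S\circ B$ and $T' = T\circ B$ as duality preserving functors. The first is immediate, since $\pair{f_* \times Id} = (\pair{f_*,\rr_K}\,I_{\ssp_{\one,K}}) \times Id$ and the final $\pair{\TTens,\dd_{K,M}}$ is common to both. For the second, after cancelling the common leading $\pair{f_*,\rr_{K\TTens M}}$ and the common trailing $\pair{Id \times f^*,\,id\times\fh_M}$ (the latter commuting with $B$ since the two act on independent factors), one is reduced to the identity of duality preserving functors into $\cC_{f^!(K\TTens M)}$
\[
I_{\ssp_{\one,K\TTens M}}\, I_{Id \TTens \fp_{K,M}}\, \pair{\TTens,\dd_{\rel'_f \TTens f^*K,\, f^*M}} = I_{\ssp_{K,M}}\, \pair{\TTens,\dd_{f^!K,\,f^*M}}\,(I_{\ssp_{\one,K}}\times Id).
\]
Using the naturality of $\dd$ in its dualizing objects (a \diag{mf}-type square, immediate from the construction of $\dd$ via $\coev$, $\ev$ and $\exch$) to slide $I_{\ssp_{\one,K}}$ past the tensor as $I_{\ssp_{\one,K}\TTens id}$, together with the composition law $I_\kappa I_\iota = I_{\kappa\iota}$ of Lemma \ref{isoKisoWitt}, both sides collapse to $I$ applied to a single morphism, and the required equality reduces to the identity $\ssp_{\one,K\TTens M}\circ(id \TTens \fp_{K,M}) = \ssp_{K,M}\circ(\ssp_{\one,K}\TTens id)$ modulo the associator and the unit isomorphism.

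This last identity is the main obstacle: it is the cocycle/associativity property of $\ssp$ with respect to $\TTens$ and the unit object, the very ingredient underlying the cocycle condition of Lemma \ref{cocyclecon}; alternatively it can be extracted from the defining diagrams of $\ssp$ in Proposition \ref{projFormIso} together with the associativity of $\q$ (diagram \diag{\ref{projMorphAssoc_diag}}), since $\ssp$ is built from $\q^{-1}$. Granting it, $S' = S\circ B$ and $T' = T\circ B$ hold, and whiskering the morphism of duality preserving functors $\q\colon S \Rightarrow T$ of Theorem \ref{projform0_theo} by $B$ on the right yields a morphism of duality preserving functors $S' \Rightarrow T'$. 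Because $B$ has the identity as underlying functor, this whiskered natural transformation is literally $\q$ again, so diagram \diag{M} of Definition \ref{MorphDualPresFunc_defi} for the reformulated pair follows at once from the one already established for the unreformulated pair. I expect the verification of the $\ssp$-associativity to be the only genuine work; everything else is bookkeeping with $I_\iota$'s, which do not disturb underlying functors.
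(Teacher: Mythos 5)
Correct, and essentially the paper's own route: the authors leave all proofs in Section \ref{variousReform_sec} as exercises precisely because, as you do, they reduce to the unreformulated Theorem \ref{projform0_theo} via duality-changing functors $I_\iota$ (here $B=I_{\ssp_{\one,K}}\times Id$), whose identity underlying functor makes the whiskered transformation literally $\q$ again, so diagram \diag{M} transports verbatim. Your isolation of the multiplicativity identity $\ssp_{\one,K\TTens M}\circ(id\TTens\fp_{K,M})=\ssp_{K,M}\circ(\ssp_{\one,K}\TTens id)$ (modulo unit and associator) --- the same cocycle mechanism as Lemma \ref{cocyclecon}, extractable from the defining diagrams of $\ssp$ in Proposition \ref{projFormIso} and the associativity of $\q$ --- together with the naturality of $\dd$ in its dualizing objects, is exactly the bookkeeping the authors compress into choosing $\iota$ ``adapted to the situation.''
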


\begin{coro} (projection formula for Witt groups)
In the situation of the theorem, the push-forward, pull-back and product on Witt groups of Definition \ref{PushForward1Witt_defi}, Corollary \ref{PullBack0Witt_coro} and Proposition \ref{existProduct} satisfy $f_*^W I_{Id \TTens \fp_{K,M}}^W(x.f^*_W(y))=f_*^W(x).y$ in $\W^{i+j}(Y, K \TTens M)$ for any $x\in \W^{i+d_f}(X,\rel_f \TTens f^* K)$ and $y \in \W^j(Y,M)$.
\end{coro}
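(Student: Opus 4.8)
The plan is to obtain the identity as a direct application of Proposition \ref{samemorphismWitt_prop} to the reformulated projection formula theorem stated just above. That theorem provides an isomorphism of functors $\q$ which is a duality preserving morphism between the two composites around the displayed square; write $F_2=\pair{\otimes}\circ\pair{f_*\times Id}$ for the bottom composite and $F_1=\pair{f_*}\circ I_{Id\TTens\fp_{K,M}}\circ\pair{\otimes}\circ\pair{Id\times f^*}$ for the composite going up, across the top and back down. First I would note that, under the strongness hypotheses needed for the three Witt-group operations to exist (strong dualities, the relevant $\bid$'s, $\fh$, $\rr$, $\dd$, $\fp$ and $\ssp$ all invertible), each of the four elementary duality preserving functors in the square is a $1$-exact \emph{strong} duality preserving functor and $\q$ is a \emph{strong} (invertible) morphism between $F_1$ and $F_2$. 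Proposition \ref{samemorphismWitt_prop} then gives at once that $F_1$ and $F_2$ induce the same homomorphism on Witt groups.

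The second step is to read off these two homomorphisms. Along $F_2$, the functor $\pair{f_*\times Id}$ induces $f^W_*\times id$ (Definition \ref{PushForward1Witt_defi}, i.e.\ Corollary \ref{PushForward0Witt_coro}) and $\pair{\otimes}$ induces the product of Proposition \ref{existProduct}, so $F_2$ sends $(x,y)$ to $f^W_*(x)\cdot y$. Along $F_1$, the functor $\pair{Id\times f^*}$ induces $id\times f^*_W$ (Corollary \ref{PullBack0Witt_coro}), $\pair{\otimes}$ induces the product, $I_{Id\TTens\fp_{K,M}}$ induces $I^W_{Id\TTens\fp_{K,M}}$ (Lemma \ref{isoKisoWitt}) and $\pair{f_*}$ induces $f^W_*$, so $F_1$ sends $(x,y)$ to $f^W_*\bigl(I^W_{Id\TTens\fp_{K,M}}(x\cdot f^*_W(y))\bigr)$. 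Equating the two images produces precisely the asserted formula $f_*^W I_{Id\TTens\fp_{K,M}}^W(x\cdot f^*_W(y))=f_*^W(x)\cdot y$.

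It remains to verify that the gradings match. The only bookkeeping is the passage between $\rel'_f=f^!(\one)$ and $\rel_f=T^{-d_f}\rel'_f$, which by Proposition \ref{DTKTDK} corresponds to a shift of the graded Witt groups and is exactly the $d_f$ built into Definition \ref{PushForward1Witt_defi}. Thus for $x\in\W^{i+d_f}(X,\rel_f\TTens f^*K)$ and $y\in\W^j(Y,M)$ one has $f^*_W(y)\in\W^j(X,f^*M)$, so $x\cdot f^*_W(y)\in\W^{i+d_f+j}(X,\rel_f\TTens f^*K\TTens f^*M)$; the functor $I^W_{Id\TTens\fp_{K,M}}$ changes the dualizing object via $\fp$ without altering the grading, and the degree-$(-d_f)$ push-forward $f^W_*$ then lands the class in $\W^{i+j}(Y,K\TTens M)$, which agrees with the degree of $f^W_*(x)\cdot y$.

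I expect the only real subtlety to lie in this final degree count: one must insert the suspension shift $d_f$ hidden in $\rel_f$ and in Definition \ref{PushForward1Witt_defi} consistently, and confirm that $I_{Id\TTens\fp_{K,M}}$ contributes no shift. Everything else is a formal translation, via Proposition \ref{samemorphismWitt_prop} and Lemma \ref{isoKisoWitt}, of the already established commutativity of the duality preserving square.
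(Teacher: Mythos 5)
Your proposal is correct and follows exactly the route the paper intends: the paper leaves the proofs in Section \ref{fofUnitObj_sec} as exercises, indicating only that such corollaries follow from Proposition \ref{samemorphismWitt_prop} (together with Lemma \ref{isoKisoWitt} and the Gille--Nenashev product of Theorem \ref{GiNe}) applied to the duality preserving square of the theorem, which is precisely what you do. Your explicit degree bookkeeping via $\rel_f=T^{-d_f}\rel'_f$ and Proposition \ref{DTKTDK} is exactly the shift already built into Definition \ref{PushForward1Witt_defi}, so nothing is missing.
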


\subsection{Reformulations using morphisms to correct the dualities} \label{reformcorrectdual_sec}

We now reformulate the main results using a convenient categorical setting 
which allows us to display the previous resultsin an even nicer way.
This will be useful in applications.
 
We define new categories.

\begin{defi}
Let $\cB^*$ be the category whose objects are pairs $(X,K)$ with $X \in \cB$, $K\in \cC_X$, and whose morphisms from $(X,K)$ to $(Y,L)$ are pairs $(f,\phi)$ where $f: X \to Y$ is a morphism in $\cB$ and $\phi:f^*L \to K$ is a morphism in $\cC_X$. The composition is defined by $(g,\psi)(f,\phi)=(gf, \phi \circ f^*(\psi) \circ(\ea_{g,f})^{-1})$. The identity morphism on $(X,K)$ is $(Id_X,Id_K)$ and the composition is clearly associative.
\end{defi}

\begin{defi}
Let $\cB^!$ be the category whose objects are pairs $(X,K)$ with $X \in \cB'$, $K\in \cC_X$, and whose morphisms from $(X,K)$ to $(Y,M)$ are pairs $(f,\phi)$ where $f: X \to Y$ is a morphism in $\cB'$ and $\phi:K \to f^! M$ is a morphism in $\cC_X$. The composition is defined by $(g,\psi)(f,\phi)=(gf,\ec_{g,f}\circ f^!(\psi) \circ \phi)$. The identity morphism on $(X,K)$ is $(Id_X,Id_K)$ and the composition is clearly associative.
\end{defi}

For applications to Witt groups, we need the objects defining the dualities to be dualizing and the duality preserving functors to be strong. We thus define two more categories.

\begin{defi}
Let $\cB^*_W$ denote the subcategory of $\cB^*$ in which the objects $(X,K)$ are such that $K$ is dualizing ($\bid_K$ is an isomorphism) and the morphisms $(f,\phi):(X,K) \to (Y,L)$ are such that $\phi$ and $\fh_{f,L}$ are isomorphisms. 
\end{defi}

\begin{defi}
Let $\cB^!_W$ denote the subcategory of $\cB^!$ in which the objects $(X,K)$ are such that $K$ is dualizing ($\bid_K$ is an isomorphism) and the morphisms $(f,\phi)$ are such that $\phi$ is an isomorphism.
\end{defi}

Using the categories $\cB^!$ and $\cB^*$ and their subcategories, the main theorems \ref{PullBack0_theo}, \ref{PushForward0_theo}, \ref{compof^*_theo}, \ref{compof_*0_theo}, \ref{basechange0_theo}, \ref{projform0_theo} and their corollaries on Witt groups can be rephrased as follows. 

\begin{defi} (pull-back)
For any morphism $(f,\phi):(X,K) \to (Y,L)$ in $\cB^*$, we define a duality preserving functor $\func{f,\phi}^*: \cC_{Y,L} \to \cC_{X,K}$
by the composition
$$\xymatrix@C=10ex{
\cC_{Y,L} \ar[r]^-{\pair{f^*,\fh_L}} & \cC_{X,f^* L} \ar[r]^-{I_{\phi}} & \cC_{X,K}.
}$$
\end{defi}

\begin{defi} (pull-back on Witt groups) \label{PullBack2Witt_defi}
For any morphism $(f,\phi):(X,K) \to (Y,L)$ in $\cB^*_W$, the duality preserving functor $\func{f,\phi}$ induces a morphism $\func{f,\phi}^*_W: \W^i(Y,L) \to \W^i(X,K)$ on Witt groups.
\end{defi}

\begin{theo} (composition of pull-backs)
Let $(f,\phi)$ and $(g,\psi)$ be composable morphisms in $\cB^*$.
The morphism $\ea_{g,f}:f^* g^* \to (gf)^*$ is a morphism of duality preserving functors as in the following diagram. 
$$\xymatrix@R=3ex{
 & \cC_{Y,L} \ar[dr]^{\func{f,\phi}^*} \ar@{=>}[d]^{\ea_{g,f}} & \\
\cC_{Z,M} \ar[rr]_{\func{(g,\psi)(f,\phi)}^*} \ar[ur]^{\func{g,\psi}^*} &  & \cC_{X,K} 
}$$
\end{theo}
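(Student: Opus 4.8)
The plan is to reduce everything to the composition theorem for pull-backs already proved, Theorem~\ref{compof^*_theo}, by absorbing the duality corrections $\phi$ and $\psi$ into the functors $I_\iota$ and exploiting their functoriality (Lemma~\ref{isoKisoWitt}).

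First I would isolate the one new ingredient: the naturality of $\fh$ in the object defining the duality. Because $\fh$ is a morphism of bifunctors (Proposition~\ref{defifh}), for any morphism $\iota\colon N\to N'$ in $\cC_Y$ the square relating $\fh_{f,N}$, $\fh_{f,N'}$, $\tilde\iota$ and $\widetilde{f^*\iota}$ commutes; concretely $\fh_{f,N'}\circ f^*\tilde\iota=\widetilde{f^*\iota}\circ\fh_{f,N}$. Spelling out the composition rule $\pair{F',f'}\pair{F,f}=\pair{F'F,f'F^o\circ F'f}$ of Definition~\ref{DualPresFunc_defi}, this identity says exactly that
$$\pair{f^*,\fh_{f,N'}}\circ I_\iota = I_{f^*\iota}\circ\pair{f^*,\fh_{f,N}}$$
as duality preserving functors $\cC_{Y,N}\to\cC_{X,f^*N'}$. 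Applying this with $\iota=\psi\colon g^*M\to L$, and using $I_\phi\circ I_{f^*\psi}=I_{\phi\circ f^*\psi}$ (Lemma~\ref{isoKisoWitt}), I can rewrite the top composite of the diagram as
$$\func{f,\phi}^*\func{g,\psi}^* = I_\phi\pair{f^*,\fh_{f,L}}I_\psi\pair{g^*,\fh_{g,M}} = I_{\phi\circ f^*\psi}\,\pair{f^*,\fh_{f,g^*M}}\pair{g^*,\fh_{g,M}}.$$
On the other side, the composition law of $\cB^*$ gives the corrected object $\chi:=\phi\circ f^*(\psi)\circ\ea_{g,f,M}^{-1}$, whence $I_\chi=I_{\phi\circ f^*\psi}\circ I_{\ea_{g,f,M}^{-1}}$ and $\func{(g,\psi)(f,\phi)}^*=I_\chi\pair{(gf)^*,\fh_{gf,M}}$.

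Then I would invoke Theorem~\ref{compof^*_theo} with $K=M$: it states that $\ea_{g,f}$ is a morphism of duality preserving functors from $G:=I_{\ea_{g,f,M}}\pair{f^*,\fh_{f,g^*M}}\pair{g^*,\fh_{g,M}}$ to $H:=\pair{(gf)^*,\fh_{gf,M}}$. Post-composing this morphism on the left with the duality preserving functor $I_\chi$ again yields a morphism of duality preserving functors (immediate from Definition~\ref{MorphDualPresFunc_defi}), and since $I_\chi$ has identity underlying functor its underlying natural transformation is still $\ea_{g,f}$. Because $I_{\ea_{g,f,M}^{-1}}\circ I_{\ea_{g,f,M}}$ is the identity functor $I_{Id}$ (Lemma~\ref{isoKisoWitt}), one has $I_\chi G=I_{\phi\circ f^*\psi}\pair{f^*,\fh_{f,g^*M}}\pair{g^*,\fh_{g,M}}=\func{f,\phi}^*\func{g,\psi}^*$ and $I_\chi H=\func{(g,\psi)(f,\phi)}^*$. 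This is precisely the assertion that $\ea_{g,f}$ is a morphism of duality preserving functors from $\func{f,\phi}^*\func{g,\psi}^*$ to $\func{(g,\psi)(f,\phi)}^*$.

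The only step needing genuine (though still routine) care is the naturality identity of the second paragraph: one must match the two structure morphisms produced by the composition rule after unwinding the op-conventions of Definition~\ref{DualPresFunc_defi}, which is exactly the commutativity of the naturality square of the bifunctor morphism $\fh$ in its second variable. Everything else is formal bookkeeping with $I_\iota$ through Lemma~\ref{isoKisoWitt} and with post-composition of morphisms of duality preserving functors; in the suspended or triangulated setting all the maps involved are suspended or exact by construction, so no further checking is required.
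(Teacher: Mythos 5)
Your proof is correct and follows exactly the route the paper intends: the paper explicitly leaves the proofs of Section \ref{reformcorrectdual_sec} as exercises, remarking that they follow from the original theorems by simple commutative diagrams involving the functors $I_\iota$ of Lemma \ref{isoKisoWitt}. Your reduction to Theorem \ref{compof^*_theo} — via the naturality of $\fh$ in the dualizing object (giving $\pair{f^*,\fh_{f,N'}}\circ I_\iota = I_{f^*\iota}\circ\pair{f^*,\fh_{f,N}}$), the identity $I_\kappa I_\iota=I_{\kappa\iota}$, and post-composition of the duality preserving morphism $\ea_{g,f}$ with $I_\chi$ — is precisely that intended argument, with the one genuinely non-formal step (the naturality square for $\fh$) correctly identified and justified.
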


\begin{coro} (composition of pull-backs for Witt groups)
Let $(f,\phi)$ and $(g,\psi)$ be composable morphisms in $\cB^*_W$. Then the pull-backs for Witt groups of Definition \ref{PullBack2Witt_defi} satisfy $\func{(g,\psi)(f,\phi)}^*_W=\func{f,\phi}^*_W \func{g,\psi}^*_W$. In other words, $\W^*$ is a contravariant functor from $\cB^*_W$ to graded abelian groups. 
\end{coro}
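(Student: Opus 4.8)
The plan is to deduce this corollary directly from the preceding theorem (composition of pull-backs) together with Propositions \ref{dualPresIsTransfer_prop} and \ref{samemorphismWitt_prop}, exactly as the introduction announces for the Witt-group corollaries; no new commutative diagram is needed.

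First I would record what the preceding theorem gives: $\ea_{g,f}: f^* g^* \to (gf)^*$ is a morphism of duality preserving functors from the composite $\func{f,\phi}^* \func{g,\psi}^*$ to $\func{(g,\psi)(f,\phi)}^*$, both viewed as duality preserving functors $\cC_{Z,M} \to \cC_{X,K}$. Next I would check that, because $(f,\phi)$ and $(g,\psi)$ are morphisms of $\cB^*_W$, all the data in sight are strong: by definition of $\cB^*_W$ the objects $K$, $L$, $M$ are dualizing, so the three categories carry strong dualities (Corollary \ref{tripleCat_coro}), and $\phi$, $\psi$, $\fh_{f,L}$, $\fh_{g,M}$ are isomorphisms. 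Hence each of $I_\phi$, $I_\psi$, $\pair{f^*,\fh_L}$, $\pair{g^*,\fh_M}$ is strongly duality preserving, and therefore so are the composites $\func{f,\phi}^*$, $\func{g,\psi}^*$, their composition, and $\func{(g,\psi)(f,\phi)}^*$ (the composite morphism again lies in $\cB^*_W$). These are moreover the $1$-exact strong duality preserving functors already used to define the pull-backs on Witt groups in Definition \ref{PullBack2Witt_defi}. Since $\ea_{g,f}$ is an isomorphism (part of the pseudo functor structure), it is a duality preserving isomorphism between two such functors.

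At this point Proposition \ref{samemorphismWitt_prop} applies and shows that $\func{f,\phi}^* \func{g,\psi}^*$ and $\func{(g,\psi)(f,\phi)}^*$ induce the \emph{same} morphism on Witt groups. To conclude I would invoke the purely formal functoriality of the transfer construction of Proposition \ref{dualPresIsTransfer_prop}: from the composition law $\pair{F',f'}\pair{F,f}=\pair{F'F, f'F^o \circ F'f}$ one reads directly that the Witt map attached to a composite of duality preserving functors is the composite of the attached Witt maps, so the Witt morphism of $\func{f,\phi}^* \func{g,\psi}^*$ is $\func{f,\phi}^*_W \func{g,\psi}^*_W$. Combining the two facts yields $\func{(g,\psi)(f,\phi)}^*_W = \func{f,\phi}^*_W \func{g,\psi}^*_W$; as identities are evidently sent to identities, $\W^*$ is a contravariant functor on $\cB^*_W$.

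The only genuinely non-automatic point — and hence the step I would treat most carefully — is the strongness bookkeeping of the second paragraph: one must verify that the definition of $\cB^*_W$ supplies exactly the invertibility hypotheses ($\phi$, $\psi$ and the relevant $\fh$'s isomorphisms, $K,L,M$ dualizing) required to make every functor in the triangle a $1$-exact strong duality preserving functor, so that Proposition \ref{samemorphismWitt_prop} is legitimately applicable. Everything else is a formal consequence of the preceding theorem and of the two classical propositions.
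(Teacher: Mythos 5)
Your proof is correct and follows exactly the route the paper prescribes: the paper leaves the corollaries of Section \ref{reformcorrectdual_sec} as exercises, stating that they follow from the preceding theorem together with Propositions \ref{dualPresIsTransfer_prop} and \ref{samemorphismWitt_prop}, which is precisely your argument (duality preserving isomorphism $\ea_{g,f}$ plus functoriality of the transfer under composition of duality preserving functors). Your careful bookkeeping that the $\cB^*_W$ hypotheses supply all the needed strongness, and that the composite morphism again lies in $\cB^*_W$, is exactly the intended content of the exercise.
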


\begin{defi} (push-forward)
For any morphism $(f,\phi):(X,K) \to (Y,L)$ in $\cB^!$, we define a duality preserving functor $\func{f,\phi}_*: \cC_{X,K} \to \cC_{Y,L}$ by the composition 
$$\xymatrix@C=10ex{
\cC_{X,K} \ar[r]^-{I_{\phi}} & \cC_{X,f^! L} \ar[r]^-{\pair{f_*,\rr_L}} & \cC_{Y,L}.
}$$
\end{defi}

\begin{defi} (push-forward for Witt groups) \label{PushForward2Witt_defi}
For any morphism $(f,\phi):(X,K) \to (Y,L)$ in $\cB^!_W$, the duality preserving functor $\func{f,\phi}_*$ induces a morphism on Witt groups $\func{f,\phi}^W_*:\W^i(X,K) \to \W^i(X,L)$.
\end{defi}

\begin{theo} (composition of push-forwards)
Let $(f,\phi)$ and $(g,\psi)$ be composable morphisms in $\cB^!$.
The morphism $\eb_{g,f}:(gf)_* \to g_* f_*$ is a morphism of duality preserving functors as in the following diagram. 
$$\xymatrix@R=3ex{
 & \cC_{Y,L} \ar[dr]^{\func{g,\psi}_*} & \\
\cC_{X,K} \ar[rr]_{\func{(g,\psi)(f,\phi)}_*} \ar[ur]^{\func{f,\phi}_*} & \ar@{=>}[u]^(.4){\eb_{g,f}}  & \cC_{Z,M} 
}$$
\end{theo}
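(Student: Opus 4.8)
The plan is to reduce the statement to the unreformulated composition result, Theorem~\ref{compof_*0_theo}, by inserting the change-of-duality functors $I_\iota$ and exploiting their compatibility with composition. Write $(f,\phi)\colon(X,K)\to(Y,L)$ and $(g,\psi)\colon(Y,L)\to(Z,M)$, so that $\phi\colon K\to f^!L$, $\psi\colon L\to g^!M$, and the composite in $\cB^!$ is $(gf,\ec_{g,f}\circ f^!(\psi)\circ\phi)$. First I would unwind both functors in the triangle straight from the definition of $\func{f,\phi}_*$: on the one hand $\func{g,\psi}_*\func{f,\phi}_* = \pair{g_*,\rr_M}\,I_\psi\,\pair{f_*,\rr_L}\,I_\phi$, and on the other hand $\func{(g,\psi)(f,\phi)}_* = \pair{(gf)_*,\rr_M}\,I_{\ec_{g,f}\circ f^!(\psi)\circ\phi}$. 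Since $I$ respects composition (Lemma~\ref{isoKisoWitt}), the latter rewrites as $\pair{(gf)_*,\rr_M}\,I_{\ec_{g,f}}\,I_{f^!(\psi)}\,I_\phi$.

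Next I would invoke Theorem~\ref{compof_*0_theo} with the object $M\in\cC_Z$, which asserts that $\eb_{g,f}$ is a morphism of duality preserving functors from $\pair{(gf)_*,\rr_M}\,I_{\ec_{g,f,M}}$ to $\pair{g_*,\rr_M}\pair{f_*,\rr_{g^!M}}$. Precomposing (whiskering on the right) with the fixed duality preserving functor $I_{f^!(\psi)}\,I_\phi$ leaves the underlying natural transformation equal to $\eb_{g,f}$, since the underlying functor of $I_{f^!(\psi)}I_\phi$ is the identity; and the defining square \diag{M} of Definition~\ref{MorphDualPresFunc_defi} stays commutative because it is obtained from the one for $\eb_{g,f}$ by precomposition. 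Hence $\eb_{g,f}$ is a morphism of duality preserving functors from $\pair{(gf)_*,\rr_M}I_{\ec_{g,f}}I_{f^!(\psi)}I_\phi=\func{(g,\psi)(f,\phi)}_*$ to $\pair{g_*,\rr_M}\pair{f_*,\rr_{g^!M}}I_{f^!(\psi)}I_\phi$.

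It then remains only to identify this last target with $\func{g,\psi}_*\func{f,\phi}_*$, that is, to prove the equality of duality preserving functors
$$\pair{f_*,\rr_{g^!M}}\,I_{f^!(\psi)} \;=\; I_\psi\,\pair{f_*,\rr_L}\colon\ \cC_{X,f^!L}\longrightarrow\cC_{Y,g^!M}.$$
Both sides have underlying functor $f_*$, so unwinding the composition law for duality preserving functors (recalled after Definition~\ref{DualPresFunc_defi}) reduces the claim to the commutativity of a single square of morphisms $f_*D_{f^!L}^o\to D_{g^!M}^o f_*^o$. This square is exactly the naturality, in the dualizing-object variable $-\in\cC_1$, of the bifunctor morphism $\rr\colon f_*\HHom{*}{f^!(-)}\to\HHom{f_*(*)}{-}$ evaluated at $\psi\colon L\to g^!M$, and it commutes because $\rr$ is by construction a morphism of bifunctors (Theorem~\ref{PushForward0_theo}). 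I expect this compatibility square to be the only genuine content; the rest is formal. The main nuisance will be keeping track of the opposite-category decorations and the directions of the induced maps $\tilde\psi\colon D_L\to D_{g^!M}$ and $\widetilde{f^!(\psi)}\colon D_{f^!L}\to D_{f^!g^!M}$, but once the naturality square is in place the target coincides with $\func{g,\psi}_*\func{f,\phi}_*$ and the theorem follows.
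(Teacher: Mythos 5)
Your proposal is correct and follows exactly the route the paper intends: the theorem is one of those left as an exercise in Section \ref{variousReform_sec}, where the authors say the reformulations reduce to the original theorems (here Theorem \ref{compof_*0_theo}) via simple commutative diagrams involving the functors $I_\iota$ of Lemma \ref{isoKisoWitt}. Your exchange identity $\pair{f_*,\rr_{g^!M}}\,I_{f^!(\psi)}=I_\psi\,\pair{f_*,\rr_L}$, justified by naturality of the bifunctor morphism $\rr$ in its $\cC_1$-variable, is precisely the ``simple commutative diagram'' in question, and the whiskering step is the standard compatibility of Definition \ref{MorphDualPresFunc_defi} with composition.
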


\begin{coro} (composition of push-forwards for Witt groups)
Let $(f,\phi)$ and $(g,\psi)$ be composable morphisms in $\cB^!_W$. Then the push-forwards for Witt groups of Definition \ref{PushForward2Witt_defi} satisfy $\func{(g,\psi)(f,\phi)}_*^W=\func{g,\psi}_*^W \func{f,\phi}_*^W$. In other words, $\W^*$ is a covariant functor from $\cB^!_W$ to graded abelian groups. 
\end{coro}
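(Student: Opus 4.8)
The plan is to deduce this purely formally from the preceding composition-of-push-forwards theorem together with Proposition \ref{samemorphismWitt_prop}, exactly in the way that all the other Witt-group corollaries in the article are derived from their underlying categorical statements. No new diagram has to be checked: the single categorical commutativity has already been supplied by the theorem, and the corollary only has to translate it into Witt groups.

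First I would record that, since $(f,\phi)$ and $(g,\psi)$ lie in $\cB^!_W$, the objects $K$, $L$, $M$ are dualizing and $\phi$, $\psi$ are isomorphisms; hence the functors $I_\phi$, $I_\psi$ of Lemma \ref{isoKisoWitt} are strong, and the push-forwards $\func{f,\phi}_*$, $\func{g,\psi}_*$ and $\func{(g,\psi)(f,\phi)}_*$ are \emph{strong}, $1$-exact duality preserving functors between triangulated categories with strong duality (the $\rr$'s being isomorphisms as in Proposition \ref{adjpairsf_*_prop}). In particular the composite $\func{g,\psi}_*\func{f,\phi}_*$ is again a strong $1$-exact duality preserving functor, and by functoriality of the transfer its induced map on Witt groups is $\func{g,\psi}_*^W\func{f,\phi}_*^W$; this functoriality is a routine check from the explicit formula $\psi \mapsto f_A\circ F(\psi)$ of Proposition \ref{dualPresIsTransfer_prop}. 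I would also note here that $\cB^!_W$ is closed under composition, so that $\func{(g,\psi)(f,\phi)}_*^W$ is defined at all.

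Next I would invoke the composition theorem itself: it exhibits $\eb_{g,f}\colon (gf)_*\to g_*f_*$ as a morphism of duality preserving functors from $\func{(g,\psi)(f,\phi)}_*$ to $\func{g,\psi}_*\func{f,\phi}_*$. Because $\eb_{g,f}$ is a component of the pseudo functor structure it is an \emph{isomorphism} of the underlying functors, hence a \emph{strong} morphism of duality preserving functors in the sense of Definition \ref{MorphDualPresFunc_defi}. Proposition \ref{samemorphismWitt_prop} then applies verbatim and shows that the two functors induce the \emph{same} homomorphism on Witt groups, which is precisely the equality $\func{(g,\psi)(f,\phi)}_*^W=\func{g,\psi}_*^W\func{f,\phi}_*^W$. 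The concluding assertion that $\W^*$ is a covariant functor on $\cB^!_W$ then follows, this equality being exactly compatibility with composition, while $(Id_X,Id_K)$ manifestly induces the identity.

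The only point requiring any care — and it is bookkeeping rather than a genuine obstacle — is verifying the strongness and $1$-exactness hypotheses needed to apply Propositions \ref{dualPresIsTransfer_prop} and \ref{samemorphismWitt_prop}, and confirming the transfer respects composition. All of these are immediate from the definition of $\cB^!_W$ and from Lemma \ref{isoKisoWitt}, so I expect the write-up to be a single short paragraph citing the theorem and Proposition \ref{samemorphismWitt_prop}.
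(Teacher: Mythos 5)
Your proposal is correct and is precisely the argument the paper intends: the proofs of Section \ref{reformcorrectdual_sec} are explicitly left as exercises at the start of Section \ref{variousReform_sec}, the intended derivation being exactly what you give --- the composition-of-push-forwards theorem for $\func{-}_*$ combined with Proposition \ref{samemorphismWitt_prop}, using that $\eb_{g,f}$ is an isomorphism of the underlying functors and hence a strong morphism of duality preserving functors, in the same pattern as Corollary \ref{compof_*0Witt_coro}. One small inaccuracy in your bookkeeping paragraph: you cite Proposition \ref{adjpairsf_*_prop} to conclude that the $\rr$'s are isomorphisms, but that proposition requires Assumption \assumption{\ref{assuProjFormIso}}{f}, which morphisms of $\cB^!_W$ (built over $\cB'$, not $\cB''$) are not assumed to satisfy; this strongness point is already packaged into Definition \ref{PushForward2Witt_defi}, on which the corollary rests, so it does not affect the structure of your proof.
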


\begin{theo} (base change)
Let $(f,\phi)$ and $(\bar{f},\bar{\phi})$ be morphisms in $\cB^*$, $(g,\psi)$, and $(\bar{g},\bar{\psi})$ be morphisms in $\cB^!$ fitting in the diagram 
$$\xymatrix@R=3ex{
(V,N) \ar[r]^{(\bar{g},\bar{\psi})} \ar[d]_{(\bar{f},\bar{\phi})} & (Y,L) \ar[d]^{(f,\phi)} \\
(X,M) \ar[r]_{(g,\psi)} & (Z,K)
}$$
such that $f \bar{g} = g \bar{f} \in \cB$, such that Assumptions \assumption{\ref{epsIso}}{f,g} is satisfied
and such that the diagram
$$\xymatrix@R=0ex@C=10ex{
 & \bar{f}^* M \ar[dl]_{\bar{\phi}} \ar[r]^{\bar{f}^*(\psi)} & \bar{f}^* g^! K \ar[dd]^{\gam_K} \\ 
N \ar[dr]_{\bar{\psi}} & & \\
 & \bar{g}^! L & \bar{g}^! f^* K \ar[l]^{\bar{g}^!(\phi)} 
}$$
is commutative. Then the morphism $\eps:f^* g_* \to \bar{g}_* \bar{f}^*$ is duality preserving between the compositions of duality preserving functors in the diagram.
$$\xymatrix@R=3ex{
\cC_{V,N} \ar[r]^{\func{\bar{g},\bar{\psi}}_*} & \cC_{Y,L} \\
\cC_{X,M} \ar[u]^{\func{\bar{f},\bar{\phi}}^*} \ar[r]_{\func{g,\psi}_*} & \cC_{Z,K} \ar[u]_{\func{f,\phi}^*} \ar@{=>}[ul]^{\eps}
}$$
\end{theo}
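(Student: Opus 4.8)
The plan is to deduce this reformulation directly from Theorem \ref{basechange0_theo} by conjugating the duality preserving morphism $\eps$ produced there with the correction functors $I_\iota$ of Lemma \ref{isoKisoWitt}. First I would unfold the two composite functors using the definitions of $\func{f,\phi}^*$ and $\func{g,\psi}_*$: recalling that $(g,\psi)\colon(X,M)\to(Z,K)$ in $\cB^!$ means $\psi\colon M\to g^!K$ and that $(f,\phi)\colon(Y,L)\to(Z,K)$ in $\cB^*$ means $\phi\colon f^*K\to L$, one gets
\[
\func{f,\phi}^*\,\func{g,\psi}_*=I_\phi\,\pair{f^*,\fh_K}\,\pair{g_*,\rr_K}\,I_\psi,
\]
\[
\func{\bar g,\bar\psi}_*\,\func{\bar f,\bar\phi}^*=\pair{\bar g_*,\rr_L}\,I_{\bar\psi}\,I_{\bar\phi}\,\pair{\bar f^*,\fh_M},
\]
with $\bar\phi\colon\bar f^*M\to N$ and $\bar\psi\colon N\to\bar g^!L$. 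The source functor is thus exactly $I_\phi$ composed on the left, and $I_\psi$ on the right, with the source functor $\pair{f^*,\fh_K}\pair{g_*,\rr_K}$ of Theorem \ref{basechange0_theo}. The whole point is to recognize the target functor as the same kind of conjugate of the target functor $\pair{\bar g_*,\rr_{f^*K}}\,I_{\gam_K}\,\pair{\bar f^*,\fh_{g^!K}}$ of that theorem.

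The key intermediate step is two commutation identities of duality preserving functors expressing the naturality of $\fh$ and $\rr$ in the dualizing object (both being morphisms of bifunctors, see Proposition \ref{defifh} and Theorem \ref{PushForward0_theo}): for a morphism $\iota\colon A\to B$ one has $\pair{\bar f^*,\fh_B}\,I_\iota=I_{\bar f^*\iota}\,\pair{\bar f^*,\fh_A}$ and $I_\iota\,\pair{\bar g_*,\rr_A}=\pair{\bar g_*,\rr_B}\,I_{\bar g^!\iota}$, each verified by comparing structure $2$-cells through the composition rule $\pair{F',f'}\pair{F,f}=\pair{F'F,f'F^o\circ F'f}$. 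Applying the first to $\iota=\psi$ and the second to $\iota=\phi$, and using that the $I$'s compose ($I_\kappa I_\iota=I_{\kappa\iota}$, Lemma \ref{isoKisoWitt}), I would rewrite
\[
I_\phi\,\pair{\bar g_*,\rr_{f^*K}}\,I_{\gam_K}\,\pair{\bar f^*,\fh_{g^!K}}\,I_\psi=\pair{\bar g_*,\rr_L}\,I_{\bar g^!(\phi)\circ\gam_K\circ\bar f^*(\psi)}\,\pair{\bar f^*,\fh_M}.
\]
The commutative hypothesis diagram says precisely $\bar g^!(\phi)\circ\gam_K\circ\bar f^*(\psi)=\bar\psi\circ\bar\phi$, so (again by Lemma \ref{isoKisoWitt}) the right-hand side is $\pair{\bar g_*,\rr_L}\,I_{\bar\psi}\,I_{\bar\phi}\,\pair{\bar f^*,\fh_M}=\func{\bar g,\bar\psi}_*\,\func{\bar f,\bar\phi}^*$. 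Hence the target of the reformulated diagram is the conjugate by $I_\phi$ and $I_\psi$ of the target in Theorem \ref{basechange0_theo}.

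Finally I would invoke Theorem \ref{basechange0_theo}: $\eps$ is a morphism of duality preserving functors from $\pair{f^*,\fh_K}\pair{g_*,\rr_K}$ to $\pair{\bar g_*,\rr_{f^*K}}I_{\gam_K}\pair{\bar f^*,\fh_{g^!K}}$. Whiskering this $2$-cell by the duality preserving functor $I_\phi$ on the left and $I_\psi$ on the right yields a morphism of duality preserving functors between the two conjugated functors, which by the previous paragraph are exactly $\func{f,\phi}^*\func{g,\psi}_*$ and $\func{\bar g,\bar\psi}_*\func{\bar f,\bar\phi}^*$; moreover, since each $I_\iota$ is the identity on underlying functors, the whiskered $2$-cell is still the natural transformation $\eps$ (Definition \ref{MorphDualPresFunc_defi}). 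This gives the desired conclusion. The main obstacle is the careful bookkeeping in the two commutation identities: one must check that naturality of $\fh$ and $\rr$ in the object variable upgrades from an equality of underlying natural transformations to an equality of duality preserving functors (matching the structure $2$-cells), and then keep straight the direction of each of $\phi,\psi,\bar\phi,\bar\psi$ and the placement of $\gam_K$ so that the hypothesis diagram is applied on the nose.
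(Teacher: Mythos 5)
Your proposal is correct and is exactly the argument the paper intends: the paper leaves the proofs of Section \ref{reformcorrectdual_sec} as exercises, stating that they follow from the main theorems (here Theorem \ref{basechange0_theo}) by simple commutations with the correction functors $I_\iota$ of Lemma \ref{isoKisoWitt}, which is precisely your conjugation of $\eps$ by $I_\phi$ and $I_\psi$, using naturality of $\fh$ and $\rr$ in the dualizing object and the hypothesis $\bar{g}^!(\phi)\circ\gam_K\circ\bar{f}^*(\psi)=\bar{\psi}\circ\bar{\phi}$ to identify the target functor. The bookkeeping you flag (directions of $\phi,\psi,\bar{\phi},\bar{\psi}$, matching the structure $2$-cells via the composition rule $\pair{F',f'}\pair{F,f}=\pair{F'F,f'F^o\circ F'f}$) all checks out.
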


\begin{coro} (base change for Witt groups)
In the situation of the theorem, assuming furthermore that $(f,\phi)$ and $(\bar{f},\bar{\phi})$ are in $\cB^*_W$, $(g,\psi)$ and $(\bar{g},\bar{\psi})$ are in $\cB^!_W$ and $\gam_K$ is an isomorphism, the pull-backs and push-forwards on Witt groups of definitions \ref{PullBack2Witt_defi} and \ref{PushForward2Witt_defi} satisfy
$\func{\bar{g},\bar{\psi}}_*^W \func{\bar{f},\bar{\phi}}^*_W = \func{f,\phi}^*_W \func{g,\psi}_*^W$.
\end{coro}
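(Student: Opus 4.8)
The plan is to obtain this as a formal consequence of the base change theorem stated just above together with Proposition \ref{samemorphismWitt_prop}, in exactly the same spirit as the corollary following Theorem \ref{basechange0_theo}. That theorem already provides the morphism of duality preserving functors $\eps$ from $\func{f,\phi}^* \func{g,\psi}_*$ to $\func{\bar{g},\bar{\psi}}_* \func{\bar{f},\bar{\phi}}^*$, both regarded as functors $\cC_{X,M} \to \cC_{Y,L}$. Since the transfer on Witt groups is compatible with composition of duality preserving functors (by the explicit formula for $\pair{F',f'}\pair{F,f}$ given after Definition \ref{DualPresFunc_defi} together with the description of the transfer in Proposition \ref{dualPresIsTransfer_prop}), the composite Witt maps $\func{f,\phi}^*_W \func{g,\psi}_*^W$ and $\func{\bar{g},\bar{\psi}}_*^W \func{\bar{f},\bar{\phi}}^*_W$ are precisely the maps induced by these two composite functors. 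It therefore suffices to verify the hypotheses of Proposition \ref{samemorphismWitt_prop}: that both composites are $1$-exact strong duality preserving functors and that $\eps$ is a duality preserving isomorphism between them.

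First I would check the strongness of the four elementary factors. As $(f,\phi)$ and $(\bar{f},\bar{\phi})$ lie in $\cB^*_W$, the maps $\phi,\bar{\phi}$ and the maps $\fh_{f,\cdot},\fh_{\bar{f},\cdot}$ are isomorphisms, so by Definition \ref{PullBack2Witt_defi} the functors $\func{f,\phi}^*=I_\phi\pair{f^*,\fh_K}$ and $\func{\bar{f},\bar{\phi}}^*$ are strong. Likewise, as $(g,\psi)$ and $(\bar{g},\bar{\psi})$ lie in $\cB^!_W$, the maps $\psi,\bar{\psi}$ are isomorphisms and the relevant $\rr$'s are isomorphisms (Proposition \ref{adjpairsf_*_prop}), so $\func{g,\psi}_*=\pair{g_*,\rr_K}I_\psi$ and $\func{\bar{g},\bar{\psi}}_*$ are strong. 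The composition formula after Definition \ref{DualPresFunc_defi} shows at once that a composite of strong duality preserving functors is strong, so both ways around the square are strong. For exactness, each of the four factors is $1$-exact --- this is exactly what lets each induce a Witt map through Proposition \ref{dualPresIsTransfer_prop} --- and since composing a $\delta$-exact functor with a $\delta'$-exact one gives a $\delta\delta'$-exact one (Remark \ref{signTF}), both composites are $1$-exact.

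Finally, $\eps$ is an isomorphism of the underlying functors by Assumption \assumption{\ref{epsIso}}{f,g}, hence a strong, i.e.\ duality preserving, morphism between the two composite functors. Proposition \ref{samemorphismWitt_prop} then yields the desired equality $\func{\bar{g},\bar{\psi}}_*^W \func{\bar{f},\bar{\phi}}^*_W = \func{f,\phi}^*_W \func{g,\psi}_*^W$ of induced morphisms on Witt groups. I do not anticipate any serious obstacle, since the whole mathematical content is carried by the base change theorem; the only point needing a little care is the bookkeeping of the strongness hypotheses. One must confirm that membership in $\cB^!_W$ really does force the projection formula isomorphism (via Proposition \ref{adjpairsf_*_prop}) that makes the push-forwards strong, and that the hypothesis that $\gam_K$ is an isomorphism is what guarantees --- through the commutativity condition relating $\bar{\phi}$, $\bar{\psi}$, $\phi$, $\psi$ and $\gam_K$ imposed in the theorem --- that the barred composite genuinely matches the $I_{\gam_K}$-corrected composite of Theorem \ref{basechange0_theo}, so that Proposition \ref{samemorphismWitt_prop} applies verbatim.
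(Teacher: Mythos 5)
Your proposal is correct and coincides with the paper's intended proof: the authors explicitly leave the corollaries of Section \ref{reformcorrectdual_sec} as exercises, stating that these Witt-group statements follow trivially from the corresponding duality-preserving theorems together with Propositions \ref{dualPresIsTransfer_prop} and \ref{samemorphismWitt_prop}, which is exactly your argument (the base change theorem of this section supplies the duality preserving morphism $\eps$, which is an isomorphism by Assumption \assumption{\ref{epsIso}}{f,g}, the $\cB^*_W$ and $\cB^!_W$ memberships supply the strongness of the four factors and of their composites, and $1$-exactness is preserved under composition). The two points you flag at the end also resolve in your favour: the strongness of the push-forwards is precisely what Definition \ref{PushForward2Witt_defi} presupposes for morphisms of $\cB^!_W$, so you use it exactly as the paper does, and in fact the commutative diagram in the theorem, with $\phi$, $\psi$, $\bar{\phi}$, $\bar{\psi}$ all invertible, already forces the relevant component of $\gam_K$ to be an isomorphism, so that hypothesis causes no difficulty.
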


\begin{theo} (projection formula)
Let $f:X \to Y$ be a morphism in $\cB''$. Let $K,M \in \cC_X$ and $L,N \in \cC_Y$ be objects, $\phi:f^* L \to K$, $\psi: M \to f^! N$ and $\chi: M \TTens K \to f^! (N \TTens L)$ be morphisms such that
$$\xymatrix@R=3ex{
M \TTens K \ar[r]^{\chi} & f^! (N \TTens L) \\
M \TTens f^* L \ar[u]^{Id \TTens \phi} \ar[r]^{\psi \TTens Id} & f^! N \TTens f^* L \ar[u]_{\ssp_{N,L}}
}$$
is commutative. 

Then, the morphism $\q$ is duality preserving between the compositions in the diagram
$$\xymatrix@R=3ex@C=3ex{
 & \cC_{X,M} \times \cC_{X,K} \ar[rr]^-{\pair{\TTens}} & & \cC_{X,M \TTens K} \ar[dr]^-{\func{f,\chi}_*} \\ 
\cC_{X,M} \times \cC_{Y,L} \ar[ur]^{Id \times \func{f,\phi}^*} \ar[rr]^-{\func{f,\psi}_* \times Id} & & \cC_{Y,N} \times \cC_{Y,L} \ar@{=>}[u]^{\q} \ar[rr]^{\pair{\TTens}} & & \cC_{Y, N\TTens L}
}$$
\end{theo}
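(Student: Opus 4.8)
The plan is to deduce this reformulation directly from the original projection formula, Theorem~\ref{projform0_theo}, by precomposing the morphism $\q$ with a duality preserving functor of the form $I_\psi \times Id$ and rewriting the target path by means of the compatibility square. Throughout I identify $\cC_X$ with the category $\cC_2$ and $\cC_Y$ with $\cC_1$ of Theorem~\ref{projform0_theo}, and I apply that theorem with its objects $K,M$ replaced by $N,L\in\cC_Y$. It then asserts that $\q$ is a morphism of duality preserving functors from
$$A := \pair{\TTens,\dd_{N,L}}\pair{f_* \times Id,\rr_N \times id}$$
to
$$B := \pair{f_*,\rr_{N \TTens L}}\, I_{\ssp_{N,L}}\, \pair{\TTens,\dd_{f^!N,f^*L}}\pair{Id \times f^*, id \times \fh_L},$$
both functors $\cC_{X,f^!N}\times\cC_{Y,L}\to\cC_{Y,N\TTens L}$, the underlying natural transformation being $\q_{A,B}\colon f_*A\TTens B\to f_*(A\TTens f^*B)$.

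First I would record the elementary fact that precomposing a morphism of duality preserving functors with a duality preserving functor again yields a morphism of duality preserving functors; this is a routine whiskering argument using the interchange law and Diagram $\diag{M}$, and it leaves the underlying natural transformation unchanged. Since $I_\psi\colon\cC_{X,M}\to\cC_{X,f^!N}$ (induced by $\psi\colon M\to f^!N$ via Lemma~\ref{isoKisoWitt}) is duality preserving and acts as the identity on objects, $\q$ is then automatically a morphism of duality preserving functors from $A\circ(I_\psi\times Id)$ to $B\circ(I_\psi\times Id)$, still with underlying $\q_{A,B}$.

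It then remains to identify these two composites with the bottom and top paths of the reformulated diagram. For the source this is immediate on unwinding definitions: since $\pair{f_*\times Id,\rr_N\times id}=\pair{f_*,\rr_N}\times Id$ and $\func{f,\psi}_*=\pair{f_*,\rr_N}I_\psi$, one gets $A\circ(I_\psi\times Id)=\pair{\TTens,\dd_{N,L}}(\func{f,\psi}_*\times Id)$, the bottom path. For the target I would use two building blocks: the composition law $I_\kappa I_\iota=I_{\kappa\iota}$ of Lemma~\ref{isoKisoWitt}, and the naturality of $\dd$ in its two object-parameters, namely that for morphisms $a\colon P'\to P$ and $b\colon Q'\to Q$ one has $\pair{\TTens,\dd_{P,Q}}(I_a\times I_b)=I_{a\TTens b}\pair{\TTens,\dd_{P',Q'}}$ as duality preserving functors (I would isolate this as a small lemma; it follows from the naturality of $\ev^l$ and $\coev^l$ used to build $\dd$). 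Inserting $\func{f,\phi}^*=I_\phi\pair{f^*,\fh_L}$ on the top path and noting that the two factors $I_\psi\times Id$ and $Id\times\pair{f^*,\fh_L}$ act on different coordinates and hence commute, both the top inner factor and the inner factor of $B\circ(I_\psi\times Id)$ (after removing the common leftmost factor $\pair{f_*,\rr_{N\TTens L}}$) collapse, by the two building blocks, to $I_{\chi\circ(Id\TTens\phi)}\,\pair{\TTens,\dd_{M,f^*L}}(Id\times\pair{f^*,\fh_L})$ and $I_{\ssp_{N,L}\circ(\psi\TTens Id)}\,\pair{\TTens,\dd_{M,f^*L}}(Id\times\pair{f^*,\fh_L})$ respectively. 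These agree precisely because the compatibility square of the statement reads $\chi\circ(Id\TTens\phi)=\ssp_{N,L}\circ(\psi\TTens Id)$; hence the two target composites coincide as duality preserving functors, completing the argument.

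The main obstacle I anticipate is the naturality lemma for $\dd$ in its object-parameters: it is the one genuinely new verification, and the point to be careful about is that $\pair{\TTens,\dd_{P,Q}}(I_a\times I_b)=I_{a\TTens b}\pair{\TTens,\dd_{P',Q'}}$ must hold not merely on underlying functors but as duality preserving functors, i.e. the duality structure maps must match on the nose. Beyond that, the only thing to watch is the bookkeeping of domains and codomains of the various $I$-functors — in particular that $\chi\circ(Id\TTens\phi)$ and $\ssp_{N,L}\circ(\psi\TTens Id)$ are genuinely parallel morphisms $M\TTens f^*L\to f^!(N\TTens L)$, so that the associated $I$-functors share source and target — together with the consistent translation of the variables $K,M$ of Theorem~\ref{projform0_theo} into the present $N,L$.
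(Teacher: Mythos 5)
Your proposal is correct and takes essentially the approach the paper intends: Section \ref{variousReform_sec} explicitly says the reformulated theorems are deduced from the originals (here Theorem \ref{projform0_theo}) by correcting dualities with the functors $I_\iota$ of Lemma \ref{isoKisoWitt} via simple commutative diagrams, and leaves the verification as an exercise. Your whiskering of $\q$ by $I_\psi \times Id$, the composition law $I_\kappa I_\iota = I_{\kappa\iota}$, the naturality of $\dd$ in its object-parameters, and the hypothesis square $\chi\circ(Id\TTens\phi)=\ssp_{N,L}\circ(\psi\TTens Id)$ constitute precisely that exercise, correctly carried out.
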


\begin{coro} (projection formula for Witt groups)
In the situation of the theorem, assuming furthermore that $(f,\phi)$ is in $\cB^*_W$ and $(f,\psi)$ and $(f,\chi)$ are in $\cB^!_W$, the push-forward, pull-back and product for Witt groups of Definition \ref{PushForward2Witt_defi}, Definition \ref{PullBack2Witt_defi} and Proposition \ref{existProduct} satisfy $\func{f,\chi}_*^W (x.\func{f,\phi}^*_W(y))=\func{f,\psi}_*^W(x).y$ in $\W^{i+j}(Y, N \TTens L)$ for any $x\in \W^{i}(X,M)$ and $y \in \W^j(Y,L)$.
\end{coro}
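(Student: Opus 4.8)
The plan is to deduce this reformulation directly from Theorem \ref{projform0_theo}, the content of the section being that the corrections $I_\iota$ of Lemma \ref{isoKisoWitt} can be reorganized so as to match the intrinsic correction $I_{\ssp}$ occurring there. First I would unfold the functors in the diagram according to their definitions: $\func{f,\phi}^* = I_\phi\pair{f^*,\fh_L}$, $\func{f,\psi}_* = \pair{f_*,\rr_N}\,I_\psi$ and $\func{f,\chi}_* = \pair{f_*,\rr_{N\TTens L}}\,I_\chi$, while $\pair{\TTens}$ abbreviates $\pair{-\TTens-,\dd}$ with the evident index. The underlying morphism of functors $\q$ is literally the one of Theorem \ref{projform0_theo}, so the whole content is the commutativity of diagram \diag{M} of Definition \ref{MorphDualPresFunc_defi} for the corrected duality-preserving structures, which is all that has to be re-proved.

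The dictionary with Theorem \ref{projform0_theo} is as follows: its objects $K$ and $M$ play the roles of $N$ and $L$ here; the slot carrying $f^!K$ now carries the general object $M$ together with $\psi:M\to f^!N$; the slot carrying $f^*M$ now carries $K$ together with $\phi:f^*L\to K$; and the combined slot $f^!K\TTens f^*M \arr{\ssp} f^!(K\TTens M)$ now carries $M\TTens K$ together with $\chi$. The two elementary facts I would record are the naturality of the product with respect to the $I_\iota$, namely the equalities of duality-preserving functors $\pair{-\TTens-,\dd_{M,K}}\,(Id\times I_\phi) = I_{Id_M\TTens\phi}\,\pair{-\TTens-,\dd_{M,f^*L}}$ and its first-factor analogue $\pair{-\TTens-,\dd_{f^!N,f^*L}}\,(I_\psi\times Id) = I_{\psi\TTens Id}\,\pair{-\TTens-,\dd_{M,f^*L}}$. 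Each is a routine diagram of type \diag{mf} combined with the defining square of $I_\iota$.

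With these in hand the computation is direct. The top (target) composition is $\pair{f_*,\rr_{N\TTens L}}\, I_\chi\, \pair{-\TTens-,\dd_{M,K}}\,(Id\times I_\phi)\,(Id\times\pair{f^*,\fh_L})$; applying the first rule rewrites $\pair{-\TTens-,\dd_{M,K}}(Id\times I_\phi)$ as $I_{Id_M\TTens\phi}\,\pair{-\TTens-,\dd_{M,f^*L}}$, and since $I_\chi\,I_{Id_M\TTens\phi}=I_{\chi\circ(Id_M\TTens\phi)}$ by Lemma \ref{isoKisoWitt}, the hypothesis $\chi\circ(Id_M\TTens\phi)=\ssp_{N,L}\circ(\psi\TTens Id)$ replaces this correction by $I_{\ssp_{N,L}}\,I_{\psi\TTens Id}$. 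Pushing $I_{\psi\TTens Id}$ back out through the product via the second rule then identifies the target composition with $\pair{f_*,\rr_{N\TTens L}}\,I_{\ssp_{N,L}}\,\pair{-\TTens-,\dd_{f^!N,f^*L}}\,(Id\times\pair{f^*,\fh_L})$ precomposed with $I_\psi\times Id$, i.e. with the target functor of Theorem \ref{projform0_theo} (read through the dictionary) precomposed with $I_\psi\times Id$. The source composition $\pair{-\TTens-,\dd_{N,L}}\,(\pair{f_*,\rr_N}I_\psi\times Id)$ is at once the source functor of that theorem precomposed with the same $I_\psi\times Id$.

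Both compositions are thus the source and target functors of Theorem \ref{projform0_theo} precomposed with the single duality-preserving functor $I_\psi\times Id$, which is the identity on underlying functors; and $\q$ is the same natural transformation. Hence diagram \diag{M} for the present, corrected functors is obtained from diagram \diag{M} of Theorem \ref{projform0_theo} by whiskering on the right with $I_\psi\times Id$, so it commutes by that theorem together with the immediate fact that whiskering a morphism of duality-preserving functors by a duality-preserving functor again yields one. The main obstacle is purely the bookkeeping of the two naturality rules: one must keep careful track of the order in which the corrections $I_\iota$ are commuted through the product bifunctor $\pair{-\TTens-,\dd}$, since (as emphasized in Remark \ref{remaHigherBif} and the surrounding discussion) the product is functorial only up to its explicit structure maps, and a misplaced correction would alter the resulting morphism.
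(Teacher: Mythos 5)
Your functor-level reduction is correct, and it is exactly the route the paper intends for the theorems of Section \ref{variousReform_sec} (whose proofs are explicitly left as exercises ``involving functors of the form $I_\iota$''): the two commutation rules you record hold by naturality of $\dd$ in each of its two duality-object parameters, the identity $I_\chi I_{Id_M \TTens \phi}=I_{\chi\circ(Id_M\TTens\phi)}$ is the composition statement of Lemma \ref{isoKisoWitt}, the hypothesis square of the theorem is precisely $\chi\circ(Id_M\TTens\phi)=\ssp_{N,L}\circ(\psi\TTens Id)$, and whiskering the morphism $\q$ of Theorem \ref{projform0_theo} with the duality preserving functor $I_\psi\times Id$ is legitimate (naturality of $\q$ plus the whiskered diagram \diag{M}). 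Note, however, that all of this re-proves the \emph{theorem} of this subsection, which the corollary already grants you by its opening phrase ``in the situation of the theorem''.

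The genuine gap is that the statement to be proved is an equality in $\W^{i+j}(Y,N\TTens L)$, and your argument stops at the level of duality preserving functors without ever passing to Witt groups; that passage is the entire content of the corollary (the introduction says these corollaries ``just rely on'' Propositions \ref{dualPresIsTransfer_prop} and \ref{samemorphismWitt_prop}). Concretely, what is missing: first, verify that the extra hypotheses make everything strong --- membership in $\cB^*_W$ and $\cB^!_W$ gives that $\phi$, $\psi$, $\chi$ and $\fh_{f,L}$ are isomorphisms and that $M$, $K$, $L$, $N$, $M\TTens K$, $N\TTens L$ are dualizing; $\rr_N$ and $\rr_{N\TTens L}$ are isomorphisms by Proposition \ref{adjpairsf_*_prop} because $f\in\cB''$ satisfies \assumption{\ref{assuProjFormIso}}{f}; and $\q$ itself is invertible for the same reason (with $\dd$ strong where the product of Proposition \ref{existProduct} is invoked). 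Second, apply Proposition \ref{samemorphismWitt_prop} --- in its bifunctor form, i.e.\ the unnamed proposition following Theorem \ref{GiNe} --- to the strong duality preserving isomorphism $\q$ between the two composed (bi)functors, to conclude that they induce the same pairing $\W^i(X,M)\times\W^j(Y,L)\to\W^{i+j}(Y,N\TTens L)$; unwinding this via Proposition \ref{dualPresIsTransfer_prop} and Definitions \ref{PullBack2Witt_defi} and \ref{PushForward2Witt_defi} yields $\func{f,\chi}_*^W(x.\func{f,\phi}^*_W(y))=\func{f,\psi}_*^W(x).y$. These steps are routine, but as written your proof establishes a statement about functors, not the asserted identity of Witt classes, and it spends its effort on the part of the corollary that was already hypothesis.
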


\subsection{Reformulations using a final object} \label{reformFinalObject_sec}

When the category $\cB$ has a final object denoted by $\pt$ that is also a final object for the subcategory $\cB''$, there is a reformulation of the main results using absolute 
(that is relative to the final object) canonical objects rather than relative ones as in section \ref{fofUnitObj_sec}. 
For any object $X\in \cB'$, let $p_X$ denote the unique morphism $X \to \pt$. We define $d_X=d_{p_X}$, $\abs'_X=\rel'_{p_X}$ and $\abs_X=\rel_{p_X}$.

\begin{defi}
Let $\cB_*$ be the category whose objects are pairs $(X,K)$ with $X \in \cB''$, $K\in \cC_X$, and whose morphisms from $(X,K)$ to $(Y,L)$ are pairs $(f,\phi)$ where $f: X \to Y$ is a morphism in $\cB''$ and $\phi:K \to f^*L$ is a morphism in $\cC_X$. The composition is defined by $(g,\psi)(f,\phi)=(gf, \eb_{g,f}) f^*(\psi) \phi $. The identity of $(X,K)$ is $(Id_X,Id_K)$ and the composition is clearly associative.
\end{defi}

\begin{defi} \label{B_*^W_defi}
Let $\cB^W_*$ be the subcategory of $\cB_*$ in which the objects $(X,K)$ are such that $\abs_X \TTens K$ is dualizing and the morphisms $(f,\phi):(X,K) \to (Y,L)$ are such that $\iota$ defined as the composition
$$\xymatrix@R=3ex@C=12ex{\abs'_X \TTens K \ar[r]^-{(\ec_{f,p_Y})^{-1}\TTens \phi} & f^!\abs'_Y \TTens f^* L \ar[r]^{\ssp_{\abs'_Y,L}} & f^!(\abs'_Y \TTens L).
}$$
is an isomorphism.
\end{defi}

\begin{defi} (push-forward) \label{PushForward3_defi}
For any morphism $(f,\phi):(X,K) \to (Y,L)$ in $\cB_*$, we define a duality preserving functor $\func{f,\phi}_*: \cC_{X,\abs'_X \TTens K} \to \cC_{Y,\abs'_Y \TTens L}$ by the composition 
$$\xymatrix@C=10ex{
\cC_{X,\abs'_X\TTens K} \ar[r]^-{I_{\iota}} & \cC_{X,f^!(\abs'_Y \TTens L)} \ar[r]^-{\pair{f_*,\rr_{\abs'_Y \TTens L}}} & \cC_{Y,L}.
}$$
where $\iota$ is defined as in Definition \ref{B_*^W_defi} (but not necessarily an isomorphism). 
\end{defi}

\begin{defi} (push-forward for Witt groups) \label{PushForward3Witt_defi}
Let $(f,\phi)$ be a morphism in $\cB_*^W$. Then the duality preserving functor $\func{f,\phi}_*$ of Definition \ref{PushForward3_defi} induces a morphism
$\func{f,\phi}_*^W : \W^{i+d_X}(X,\abs_X \TTens K) \to \W^{i+d_Y}(Y,\abs_Y \TTens L)$
on Witt groups.
\end{defi}

\begin{theo} (composition of push-forwards)
Let $(f,\phi)$ and $(g,\psi)$ be composable morphisms in $\cB_*$.
Then the morphism $\eb_{g,f}:(gf)_* \to g_* f_*$ is a morphism of duality preserving functors as in the following diagram. 
$$\xymatrix@R=3ex{
 & \cC_{Y,\abs'_Y \TTens L} \ar[dr]^{\func{g,\psi}_*} & \\
\cC_{X,\abs'_X \TTens K} \ar[rr]_{\func{(g,\psi)(f,\phi)}_*} \ar[ur]^{\func{f,\phi}_*} & \ar@{=>}[u]^(.4){\eb_{g,f}} & \cC_{Z,\abs'_Z \TTens M} 
}$$
\end{theo}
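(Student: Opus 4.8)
The plan is to deduce this statement from the composition theorem \ref{compof_*0_theo}, applied with the single dualizing object $N := \abs'_Z \TTens M$, by absorbing all dualizing-object corrections into functors of the form $I_\iota$ and then checking that these corrections match up. First I would unfold the definitions. By Definition \ref{PushForward3_defi}, write $\iota_f \colon \abs'_X \TTens K \to f^!(\abs'_Y \TTens L)$, $\iota_g \colon \abs'_Y \TTens L \to g^!N$ and $\iota_{gf}\colon \abs'_X\TTens K \to (gf)^!N$ for the three structure morphisms, each built from $\ssp$, $\ec$ and the relevant $\phi$, $\psi$, or $\chi=\ea_{g,f}\circ f^*(\psi)\circ\phi$. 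Then the three push-forwards appearing in the diagram are
$$\func{f,\phi}_* = \pair{f_*,\rr_{\abs'_Y\TTens L}}\,I_{\iota_f},\quad \func{g,\psi}_* = \pair{g_*,\rr_N}\,I_{\iota_g},\quad \func{(g,\psi)(f,\phi)}_* = \pair{(gf)_*,\rr_N}\,I_{\iota_{gf}}.$$

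Next I would slide the middle correction past the first push-forward. Since $\rr$ is a morphism of bifunctors (Theorem \ref{PushForward0_theo}), it is natural in the object defining the duality; this naturality gives the identity of duality preserving functors $I_{\iota_g}\,\pair{f_*,\rr_{\abs'_Y\TTens L}} = \pair{f_*,\rr_{g^!N}}\,I_{f^!(\iota_g)}$. Combined with $I_\kappa I_\iota = I_{\kappa\iota}$ from Lemma \ref{isoKisoWitt}, this rewrites the target functor as
$$\func{g,\psi}_*\func{f,\phi}_* = \pair{g_*,\rr_N}\,\pair{f_*,\rr_{g^!N}}\,I_{\beta},\qquad \beta := f^!(\iota_g)\circ\iota_f,$$
while the source functor is $\pair{(gf)_*,\rr_N}\,I_{\iota_{gf}}$.

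Now I would invoke Theorem \ref{compof_*0_theo} for the object $N$: it asserts that $\eb_{g,f}$ is a morphism of duality preserving functors from $\pair{(gf)_*,\rr_N}\,I_{\ec_{g,f,N}}$ to $\pair{g_*,\rr_N}\,\pair{f_*,\rr_{g^!N}}$. Whiskering this $2$-cell on the right by the duality preserving functor $I_\beta$, whose underlying functor is $Id_{\cC_X}$ so that the underlying natural transformation remains $\eb_{g,f}$, produces a morphism of duality preserving functors from $\pair{(gf)_*,\rr_N}\,I_{\ec_{g,f,N}}\,I_\beta = \pair{(gf)_*,\rr_N}\,I_{\ec_{g,f,N}\circ\beta}$ to $\func{g,\psi}_*\func{f,\phi}_*$. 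Hence everything reduces to the single identity of morphisms $\abs'_X\TTens K \to (gf)^!N$
$$\iota_{gf} = \ec_{g,f,N}\circ f^!(\iota_g)\circ\iota_f,$$
which identifies this whiskered source with $\func{(g,\psi)(f,\phi)}_*$.

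The main obstacle is precisely this last identity. It is a cocycle-type compatibility among $\ssp$, the pseudo-functor isomorphisms $\ec$ for $(-)^!$ and $\ea$, $\fp$ for $(-)^*$, together with the definition of composition in $\cB_*$. I would prove it by the same mechanism as Lemma \ref{cocyclecon}: expand $\iota_f$, $\iota_g$, $\iota_{gf}$ via Definition \ref{PushForward3_defi} and assemble the resulting diagram out of (i) the naturality and associativity of $\ssp$ encoded in the defining diagrams \ref{Rapp2}--\ref{G2app2} of Proposition \ref{projFormIso}, (ii) the associativity of $\ec$ (Lemma \ref{adjWeakAssoc}) together with the compatibility of the canonical objects $\rel'_{p_\bullet}$ recorded in Lemma \ref{cocyclecon}, and (iii) the monoidal coherence of $f^*$ (diagrams \ref{f^*tens} and \ref{f^*assoc}), needed to treat the factor $\chi=\ea_{g,f}\circ f^*(\psi)\circ\phi$ and to move $\fp$ past the tensor factors. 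Since all of these ingredients are already available, the identity follows by a large but routine diagram chase, which completes the proof.
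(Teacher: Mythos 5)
Correct, and essentially the paper's own route: the authors explicitly leave all proofs in Section \ref{reformFinalObject_sec} as exercises, stating that they follow from the main theorems by simple commutative diagrams involving functors of the form $I_\iota$ --- which is exactly your reduction of the statement to Theorem \ref{compof_*0_theo} (with $N=\abs'_Z\TTens M$) via the naturality of $\rr$ in its second variable, right-whiskering by $I_\beta$, and the cocycle identity $\iota_{gf}=\ec_{g,f,N}\circ f^!(\iota_g)\circ\iota_f$. You correctly isolate that last identity as the only real content, and your sketch for it (naturality of $\ssp$ in both variables, the compatibility of $\ssp$ with composition extracted by the mate calculus of Lemma \ref{cube} from diagram \diag{\ref{f^*tens}}, and the associativity of $\ec$ from Lemma \ref{adjWeakAssoc} evaluated at the unit object, assembled exactly as in Lemma \ref{cocyclecon}) is the intended argument.
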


\begin{coro} (composition of push-forwards for Witt groups)
Let $(f,\phi)$ and $(g,\psi)$ be composable morphisms in $\cB_*^W$. Then the push-forwards of Definition \ref{PushForward3Witt_defi} satisfy
$\func{(g,\psi)(f,\phi)}_*^W=\func{g,\psi}_*^W \func{f,\phi}_*^W$. In other words, $\W^*$ is a covariant functor from $\cB_*^W$ to graded abelian groups.
\end{coro}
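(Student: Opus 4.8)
The plan is to read off this corollary from the preceding theorem together with Proposition \ref{samemorphismWitt_prop}, exactly along the lines announced in the introduction for all the Witt-group corollaries. First I would invoke the theorem just proved: it exhibits $\eb_{g,f}$ as a morphism of duality preserving functors from $\func{(g,\psi)(f,\phi)}_*$ to the composite $\func{g,\psi}_* \func{f,\phi}_*$. Since $\eb_{g,f} : (gf)_* \to g_* f_*$ is by construction a (suspended, exact) \emph{isomorphism} of functors---it is the pseudofunctor structure isomorphism produced by Lemma \ref{adjWeak}---it is automatically a \emph{strong} morphism of duality preserving functors in the sense of Definition \ref{MorphDualPresFunc_defi}, i.e. a duality preserving isomorphism.

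The second step is the only one requiring attention: checking that the source and target of $\eb_{g,f}$ are $1$-exact strong duality preserving functors between triangulated categories with strong duality, so that Proposition \ref{samemorphismWitt_prop} applies. This is where the hypotheses defining $\cB_*^W$ are used. By Definition \ref{PushForward3_defi} each push-forward $\func{h,\chi}_*$ is the composite $\pair{h_*,\rr} I_\iota$. The functor $I_\iota$ is strong because $\iota$ is required to be an isomorphism for morphisms of $\cB_*^W$ (Definition \ref{B_*^W_defi}), and $\pair{h_*,\rr}$ is strong because every morphism of $\cB_*$ lies over a morphism of $\cB''$, whence Assumption \assumption{\ref{assuProjFormIso}}{h} holds and $\rr$ is invertible by Proposition \ref{adjpairsf_*_prop}; the $1$-exactness of $h_*$ (hence of the composite) follows from Proposition \ref{suspAdjExists} since $h^*$ is $1$-exact. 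The dualities at the two ends are strong precisely because the objects $\abs'_X \TTens K$, $\abs'_Y \TTens L$ and $\abs'_Z \TTens M$ are dualizing, again by the definition of $\cB_*^W$; and the passage between $\abs'$ and the desuspended $\abs$ used in Definition \ref{PushForward3Witt_defi} is effected by the iterated isomorphism of Proposition \ref{DTKTDK}.

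With both points in hand, Proposition \ref{samemorphismWitt_prop} forces $\func{(g,\psi)(f,\phi)}_*$ and $\func{g,\psi}_* \func{f,\phi}_*$ to induce the same homomorphism on Witt groups; together with the compatibility of the degree shifts ($d_X = d_f + d_Y$ and $d_Y = d_g + d_Z$, hence $d_X = d_{gf} + d_Z$, which makes the gradings of Definition \ref{PushForward3Witt_defi} compose) this is precisely the equality $\func{(g,\psi)(f,\phi)}_*^W = \func{g,\psi}_*^W \func{f,\phi}_*^W$. For the functoriality assertion I would finally note that $(Id_X,Id_K)$ induces the identity, since $(Id_X)_* = Id$ and the chosen structure isomorphisms are identities on identities. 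The main obstacle is thus not conceptual but purely the bookkeeping of the second paragraph: the corollary is ``trivial'' in the sense of the introduction once one checks that membership in $\cB_*^W$ genuinely supplies all the strongness needed to feed Proposition \ref{samemorphismWitt_prop}.
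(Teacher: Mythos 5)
Your proposal is correct and follows exactly the route the paper intends: the paper leaves the proofs of Section \ref{reformFinalObject_sec} as exercises, stating in the introduction that these Witt-group corollaries follow trivially from the preceding theorem together with Proposition \ref{samemorphismWitt_prop}, which is precisely your argument (invoke the theorem exhibiting $\eb_{g,f}$ as a duality preserving isomorphism, verify via Definition \ref{B_*^W_defi}, Proposition \ref{adjpairsf_*_prop} and Proposition \ref{suspAdjExists} that everything is strong and $1$-exact, then apply Proposition \ref{samemorphismWitt_prop}). Your additional bookkeeping on the degree shifts $d_X = d_f + d_Y$ and on identities being sent to identities is exactly the routine verification the authors leave to the reader.
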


To state a base change theorem, we need the following. Let $(f,\phi)$ and $(\bar{f},\bar{\phi})$ be morphisms in $\cB^*$ and $(g,\psi)$ and $(\bar{g},\bar{\psi})$ be morphisms in $\cB_*$ with sources and targets as on the diagram
$$\xymatrix@R=3ex{
(V,\abs'_V \TTens N) \ar[d]_{(\bar{f},\bar{\phi})} & (V,N) \ar[r]^{(\bar{g},\bar{\psi})} & (Y,L) & (Y,\abs'_Y \TTens L) \ar[d]^{(f,\phi)} \\
(X,\abs'_X \TTens M) & (X,M) \ar[r]_{(g,\psi)} & (Z,K) & (Z,\abs'_Z \TTens K) 
}$$
such that $f \bar{g} = g \bar{f} \in \cB$ and such that Assumption \assumption{\ref{epsIso}}{f,g} is satisfied. The morphism $\phi$ induces a morphism $``\phi"$ defined by the composition 
$$\xymatrix@R=3ex@C=15ex{
\bar{f}^*(\abs'_X \TTens g^* K) \ar@{-->}[d]_{``\phi"} \ar[r]^{\fp^{-1}} & \bar{f}^* \abs'_X \TTens \bar{f}^* g^* K \ar[r]^{\bar{f}^*(\ec_{p_Z,g})^{-1}\TTens Id} & \bar{f}^* g^! \abs'_Z \TTens \bar{f}^* g^* K \ar[d]^{\gam \TTens \xi^{-1}} \\
\bar{g}^! (\abs'_Y \TTens L) & \bar{g}^! f^* (\abs'_Z \TTens K) \ar[l]^{\bar{g}^!(\phi)} & \bar{g}^! f^* \abs'_Z \TTens \bar{g}^* f^* K \ar[l]^{\fp \circ \ssp} }$$
The morphism $\bar{\psi}$ induces a morphism $``\bar{\psi}"$ defined by the composition
$$\xymatrix@C=15ex{
\abs'_V \TTens N \ar[r]^{(\ec_{p_Y,\bar{g}})^{-1}\TTens \bar{\psi}} & \bar{g}^!\abs'_Y \TTens \bar{g}^* L \ar[r]^{\ssp} & \bar{g}^! (\abs'_Y \TTens L)
}$$

\begin{theo} (base change)
Let $(f,\phi)$, $(\bar{f},\bar{\phi})$, $(g,\psi)$ and $(\bar{g},\bar{\psi})$ be as above. 
We assume that the diagram
$$\xymatrix@R=3ex{
\abs'_V \TTens N \ar[r]^{``\bar{\psi}"} & \bar{g}^! (\abs'_Y \TTens L) \\
\bar{f}^* (\abs'_X \TTens M) \ar[u]^{\bar{\phi}} \ar[r]^{\psi} & \bar{f}^* (\abs_X \TTens g^* K) \ar[u]_{``\phi"}
}$$
is commutative.
Then, $\eps: f^* g_* \to \bar{g}_* \bar{f}^*$ is a morphism of duality preserving functors as in the following diagram.
$$\xymatrix@R=3ex{
\cC_{V,\abs'_V \TTens N} \ar[r]^{\func{\bar{g},\bar{\psi}}_*} & \cC_{Y,\abs'_Y \TTens L} \\
\cC_{X,\abs'_X \TTens M} \ar[u]^{\func{\bar{f},\bar{\phi}}^*} \ar[r]_{\func{g,\psi}_*} & \cC_{Z,\abs'_Z \TTens K} \ar[u]_{\func{f,\phi}^*} \ar@{=>}[ul]^{\eps}
}$$
\end{theo}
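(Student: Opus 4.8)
The plan is to deduce this statement from the base change theorem of Section \ref{reformcorrectdual_sec}, of which it is essentially the special case obtained by substituting the dualizing objects $\abs'_Z\TTens K$, $\abs'_Y\TTens L$, $\abs'_X\TTens M$, $\abs'_V\TTens N$ and by reinterpreting the $\cB_*$-morphisms $(g,\psi)$ and $(\bar g,\bar\psi)$ as $\cB^!$-morphisms. First I would record the identifications of functors. For a $\cB_*$-morphism $(g,\psi)\colon(X,M)\to(Z,K)$, the push-forward $\func{g,\psi}_*$ of Definition \ref{PushForward3_defi} is $\pair{g_*,\rr_{\abs'_Z\TTens K}}\circ I_{\iota_g}$ with $\iota_g\colon\abs'_X\TTens M\to g^!(\abs'_Z\TTens K)$, which is exactly the push-forward attached in Section \ref{reformcorrectdual_sec} to the $\cB^!$-morphism $(g,\iota_g)\colon(X,\abs'_X\TTens M)\to(Z,\abs'_Z\TTens K)$. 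Likewise $\func{\bar g,\bar\psi}_*$ is the push-forward of $(\bar g,\iota_{\bar g})$, and a direct comparison of the defining compositions shows $``\bar\psi"=\iota_{\bar g}$. The pull-backs $\func{f,\phi}^*$, $\func{\bar f,\bar\phi}^*$ are literally $\cB^*$-morphisms already. Thus the conclusion square here is, term by term, the conclusion square of the base change theorem of Section \ref{reformcorrectdual_sec} applied to $(g,\iota_g)$, $(\bar g,\iota_{\bar g})$, $(f,\phi)$, $(\bar f,\bar\phi)$ (by Lemma \ref{isoKisoWitt} the two correction functors on the upper-right composite glue into one).

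It then remains only to check that the present hypothesis implies the hypothesis of that earlier theorem, i.e. the commutativity of the hexagon
$$\iota_{\bar g}\circ\bar\phi=\bar g^!(\phi)\circ\gam_{\abs'_Z\TTens K}\circ\bar f^*(\iota_g).$$
Writing $``\phi"=\bar g^!(\phi)\circ``\phi"_0$, where $``\phi"_0$ is the composition defining $``\phi"$ with its last arrow $\bar g^!(\phi)$ deleted, the hypothesis square of the theorem reads $\iota_{\bar g}\circ\bar\phi=\bar g^!(\phi)\circ``\phi"_0\circ\bar f^*(\mathrm{id}\TTens\psi)$, with $\psi\colon M\to g^*K$ the structure map of $(g,\psi)$. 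Hence the two hypotheses become equivalent once one establishes the purely formal identity
$$``\phi"_0\circ\bar f^*(\mathrm{id}\TTens\psi)=\gam_{\abs'_Z\TTens K}\circ\bar f^*(\iota_g),\qquad(\dagger)$$
which involves none of $\phi$, $\bar\phi$, $\bar\psi$.

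To prove $(\dagger)$ I would feed diagram \diag{\ref{spgamm}} of Proposition \ref{moreCommDiag} (with $-=\abs'_Z$, $*=K$) into the definition of $``\phi"_0$: its initial segment $(\fp\circ\ssp)\circ(\gam\TTens\xi^{-1})$ is precisely the top edge of that diagram, which \diag{\ref{spgamm}} rewrites as the bottom edge $\gam\circ\bar f^*(\ssp)\circ\fp$. The remaining factor $\fp\circ(\bar f^*(\ec_{p_Z,g})^{-1}\TTens\mathrm{id})\circ\fp^{-1}$ collapses to $\bar f^*((\ec_{p_Z,g})^{-1}\TTens\mathrm{id})$ by naturality of $\fp$, so that $``\phi"_0=\gam\circ\bar f^*\!\big(\ssp\circ((\ec_{p_Z,g})^{-1}\TTens\mathrm{id})\big)$; composing with $\bar f^*(\mathrm{id}\TTens\psi)$ and merging $((\ec_{p_Z,g})^{-1}\TTens\mathrm{id})\circ(\mathrm{id}\TTens\psi)=(\ec_{p_Z,g})^{-1}\TTens\psi$ by functoriality of $\TTens$ identifies the inner map with $\iota_g$, yielding $(\dagger)$. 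Everything else is a transcription of the definitions, and the suspended case is automatic since we only quote an already-established theorem and manipulate commutative diagrams of morphisms of (suspended) functors. The only genuine work is $(\dagger)$, that is, matching the composition defining $``\phi"$ against \diag{\ref{spgamm}}; I expect the index bookkeeping for the pseudo-functor isomorphisms $\ec$ (fixing once and for all the canonical iso $g^!\abs'_Z\cong\abs'_X$) and the exact placement of $\xi^{-1}$ to be the main source of friction, but no idea beyond Proposition \ref{moreCommDiag} is required.
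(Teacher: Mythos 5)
Your proposal is correct and follows exactly the route the paper intends: the authors leave the proofs of Section \ref{reformFinalObject_sec} as exercises, saying they are obtained from the earlier theorems by inserting duality-correcting functors $I_\iota$, and your reduction to the base change theorem of Section \ref{reformcorrectdual_sec} — reading $(g,\psi)$ and $(\bar g,\bar\psi)$ as the $\cB^!$-morphisms $(g,\iota_g)$ and $(\bar g,\iota_{\bar g})$ with $\iota_{\bar g}=``\bar\psi"$, so that the two conclusion squares agree term by term — is precisely that bookkeeping. Your identity $(\dagger)$ is indeed the only non-formal step, and your verification of it via diagram \diag{\ref{spgamm}} of Proposition \ref{moreCommDiag} (with $-=\abs'_Z$, $*=K$, turning the $\gam_{\abs'_Z}\TTens\xi^{-1}$ in $``\phi"$ into the needed $\gam_{\abs'_Z\TTens K}\circ\bar f^*(\ssp)$) together with naturality of $\fp$ and bifunctoriality of $\TTens$ is sound.
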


\begin{coro} (base change for Witt groups)
In the situation of the theorem, assuming furthermore that $(f,\phi)$ and $(\bar{f},\bar{\phi})$ are in $\cB^*_W$ and $(g,\psi)$ and $(\bar{g},\bar{\psi})$ are in $\cB_*^W$ and $\gam_{\abs_Z}$ is an isomorphism. Then the pull-backs and push-forwards on Witt groups of definitions \ref{PullBack2Witt_defi} and \ref{PushForward3Witt_defi} satisfy $\func{\bar{g},\bar{\psi}}_*^W\func{\bar{f},\bar{\phi}}^*_W = \func{f,\phi}^*_W \func{g,\psi}_*^W$.
\end{coro}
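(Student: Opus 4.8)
The plan is to deduce this corollary from the base change theorem immediately preceding it in exactly the formal manner in which the base change corollary of Section~\ref{BaseChange} follows from Theorem~\ref{basechange0_theo}, namely through Proposition~\ref{samemorphismWitt_prop}. That theorem already supplies the isomorphism of functors $\eps\colon f^*g_* \to \bar{g}_*\bar{f}^*$ as a morphism of duality preserving functors from $\func{\bar{g},\bar{\psi}}_*\,\func{\bar{f},\bar{\phi}}^*$ to $\func{f,\phi}^*\,\func{g,\psi}_*$. Hence nothing about commutative diagrams remains to be checked; all that is needed is to pass from this categorical assertion to the claimed equality of induced maps on Witt groups.

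First I would verify that, under the stated hypotheses, both composite functors are $1$-exact strong duality preserving functors between triangulated categories with strong duality, so that Proposition~\ref{dualPresIsTransfer_prop} attaches a Witt-group homomorphism to each. The four categories $\cC_{X,\abs'_X \TTens M}$, $\cC_{V,\abs'_V \TTens N}$, $\cC_{Z,\abs'_Z \TTens K}$ and $\cC_{Y,\abs'_Y \TTens L}$ carry strong dualities because $(g,\psi),(\bar{g},\bar{\psi})\in\cB_*^W$ forces $\abs_X \TTens M$, $\abs_Z \TTens K$, $\abs_V \TTens N$ and $\abs_Y \TTens L$ to be dualizing, the passage between $\abs$ and $\abs'$ (a suspension by $T^{-d_X}$, etc.) preserving this by Proposition~\ref{DTKTDK}. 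The factors are strong for the same bookkeeping reason as in the individual existence theorems: membership of $(f,\phi),(\bar{f},\bar{\phi})$ in $\cB^*_W$ makes $\func{f,\phi}^*$ and $\func{\bar{f},\bar{\phi}}^*$ strong (the relevant $\fh$ and the correcting $I_\phi$, $I_{\bar{\phi}}$ are isomorphisms), while membership of $(g,\psi),(\bar{g},\bar{\psi})$ in $\cB_*^W$ makes $\func{g,\psi}_*$ and $\func{\bar{g},\bar{\psi}}_*$ strong (there $\rr$ is an isomorphism by Proposition~\ref{adjpairsf_*_prop} since these morphisms lie in $\cB''$, and the correcting functors $I_\iota$ of Definition~\ref{PushForward3_defi} are strong because $\iota$ is invertible by Definition~\ref{B_*^W_defi}). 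The invertibility of $\gam_{\abs_Z}$ plays here precisely the role that invertibility of $\gam_K$ plays in the corollary to Theorem~\ref{basechange0_theo}: it guarantees that the $\gam$-correcting datum folded into the definitions (the morphism ``$\phi$'' built from $\gam$ in the preceding theorem) is an isomorphism, so that the associated $I$-functor is strong. Since a composition of strong duality preserving functors is strong, both composites qualify.

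Using that the homomorphism induced by a composite of strong duality preserving functors is the composite of the induced homomorphisms (immediate from Proposition~\ref{dualPresIsTransfer_prop} and the composition rule for duality preserving functors), I would then identify the maps induced by the two composites with $\func{\bar{g},\bar{\psi}}_*^W\,\func{\bar{f},\bar{\phi}}^*_W$ and $\func{f,\phi}^*_W\,\func{g,\psi}_*^W$, the constituent maps being those of Definitions~\ref{PullBack2Witt_defi} and~\ref{PushForward3Witt_defi} and the grading shifts by $d_X$ and $d_Y$ being matched on the two sides through Proposition~\ref{DTKTDK}. Finally, Assumption~\assumption{\ref{epsIso}}{f,g} makes $\eps$ an isomorphism, hence a \emph{strong} morphism of duality preserving functors, and Proposition~\ref{samemorphismWitt_prop} concludes that the two composites induce the same homomorphism on Witt groups, which is the asserted identity.

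The only genuine work, and the expected obstacle, is the strongness bookkeeping of the second paragraph: checking that each elementary functor and each correcting functor $I_\phi$, $I_\iota$ and the one built from $\gam$ is strong under the precise membership conditions in $\cB^*_W$, $\cB_*^W$ and the hypothesis on $\gam_{\abs_Z}$, and tracking the $\abs'$-versus-$\abs$ suspension shifts through Proposition~\ref{DTKTDK} so that the equality lands in the correctly graded group $\W^{i+d_Y}(Y,\abs_Y \TTens L)$. This is routine rather than deep, which is what the introduction means by calling such corollaries ``trivial''.
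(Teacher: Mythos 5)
Your proposal is correct and is exactly the argument the paper intends: the proofs of Section \ref{reformFinalObject_sec} are explicitly left as exercises, the introduction noting that these Witt-group corollaries are trivial consequences of Propositions \ref{dualPresIsTransfer_prop} and \ref{samemorphismWitt_prop} once the strongness hypotheses (membership in $\cB^*_W$ and $\cB_*^W$, invertibility of the correcting morphisms such as $\iota$ and the $\gam$-datum, and the $\abs$-versus-$\abs'$ shifts via Proposition \ref{DTKTDK}) are verified, which is precisely your bookkeeping. Only a cosmetic slip: in the theorem $\eps$ runs from $\func{f,\phi}^*\func{g,\psi}_*$ to $\func{\bar{g},\bar{\psi}}_*\func{\bar{f},\bar{\phi}}^*$ rather than the reverse, but since Assumption \assumption{\ref{epsIso}}{f,g} makes $\eps$ invertible this has no effect on the conclusion.
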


\begin{theo} (projection formula)
Let $f:X \to Y$ be a morphism in $\cB''$. Let $K,M \in \cC_X$ and $L,N \in \cC_Y$ be objects, $\phi:f^* L \to K$, $\psi: M \to f^* N$ and $\chi: M \TTens K \to f^* (N \TTens L)$ be morphisms such that
$$\xymatrix@R=3ex{
M \TTens K \ar[r]^{\chi} \ar[d]_{\psi \TTens Id} & f^* (N \TTens L) \ar[d]^{\fp^{-1}} \\
f^* N \TTens K \ar[r]^{Id \TTens \phi} & f^* N \TTens f^* L 
}$$
is commutative. 
Then, the morphism $\q$ is duality preserving between the compositions in the diagram
$$\xymatrix@R=3ex@C=2.5ex{
 & \cC_{X,\abs'_X \TTens M} \times \cC_{X,K} \ar[rr]^-{\pair{\TTens}} & & \cC_{X,\abs'_X \TTens M \TTens K} \ar[dr]^-{\func{f,\chi}_*} \\ 
\cC_{X,\abs'_X \TTens M} \times \cC_{Y,L} \ar[ur]^{Id \times \func{f,\phi}^*} \ar[rr]^-{\func{f,\psi}_* \times Id} & & \cC_{Y,N} \times \cC_{Y,L} \ar@{=>}[u]^{\q} \ar[rr]^{\pair{\TTens}} & & \cC_{Y, N\TTens L}
}$$
\end{theo}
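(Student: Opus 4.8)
The plan is to deduce this statement from the abstract projection formula, Theorem~\ref{projform0_theo}, applied to the morphism $f$ and the two objects $K_0:=\abs'_Y\TTens N$ and $M_0:=L$ of $\cC_Y$, by inserting the appropriate duality‑correcting functors $I_\iota$ of Lemma~\ref{isoKisoWitt} and checking that they match up. First I would unfold the three functors in the diagram: the pull‑back $\func{f,\phi}^*=I_\phi\pair{f^*,\fh_L}$ (its definition in Section~\ref{reformcorrectdual_sec}), and the two push‑forwards $\func{f,\psi}_*=\pair{f_*,\rr_{K_0}}I_{\iota_\psi}$ and $\func{f,\chi}_*=\pair{f_*,\rr_{K_0\TTens M_0}}I_{\iota_\chi}$ (Definition~\ref{PushForward3_defi}), where $\iota_\psi\colon\abs'_X\TTens M\to f^!K_0$ and $\iota_\chi\colon\abs'_X\TTens(M\TTens K)\to f^!(K_0\TTens M_0)$ are the structure maps of Definition~\ref{B_*^W_defi}. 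A direct inspection shows that the underlying natural transformation of both compositions is the projection morphism $\q\colon f_*(A)\TTens B\to f_*(A\TTens f^*B)$, exactly as in Theorem~\ref{projform0_theo}.

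Writing $\Psi_{\mathrm{src}}$ and $\Psi_{\mathrm{tgt}}$ for the two compositions of Theorem~\ref{projform0_theo} for $(K_0,M_0)$, the source path of our diagram is literally $\Psi_{\mathrm{src}}\circ(I_{\iota_\psi}\times Id)$, since the second factor passes through unchanged. For the target path, the pull‑back $\pair{f^*,\fh_L}$ on the second factor occurs identically in both $\Psi_{\mathrm{tgt}}$ and our composition, so after whiskering it away the comparison takes place entirely inside $\cC_X$. Because every $I_\iota$ has identity underlying functor, whiskering $\q$ by $I_{\iota_\psi}\times Id$ and by $Id\times\pair{f^*,\fh_L}$ leaves the underlying transformation unchanged; hence, once the target paths are seen to agree as duality‑preserving functors, Theorem~\ref{projform0_theo} immediately yields that $\q$ is a morphism of duality‑preserving functors in the sense of Definition~\ref{MorphDualPresFunc_defi}.

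The crux is therefore to prove that
\[
I_{\iota_\chi}\,\pair{\TTens,\dd_{\abs'_X\TTens M,\,K}}\,(Id\times I_\phi)
\;=\;
I_{\ssp_{K_0,M_0}}\,\pair{\TTens,\dd_{f^!K_0,\,f^*L}}\,(I_{\iota_\psi}\times Id)
\]
as duality‑preserving functors $(\cC_X)_{\abs'_X\TTens M}\times(\cC_X)_{f^*L}\to(\cC_X)_{f^!(K_0\TTens M_0)}$. By Lemma~\ref{isoKisoWitt} (which gives $I_{\kappa\iota}=I_\kappa I_\iota$ and reduces a comparison of functors built from the canonical datum $\dd$ and from $I_\iota$'s to the underlying map of dualizing objects), this reduces to the single identity
\[
\iota_\chi\circ(Id\TTens\phi)\;=\;\ssp_{K_0,M_0}\circ(\iota_\psi\TTens Id)
\colon\ (\abs'_X\TTens M)\TTens f^*L\longrightarrow f^!(K_0\TTens M_0).
\]
Expanding both sides through Definition~\ref{B_*^W_defi}, I would first use the hypothesis square to replace $\chi\circ(Id_M\TTens\phi)$ by $\fp\circ(\psi\TTens Id_{f^*L})$, and then invoke the associativity coherence for $\ssp$, namely $\ssp_{\abs'_Y,\,N\TTens L}\circ(Id\TTens\fp)=\ssp_{\abs'_Y\TTens N,\,L}\circ(\ssp_{\abs'_Y,N}\TTens Id)$, together with the single structural isomorphism $\ec_{f,p_Y}$ common to $\iota_\psi$ and $\iota_\chi$; combining these gives the equality by bifunctoriality of $\TTens$.

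The main obstacle is the $\ssp$‑associativity coherence invoked above: it is the exact analogue for $\ssp$ of Diagram~\diag{\ref{projMorphAssoc_diag}} for $\q$, but it is not recorded in the text. Since $\ssp$ is \emph{defined} in Proposition~\ref{projFormIso} as the mate obtained from $\q^{-1}$, I expect to derive it from Diagram~\diag{\ref{projMorphAssoc_diag}} by passing to mates under the adjunction $(f_*,f^!)$, which is a routine but careful diagram chase; once it is available, every remaining step is the formal whiskering described above.
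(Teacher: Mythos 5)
Correct, and it is exactly the route the paper intends: the proofs of Section \ref{reformFinalObject_sec} are explicitly left as exercises, to be deduced from the main theorems by bookkeeping with the functors $I_\iota$ of Lemma \ref{isoKisoWitt}, and your reduction to Theorem \ref{projform0_theo} with $K_0=\abs'_Y\TTens N$, $M_0=L$ --- whiskering by $I_{\iota_\psi}\times Id$, using naturality of $\dd$ in the dualizing objects to collapse the crux to the single identity $\iota_\chi\circ(Id\TTens\phi)=\ssp_{K_0,L}\circ(\iota_\psi\TTens Id)$, and then expanding via Definition \ref{B_*^W_defi} and the hypothesis square (which you correctly read with $Id\TTens\phi\colon M\TTens f^*L\to M\TTens K$, the direction forced by $\phi\colon f^*L\to K$ despite the typo in the printed diagram) --- is precisely that derivation. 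The one obstacle you flag is genuine and your fix is sound: the coherence $\ssp_{\abs'_Y,\,N\TTens L}\circ(Id\TTens\fp)=\ssp_{\abs'_Y\TTens N,\,L}\circ(\ssp_{\abs'_Y,N}\TTens Id)$ is indeed not recorded in the text, but both of its sides are the mates under $(f_*,f^!)$ of the two (equal) sides of Diagram \diag{\ref{projMorphAssoc_diag}} with $\q$ inverted (admissible since $f\in\cB''$), as one checks from the explicit formula for $\ssp$ in Proposition \ref{projFormIso} together with \diag{\ref{G2app2}}, so the uniqueness clause of Lemma \ref{adjab} yields it at once.
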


\begin{coro} (projection formula for Witt groups)
In the situation of the theorem, assuming furthermore that $(f,\phi)$ is in $\cB^*_W$ and $(f,\psi)$ and $(f,\chi)$ are in $\cB_*^W$. Then the pull-back, push-forwards and product for Witt groups of definitions \ref{PullBack2Witt_defi}, \ref{PushForward3Witt_defi} and Proposition \ref{existProduct} satisfy $\func{f,\chi}_*^W(x.\func{f,\phi}^*_W(y))=\func{f,\psi}_*^W(x).y$. 
\end{coro}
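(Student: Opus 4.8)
The plan is to deduce this corollary from the preceding theorem (the projection formula) in exactly the way the other Witt-group corollaries are deduced from their categorical counterparts, namely via Proposition~\ref{samemorphismWitt_prop}. That theorem already carries the real content: it says that $\q$ is a duality preserving morphism of functors between the two composite duality preserving (bi)functors running along the top and bottom of the displayed square. Because $f \in \cB''$, Assumption~\assumption{\ref{assuProjFormIso}}{f} holds and $\q$ is an \emph{isomorphism}, so this duality preserving morphism is strong. The task therefore splits into two parts: first, checking that each functor and bifunctor appearing in the square is a $1$-exact strong duality preserving (bi)functor between triangulated categories with strong duality, so that each composite induces a well-defined map on Witt groups — the pull-back and push-forward factors by Proposition~\ref{dualPresIsTransfer_prop} and the product bifunctors by Proposition~\ref{existProduct} together with Theorem~\ref{GiNe}; and second, reading off the resulting equality of induced maps.

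For the first part I would inspect the functors in turn. The hypothesis $(f,\phi) \in \cB^*_W$ states that $K$ and $L$ are dualizing and that $\phi$ and $\fh_{f,L}$ are isomorphisms, which is precisely what makes the pull-back $\func{f,\phi}^* = I_\phi \circ \pair{f^*,\fh_L}$ strong. The hypotheses $(f,\psi),(f,\chi) \in \cB_*^W$ make the push-forwards $\func{f,\psi}_*$ and $\func{f,\chi}_*$ strong: their structural morphisms $\rr$ are invertible because $\q$ is (Proposition~\ref{adjpairsf_*_prop}), the correcting morphisms $\iota$ of Definition~\ref{B_*^W_defi} are isomorphisms by assumption, and the objects $\abs_X \TTens M$, $\abs_X \TTens M \TTens K$, $\abs_Y \TTens N$, $\abs_Y \TTens N \TTens L$ are dualizing (dualizability passing between $\abs$ and $\abs'$ through the suspension twist by Proposition~\ref{DTKTDK}). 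Finally, the two product bifunctors $\pair{\TTens,\dd}$ are strong once the relevant $\dd_{\abs'_X \TTens M,\,K}$ and $\dd_{\abs'_Y \TTens N,\,L}$ are isomorphisms, which is among the strongness assumptions needed merely for the product of Proposition~\ref{existProduct} to exist, and Lemma~\ref{KMdualKtensMdual} then shows their targets are again dualizing; hence every category in the square is a triangulated category with strong duality.

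For the second part, all the composites now induce Witt-group maps and $\q$ is a strong duality preserving morphism between the top and bottom ones, so Proposition~\ref{samemorphismWitt_prop} — together with its bifunctor counterpart stated just after Theorem~\ref{GiNe}, which handles the product entries — forces the two composites to induce the \emph{same} map $\W(\cC_{X,\abs'_X \TTens M}) \times \W(\cC_{Y,L}) \to \W(\cC_{Y,\abs'_Y \TTens N \TTens L})$, where I use that the Witt group of a product category is the product of the Witt groups. Evaluating on a pair $(x,y)$ and tracing the two routes identifies the top composite with $\func{f,\chi}_*^W(x.\func{f,\phi}^*_W(y))$ and the bottom composite with $\func{f,\psi}_*^W(x).y$; since the functors are by their very definitions the pull-back, push-forwards and product of Definitions~\ref{PullBack2Witt_defi}, \ref{PushForward3Witt_defi} and Proposition~\ref{existProduct}, this is exactly the asserted identity.

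The logical skeleton is immediate, so the main obstacle is entirely the bookkeeping of the first part: confirming that membership in $\cB^*_W$ and $\cB_*^W$, together with the $\dd$-isomorphism conditions implicit in the existence of the two products, is collectively just strong enough to force every functor and bifunctor to be $1$-exact strong duality preserving and every object in the square to be dualizing. The only genuinely delicate (though still routine) points will be propagating the dualizing property through tensor products by Lemma~\ref{KMdualKtensMdual} and matching the suspension conventions hidden in $\abs_X$, $\abs'_X$ against the degree shifts $d_X$, exactly as in the earlier reformulations of this section.
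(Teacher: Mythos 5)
Your proposal is correct and follows exactly the route the paper intends: the proofs of Section \ref{reformFinalObject_sec} are explicitly left as exercises, with the stated recipe being to deduce each Witt-group corollary from the corresponding duality-preserving statement via Proposition \ref{samemorphismWitt_prop} (supplemented by Theorem \ref{GiNe} and the proposition following it for the product entries), since $\q$ is a strong duality preserving morphism under Assumption \assumption{\ref{assuProjFormIso}}{f}. Your strongness bookkeeping — $\func{f,\phi}^*$ strong from $(f,\phi)\in\cB^*_W$, the push-forwards strong from $\cB_*^W$ membership together with Proposition \ref{adjpairsf_*_prop}, dualizing objects propagated by Lemma \ref{KMdualKtensMdual}, and the $\dd$-isomorphisms implicit in invoking the product of Proposition \ref{existProduct} — is precisely the routine verification the authors have in mind.
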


\appendix

\section{Signs in the category of complexes} \label{SignComplexes}

Let $\cE$ be an exact category $\cE$ admitting infinite countable direct sums and products, with a tensor product $\ttens$ adjoint to an internal Hom (denoted by $\hhom$) in the sense of Definition \ref{defiAdjBif}.  
In this section, we explain how certain signs have to be chosen in order to induce a suspended symmetric monoidal closed structure on the category of chain complexes of $\cE$ (resp. the homotopy category, the derived category). We use homological complexes, as it is the usual convention in the articles about Witt groups, so the differential of a complex is
$$d^A_i : A_i \to A_{i-1}.$$
The suspension functor $T$ is
$$(TA)_n = A_{n-1}$$
and the tensor product and the internal Hom are given by
$$(A \TTens B)_n = \bigoplus_{i+j=n} A_i \ttens B_j$$
and
$$\HHom{A}{B}_n = \prod_{j-i=n} \hhom(A_i , B_j).$$
In table \ref{defSigns}, we give a possible choice of signs for the translation functor, tensor product, the associativity morphism (denoted by $\asso$), the symmetry morphism and the adjunction morphism (denoted by $\ath$), and what it induces on the internal Hom using Proposition \ref{suspACRBExists}. 
In table \ref{compatSigns} further below, 
we state the compatibility that these signs must satisfy 
to ensure that all axioms of suspended symmetric monoidal closed categories 
considered hold.

\begin{table}[h]
\setlength{\extrarowheight}{4pt}
\begin{tabular}{|c|c|c|c|}
\hline
Definition of & Sign & Choice & Locus \\
\hline \hline
$TA$ & $\ep^{T}_i$ & $-1$ & $d_{i+1}^{TA}= \ep^{T}_i d_i A$ \\
\hline
$A \TTens B$ & $\ep^{1 \TTens}_{i,j}$ & $1$ & $\ep^{1 \TTens}_{i,j} d^A_i \ttens id_{B_j}$ \\
 & $\ep^{2 \TTens}_{i,j}$ & $(-1)^i$ & $\ep^{2 \TTens}_{i,j} id_{A_i} \ttens d^B_j$ \\
\hline
$\tpp_{1,A,B}$ & $\ep^{\tpp1}_{i,j}$ & $1$ & $\ep^{\tpp1}_{i,j} id_{A_i\ttens B_j}$ \\
\hline
$\tpp_{2,A,B}$ & $\ep^{\tpp2}_{i,j}$ & $(-1)^i$ & $\ep^{\tpp2}_{i,j} id_{A_i\ttens B_j}$ \\
\hline
$\asso_{A,B,C}$ & $\ep^{\asso}_{i,j,k}$ & $1$ & $\ep^{\asso}_{i,j,k} ((A_i \ttens B_j) \ttens C_k \to A_i \ttens (B_j \ttens C_k))$ \\
\hline
$\cc_{A,B}$ & $\ep^{\cc}_{i,j}$ & $(-1)^{ij}$ & $\ep^{\cc}_{i,j}(A_i\ttens B_j \to B_j\ttens A_i)$ \\
\hline
$\ath_{A,B,C}$ & $\ep^{\ath}_{i,j}$ & $(-1)^{i(i-1)/2}$ & $\ep^{\ath}_{i,j}(\Hom(A_i\ttens B_j, C_{i+j})$ \\
 & & & $\to \Hom(A_i, \hhom (B_j,C_{i+j})))$ \\
\hline
$\HHom{A}{B}$ & $\ep^{1 \hhom}_{i,j}$ & $1$ & $\ep^{1 \hhom}_{i,j} (d^A_{i+1})^\sharp$ \\
& $\ep^{2 \hhom}_{i,j}$ & $(-1)^{i+j+1}$ & $\ep^{2 \hhom}_{i,j} (d^B_{j})_\sharp$ \\
\hline
$\thh_{1,A,B}$ & $\ep^{\thh1}_{i,j}$ & $1$ & $\ep^{\thh1}_{i,j} id_{\hhom(A_i,B_j)}$ \\
\hline
$\thh_{2,A,B}$ & $\ep^{\thh2}_{i,j}$ & $(-1)^{i+j}$ & $\ep^{\thh2}_{i,j} id_{\hhom(A_i,B_j)}$ \\
\hline
\end{tabular}
\caption{Sign definitions} \label{defSigns}
\end{table}
Balmer \cite{Balmer00}, \cite{Balmer01}, Gille and Nenashev ,\cite{Gille02},
\cite{Gille03} always consider strict dualities, that is $\ep^{\thh1}=1$.
The signs chosen in \cite[§2.6]{Balmer01} imply that 
$\ep_{i,0}^{1 \hhom}=1$.
The choices made by \cite[Example 1.4]{Gille03} are $\ep^{1\TTens}_{i,j}=1$ 
and $\ep^{2\TTens}_{i,j}=(-1)^{i}$. In \cite[p. 111]{Gille02} the signs
$\ep^{1\hhom}_{i,j}=1$ and $\ep^{2\hhom}_{i,j}=(-1)^{i+j+1}$ are chosen.
Finally, the sign chosen
for $\bid$ in \cite[p. 112]{Gille02} corresponds via our definition
of $\bid$ (see Section \ref{BidualIsomorphism}) to the equality $\ep^{\ath}_{j-i,i}\ep^{\ath}_{i,j-i}\ep^{\cc}_{j-i,i}
=(-1)^{j(j-1)/2}$. It is possible to choose the signs in a way 
compatible with all these choices and our formalism. It is given in the third column of Table \ref{defSigns}. 
More precisely, we have the following theorem.

\begin{theo}
Let $a,b \in \{+1,-1 \}$. Then
$$\begin{array}{ll}
\ep^{1 \TTens}_{i,j}=1 & \ep^{\tpp1}_{i,j}=a \\  
\ep^{2 \TTens}_{i,j}=(-1)^i & \ep^{\tpp2}_{i,j}=a(-1)^i \\ 
\ep^{1 \hhom}_{i,j}=1 & \ep^{\thh1}_{i,j}=1 \\  
\ep^{2 \hhom}_{i,j}=(-1)^{i+j+1} &  \ep^{\thh2}_{i,j}=a(-1)^{i+j} \\  
\ep^{\ath}_{i,j}=b(-1)^{i(i-1)/2} &  \ep^{\cc}_{i,j}=(-1)^{ij} \\
\ep^{T}_i=-1 & \\  
\end{array}$$
satisfies all equalities of Table \ref{compatSigns}
as well as $\ep^{\ath}_{j-i,i}\ep^{\ath}_{i,j-i}\ep^{\cc}_{j-i,i}
=(-1)^{j(j-1)/2}$. Therefore, for any exact category $\cE$
the category of chain complexes $Ch(\cE)$ and its
bounded variant $Ch_b(\cE)$ may be equipped 
with the entire structure of suspended symmetric monoidal category discussed in Section \ref{TensProHom}.
Moreover, all signs may be chosen in a compatible 
way with all the above sign choices of Balmer, Gille and Nenashev. 
\end{theo}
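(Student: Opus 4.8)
The plan is to treat the statement as a finite, if lengthy, sign verification, organised so that the \emph{dependent} signs are disposed of before the genuinely independent relations are checked. First I would observe that among the ten families of signs, four are not free: by Proposition \ref{suspACRBExists} the internal Hom and its structure isomorphisms $\thh_1,\thh_2$ are manufactured from the tensor product, the adjunction $\ath$, and $\tpp_1,\tpp_2$. So the opening step is to compute $\ep^{1\hhom}$, $\ep^{2\hhom}$, $\ep^{\thh1}$ and $\ep^{\thh2}$ from the chosen $\ep^{1\TTens}$, $\ep^{2\TTens}$, $\ep^{\ath}$, $\ep^{\tpp1}$, $\ep^{\tpp2}$ by unwinding the adjunction bijection on the graded pieces, using the degree conventions $\HHom{A}{B}_n=\prod_{j-i=n}\hhom(A_i,B_j)$, and to check that the outcome agrees with the table. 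This pins down, for instance, that $\ep^{2\hhom}_{i,j}=(-1)^{i+j+1}$ is forced by $\ep^{2\TTens}$ and the reindexing, and that $\ep^{\thh2}_{i,j}=a(-1)^{i+j}$ is the image of $\ep^{\tpp2}=a(-1)^i$.

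Once the dependent signs are known to be consistent, the remaining task is to substitute the explicit formulas into every relation of Table \ref{compatSigns} and into the auxiliary identity $\ep^{\ath}_{j-i,i}\ep^{\ath}_{i,j-i}\ep^{\cc}_{j-i,i}=(-1)^{j(j-1)/2}$. Each relation then becomes an equality $(-1)^{P(i,j,k)}=(-1)^{Q(i,j,k)}$ with $P,Q$ quadratic in the indices, and one only has to verify that $P-Q$ is even on all integer arguments. The free parameters $a,b$ enter every relation quadratically — each structure isomorphism appears paired with its inverse, and $\ath$ always occurs an even number of times — so they contribute only $a^2=b^2=1$; this is exactly why both choices of sign are admissible and why the statement is phrased for arbitrary $a,b\in\{+1,-1\}$. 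For the auxiliary identity I would invoke the binomial identity $\binom{i}{2}+\binom{j-i}{2}+i(j-i)=\binom{j}{2}$, which turns $\ep^{\ath}_{j-i,i}\ep^{\ath}_{i,j-i}\ep^{\cc}_{j-i,i}=b^2(-1)^{\binom{i}{2}+\binom{j-i}{2}+i(j-i)}$ into $(-1)^{j(j-1)/2}$ at once.

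The main obstacle I expect is the relation expressing that $(-\TTens*,\tpp_1,\tpp_2)$ is a suspended bifunctor in the sense of Definition \ref{defiSuspBif}, namely the anticommutativity of the square built from $\tpp_1$ and $\tpp_2$, together with its interaction with associativity (diagram $\diag{assoc}$) and symmetry (diagram $\diag{\cc}$). Here the indices shift when a $T$ is pushed through a variable, so $\ep^{2\TTens}_{i,j}=(-1)^i$ and $\ep^{\tpp2}_{i,j}=a(-1)^i$ must conspire with the reindexing to produce the required global $-1$; keeping track of which $i$ has been decremented is the delicate point, and it is precisely the quadratic dependence of $\ep^{\ath}$ on $i$ that keeps the bidual and the adjunction signs compatible across a suspension. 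All the remaining relations — the Leibniz rule making the differential on $A\TTens B$ square to zero, the pentagon for $\asso$, the hexagon and involutivity for $\cc$, and the triangle identities for $\ath$ — are comparatively routine once the reindexing conventions are fixed.

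Finally, compatibility with the choices of Balmer and of Gille--Nenashev is immediate: the individual values $\ep^{1\TTens}=1$, $\ep^{2\TTens}=(-1)^i$, $\ep^{1\hhom}=1$, $\ep^{2\hhom}=(-1)^{i+j+1}$, $\ep^{\thh1}=1$ and the bidual normalisation recalled just before the statement are read off directly from the third column of Table \ref{defSigns}, so no further argument is needed. The conclusion that $Ch(\cE)$ and $Ch_b(\cE)$ carry the full suspended symmetric monoidal closed structure of Section \ref{TensProHom} then follows formally from the verified relations.
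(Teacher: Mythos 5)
Your overall route --- a finite brute-force verification, organised by first deriving the internal-Hom signs $\ep^{1\hhom},\ep^{2\hhom},\ep^{\thh1},\ep^{\thh2}$ from the tensor signs and $\ep^{\ath}$ via Proposition \ref{suspACRBExists}, then checking each row of Table \ref{compatSigns} as a parity identity in quadratic polynomials --- is exactly what the paper's one-word proof (``Straightforward'') compresses, and most of your ingredients check out: the identity $\binom{i}{2}+\binom{j-i}{2}+i(j-i)=\binom{j}{2}$ does settle the auxiliary bidual relation, the parameter $b$ genuinely occurs twice in every relation where it occurs at all (rows 20, 21 and the auxiliary identity), and spot checks of rows 1, 14, 19, 20, 21 with the stated values succeed.

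However, your key structural claim --- that $a$ enters \emph{every} relation an even number of times, so that $a^2=1$ disposes of the freedom in $a$ --- fails on the table as printed, and this is the one point your argument actually rests on. Row 15 contains exactly three factors $\ep^{\tpp1}$ (at indices $(i,j)$, $(i+j,k)$, $(i,j+k)$) and row 17 exactly three factors $\ep^{\tpp2}$; with the constant values $\ep^{\tpp1}_{i,j}=a$, $\ep^{\tpp2}_{i,j}=a(-1)^i$ and $\ep^{\asso}=1$, these rows evaluate to $a^3=a$, not $1$. The odd count is forced by the shape of the diagram $\diag{assoc}$: $\tpp_1$ appears twice on the left-bottom path and once on the right path, so the uniform rescaling $\tpp_1\mapsto a\,\tpp_1$ multiplies the two sides by $1$ and by $a$ respectively, and no other sign in the diagram can absorb the discrepancy. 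So the cancellation you invoke breaks precisely at the associativity/suspension compatibilities that you yourself flagged as ``the main obstacle'' and then waved through. (This in fact exposes a tension internal to the paper: rows 15 and 17 as printed are consistent with the theorem only for $a=1$, so either those rows carry misprints or the freedom in $a$ is overstated. But a blind proof cannot appeal to that; you would need to compute those two rows honestly --- at which point the constraint $a^3=1$ surfaces --- or else demonstrate that the printed rows misstate the actual compatibility, rather than assert a global even-occurrence principle that the table refutes.)
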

\begin{proof}
Straightforward.
\end{proof}

\begin{table}[h]
\setlength{\extrarowheight}{4pt}
\begin{tabular}{|c|c|c|c|}
\hline
 &compatibility & reason \\
\hline \hline
1 & $\ep^{1 \TTens}_{i,j} \ep^{1 \TTens}_{i,j-1} \ep^{2 \TTens}_{i,j} \ep^{2 \TTens}_{i-1,j} = -1$ & $A \TTens B$ is a complex \\
\hline 
2 & $\ep^{\TTens 1}_{i,j}\ep^{\TTens 1}_{i,j+k}\ep^{\TTens 1}_{i+j,k}\ep^{\asso}_{i,j,k}\ep^{\asso}_{i-1,j,k}$ & $\asso$ is a morphism \\
3 & $\ep^{\TTens 2}_{i,j}\ep^{\TTens 1}_{j,k}\ep^{\TTens 1}_{i+j,k}\ep^{\TTens 2}_{i,j+k}\ep^{\asso}_{i,j,k}\ep^{\asso}_{i, j-1,k}$ & \\
4 & $\ep^{\TTens 2}_{i+j,k}\ep^{\TTens 2}_{j,k}\ep^{\TTens 2}_{i,j+k}\ep^{\asso}_{i,j,k}\ep^{\asso}_{i,j,k-1}$ & \\
\hline
5 & $\ep^{\asso}_{i,j,k}\ep^{\asso}_{i,j+k,l}\ep^{\asso}_{j,k,l}\ep^{\asso}_{i,j,k+l}\ep^{\asso}_{i+j,k,l}=1$ & the pentagon of \cite[p. 252]{MacLane98} commutes \\
\hline
6 & $\ep^{1 \TTens}_{i,j} \ep^{2 \TTens}_{j,i} \ep^{\cc}_{i,j} \ep^{\cc}_{i-1,j} = 1$ & $\cc_{A,B}$ is a morphism \\
7 & $\ep^{1 \TTens}_{j,i} \ep^{2 \TTens}_{i,j} \ep^{\cc}_{i,j} \ep^{\cc}_{i,j-1} = 1$ & \\
\hline
8 & $\ep^{\cc}_{i,j} \ep^{\cc}_{j,i} = 1$ & $\cc$ is self-inverse \\
\hline
9 & $\ep^{\cc}_{j,k} \ep^{\cc}_{i,k} \ep^{\cc}_{i+j,k}\ep^{\asso}_{i,j,k} \ep^{\asso}_{k,i,j}\ep^{\asso}_{i,k,j}=1$ & the hexagons of \cite[p. 253]{MacLane98} commutes \\
\hline 
10 & $\ep^{T}_i \ep^{T}_{i+j} \ep^{1 \TTens}_{i,j} \ep^{1 \TTens}_{i+1,j} \ep^{\tpp1}_{i,j} \ep^{\tpp1}_{i-1,j} = 1$ & $\tpp_{1,A,B}$ is a morphism \\
11 & $\ep^{T}_{i+j} \ep^{2 \TTens}_{i,j} \ep^{2 \TTens}_{i+1,j} \ep^{\tpp1}_{i,j} \ep^{\tpp1}_{i,j-1} = 1$ & \\
\hline
12 & $\ep^{T}_j \ep^{T}_{i+j} \ep^{2 \TTens}_{i,j} \ep^{2 \TTens}_{i,j+1} \ep^{\tpp2}_{i,j} \ep^{\tpp2}_{i,j-1} = 1$ & $\tpp_{2,A,B}$ is a morphism \\
13 & $\ep^{T}_{i+j} \ep^{1 \TTens}_{i,j} \ep^{1 \TTens}_{i,j+1} \ep^{\tpp2}_{i,j} \ep^{\tpp2}_{i-1,j} = 1$ & \\
\hline
14 & $\ep^{\tpp1}_{i,j} \ep^{\tpp1}_{i,j+1} \ep^{\tpp2}_{i,j} \ep^{\tpp2}_{i+1,j} = -1$ & the square in Definition \ref{defiSuspBif} \\
 & & anti-commutes \\
\hline
15 & $\ep^{\tpp1}_{i,j} \ep^{\tpp1}_{i+j,k} \ep^{\tpp1}_{i,j+k} \ep^{\asso}_{i,j,k} \ep^{\asso}_{i+1,j,k} =1$ & $\diag{assoc}$ et al. commute \\
16 & $\ep^{\tpp2}_{i,j} \ep^{\tpp1}_{i+j,k} \ep^{\tpp2}_{i,j+k} \ep^{\tpp1}_{j,k} \ep^{\asso}_{i,j,k} \ep^{\asso}_{i,j+1,k}=1$ & \\
17 & $\ep^{\tpp2}_{i+j,k} \ep^{\tpp2}_{i,j+k} \ep^{\tpp2}_{j,k} \ep^{\asso}_{i,j,k} \ep^{\asso}_{i,j,k+1}=1$ & \\ 
\hline
18 & $\ep^{\tpp1}_{i,j} \ep^{\tpp2}_{j,i} \ep^{\cc}_{i,j} \ep^{\cc}_{i+1,j} = 1$ & the square $\diag{s}$ commutes \\
\hline
19 & $\ep^{1 \hhom}_{i,j} \ep^{1 \hhom}_{i,j-1} \ep^{2 \hhom}_{i,j} \ep^{2 \hhom}_{i+1,j} = -1$ & $\HHom{A}{B}$ is a complex \\
\hline
20 & $\ep^{1 \TTens}_{i,j} \ep^{2 \TTens}_{i,j} \ep^{1 \hhom}_{j-1,i+j-1} \ep^{\ath}_{i,j-1} \ep^{\ath}_{i-1,j}= -1$ & $\ath$ is well defined \\
21 & $\ep^{1 \TTens}_{i,j} \ep^{2 \hhom}_{j,i+j} \ep^{\ath}_{i-1,j} \ep^{\ath}_{i,j} = 1$ & \\
\hline
\end{tabular} 
\caption{Sign definitions} \label{compatSigns}
\end{table}

These structures trivially pass to the homotopy category.
If the exact category $\cE$ one considers
has enough injective and projective objects, one obtains a 
left derived functor of the tensor product and a right derived 
functor of the internal Hom which are exact in both variables.

\bibliography{MonCatDual}

\providecommand{\bysame}{\leavevmode\hbox to3em{\hrulefill}\thinspace}
\providecommand{\MR}{\relax\ifhmode\unskip\space\fi MR }
\providecommand{\MRhref}[2]{%
  \href{http://www.ams.org/mathscinet-getitem?mr=#1}{#2}
}
\providecommand{\href}[2]{#2}
\begin{thebibliography}{10}

\bibitem{Balmer00}
P.~Balmer, \emph{{T}riangular {W}itt {G}roups {P}art 1: The 12-{T}erm
  {L}ocalization {E}xact {S}equence}, K-Theory \textbf{4} (2000), no.~19,
  311--363.

\bibitem{Balmer01}
\bysame, \emph{Triangular {W}itt groups. {II}. {F}rom usual to derived}, Math.
  Z. \textbf{236} (2001), no.~2, 351--382.

\bibitem{Calmes08b_pre}
B.~Calm\`es and J.~Hornbostel, \emph{{P}ush-forwards for {W}itt groups of
  schemes}, preprint, 2008.

\bibitem{Eilenberg66b}
S.~Eilenberg and G.~M. Kelly, \emph{A generalization of the functional
  calculus}, J. Algebra \textbf{3} (1966), 366--375.

\bibitem{Fausk03}
H.~Fausk, P.~Hu, and J.~P. May, \emph{Isomorphisms between left and right
  adjoints}, Theory Appl. Categ. \textbf{11} (2003), No. 4, 107--131
  (electronic).

\bibitem{Gille02}
S.~Gille, \emph{On {W}itt groups with support}, Math. Annalen \textbf{322}
  (2002), 103--137.

\bibitem{Gille03}
S.~Gille and A.~Nenashev, \emph{Pairings in triangular {W}itt theory}, J.
  Algebra \textbf{261} (2003), no.~2, 292--309.

\bibitem{Kelly71}
G.~M. Kelly and S.~Mac~Lane, \emph{Coherence in closed categories}, Journal of
  Pure and Applied Algebra \textbf{1} (1971), no.~1, 97--140.

\bibitem{MacLane98}
S.~Mac~Lane, \emph{Categories for the working mathematician}, second ed.,
  Graduate Texts in Mathematics, vol.~5, Springer-Verlag, New York, 1998.

\bibitem{Weibel94}
C.~A. Weibel, \emph{An introduction to homological algebra}, Cambridge Studies
  in Advanced Mathematics, vol.~38, Cambridge University Press, Cambridge,
  1994.

\end{thebibliography}

\end{document}